\let\save@mathaccent\mathaccent
\newcommand*\if@single[3]{%
  \setbox0\hbox{${\mathaccent"0362{#1}}^H$}%
  \setbox2\hbox{${\mathaccent"0362{\kern0pt#1}}^H$}%
  \ifdim\ht0=\ht2 #3\else #2\fi
  }
\newcommand*\rel@kern[1]{\kern#1\dimexpr\macc@kerna}
\newcommand*\widebar[1]{\@ifnextchar^{{\wide@bar{#1}{0}}}{\wide@bar{#1}{1}}}
\newcommand*\wide@bar[2]{\if@single{#1}{\wide@bar@{#1}{#2}{1}}{\wide@bar@{#1}{#2}{2}}}
\newcommand*\wide@bar@[3]{%
  \begingroup
  \def\mathaccent##1##2{%
    \let\mathaccent\save@mathaccent
    \if#32 \let\macc@nucleus\first@char \fi
    \setbox\z@\hbox{$\macc@style{\macc@nucleus}_{}$}%
    \setbox\tw@\hbox{$\macc@style{\macc@nucleus}{}_{}$}%
    \dimen@\wd\tw@
    \advance\dimen@-\wd\z@
    \divide\dimen@ 3
    \@tempdima\wd\tw@
    \advance\@tempdima-\scriptspace
    \divide\@tempdima 10
    \advance\dimen@-\@tempdima
    \ifdim\dimen@>\z@ \dimen@0pt\fi
    \rel@kern{0.6}\kern-\dimen@
    \if#31
      \overline{\rel@kern{-0.6}\kern\dimen@\macc@nucleus\rel@kern{0.4}\kern\dimen@}%
      \advance\dimen@0.4\dimexpr\macc@kerna
      \let\final@kern#2%
      \ifdim\dimen@<\z@ \let\final@kern1\fi
      \if\final@kern1 \kern-\dimen@\fi
    \else
      \overline{\rel@kern{-0.6}\kern\dimen@#1}%
    \fi
  }%
  \macc@depth\@ne
  \let\math@bgroup\@empty \let\math@egroup\macc@set@skewchar
  \mathsurround\z@ \frozen@everymath{\mathgroup\macc@group\relax}%
  \macc@set@skewchar\relax
  \let\mathaccentV\macc@nested@a
  \if#31
    \macc@nested@a\relax111{#1}%
  \else
    \def\gobble@till@marker##1\endmarker{}%
    \futurelet\first@char\gobble@till@marker#1\endmarker
    \ifcat\noexpand\first@char A\else
      \def\first@char{}%
    \fi
    \macc@nested@a\relax111{\first@char}%
  \fi
  \endgroup
}
\renewcommand{\Re}{\mathop{\rm Re}\nolimits}
\renewcommand{\Im}{\mathop{\rm Im}\nolimits}
\theoremstyle{plain}
\newtheorem{theorem}{Theorem}[section]
\newtheorem{lemma}[theorem]{Lemma}
\newtheorem{corollary}[theorem]{Corollary}
\theoremstyle{definition}
\theoremstyle{remark}
\newtheorem{remark}[theorem]{Remark}
\newtheorem*{remnonum}{Remark}{\it}{\rm}
\newtheoremstyle{RHP}  		
  {3pt}					
  {3pt}					
  {\itshape}				
  {}						
  {\bfseries}				
  {}						
  {.5em}					
  {\thmnote{#3} Riemann-Hilbert Problem #2 } 
\theoremstyle{RHP}
\newtheorem{rhp}{}[section]
\newtheoremstyle{DBAR}  		
  {3pt}					
  {3pt}					
  {\itshape}				
  {}						
  {\bfseries}				
  {}						
  {.5em}					
  {\thmnote{#3} $\overline \partial$ Problem #2 } 
\theoremstyle{DBAR}
\newtheorem{DBAR}{}[section]
\newcommand{\R}{{\mathbb R}}
\newcommand{\N}{{\mathbb N}}
 \def\im{{\rm i}}
\newcommand{\C}{\mathbb{C}}
\newcommand{\La}{ {\mathcal{L} } }
\newcommand{\B}{ {\mathcal{B } } }
\newcommand{\bigo}[1]{\mathcal{O} \left( #1 \right)}
\newcommand{\littleo}[1]{ {o} \left( #1 \right) }
\newcommand{\pd}[3][ ]{\frac{\partial^{#1} #2}{\partial #3^{#1} } }
\newcommand{\lp}{\left(}
\newcommand{\rp}{\right)}
\newcommand{\lb}{\left[}
\newcommand{\rb}{\right]}
\newcommand{\eps}{\epsilon}
\newcommand{\sig}{ {\sigma_3} }
\newcommand{\dbar}{ {\overline{\partial}} }
\newcommand{\vect}[1]{\bm{#1}}
\newcommand{\triu}[2][1]{\begin{pmatrix} #1 & #2 \\ 0 & #1 \end{pmatrix}}
\newcommand{\tril}[2][1]{\begin{pmatrix} #1 & 0 \\ #2 & #1 \end{pmatrix}}
\newcommand{\twovec}[2]{\begin{pmatrix} #1 \\ #2  \end{pmatrix}}
\newcommand{\mk}[1]{  m^{(#1)}  }
\newcommand{\vk}[1]{ { V^{(#1)} } }
\DeclareMathOperator{\Tr}{Tr}
\DeclareMathOperator*{\res}{Res}
\DeclareMathOperator{\sgn}{sgn}
\DeclareMathOperator{\adj}{adj}
\DeclareMathOperator{\Diag}{diag}
\DeclareMathOperator{\sol}{sol}
\DeclareMathOperator{\supp}{supp}
\newcommand{\ie}{\textit{i.e.}}
\newcommand{\cf}{\textrm{c.f.}}
\def\({\left(}
\def\){\right)}
\def\<{\left\langle}
\def\>{\right\rangle}
\numberwithin{equation}{section}
\title{On the asymptotic stability of $N$--soliton solutions of the defocusing nonlinear Schr\"odinger equation}
\author[1]{Scipio Cuccagna}
\author[2]{Robert Jenkins}
\affil[1]{Department of Mathematics and Geosciences,  University
of Trieste, Trieste, 34127  Italy\\
\texttt{scuccagna@units.it}}
\affil[2]{Department of Mathematics, University of Arizona  ,
Tucson 85721 USA \\
\texttt{rjenkins@math.arizona.edu}}
\begin{document}
\maketitle

\begin{abstract}
We consider the Cauchy problem for the defocusing nonlinear Schr\"odinger (NLS) equation for finite density type initial data.
Using the $\dbar$ generalization of the nonlinear steepest descent method of Deift and Zhou we derive the leading order approximation to the solution of NLS for large times in the solitonic region of space--time, $|x|<2 t$, and we provide bounds for the error which decay as $ t \to \infty$ for a general class of initial data whose difference from the non vanishing background possesses a fixed number of finite moments and derivatives. Using properties of the scattering map of NLS we derive, as a corollary, an asymptotic stability result for initial data which are sufficiently close to the $N$-dark soliton solutions of NLS.
\end{abstract}

\section{Introduction and statement of main results}

We consider the Cauchy problem for the defocusing nonlinear Schr\"odinger (NLS) equation on the real line with finite density initial data:
\begin{gather}
		 \im q_{t }+  q_{xx }  -  2(|q|^2-1)  q=0 		\label{eq:nls} \\
		 q(x,0)=q_0(x), \quad
		 \lim _{x\to \pm \infty}q_0 (x) = \pm 1.			\label{eq:nls1}
\end{gather}

\medskip

\begin{remnonum}
The usual form of the NLS equation is $\im u_t + u_{xx} + 2 \sigma u |u|^2 = 0$ where $\sigma = 1$ is called the \textit{focusing} and $\sigma = -1$ the \textit{defocusing} NLS equation. The change of variables $q(x,t) = u(x,t) e^{  2\im   t}$ reduces the defocusing NLS equation to \eqref{eq:nls}. This form has the advantage that solutions of \eqref{eq:nls} which satisfy \eqref{eq:nls1} are asymptotically time independent as $x\to \infty$.
\end{remnonum}

\medskip

\begin{remnonum}
Some authors have referred to \eqref{eq:nls} as the Gross-Pitaevskii (GP) equation \cite{BGS,BGS1,BGSS,BGSS1,gallo,GZ,GS}. The general one dimension GP equation, which appears in the modeling of Bose-Einstein condensates on a nonzero background, is $\im \psi_t + \psi_{xx} - 2( |\psi |^2 -1) \psi + V(x) \psi = 0$ where $\psi$ is the wave function of a single particle and $V$ is an external potential. 
Equation \eqref{eq:nls} is the integrable case of the 1D GP equation in which the particle is free, \ie, $V \equiv 0$. 
\end{remnonum}

It is an elementary fact that solutions of the linear Schr\"odinger equation $i q_t+q_{xx} = 0$ disperse, \ie,\, $q(x,t) = \bigo{ t^{-1/2} }$ as $t \to \infty$.
Once nonlinear effects are included, \textit{soliton} solutions appear. These special solutions do not disperse. Instead, the nonlinear effects balance the dispersive, to create solutions which persist for all time.
For initial data $q_0(x)$ which vanish sufficiently quickly as $|x| \to \infty$ only the focusing NLS equation supports soliton solutions. For the finite density type of data considered in \eqref{eq:nls}-\eqref{eq:nls1} the defocusing equation also possesses soliton solutions.
Let $\partial D(0,1) =\{ z\in \C : |z|=1\}$.
For $ z_0 \in \partial D(0,1) \cap \C^+$
define
\begin{equation} \label{eq:defsol}
	\sol (x, t ; z_0) := -\im  z_0 \left( \im z_{0R}
    	+z_{0I} \tanh \left ( z _{0I} (x  - 2 z  _{0R}t) \right ) \right )   
	\text{ where $z _{0R}=\Re z_0$ and $z _{0I}=\Im  z_0$}.
\end{equation}
Then $ q(x,t) = \sol ( x-x_0, t ; z_0 )$ is a traveling wave solution of \eqref{eq:nls} satisfying
$\lim_{x \to \infty} q(x,t)=1$  and $\lim_{x \to - \infty} q(x,t)=z_0^2$.
We call these solutions 1-\text{solitons}, or simply solitons.
More generally, given a collection of distinct points $\{ z_k \}_{k=0}^{N-1} \in \partial D(0,1) \cap \C^+$ one can construct more elaborate exact soliton solutions $q^{(sol),N}(x,t)$, called $N$-\textit{solitons} which, instead of dispersing, resemble the sum of $N$ individual $1$-solitons at sufficiently large times.
Such solutions are constructed in Appendix \ref{sec:msol}.

The soliton resolution conjecture is the vaguely stated, but widely believed statement that the evolution of generic initial data for many globally well posed nonlinear dispersive equations will in the long time limit resolve into a finite train of solitons plus a dispersing radiative component.
For most dispersive evolution equations this is a wide open and active area of research \cite{tao,CM,MaM0,MaM1}.
The situation is somewhat better understood in the integrable setting where the inverse scattering transform (IST) gives one much stronger control on the behavior of solutions than purely analytic techniques \cite{DT,GT,TVZ,DKKZ,DP,CP}. Even among the integrable evolutions, most results concern initial data with sufficient decay at spatial infinity, but there have been some recent studies concerning non vanishing initial data \cite{EGKT,BKS,J,V1,V2}.

As we review below, problem \eqref{eq:nls}--\eqref{eq:nls1} is integrable--- as discovered by Zakharov--Shabat--- and its solution can be characterized in terms of an IST \cite{ZS1}.
Briefly, the Lax-pair representation of \eqref{eq:nls} (\cf \ \eqref{eq: lax pair}) encodes the solution of NLS as a time evolving potential in a certain spectral problem, \eqref{eq:zs}, on the line.
In analogy to the standard Sturm--Liouville theory for Schr\"odinger operators on the line,
see for example \cite{DT},  the \textit{scattering map} associates  to $q_0$ a
discrete spectrum, formed by a finite number of  \textit{poles}   $\{ z_j \}_{j=0}^{N-1} \subset \partial D(0,1) \cap \C^+$ and for each pole  an  associated   \textit{coupling constant} $c_j \in \im z_j \R_+$. In addition to the discrete data, the scattering map associates to $q_0$ the so called  \textit{reflection coefficient}, $r$, defined along the continuous spectrum of the scattering operator, \ie\ , 
$r:\R \to \C$, which is a sort of Fourier transform of  $q_0$ satisfying $|r(z)|<1$ for any $z\neq \pm 1$ with $r(0)=0$.
The collection $\left\{ r(z), \{ z_j, c_j \}_{j=0}^{N-1} \right\}$ is called the \textit{scattering data} associated with $q_0$.
In terms of scattering data, soliton solutions correspond to \textit{reflectionless} potentials $q_0$ for which the scattering map gives $r(z) \equiv 0$; the scattering data of a $1-$soliton is $\{ 0 , \{z_0, c_0\} \}$ and for an $N-$soliton $\{0, \{ z_k ,c_k \}_{k=0}^{N-1} \}$.

The essential fact is that the evolution of the scattering data is trivial, and an \textit{inverse scattering map} can be constructed in terms of a Riemann-Hilbert problem where the spatio-temporal dependence appears only parametrically.
This characterization of the inverse map is ideally suited to rigorous  asymptotic analysis via the Deift-Zhou steepest descent method and has been key to deriving detailed asymptotic expansions of NLS and other integrable evolutions in various asymptotic regimes \cite{BM,CG,DZ2,DZ3,JM,TVZ}. 

The long time asymptotic behavior of the defocusing NLS equation with finite density data has been studied previously. In a series of papers \cite{V1,V2,V4}  Vartanian computed both the leading and first correction terms in the asymptotic expansion of the solution $q(x,t)$ and `partial masses' $\int_{\pm \infty}^x ( 1 - |q(x,t)|^2) dx$ of \eqref{eq:nls}-\eqref{eq:nls1} as $x,t \to \pm \infty$ with $|\xi| = |x/2t| >1$ (outside the soliton `light cone') and $|\xi|< 1$ (inside the soliton `light cone'). 
In particular, in \cite{V2}, it is shown that when the initial data generates discrete data $\{z_k, c_k \}_{k=0}^{N-1}$ (in our notation), then in a frame of reference moving at one of the soliton speeds, \ie\, $x + 2 \Re( z_j) t = \bigo{1}$ for any $j=0,1,\dots,N-1$ , the solution $q(x,t)$ of \eqref{eq:nls}-\eqref{eq:nls1} is asymptotically described to leading order by a 1-soliton. 
Our first result below, Theorem~\ref{thm:main1}, describes the leading order asymptotic behavior of the solution $q(x,t)$ uniformly in any closed sector within the `light cone', that is with $|\xi| \leq \xi_0 < 1$. 
Our formulation \eqref{eq:thmmain1} is consistent with Vartanian's description, but is formulated such that we give a more holistic description of the solution as an $N$-soliton with a fixed set of poles whose coupling constants slowly modulate due to the interaction of the soliton components with the reflection coefficient. 
Expressing this $N$-soliton solution in separated form for $t \gg 1$ , \eqref{soliton separation} reduces to Vartanian's leading order asymptotics in the frames of reference $x + 2 \Re( z_j) t = \bigo{1}$ defined by the individual solitons. 
From a technical perspective our $\dbar$ methods greatly simplify the analytical arguments needed to prove results. 
Moreover, our results hold for a much larger class of initial data than was considered in \cite{V1,V2,V4} (\cf\ Remark~\ref{rem:vartanian}). 
Finally, we believe that our methods should be more easily adapted to considering the so called collisionless shock region $|x/2t| \approx 1$ where $|r(z)| \to 1$ which we plan to consider in the near future. 

\subsection{Results}
Our first result is a verification of the soliton resolution conjecture for \eqref{eq:nls} for initial data of finite density type \eqref{eq:nls1} which possess a certain number of derivatives and moments.
To state the theorem precisely we introduce the Japanese bracket $\langle x \rangle := \sqrt{1+|x| ^2} $ and the normed spaces:
\begin{equation*}
\begin{aligned}
	& L^{p ,s}(\R ) \text{  defined with  }
  		\| q \| _{L^{p,s}(\R) }  :=  \|   \langle x \rangle ^s   q \| _{L^p ( \R )} ;  \\
  	& W ^{k,p}( \R )\text{ defined with }
		\| q \|_{W^{k,p}(\R)} := \sum_{j=0}^k \| \partial^j \! q \|_{L^p(\R)},
		\text{ where $\partial^j \! u$ is the $j^{th}$ weak derivative of $u$;} 	\\
 	& H^k( \R ) \text{  defined with  }
  		\| q \| _{H^k( \R ) }  :=  \|   \langle x \rangle ^k   \widehat{q} \| _{L^2 ( \R )},
		\text{ where $\widehat{u}$ is the Fourier transform of $u$;} \\
	& \Sigma _k :=L^{2 ,k}( \R )\cap   H^k ( \R )  .
\end{aligned}
\end{equation*}
We also set $\C^\pm =\{ z\in \C : \pm \Im z >0\} $ and $\R_+ = (0,\infty)$. 


\begin{theorem}\label{thm:main1}
Consider initial data $q_0\in  \tanh \left (  x \right) +\Sigma _4$ with associated scattering data $ \{ r(z), \{z_j, c_j \}_{j=0}^{N-1} \}$. 
Order the $z_j$  such that
\begin{equation}\label{eq:ord1}
	\Re z_0 > \Re z_1 > \dots > \Re z_{N-1},
\end{equation}
let $\xi = x/2t$, and define 
\begin{equation}\label{acquired phase}
	\alpha(\xi) = \frac{1}{2\pi} \int_0^\infty \frac{ \log(1 - |r(s)|^2)}{s} ds
	+ 2 \sum_{\mathclap{k:\, \Re z_k > \xi}} \arg z_k. 
\end{equation}
Fix $\xi _0\in (0,1)$, then there exist $t_0=t_0(q_0,\xi_0 )$ and $C=C(q_0,\xi_0 )$ such that the solution   $q(x,t)$ of \eqref{eq:nls}-\eqref{eq:nls1} satisfies
\begin{equation}\label{eq:thmmain1}
	\left| q(x,t) - e^{i \alpha(\xi)} q^{(sol),N}(x,t) \right | \leq  C t^{-1}
	\text{ for all $t > t_0$ and $|\xi|  \le \xi_0  $.}
\end{equation}
Here $q^{(sol),N}(x,t)$ is the  $N$-soliton  with associated scattering data
$\{ \widetilde{r} \equiv 0,  \{ z_j ,\widetilde{c}_j \}_{j=0}^{N-1}  \}$ where
\begin{equation}\label{new phase}
	\widetilde{c}_j  = c_j \exp \lp -\frac{1}{\im \pi }
	\int_0^\infty \log (1 - |r(s)|^2) \lp \frac{1}{s-z_j} - \frac{1}{2s} \rp ds \rp.
\end{equation}

Moreover, for $t>t_0$ and $|\xi| < \xi_0$, the N-soliton solution separates in the sense that
\begin{equation}\label{soliton separation}
	q(x,t) = e^{i \alpha(1)}  \left[ 1 + \sum_{k=0}^{N-1} \lp \prod_{j < k}  z_j^2 \rp \lb \sol(  x- x_k, t ; z_k) - 1 \rb \right] + \bigo{ t^{-1} },
\end{equation}
where $\sol( x,t; z)$ is the one soliton defined by \eqref{eq:defsol} and
\begin{equation}\label{delta+}
	\begin{gathered}
	x_{k} = \frac{1}{2 \Im(z_{k})} \lp \log \lp \frac{|c_{k} |}{2 \Im(z_{k})}
		\prod_{\substack{{\ell:\,  \Re(z_{\ell} ) > \xi } \\ \ell \neq k}}
		\left| \frac{ z_{k} - z_\ell}{z_{k} z_\ell - 1} \right|^2 \rp
		- \frac{\Im(z_{k})}{\pi} \int_0^\infty \frac{\log(1 - | r(s) |^2)}{| s- z_{k}|^2} ds \rp .
	\end{gathered}
\end{equation}
\end{theorem}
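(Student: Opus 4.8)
The plan is to run the Deift--Zhou nonlinear steepest descent method, in its $\dbar$-refinement, on the matrix Riemann--Hilbert problem (RHP) for $m=m(z;x,t)$ supplied by the inverse scattering transform. After uniformizing the Zakharov--Shabat spectral curve by $z$ (so $k(z)=\tfrac12(z+z^{-1})$, $\lambda(z)=\tfrac12(z-z^{-1})$), $m$ is normalized at a distinguished point, has a jump across the continuous spectrum $z\in\R$ built from $r$ and the oscillatory factor $e^{2it\theta(z;\xi)}$ with $\theta(z;\xi)=\lambda(z)(k(z)-\xi)$, and carries simple poles at the $z_j\in\partial D(0,1)\cap\C^+$ with residue conditions governed by $c_j$; the solution $q(x,t)$ is recovered from $m$ at that distinguished point (the image of $k=\infty$, \ie\ $z=0$). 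The first step is to locate the critical points of $\theta$: writing $w=z+z^{-1}$, the equation $\theta'=0$ reduces to $w^2-\xi w-2=0$, with roots $w_\pm=\tfrac12(\xi\pm\sqrt{\xi^2+8})$, and one checks that $|w_\pm|<2$ precisely when $|\xi|<1$. Hence for every $|\xi|\le\xi_0<1$ all four critical points of $\theta$ lie on the unit circle --- at a distance from $\R$ bounded below by a constant depending only on $\xi_0$ --- and the only points of the jump contour where $\theta$ degenerates are the branch points $z=\pm1$, where $\lambda$ (hence $\theta$) vanishes but $\theta'(\pm1)=\pm(1\mp\xi)\neq0$. Because there is \emph{no} stationary phase point on the continuous spectrum, no parabolic-cylinder local model is needed; this is the structural reason behind the $\bigo{t^{-1}}$ rate.

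Next I would carry out the usual chain of transformations. (i) Conjugate $m$ by a scalar function $\delta(z)=\delta(z;\xi)$ solving the scalar RHP with jump $1-|r|^2$ on $\R$, together with Blaschke-type factors renormalizing the residue conditions at the poles $z_k$ with $\Re z_k>\xi$; the sign of $\Re(i\theta)$ just off $\R$ reverses only at $z=0$, where $r(0)=0$, so this step needs no splitting of the contour at a stationary point. Tracking $\delta$ and the Blaschke factors through the reconstruction produces exactly the phase $e^{i\alpha(\xi)}$ of \eqref{acquired phase} (the $\int_0^\infty$ term from $\delta$, the $\arg z_k$ terms from the Blaschke factors), the modulated coupling constants $\widetilde c_j$ of \eqref{new phase}, and --- after separation --- the centers $x_k$ of \eqref{delta+}. (ii) Since there is no stationary point, open lenses uniformly: replace $r$ on $\R$ by a $\dbar$-extension $R$ with $\dbar R$ supported in thin lens-shaped regions kept clear of the unit circle (hence of the critical points), chosen so that $e^{\pm2it\theta}$ decays in the lenses; this turns the RHP into a mixed RHP--$\dbar$ problem whose residual jumps are $\bigo{e^{-ct}}$ away from $z=\pm1$. (iii) Factor the result into an $N$-soliton part $m^{(sol)}$ --- the solution of the reflectionless RHP with data $\{0,\{z_j,\widetilde c_j\}_{j=0}^{N-1}\}$, \ie\ the $N$-soliton $q^{(sol),N}$ of Appendix~\ref{sec:msol} --- a small-norm RHP error $E$ collecting the deformed jumps and the contributions near $z=\pm1$, and a pure $\dbar$-correction $m^{(\dbar)}$.

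The estimates then run as follows. For $E$: away from $z=\pm1$ the jumps are $\bigo{e^{-ct}}$; near each branch point one must work harder, because the scattering coefficients carry a generic square-root singularity at $z=\pm1$ and $1-|r|^2$ may vanish there, making $\delta$ mildly singular, so one controls the conjugated jump by an explicit local parametrix (or, in the non-resonant case $|r(\pm1)|<1$, directly), obtaining $E=I+\bigo{t^{-1}}$; this square-root structure is also what prevents a rate better than $t^{-1}$. For $m^{(\dbar)}$: recast the $\dbar$-problem as an integral equation and bound the solid Cauchy (Cauchy--Green) operator, using that with no stationary point $\Re(i\theta)\gtrsim\dist(z,\R)$ on $\supp\dbar R$, together with the $L^2$ control on $\dbar R$ that follows from $q_0\in\tanh x+\Sigma_4$ via the mapping properties of the direct scattering transform; this gives $m^{(\dbar)}=I+\bigo{t^{-1}}$. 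Substituting the factorization into the reconstruction formula, and using that the $\delta$/Blaschke conjugations contribute precisely $e^{i\alpha(\xi)}$ while $m^{(sol)}$ reconstructs $q^{(sol),N}$, yields \eqref{eq:thmmain1} uniformly in $|\xi|\le\xi_0$.

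Finally, \eqref{soliton separation} concerns only the explicit $N$-soliton $q^{(sol),N}$ with data $\widetilde c_j$, with $r$ no longer present. Using the determinant/RHP representation of Appendix~\ref{sec:msol} together with the ordering \eqref{eq:ord1}, one shows that for $t\gg1$ the soliton centers separate at a rate linear in $t$, so that in the frame $x+2\Re(z_k)t=\bigo{1}$ every soliton but the $k$-th is exponentially close to its asymptotic value ($1$ to its right, $z_\ell^2$ to its left); collecting these constants produces the prefactor $\prod_{j<k}z_j^2$ and the constant phase $e^{i\alpha(1)}$, and matching the one surviving factor against \eqref{eq:defsol} reads off $x_k$ as in \eqref{delta+}, the logarithmic term recording the shift $c_k\mapsto\widetilde c_k$ and the pairwise interaction coefficients $\bigl|\tfrac{z_k-z_\ell}{z_kz_\ell-1}\bigr|^2$. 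The main obstacle, I expect, is precisely the analysis at the branch points $z=\pm1$: there $\theta$ vanishes, $|r|$ may reach $1$, the scattering data is only H\"older-$\tfrac12$, and $\delta$ is singular, so building and matching a local model carefully enough to secure the clean, uniform $\bigo{t^{-1}}$ bound down to $|\xi|=\xi_0$ is delicate --- the same difficulty that obstructs pushing the analysis into the collisionless-shock regime $|\xi|\approx1$. A secondary, more routine burden is to establish the required smoothness, decay, and square-root behavior of the scattering data, and the Lipschitz dependence of the scattering map, on $\tanh x+\Sigma_4$, which underlie the $\dbar$-extension estimates.
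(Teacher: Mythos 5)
Your outline follows the same architecture as the paper's proof: pole interpolation via Blaschke-type factors for the $z_k$ with $\Re z_k>\xi$, a scalar Szeg\H{o}-type conjugant whose value at $\infty$ produces $e^{\im\alpha(\xi)}$, $\dbar$ lens opening (no parabolic-cylinder model, since for $|\xi|\le\xi_0<1$ the saddles of the phase lie on the unit circle, as you correctly computed), factorization into the reflectionless $N$-soliton model with modified data $\widetilde c_j$ plus a small-norm piece plus a pure $\dbar$ correction, and the same separation argument for \eqref{soliton separation}. Two details are off but repairable: the scalar function must carry the jump $1-|r|^2$ only on $(0,\infty)$, not on all of $\R$ (the sign pattern of $\Re\Phi$ in the lenses forces the $BT_0B^{-\dagger}$ factorization \eqref{LDU factor} on the positive axis and the $b^{-\dagger}b$ factorization on the negative axis, which is why \eqref{acquired phase} integrates over $(0,\infty)$), and $q$ is recovered from the $z\to\infty$ coefficient of $m_{21}$.

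The genuine gap is your treatment of $z=\pm1$, which you yourself flag as the main obstacle but then only gesture at. Generically $a$ and $b$ have simple poles there and $r(\pm1)=\mp1$ (\cf\ \eqref{eq:coeff sing}--\eqref{eq:r sing}); in the $z$-uniformization there is no square-root/H\"older-$\tfrac12$ structure to exploit, and the difficulty is not one that a local Riemann--Hilbert parametrix addresses: since $1-|r|^2=|a|^{-2}$ vanishes to second order at $\pm1$, the entry $\overline{r}/(1-|r|^2)$ in the factorization on $(0,\infty)$ blows up like $|z-1|^{-2}$, so without further input one cannot even define bounded lens extensions, let alone prove your asserted $E=I+\bigo{t^{-1}}$. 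The paper's resolution is algebraic, not a local model: because $T$ carries the same jump $1-|r|^2$ on $(0,\infty)$, the ratio $a/T$ is bounded (\eqref{eq:lemT1}), and writing $\overline{r}(1-|r|^2)^{-1}T_+^{-2}=(\overline{J_b}/J_a)(a/T_+)^2$ with the Wronskians $J_a,J_b$ smooth near $\pm1$ one constructs extensions obeying the pointwise bounds \eqref{eq:extR2} and, crucially, the vanishing estimate \eqref{eq:extR21}, $|\dbar R|\le c_1|z-1|$ near $1$. That extra vanishing is indispensable because $W^{(3)}=\mk{sol}\,W\,(\mk{sol})^{-1}$ contains the factor $(1-z^{-2})^{-1}=\det(\mk{sol})^{-1}$, so the solid-Cauchy estimates of Lemmas~\ref{lem:lemJ} and \ref{lem:0crE1} must absorb an additional $|z-1|^{-1}$; once this is in place the only residual Riemann--Hilbert jumps are the circles around the poles, which are $\bigo{e^{-ct}}$, and the entire $\bigo{t^{-1}}$ rate comes from the $\dbar$ problem rather than from any contribution localized at $\pm1$. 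Relatedly, ``$L^2$ control of $\dbar R$'' is not the currency the estimates consume: one needs the pointwise bounds in terms of $|z|^{-1/2}$, $|r'(|z|)|$ and a cutoff near $1$ (together with $r\in H^1$ and two bounded derivatives of the Wronskians near $\pm1$, which is where $q_0\in\tanh(x)+\Sigma_4$ enters, \cf\ Remark~\ref{rem:reg1}). As written, your proposal leaves this cancellation unidentified, so the central step of the error analysis is asserted rather than proved.
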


\begin{remark}\label{rem:regularity}
The smoothness and decay properties of the reflection coefficient
needed in the proof of Theorem \ref{thm:main1}, which follow from our hypotheses
on $q_0$, are proved in Section \ref{sec:ab}.
Specifically, for $q_0 \in  \tanh (x) + \Sigma _m$ with $m=2$
we will prove $r\in L^2(\R )$ and  $\| \log (1 - | r|^2 ) \|_{L^p(\R)}$ for $p \geq 1$.
$m=3$ implies  $q_0 \in  \tanh (x) + L ^{1,2}(\R)$ which in turn allows us to show that $r(z) \in H^1(\R)$; additionally for $m=3$ we show (Lemma \ref{lem:simple}) that the scattering map has a finite discrete spectrum. This improves \cite{DPMV} where finiteness of the spectrum was proved for $q_0 \in  \tanh (x) + L ^{1,4}(\R)$.
The condition that $q_0 \in \tanh(x) + \Sigma_m$ with $m=4$ is needed only to bound the $\dbar$ derivatives of our extensions of the reflection coefficient in Lemma~\ref{lem:extR}; specifically it allows us to use \eqref{eq:a13}  with $n=2$.
\end{remark}

\begin{remark}
The restriction  $| {x} | < 2 {t}    $ in Theorem \ref{thm:main1} is used only to limit the length of this paper.
This is the critical regime for studying the soliton resolution of the solution as the soliton speeds $v_j$ in the scaling of \eqref{eq:nls}-\eqref{eq:nls1} are bounded by $| v_j | < 2$.
The steepest descent method of Deift and Zhou used in this paper can also be used to study the behavior of $q( x,t)$ as $t \to \infty $ in the rest of space--time.
\end{remark}

\begin{remark}
The two terms in \eqref{delta+} for the asymptotic phase shifts $x_j$ have clear interpretations. The first term gives the phase shift due to interactions between the solitons. The second term is a retarding factor due to the interaction of the soliton component with the radiative component of the solution.
\end{remark}

\begin{remark}\label{rem:vartanian}
Long time asymptotic results for \eqref{eq:nls}-\eqref{eq:nls1} were previously obtained by Vartanian in \cite{V1,V2,V4} under the assumption that $q_0(x) - \tanh(x)$ is Schwartz class, and that the reflection coefficient $r(z)$ satisfies $|r( \pm1) | < 1$.  This is a non-generic situation in that for most data $|r(\pm1)| = 1$, as we review below.
The hypothesis $\| r \| _{L^\infty (\R )}<1 $ is used crucially in \cite{V1,V2,V4} in the context of some standard factorizations in  the steepest descent method, see for example formula
 (0.23) in  \cite{DZ3},  which we write in \eqref{factorizations}--\eqref{LDU factor} and which display  factors $(1-|r (z)|^2) ^{-1}$ that would be singular at $\pm1$  if
$|r(\pm1)| = 1$.
Our methods remove the non-generic condition $\| r \| _{L^\infty (\R )} <1 $  and can handle a wider class of initial data while also requiring less technical estimates along the way.
\end{remark}

Observe that if we take $z_0 = \im$ in \eqref{eq:defsol} then we have the stationary solution
of \eqref{eq:nls}-\eqref{eq:nls1}
\[
	\sol(  x,t ; \im  ) = \tanh(x)
\]
which is called the \textit{black soliton} in analogy with the nonlinear optics application where $|q|^2$ represents the intensity of the light wave.
When $z_0 \neq \im$ the solution is a non-stationary dark soliton, becoming increasingly `whiter' as $z_0 \to \pm 1$. 
There is a substantial body of work treating the \textit{orbital} stability of the black soliton, see
 \cite{GZ,DG,BGSS1,BGS1,GS} and therein.
The  \textit{asymptotic} stability of a dark soliton, the case $z_0\neq \im $, is discussed in
 \cite{BGS}, while the case of the black soliton $z_0= \im $ is discussed in \cite{GS}.
 Orbital stability of multi--solitons is considered in  \cite{BGS1}.

A corollary  of  Theorem~\ref{thm:main1}   is the following
asymptotic stability type result  for the  multi--solitons.

\medskip

\begin{theorem}\label{thm:main2}
Consider an $M$--soliton  $q^{(sol),M} (x ,t)$ satisfying both
boundary conditions in \eqref{eq:nls1} and
let $\{ 0 , \{ z_j,c_j \} _{j=0}^{M-1} \} $ denote its reflectionless scattering data. 
There exist $ \varepsilon _0>0$ and $C>0$ such that for any initial datum $q_0$ of
problem \eqref{eq:nls}--\eqref{eq:nls1} with
\begin{equation} \label{eq:thmmain00}
	\epsilon :=\| q_0  -  q^{(sol),M} (x ,0) \| _{\Sigma _4}   < \varepsilon _0
\end{equation}
the initial data $q_0$ generates scattering data $\{ r', \{z_j' ,c_j' \} _{j=0}^{N-1} \}$ for some finite $N \geq M$
(for both sets of discrete data we use the convention that $j<k$ implies $ \Re z_j >  \Re z_k$ and $ \Re z_j' >  \Re z_k'$).  
Of the discrete data of $q_0 $,  exactly $M$ poles are close to discrete data of $q^{(sol),M}$. Any additional poles (as $N\geq M$) are close to either $-1$ or $1$. 
Specifically, there exists an $L\in \{ 0,..., N-1\}$ satisfying  $L+M \le N -1$  for which 
we have
\begin{equation}\label{eq:prop1}
	\max _{0  \le j  \le M-1  } ( | z_j- z _{j+L}' | +   | c_j   - c_{j+L}'   |)
	+ \max _{j > M+L}| 1+ z_j' |  + \max  _{j < L}| 1- z_j' |    < C \epsilon  .
\end{equation}
Furthermore, $q_0$ has  reflection coefficient $r' \in H ^1(\R ) \cap W ^{2,\infty}(\R \backslash ( -\delta _0,\delta _0 ))$  for any $\delta _0 >0$.

\noindent Set $\xi := x/2t$ and fix $ \xi _0\in (0,1)$ such that $\{ \Re z_j \}_{j=0}^{M-1} \subset (-\xi_0, \xi_0)$.
Then there exist $t_0(q_0, \xi_0)>0$, $C=C(q_0, \xi_0)>0$ and 
$\{ x_{k+L} \}_{k=0}^ {M-1} \subset \R$ such that 
for $t>t_0(q_0,\xi_0)$, $|\xi |\le \xi _0$ and
$\alpha(\xi)$ as defined by \eqref{acquired phase},
the following inequality holds:
\begin{equation}\label{soliton separation0}
	  \left| q(x,t) - e^{i\alpha(1)} (\prod_{j \le  L}  z_ {j } ^{\prime 2})
	  \lb 1 + \sum_{k=0}^{M-1} (\prod_{j =0}^{k-1}  z_ {j+L} ^{\prime 2})   
	  ( \sol ( x- x_{k+L}, t ; z_{k+L}'   ) - 1 ) \  \rb   \right| 
	  \le C { t^{-1} }.
\end{equation}

\end{theorem}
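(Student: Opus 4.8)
The plan is to derive Theorem~\ref{thm:main2} from Theorem~\ref{thm:main1}: first show that a small $\Sigma_4$ perturbation of the reflectionless potential $q^{(sol),M}(\cdot,0)$ produces scattering data close to that of the $M$-soliton (this is the quantitative content of \eqref{eq:prop1} together with the regularity of $r'$), and then apply Theorem~\ref{thm:main1} to $q_0$ and track which one-soliton components of the resulting $N$-soliton actually live inside the light cone $|\xi|\le\xi_0$. Note first that the hypothesis that $q^{(sol),M}$ obey both boundary conditions in \eqref{eq:nls1} forces $\prod_{j=0}^{M-1}z_j^2=-1$, so $q^{(sol),M}(\cdot,0)-\tanh$ decays exponentially and lies in $\Sigma_4$; hence $q_0\in\tanh(x)+\Sigma_4$ and Theorem~\ref{thm:main1} is applicable.

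For the scattering step I would use the direct scattering theory of Section~\ref{sec:ab}. The Jost functions and the scattering coefficient $a(z)$ depend Lipschitz-continuously on the potential in the relevant norms, so the $a'$ associated with $q_0$ is $\bigo{\epsilon}$-close to the $M$-soliton's $a$, and the reflection coefficient $r'=b'/a'$, which vanishes identically for $q^{(sol),M}$, satisfies $\|r'\|_{H^1}=\bigo{\epsilon}$ and (away from $z=0$) $\bigo{\epsilon}$ bounds in $W^{2,\infty}$. The discrete spectrum of $q_0$ is the zero set of $a'$ on $\partial D(0,1)\cap\C^+$; since $a$ for the unperturbed soliton has exactly $M$ simple zeros there, a Rouch\'e/argument-principle argument on compact sub-arcs bounded away from $\pm1$ shows that for $\epsilon$ small these $M$ zeros persist and move by $\bigo{\epsilon}$, while no additional zero can appear in the interior of the arc --- a new eigenvalue can enter only by bifurcating from a band edge $z=\pm1$. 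Combined with the finiteness of the discrete spectrum (Lemma~\ref{lem:simple}) this gives $N<\infty$ and the splitting of $\{z_j'\}$ into $M$ ``interior'' poles that are $\bigo{\epsilon}$-close to $\{z_j\}$ and remaining poles clustered near $\pm1$; the residue relations for the coupling constants then transfer this closeness to the $c_j'$. Up to the index bookkeeping dictated by the ordering convention, this is \eqref{eq:prop1}.

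Applying Theorem~\ref{thm:main1} to $q_0$ (after, if necessary, enlarging $\xi_0$ slightly so that the closed sector $|\xi|\le\xi_0$ is covered) gives, for such $\xi$ and $t>t_0(q_0,\xi_0)$, the $N$-soliton separation \eqref{soliton separation} with $z_j,c_j,r$ replaced by $z_j',c_j',r'$ and centers $x_k$ as in \eqref{delta+}. It remains to collapse this $N$-soliton to the $M$-soliton of \eqref{soliton separation0}. The poles near $\pm1$ give one-soliton components of speed $\approx\pm2$, so their centers $x_k+2\Re(z_k')t$ leave the region $|x|\le2\xi_0t$ at a linear rate in $t$; from \eqref{eq:defsol} one checks that on this region such a component equals its constant limit --- namely $z_k^{\prime 2}$ for a pole near $1$ (we sit to its left) and $1$ for a pole near $-1$ (we sit to its right) --- up to an error $\bigo{ e^{ -c\,\Im(z_k')(1-\xi_0)t } }$. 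Here $t_0$ must be taken large depending on $q_0$, since $\Im(z_k')=\bigo{\epsilon}$ degrades this exponential rate; but for $t>t_0(q_0,\xi_0)$ the error is $\le t^{-1}$. Substituting these limits into \eqref{soliton separation} and telescoping the sum over the edge poles collapses their contribution: the poles near $-1$ drop out entirely (their limit equals the boundary value $1$), while the poles near $1$ combine with the leading $1$ in the bracket to produce the scalar factor $\prod z_j^{\prime 2}$ over those poles --- precisely the prefactor of \eqref{soliton separation0} --- and what survives is the $M$-term bracket of \eqref{soliton separation0} with centers $x_{k+L}$ inherited from \eqref{delta+}. (The modulated coupling constants $\widetilde c_j'$ of Theorem~\ref{thm:main1} differ from $c_j'$ only by $\bigo{\epsilon^2}$, since $\log(1-|r'|^2)$ is quadratically small; this is immaterial because the statement only asserts the existence of suitable $x_{k+L}$.)

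The main obstacle is the scattering step: establishing quantitative Lipschitz continuity of the scattering map on $\tanh(x)+\Sigma_4$ and, in particular, the rigidity of the discrete spectrum --- that a small perturbation of a reflectionless potential cannot create eigenvalues away from $\pm1$, nor destroy or appreciably displace the existing ones, and that the reflection coefficient retains the $H^1$ and local $W^{2,\infty}$ regularity needed to invoke Theorem~\ref{thm:main1}. The soliton-resolution part is then a relatively soft consequence of Theorem~\ref{thm:main1} together with the elementary degeneration of the one-soliton profile \eqref{eq:defsol} to the constant background as its pole approaches $\pm1$.
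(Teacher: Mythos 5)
Your proposal follows essentially the same route as the paper's own proof: the paper obtains \eqref{eq:prop1} from the Lipschitz dependence of the Jost functions on the potential (Lemmas~\ref{lem:jf1b} and \ref{lem:jf3}, packaged in Lemma~\ref{lem:numbzeros}, with finiteness from Lemma~\ref{lem:simple}), gets the regularity of $r'$ from Lemma~\ref{lem:a3bis} and Remark~\ref{rem:reg1} applied to $q_0\in\tanh(x)+\Sigma_4$, and then applies Theorem~\ref{thm:main1} and collapses \eqref{soliton separation} to \eqref{soliton separation0} by ``elementary computations''. Your Rouch\'e argument on the circle away from $\pm1$ (new zeros can only bifurcate from $\pm1$, consistent with Appendix~\ref{app:sing}) and your telescoping of the nearly white solitons into the prefactor $\prod_j z_j'^{\,2}$, with the amplitude factor $\Im(z_k')$ compensating the slow exponential rate so the error is $\leq C t^{-1}$ for $t>t_0(q_0,\xi_0)$, are precisely the steps the paper leaves implicit, and they are correct.

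One caveat: your quantitative claims about the reflection coefficient are false, though fortunately not load-bearing. Generically (Appendix~\ref{app:sing}) an arbitrarily small perturbation of the reflectionless potential produces $a_\pm\neq0$ in \eqref{eq:coeff sing}, hence $r'(z)\to\mp1$ as $z\to\pm1$ by \eqref{eq:r sing}. Consequently $\|r'\|_{L^\infty(\R)}$ is of order one, not $\mathcal{O}(\epsilon)$; the transition from $r'\approx 0$ to $|r'|\approx 1$ occurs on a scale $|z\mp1|\sim\epsilon$, so generically $\|r'\|_{H^1(\R)}$ in fact grows like $\epsilon^{-1/2}$ as $\epsilon\to0$, the $W^{2,\infty}$ norms away from the origin are large (bounded only in terms of $q_0$, not uniformly in $\epsilon$), and $\log(1-|r'|^2)$ is not quadratically small near $\pm1$. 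What the theorem asserts, and what the proof needs, is only the qualitative regularity $r'\in H^1(\R)\cap W^{2,\infty}(\R\setminus(-\delta_0,\delta_0))$ with constants depending on $q_0$ --- which is exactly Lemma~\ref{lem:a3bis} and Remark~\ref{rem:reg1} --- together with the $\mathcal{O}(\epsilon)$ control of the discrete data in \eqref{eq:prop1}; and, as you yourself observe, the modification $c_j'\mapsto\widetilde c_j'$ need not be small, since the shifts $x_{k+L}$ are simply those furnished by \eqref{delta+} in Theorem~\ref{thm:main1}. Delete the $\mathcal{O}(\epsilon)$ assertions about $r'$ and the argument stands as is.
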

\begin{remark}
Notice that in the region  $|\xi |\le \xi _0$ the extra solitons for $j<L$ and  for $j\ge L+M$ approach constant values exponentially fast in time and their contribution inside the l.h.s. of \eqref{soliton separation0} would be exponentially small in $t$.
Equation \eqref{soliton separation0}  is written
considering only   discrete data  close to those of $ q^{(sol),M}  $ in order
to emphasize that the latter is asymptotically stable,
up to some ``phase shifts" and small changes of the velocities of the solitons,  in the region  $|\xi |\le \xi _0$.
\end{remark}

\begin{remark}\label{rem:blacksol}
Theorem \ref{thm:main2} yields when $M=1$ and  $q^{(sol),1} (x,t)=\tanh (x) $ an asymptotic
stability result for the \textit{black soliton}.
Minor modifications in our arguments yield asymptotic
stability  results also for  \textit{dark solitons} and for $N$--solitons with
boundary conditions different from \eqref{eq:nls1}.
\end{remark}

Our proofs of Theorem~\ref{thm:main1} and Theorem~\ref{thm:main2} take advantage of the integrability of \eqref{eq:nls}-\eqref{eq:nls1}.
Integrability allows one access to the inverse scattering transform (IST) machinery. This was the approach of G\'erard--Zhang \cite{GZ} in their discussion of  orbital stability of solutions near the black soliton.
We recall that the IST provides a representation of a solution $q(x,t)$
of an integrable equation in terms of its scattering data
reminiscent  of the Fourier representation formula
$q(x,t)= \int _\R e^{\im t k^2 + \im xk} \widehat{q}_0(k) dk$ 
for the linear Schr\"odinger equation.
One can then envisage that solutions $q(x,t)$ of an integrable equation
might be estimated by nonlinear analogues of the stationary phase method
or other classical tools used in asymptotic analysis. The steepest descent method of Deift--Zhou does exactly this.

We would like to highlight some of the technical aspects of the manuscript.
Our proof uses the $\dbar$ method for contour deformation introduced in McLaughlin--Miller  \cite{MM1,MM2} and Dieng--McLaughlin \cite{DM}, which allows us to consider initial data with only a small amount of regularity while simultaneously simplifying many of the necessary estimates of more standard steepest descent.
The new ingredient in our problem compared to those in the above mentioned $\dbar$ papers is the presence of solitons. 
Our procedure for accounting for the soliton contribution to the IST was in part inspired by \cite{DKKZ} and \cite{GT}.
The main technical problem we face, mentioned in Remark \ref{rem:vartanian}, is the fact that $ |r( \pm 1)| =1$ generically.
Indeed, we show in Appendix \ref{app:sing} that this happens generically  even when $q_0  -  \tanh \left ( x \right )$ is compactly supported and small.
We solve this problem by an appropriate adaptation of the  $\dbar$ method, at the cost of some loss
of regularity with respect to the standard $\dbar$ method of \cite{DM}.
This is described in detail in Lemma~\ref{lem:extR}.

Unfortunately, currently the IST is not well suited to explore cases when the metric used in \eqref{eq:thmmain00} is as weak as  in \cite{GS}, where, assuming that there is a way to associate to
$q_0$   scattering data, we should expect infinitely many poles  concentrating near
the points $\pm 1$ (for somewhat  related material see \cite{LCT}).
This is a situation we do not consider.
Instead, in the case when $\epsilon$  in \eqref{eq:thmmain00} is finite, we know by \cite{DPMV} that there is only a finite number of poles.
So we are very far from the very general set up considered in \cite{BGS,GS,BGS1} and in some obvious
respect our asymptotic stability result in the special case of solitons is much weaker than \cite{GS}.

Nonetheless, in the case treated here of solutions of \eqref{eq:nls} with sufficient regularity and finite higher momenta, the steepest descent method provides more information on the asymptotic behavior of $q(x,t)$ as $t \to \infty$ than in \cite{BGS,GS}.
Furthermore, we treat the case  of $N$--solitons for any $N$.
It is also interesting to explore the same problem using completely different theoretical
frameworks (we recall that \cite{BGS,GS} follow  the arguments introduced by Martel and Merle,
see \cite{MaM0}--\cite{MaM1} and therein). Obviously the approach in \cite{BGS,GS},
not based on a direct use of the integrability of \eqref{eq:nls}--\eqref{eq:nls1},
appears more amenable to extension to non integrable NLS's and so stronger than ours also in this respect.
On the other hand, apart from questions on the correct formulation of the problem and some technical complications in Sect. \ref{sec: step4}, the arguments in the present
paper are  technically rather elementary.
Considering that, from the viewpoint of scattering data, distinct integrable systems
might not be very different from each other, perhaps similar arguments
apply to other systems.
As for the integrability of \eqref{eq:nls}--\eqref{eq:nls1} and the non robustness of this condition in real life, we remark that we expect that   it might be  possible  to   extend the analysis to non  integrable systems, like in \cite{DZ1}, although admittedly this part of the theory seems at its infancy.
For a recent paper on this topic we refer to \cite{BN}.
Our paper was written independently of  \cite{GS}, which we learned about only after finishing the  mathematical part   of our paper.

\section{Plan of the proof} \label{sec:plan}

We prove Theorems~\ref{thm:main1} and \ref{thm:main2} by applying the inverse scattering transform (IST) to the NLS equation \eqref{eq:nls}-\eqref{eq:nls1}.

In Sections~\ref{sec:jost} we review the integrable structure of \eqref{eq:nls}.
The Lax-pair \eqref{eq: lax pair} gives one an eigenvalue problem \eqref{eq:zs} in which the solution $q(x,t)$ of NLS appears as a potential.
We construct \textit{Jost solutions} of \eqref{eq:zs}, certain normalized solutions of \eqref{eq:zs}: $\psi_1^-(z; x,t)$ and $\psi_2^+(z; x,t)$, $k=1,2$, holomorphic for $\Im z > 0$ with derivatives in $\Re z$ and $\Im z$ extending continuously to $\widebar{\C^+}$ and $\psi_1^+(z; x,t)$ and $\psi_2^-(z; x,t)$, $k=1,2$, holomorphic for $\Im z < 0$ with derivatives in $\Re z$ and $\Im z$ extending continuously to $\widebar{\C^-}$.
We enumerate several properties of these solutions under various assumptions on the smoothness and decay rate of $q(x,t) - \tanh(x)$.
Implicit to this construction is that we have global solutions of \eqref{eq:nls}, $q \in \tanh(x) + \Sigma_4$. This is shown in Appendix~\ref{sec:existence}.

In Section~\ref{sec:ab} we describe how one constructs the scattering data from these Jost functions.
The Wronskian $\det [\psi_1^-(z;x,t),\psi_2^+(z;x,t) ]$ is shown to be independent of both $x$ and $t$, and its zeros are precisely the discrete spectrum of \eqref{eq:zs} for $z \in \C^+$.
These numbers each encode a single soliton component of the solution of \eqref{eq:nls}.
The total number of solutions of $\det [\psi_1^-(z;x,t),\psi_2^+(z;x,t) ] = 0$ is finite provided $q_0 \in \tanh(x) + L^{1,2}(\R)$, (\cf\  Lemma~\ref{lem:simple} below).
The totality of the scattering data generated by $q_0( x)$ consist of the zeros  $\{z_j \} _{j=0}^{N-1}$ of
the Wronskian, where $N \ge 1$ is finite, of their corresponding coupling constants  $\{c_j \} _{j=0}^{N-1}$, and of the reflection coefficient $r(z)$, which we will  show  belongs in
$H^1(\R)$ and satisfies additional estimates proved in Sect.~\ref{sec:ab}.
In particular we show that generically we have (\cf\ \eqref{eq:r sing}) $\lim_{z \to \pm 1} r(z) = \mp 1$.
The situation in which \eqref{eq:r sing} does not hold is simpler.
Another issue that appears in Section~\ref{sec:ab} is that the map from initial data $q_0$ to scattering data is not continuous at the soliton solutions. In appendix $C$ we show that even compactly supported perturbations of the single black soliton can be multisolitonic in that the perturbed Wronskian $\det [\psi_1^-(z;x,t),\psi_2^+(z;x,t) ]$ can have up to two new zeros in $\C^+$. The new zeros however are very close to $z = \pm 1$ corresponding to nearly white solitons. In particular we have a perturbative result in Lemma~\ref{lem:numbzeros}.

In Sect.~\ref{sec:rh} we define a Riemann--Hilbert problem (RHP) for a sectionally meromorphic function $m(z; x, t)$ and describe how the solution of \eqref{eq:nls}-\eqref{eq:nls1} can be recovered from the solution $m(z; x,t)$ of the RHP.
We initiate the long time analysis of \eqref{eq:nls} in Sect \ref{sec:time} by using the $\dbar$ generalization of the Deift-Zhou steepest descent procedure following the ideas in \cite{DM}.
This proceeds as a series of three explicit transformations $m(z) \mapsto \mk{1}(z) \mapsto \mk{2}(z) \mapsto \mk{3}(z)$ such that the final unknown $\mk{3}(z)$ is a continuous function in the complex plane with an asymptotically small $\dbar$ derivative uniformly in the complex plane.
This allows one to prove the existence of $\mk{3}$ using functional analytic methods and the theory of the solid Cauchy transform.

In Sect.~\ref{sec:step1} we introduce the first transformation, a set of conjugations and interpolations such that the new unknown $\mk{1}$ has no poles following the ideas in \cite{DKKZ,GT,BKMM}.
The second transformation is the heart of the steepest descent method, where
appropriate factorizations of the jump matrices of the RHP on the real line are introduced and certain non-analytic extensions of these factorizations are used to deform the jumps onto contours in the plane on which they are asymptotically small.
The main issue here is that $|r(1)|=1$ introduces singular factors in the factors \eqref{LDU factor} which are part of the matrix factorizations in \eqref{factorizations}--\eqref{LDU factor}, which play a central role in the theory. 
Nevertheless, in Sect.~\ref{subsec:lense} we construct extensions whose $\dbar$ derivatives satisfy particular bounds, analogous to the those proved in the case of vanishing initial data \cite[Proposition 2.1]{DM}. These bounds are later used to control certain solid Cauchy integral operators that appear later in the inverse analysis.

Section~\ref{subsec:rem} contains the third transformation which gives the leading order asymptotic behavior of the solution.
In Lemma~\ref{lem: m sol} we show that if one ignores the $\dbar$-component of $\mk{2}$ what remains is a trivial conjugation of the RHP corresponding to an $N$-soliton whose reflectionless scattering data $\{ 0 ,\{ z_k , \widetilde{c}_k \}_{k=0}^{N-1} \}$ is known exactly. The poles $z_k$ are the same as those generated by the original initial data $q_0$ given in \eqref{eq:nls1}, but the connection coefficients $\widetilde{c}_k$ are modifications of the original $c_k$ by an amount which depends upon the reflection coefficient coefficient (\cf\ \eqref{new phase}). We solve this $N$-soliton problem exactly, so that we have a single expression for the asymptotic behavior of the solution uniformly for $|x| < 2t$ for large $t$.
We then given a long time asymptotic expansion for the $N$-soliton solution depending on the ratio $\xi = x/2t$ which gives the soliton component of the soliton resolution conjecture.

Finally, in Sect.  \ref{sec: step4}  we prove the existence of the function $\mk{3}$ and estimate its size in a way similar to Sect. 2.4--2.5 in \cite{DM} using the bounds on the $\dbar$ derivatives of the extensions constructed previously in Section~\ref{subsec:lense}.
Summing up the estimates yields the proof of Theorem \ref{thm:main1} in Sect. \ref{sec:pfmain1}.

\section{Jost functions} \label{sec:jost}
In this section we state without proof the details of the forward scattering transform for defocusing NLS for step-like initial data. The results are well known and the interested reader can find pedagogical and detailed treatments in the literature, see \cite{FT,BC,DT}.

The integrability of \eqref{eq:nls} follows from its Lax pair representation
\begin{subequations} \label{eq: lax pair}
\begin{align}
\label{eq:lax1}	v_x = \La v, \\
\label{eq:lax2} \im v_t = \B v.	
\end{align}
\end{subequations}
The $2\times 2$ matrices $\mathcal{L}$ and $\mathcal{B}$ are given by
\begin{subequations}\label{eq:laxmat}
	\begin{align}
		\label{eq:laxL}	\La = \La(z;x,t) &= \im \sigma_3 ( Q - \lambda(z)) \\
		\label{eq:laxB} \B = \B(z;x,t) &= -2 \im \lambda(z) \La - (Q^2 - I)\sigma_3 + \im Q_x
	\end{align}
\end{subequations}
where,
\begin{equation*}
	Q = Q(x,t) = \begin{pmatrix} 0 &   \overline{q}(x,t) \\ q(x,t) & 0 \end{pmatrix},
	\qquad
	\lambda(z) = \frac{1}{2} (z + z^{-1}),
\end{equation*}
and $\sigma_3$ is the third Pauli matrix:
\begin{equation}\label{eq:pauli}
\sigma_1 = \begin{pmatrix} 0 & 1 \\ 1 & 0 \end{pmatrix}, \qquad
\sigma_2 = \begin{pmatrix*}[r] 0 & -\im \\ \im & 0 \end{pmatrix*}, \qquad
\sigma_3 = \begin{pmatrix*}[r] 1 & 0 \\ 0 & -1 \end{pmatrix*}.
\end{equation}

The commutativity of the mixed partials of $v$, which is the compatibility condition for a simultaneous solution of \eqref{eq: lax pair}, is equivalent to
\begin{equation} \label{eq:lax0}
	\im( \im \La_t - \B_x + [\La,\B] ) = -\im \sigma_3 Q_t + Q _{xx} - 2(Q^2-I) Q =0,
\end{equation}
which is just a matrix reformulation of \eqref{eq:nls}.

Fix $q(x)$ such that $\displaystyle \lim _{x\to \pm \infty}q (x)=\pm 1$ (appropriate reformulations
of what follows hold for different boundary values in $\partial D(0,1)$).
Writing \eqref{eq:lax1} as an eigenvalue equation gives
\begin{equation}  \label{eq:zs}
	\im \sigma _3 v _x + Q v -\lambda(z) v=0.
\end{equation}
Let
\begin{gather}
\label{eq:eqB}
	B_\pm = B_{\pm}(z) =  I \pm \sigma_1 z^{-1}
\shortintertext{and}
\label{eq:zs1}
 	X^\pm(x,z) = B_{\pm}(z) e^{-\im \zeta(z) x \sigma_3}
\end{gather}
where
\begin{equation} \label{eq:zeta}
	\zeta(z) = \frac{1}{2} (z - z^{-1}).
\end{equation}
Then $X^{\pm  }$  are the solutions of \eqref{eq:lax1} obtained by replacing $Q(x)$ by
$\pm Q_{+ }$ in \eqref{eq:zs} with $ Q_{+ } = \lim_{x\to +\infty} Q(x) = \sigma_1$. 
We define \emph{Jost functions}, $\psi_j^\pm(z;x), j=1,2,$ to be the column vector solutions of \eqref{eq:zs}
whose values approach those of the $j^{th}$ column of \eqref{eq:zs1} as 
$x \to \pm \infty$.
The existence of such solutions, and their analytic properties as functions of $z$, is the subject of the following Lemma.

\begin{lemma}
\label{lem:jf1}
Let $q(x)$ be such that $ q -  \tanh(x) \in L^1(\R  )$.
Then for $z \in \R \backslash \{0,1,-1\}$ the system \eqref{eq:zs} admits solutions
\begin{equation} \label{eq:jf1}
	\begin{aligned}
		&    \psi _1^{\pm } (  z;x) =  m _1^{\pm} (z;x ) e^{-\im \zeta(z) x}   
	 	\quad \text{and} \quad
	 	\psi _2^{\pm} (z;x ) = m _2^{\pm} (z;x) e^{ \im \zeta(z) x}    
	 \end{aligned}
\end{equation}
such that
\begin{equation} \label{eq:jf2}
	\begin{aligned}
		&   \lim _{x\to \pm \infty}  m _1 ^{\pm} (z;x)
		= \begin{pmatrix} 1  \\  \pm z^{-1}   \end{pmatrix}  
		\quad \text{and} \quad
 		\lim _{x\to \pm \infty}  m _2^{\pm } (z;x)
		= \begin{pmatrix} \pm z^{-1} \\    1 \end{pmatrix} .
	\end{aligned}
\end{equation}
Both $\psi _1^{+} (z;x)$ and  $\psi _2^{-} (z;x)$ extend analytically into solutions
of \eqref{eq:zs} for $z \in \C^-$ and  $\psi _2^{+} (z;x)$ and $\psi _1^{-} (z;x)$ extend into solutions
of \eqref{eq:zs} for $z \in \C^+$.

Here $m_1^\pm(z;x)$ and $m_2^\pm(z;x)$ are the unique solution of the integral equations
\begin{align}
	 & \label{eq:eqm11}
 		m  _1^{\pm} (z;x)   = \begin{pmatrix} 1  \\     {\pm} z^{-1} \end{pmatrix}
		+ \int_{\pm \infty}^x X^\pm(x,z) X^\pm(y,z)^{-1}
		\im  \sigma_3 ( Q(y) \mp \sigma_1) m_1^{\pm}(z;y) e^{\im (x-y) \zeta(z)} dy, \\
	&  \label{eq:eqm12}
		m _2^{\pm} (z;x ) = \begin{pmatrix} \pm z^{-1}   \\    1 \end{pmatrix}
		+ \int_{\pm \infty}^x X^\pm(x,z) X^\pm(y,z)^{-1}
		\im  \sigma_3 ( Q(y) \mp \sigma_1) m_2^{\pm}(z;y) e^{-\im (x-y) \zeta(z)} dy.
\end{align}

Furthermore for any $x_0\in \R$ we have that $ z \to m _1^{\pm} (z;x) $
is a continuous map from $ \overline{\C^\mp} \backslash \{  -1,0,1 \}$   (with analytic restriction  in $ \C^\mp $)
into $C^1([x_0, \infty ), \C ^2)\cap W^{1,\infty} ([x_0, \infty ), \C ^2)$ in the + case
and $C^1(  (-\infty , x_0]  , \C ^2)\cap W^{1,\infty}((-\infty , x_0], \C ^2)$ in the -- case.
Similarly, we have that $z \to m _2 ^{\pm}(z;x) $
is a continuous map from $ \overline{\C^\pm} \backslash \{ -1,0,1 \}$ (whose restriction in $ \C^\pm$ is analytic) into $C^1([x_0, \infty ), \C ^2)\cap W^{1,\infty}([x_0, \infty ), \C  ^2)$ in the + case
and $C^1(  (-\infty , x_0]  , \C ^2)\cap W^{1,\infty} ((-\infty , x_0], \C ^2)$  in the -- case.
\end{lemma}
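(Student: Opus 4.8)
The plan is to convert \eqref{eq:zs}, for each of the four Jost solutions $\psi_1^\pm,\psi_2^\pm$, into a Volterra integral equation governed by the explicit background fundamental matrix $X^\pm$ of \eqref{eq:zs1}, and then to run a Neumann iteration whose convergence and $z$-analyticity are dictated by the sign of $\Im\zeta(z)$.

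First I would record that $X^\pm(x,z)=B_\pm(z)e^{-\im\zeta(z)x\sigma_3}$ is a fundamental solution of the constant-coefficient ``limiting'' problems $v_x=\im\sigma_3(\pm Q_+-\lambda(z))v$ at $\pm\infty$ (exactly the statement already made after \eqref{eq:zeta}), and that $B_\pm(z)$ is invertible precisely for $z\notin\{0,\pm1\}$, since $\det B_\pm(z)=1-z^{-2}$, while $\zeta$ has a pole at $z=0$; these three points must be excised throughout. Setting $\psi_1^\pm=m_1^\pm e^{-\im\zeta x}$, $\psi_2^\pm=m_2^\pm e^{\im\zeta x}$ as in \eqref{eq:jf1}, applying the variation-of-parameters (Duhamel) formula with base point at $\pm\infty$, and imposing the normalizations \eqref{eq:jf2}, yields exactly the integral equations \eqref{eq:eqm11}--\eqref{eq:eqm12}. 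The integrands make sense because the effective potential seen near $\pm\infty$ is $Q(y)\mp\sigma_1=Q(y)\mp Q_+$, which lies in $L^1([x_0,\infty))$ (resp.\ $L^1((-\infty,x_0])$): this is the content of the hypothesis $q-\tanh x\in L^1(\R)$ near $\pm\infty$, since $\tanh x\mp1$ is integrable there.

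The heart of the argument is the kernel estimate. Writing the matrix factor of the kernel as $X^\pm(x,z)X^\pm(y,z)^{-1}e^{\pm\im(x-y)\zeta(z)}=B_\pm(z)\,\diag{1}{e^{2\im\zeta(z)(x-y)}}\,B_\pm(z)^{-1}$ for the $m_1^\pm$ equations (with the two diagonal entries interchanged for the $m_2^\pm$ equations), one sees that, apart from the factor $Q(y)\mp\sigma_1$, the only $(x,y)$-dependence is a single exponential of modulus a power of $e^{-\Im\zeta(z)(x-y)}$. On the range of integration $y-x$ has a fixed sign, so this exponential stays bounded exactly when $z$ lies in the half-plane named in the statement; since $\Im\zeta(z)=\tfrac12\,\Im z\,(1+|z|^{-2})$ has the same sign as $\Im z$, one obtains $\overline{\C^\mp}$ for $m_1^\pm$ and $\overline{\C^\pm}$ for $m_2^\pm$, with a kernel bound $\leq C_z\,\|Q(y)\mp\sigma_1\|$ in which $C_z$ is locally bounded away from $\{0,\pm1\}$ and degenerates only where $B_\pm$ is singular. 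The triangular (Volterra) structure then gives the usual $1/n!$ bound on the $n$-th Neumann iterate, so the series for \eqref{eq:eqm11}--\eqref{eq:eqm12} converges absolutely in $L^\infty$ in $x$ and locally uniformly in $z$; this gives existence and uniqueness of $m_j^\pm$, continuity in $z$ up to the boundary $\R\setminus\{-1,0,1\}$, and — since each iterate is holomorphic in $z$ in the relevant open half-plane — holomorphy of $m_j^\pm(\cdot;x)$ there. Letting $x\to\pm\infty$ and using dominated convergence (the integral tends to $0$) recovers \eqref{eq:jf2}. The analytic continuations of the Jost functions follow at once: $e^{\mp\im\zeta(z)x}$ is holomorphic in $z$ for fixed $x$ away from $z=0$, so $\psi_1^+,\psi_2^-$ extend into $\C^-$ and $\psi_2^+,\psi_1^-$ into $\C^+$, and differentiating the integral equation under the integral sign confirms the continued functions still solve \eqref{eq:zs}.

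For the refined claim that $z\mapsto m_j^\pm(z;x)$ is continuous with values in $C^1\cap W^{1,\infty}$ on the relevant half-line I would differentiate \eqref{eq:eqm11}--\eqref{eq:eqm12} in $x$: the Leibniz boundary term is $\im\sigma_3(Q(x)\mp\sigma_1)m_j^\pm(z;x)$, the remaining integral is controlled by the same kernel estimate, and hence $\partial_x m_j^\pm(z;\cdot)$ is again bounded on the half-line with continuous $z$-dependence — here one also uses local boundedness (indeed continuity) of $q$, which is available under the paper's standing hypotheses on $q_0$. The only genuinely non-routine feature, by contrast with the classical construction of Jost functions for a decaying potential, is that the ``plane-wave'' conjugator is replaced by the $z$-dependent matrix $B_\pm(z)$, which is singular at $z=\pm1$ and blows up at $z=0$; carrying $B_\pm(z)^{\pm1}$ through the kernel bounds and through the analyticity/continuity argument, and in particular keeping track of the blow-up of $C_z$ near $\{0,\pm1\}$, is the point requiring care. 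Everything else is a standard Volterra fixed-point argument.
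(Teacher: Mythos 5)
The paper states this lemma without proof, delegating it to the references \cite{FT,BC,DT}, and your construction is precisely the standard argument those treatments give: the Duhamel/Volterra reformulation with the background fundamental matrix $X^\pm$, the kernel identity $X^\pm(x,z)X^\pm(y,z)^{-1}e^{\pm\im(x-y)\zeta(z)}=B_\pm(z)\,\mathrm{diag}\,(1,e^{\pm 2\im\zeta(z)(x-y)})\,B_\pm(z)^{-1}$, the sign of $\Im\zeta$ determining the half-plane of boundedness and analyticity, the $1/n!$ Neumann bounds, and the degeneration of $B_\pm^{\pm 1}$ at $z=0,\pm1$ forcing the excluded points --- all of which is correct and matches the statement. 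Your parenthetical caveat is also the right one to flag: the $C^1\cap W^{1,\infty}$ conclusion for $\partial_x m_j^\pm$ requires $q$ bounded on the relevant half-line (which $q-\tanh x\in L^1$ alone does not give, but the paper's standing hypothesis $q_0\in\tanh x+\Sigma_4$ does), since the boundary term $\im\sigma_3(Q(x)\mp\sigma_1)m_j^\pm(z;x)$ must be controlled globally, not just locally.
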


\begin{lemma} \label{lem:jf1b}
Given $n \in \N_0$ and $q - \tanh(x) \in L^{1,n}(\R)$, the map
$q \to   \frac{\partial ^{n}}{\partial z^{n}} m  _1^{+ } (z; \cdot\,  ) $,
with $m^+_1$ as defined in Lemma~\ref{lem:jf1}, is locally Lipschitz continuous from
\begin{equation} \label{eq:eqm13}
	\begin{aligned} &
		\tanh(x) + L ^{1,n}(\R  ) \to L^{\infty}_{loc} ( \overline{\C^-} \backslash \{  -1,0,1  \},
		C^1([x_0, \infty ), \C  ^2)\cap W^{1,\infty} ([x_0, \infty ), \C ^2)) .
	\end{aligned}
\end{equation}
Additionally, the maps
$z \to \frac{\partial ^{n}}{\partial z ^{n}}m _1^{\pm} (z; x  ) $
are continuous from $ \overline{\C^\mp} \backslash \{  -1,0,1\}$
(with analytic restriction  in $ \C^\mp $)
to $C^1([x_0, \infty ), \C ^2)\cap W^{1,\infty} ([x_0, \infty ), \C ^2)$

Similar statements to \eqref{eq:eqm13} hold  for  $q\to   m  _2^{+ } (z; \cdot\,  ) $ and for
$q\to   m  _j^{- } (z; \cdot  \,) $ for $j=1,2$.

\noindent
Specifically, there exists an increasing function $F_{n}(t)$, independent of $q$, such that  \begin{equation} \label{eq:eqm21n_111}
 	\begin{aligned} &
		\left |  \partial ^n _{z} [m_1 ^{+}(z; x) ]    \right |  \le  F_{n}((1+|x| ) ^{n}
		\| q - 1   \| _{L^{1,n}(x, \infty  )}), \quad z \in \overline{\C^-} \backslash \{ -1,0,1\}.
	\end{aligned}
\end{equation}
Furthermore, given potentials $q$ and $\widetilde{q}$
sufficiently close together we have for each $ z \in \overline{\C^-} \backslash \{ -1,0,1\}$,
\begin{equation} \label{eq:eqm21nn_1}
	\begin{aligned} &
	\left |  \partial ^n _{z} [m_1 ^{+}(z; x)- \widetilde{m}  _1 ^{+}(z; x ) ]\right |
	\le  \| q-  \widetilde{q}   \|_{L^{1,n}(x, \infty  )} F_{n}((1+| x | )^{n}
	\| q-  1 \| _{L^{1,n}(x, \infty  )}) .
	\end{aligned}
\end{equation}
Similar estimates hold for the other Jost functions.
\end{lemma}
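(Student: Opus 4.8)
\emph{Proof proposal.} The plan is to treat \eqref{eq:eqm11}--\eqref{eq:eqm12} as Volterra integral equations on $[x_0,\infty)$ (resp. $(-\infty,x_0]$), solve them by Neumann iteration, and then read all of the stated properties off uniform bounds on the iterates and on the resolvent operator. I will carry this out for $m_1^{+}$; the other three Jost functions follow by the reflections $x\mapsto -x$ and $\pm\mapsto\mp$. Write \eqref{eq:eqm11} as a fixed point equation $m_1^{+}=e_1+\mathcal K_z m_1^{+}$ with $e_1=e_1(z)=(1,z^{-1})^{\top}$ and $(\mathcal K_z f)(x)=\int_{+\infty}^{x}K(x,y,z)f(y)\,dy$, where, using \eqref{eq:zs1}--\eqref{eq:zeta} and $B_{\pm}$ as in \eqref{eq:eqB},
\[
	K(x,y,z)=B_{+}(z)\,\Diag\!\bigl(1,\,e^{2\im(x-y)\zeta(z)}\bigr)\,B_{+}(z)^{-1}\,\im\sigma_3\bigl(Q(y)-\sigma_1\bigr).
\]
For $y\ge x$ and $z\in\overline{\C^-}$ one has $\Im\zeta(z)\le 0$, so the diagonal factor is bounded by $1$; moreover the apparent poles of $B_{+}(z)^{-1}$ at $z=\pm1$ are cancelled by the vanishing of $\zeta$ there, so $K(\cdot,\cdot,z)$ is bounded uniformly for $z$ in sets bounded away from $\{-1,0,1\}$, with $\|K(x,\cdot,z)\|_{L^1(x,\infty)}\le C\|Q-\sigma_1\|_{L^1(x,\infty)}$. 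Hence the Neumann series $m_1^{+}=\sum_{k\ge 0}\mathcal K_z^{\,k}e_1$ converges, $\|\mathcal K_z^{\,k}e_1\|_{L^\infty(x_0,\infty)}\le |e_1(z)|\,\bigl(C\|Q-\sigma_1\|_{L^1(x_0,\infty)}\bigr)^k/k!$, and since each term is analytic in $z\in\C^-$ and continuous up to $\overline{\C^-}\setminus\{-1,0,1\}$ and the convergence is uniform on compact subsets, so is the sum; this recovers Lemma~\ref{lem:jf1} and, together with the ODE \eqref{eq:zs}, the $C^1\cap W^{1,\infty}$ dependence in $x$.

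For the $z$-derivatives I would argue by induction on $n$. Differentiating the fixed point equation $n$ times, $\partial_z^{n}m_1^{+}$ satisfies the same Volterra equation, with the same kernel $K$, but with inhomogeneity
\[
	g_n(z;x)=\partial_z^{n}e_1(z)+\sum_{j=1}^{n}\binom{n}{j}\int_{+\infty}^{x}\bigl(\partial_z^{j}K(x,y,z)\bigr)\,\partial_z^{\,n-j}m_1^{+}(z;y)\,dy.
\]
The structural point is that $\partial_z\bigl[\im(x-y)\zeta(z)\bigr]=\im(x-y)\zeta'(z)$, so every $z$-derivative falling on the exponential produces one factor $(x-y)$, while $z$-derivatives of $B_{\pm}(z),B_{\pm}(z)^{-1},\zeta(z)$ remain bounded (uniformly away from $\{-1,0,1\}$, again using the cancellation at $\pm1$). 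Thus $|\partial_z^{j}K(x,y,z)|\le C(1+|x-y|)^{j}|Q(y)-\sigma_1|$, and from $(1+|x-y|)^{j}\le(1+|x|)^{j}(1+|y|)^{j}$ on $y\ge x$ we get $\int_{+\infty}^{x}|\partial_z^{j}K(x,y,z)|\,dy\le C(1+|x|)^{j}\|Q-\sigma_1\|_{L^{1,j}(x,\infty)}$. The inductive hypothesis controls $\partial_z^{\,n-j}m_1^{+}$ for $j\ge 1$, $\partial_z^{n}e_1(z)=(0,(-1)^{n}n!\,z^{-n-1})^{\top}$ is locally bounded away from $z=0$, and the resolvent bound from the first paragraph closes the estimate, giving \eqref{eq:eqm21n_111}; analyticity/continuity of $\partial_z^{n}m_1^{+}$ in $z$ follows as before from uniform convergence on compacts of the differentiated Neumann series.

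Local Lipschitz dependence on $q$ is obtained by subtracting the Volterra equations for $q$ and $\widetilde q$: $\partial_z^{n}(m_1^{+}-\widetilde m_1^{+})$ solves the Volterra equation with kernel $K$ (built from $q$) and inhomogeneity
\[
	\bigl(g_n-\widetilde g_n\bigr)(z;x)+\sum_{j=0}^{n}\binom{n}{j}\int_{+\infty}^{x}\bigl(\partial_z^{j}K-\partial_z^{j}\widetilde K\bigr)(x,y,z)\,\partial_z^{\,n-j}\widetilde m_1^{+}(z;y)\,dy.
\]
Since $K-\widetilde K$ and its $z$-derivatives are linear in $Q-\widetilde Q$ and carry exactly the same $(1+|x-y|)^{j}$ weights, while $g_n-\widetilde g_n$ is bounded, by the inductive hypothesis, by $\|q-\widetilde q\|_{L^{1,n}(x,\infty)}$ times an $F_{n}$-factor, the resolvent estimate yields \eqref{eq:eqm21nn_1}; combined with the uniform bounds of the previous paragraph this is precisely the local Lipschitz continuity \eqref{eq:eqm13} into $L^{\infty}_{loc}\bigl(\overline{\C^-}\setminus\{-1,0,1\},\,C^1\cap W^{1,\infty}\bigr)$. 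The $C^1\cap W^{1,\infty}$-in-$x$ part of the target (and its Lipschitz dependence) is handled by differentiating the integral equation once more in $x$: the Leibniz boundary term contributes $\im\sigma_3(Q(x)-\sigma_1)\partial_z^{n}m_1^{+}(z;x)$ and the remaining integral is bounded by the same mechanism, exactly as in Lemma~\ref{lem:jf1}. Finally, the statements for $m_2^{+}$ and $m_j^{-}$, $j=1,2$, are identical after the symmetries noted at the outset.

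The main obstacle is the bookkeeping in the second step: verifying that the $n$-th $z$-derivative costs precisely $n$ powers of the spatial weight $(1+|x|)$ and $n$ moments of $q-1$ — no more — and organizing the induction so that the single resolvent bound of the first step closes the estimate at every stage, including the Lipschitz version. A secondary technical nuisance is tracking the $z$-dependence of all constants near the excluded points $\{-1,0,1\}$ and as $z\to\infty$; this is where the explicit algebraic form \eqref{eq:eqB} of $B_{\pm}$ is essential, since it is the vanishing of $\zeta$ at $\pm1$ that cancels the poles of $B_{\pm}(z)^{-1}$ and leaves $z^{-1}$ in $e_1(z)$ as the only genuine $z$-singularity. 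Everything else is the standard Volterra/Neumann machinery.
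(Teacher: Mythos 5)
You should know at the outset that the paper contains no proof of Lemma~\ref{lem:jf1b}: Section~\ref{sec:jost} is explicitly stated without proofs, with a pointer to the standard literature (Faddeev--Takhtajan, Beals--Coifman, Deift--Trubowitz). Your Volterra/Neumann route is exactly the standard argument being invoked, and your first paragraph is sound: the kernel $B_+(z)\,\Diag(1,e^{2\im(x-y)\zeta(z)})\,B_+(z)^{-1}\,\im\sigma_3(Q(y)-\sigma_1)$ is the right one, $\Im\zeta(z)\le 0$ on $\overline{\C^-}$ makes the exponential factor bounded for $y\ge x$, and the constants are locally uniform in $z$ away from $\{-1,0,1\}$ (note the cancellation at $z=\pm1$ you mention is not needed there --- it only becomes relevant near $\pm1$, where it costs one extra moment and is precisely the content of Lemma~\ref{lem:jf3}).

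The genuine gap is in how you close the induction on $n$. You take as inductive hypothesis the lemma's own bound \eqref{eq:eqm21n_111} and then integrate $F_{n-j}\bigl((1+|y|)^{n-j}\|q-1\|_{L^{1,n-j}(y,\infty)}\bigr)$ against $(1+|x-y|)^{j}|q(y)-1|$ over $y\in(x,\infty)$. But the quantity $(1+|y|)^{n-j}\|q-1\|_{L^{1,n-j}(y,\infty)}$ is not monotone in $y$ and is not controlled by $(1+|x|)^{n}\|q-1\|_{L^{1,n}(x,\infty)}$: for $|q(s)-1|\sim\langle s\rangle^{-n-2}$ (so $q-1\in L^{1,n}$ but not $L^{1,n+1}$) one gets $(1+|y|)^{n-1}\|q-1\|_{L^{1,n-1}(y,\infty)}\sim y^{\,n-3}\to\infty$; since all you retain about $F_{n-j}$ is that it is increasing --- and the bound your resolvent step actually produces is of exponential type, $C(1+t)^{k}e^{Ct}$, in the single combined variable --- the $j=1$ contribution to $g_n$ need not even be finite when $n\ge4$, so ``the resolvent bound closes the estimate'' does not follow as written. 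The standard repair is to carry through the induction a sharper, base-point--monotone bound: control $\partial_z^{k}m_1^{+}(z;x)$ by a polynomial in the difference-weighted moment $\int_x^\infty\langle s-x\rangle^{k}|q(s)-1|\,ds$ times $e^{C\|q-1\|_{L^1(x,\infty)}}$, both of which are nonincreasing in $x$; equivalently, differentiate the Neumann series term by term, where each $\partial_z$ hitting a phase produces a factor $|y_i-y_{i-1}|\le\langle y_i-x\rangle$ and the simplex integration supplies the $1/k!$. The stated bounds \eqref{eq:eqm21n_111} and \eqref{eq:eqm21nn_1} then follow a posteriori from $\langle s-x\rangle^{n}\lesssim\langle x\rangle^{n}\langle s\rangle^{n}$, and the monotonicity in $x$ of the difference-weighted moments is also what gives the supremum over $x\in[x_0,\infty)$ required for the $W^{1,\infty}([x_0,\infty))$ target in \eqref{eq:eqm13}. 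With that bookkeeping fixed, the remaining steps (Lipschitz dependence by subtracting the two Volterra equations, the $x$-derivative via the boundary term, symmetry reduction to the other Jost functions) go through as you outline.
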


The above lemmas suggests that the Jost functions exhibit singular behavior for $z$ near $-1,0$, or $1$. The singular behavior of these solutions at $z=0$ plays a non-trivial and unavoidable role in our analysis.
However, as the following lemma makes clear, if the initial data $q$ has an additional finite first moment, then the singularities of the Jost functions at $z =\pm 1$ are removable.

\begin{lemma} \label{lem:jf3}
Given $n \in \N_0$, let $q -  \tanh(x)  \in L ^{1, n+1}(\R)$ and
let $K$ be a compact neighborhood of $  \{  -1,1 \}$  in  $ \overline{\C^-} \backslash \{ 0 \} .$
Set $x^\pm =\max \{ \pm x,0 \}$.
Then there exists a $C$  such that for $z \in K$ we have
\begin{equation} \label{eq:eqm31}
	\begin{aligned}
		& \left | m_1^{+} (z; x)   - \twovec{ 1 }{ z^{-1} } \right |
		\le C  \langle x^- \rangle  e^{C\int _x^{\infty}  \langle y-x \rangle | q (y)- 1 | dy }
		\|  q - 1  \| _{L^{1,1}(x , \infty )},
	\end{aligned}
\end{equation}
\ie,\
the map $ z \to m _1^{+} (z; x) $ extends as a continuous map
to the points $\pm 1$ with values in  \\ $C^1(   [x_0 , \infty )  , \C)\cap W^{1,\infty}
( [x_0 , \infty ), \C)$ for any preassigned $x_0\in \R$.
Furthermore, the map  $q\to   m  _1^{+ } (z; \cdot ) $ is locally Lipschitz continuous from
\begin{equation} \label{eq:eqm13jf3}
	\begin{aligned}
		& \tanh(x) + L ^{1,1}(\R  ) \to L^{\infty}  ( \overline{ \C^-} \backslash \{0\} ,
		C^1([x_0, \infty ), \C)\cap W^{1,\infty} ([x_0, \infty ), \C )).
	\end{aligned}
\end{equation}
Analogous statements hold for  $ m _2^{+ } ( z; x) $ and for
$ m_j^{- } ( z; x) $ for $j=1,2$.

The maps
$ z  \to \partial _{ z }^{n} m _1^{+} (z; x ) $
 and $q \to  \partial _{ z }^{n} m _1^{+} (z; x)$,
also satisfy analogous statements and we have, as in \eqref{eq:eqm21n},
\begin{equation} \label{eq:eqm21n_1}
	\begin{aligned}
	&	\left | \partial_{ z }^{n}  m  _1^{+} (z; x )   \right |
		\le  F _{ n} \lp (1+|x|)^{n+1} \|  q - 1  \|_{L^{1,n+1}(x , \infty )} \rp,
		\quad z \in K.
	\end{aligned}
\end{equation}
\end{lemma}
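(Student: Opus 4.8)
The plan is to prove the key estimate \eqref{eq:eqm31} (and its $n$-th derivative analogue \eqref{eq:eqm21n_1}) by a careful inspection of the Volterra integral equation \eqref{eq:eqm11} for $m_1^+$, tracking the precise $z$-dependence of the kernel near $z = \pm 1$. The point of departure is that the exponential $e^{\im(x-y)\zeta(z)}$ and the factors $B_\pm(z) B_\pm(y)^{-1}$ hidden in $X^\pm(x,z)X^\pm(y,z)^{-1}$ are \emph{regular} at $z = \pm 1$ (since $\zeta(\pm 1) = 0$ and $B_\pm(\pm 1)$ are finite and invertible); the only source of singular behavior at $z=\pm 1$ in Lemma~\ref{lem:jf1} came from the inhomogeneous term $(1, \pm z^{-1})^T$ and from estimating the $L^1$ norm of the kernel crudely. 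So the strategy is: first substitute the splitting $Q(y) - \sigma_1 = (Q(y) - \sigma_1)$ where $q(y) - 1 \to 0$, so the coefficient $\im\sigma_3(Q(y)\mp\sigma_1)$ is genuinely integrable; then observe that on the compact set $K \subset \overline{\C^-}\setminus\{0\}$ the inverse power $z^{-1}$ is bounded and the kernel $|X^+(x,z)X^+(y,z)^{-1} \im\sigma_3(Q(y)-\sigma_1)|$ is bounded, \emph{uniformly in $z \in K$}, by $C\langle x - y\rangle |q(y)-1|$ — the extra $\langle x-y\rangle$ factor comes from the $z^{-1}$ entries in $B_\pm$ being multiplied against the growing-in-$x$ solution, and is exactly what forces the $L^{1,1}$ norm rather than $L^1$.

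Concretely, I would set $h(z;x) = m_1^+(z;x) - (1, z^{-1})^T$ and rewrite \eqref{eq:eqm11} as a Volterra equation $h = f + \mathcal{K} h$ where $f(z;x) = \int_{\infty}^x (\text{kernel})\,(1,z^{-1})^T e^{\im(x-y)\zeta(z)} dy$ is the inhomogeneous term after the shift. The estimate $|f(z;x)| \le C\langle x^-\rangle e^{C\int_x^\infty \langle y-x\rangle|q-1|} \|q-1\|_{L^{1,1}(x,\infty)}$ then follows from the bound on the kernel together with the factor $z^{-1}$ being $O(1)$ on $K$; the $\langle x^-\rangle$ appears because for $x<0$ one still integrates over $(x,\infty)$ and must absorb the linear growth. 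The operator norm of $\mathcal{K}$ on $L^\infty([x_0,\infty))$ (weighted appropriately) is controlled by $\int_x^\infty \langle y-x\rangle|q(y)-1|dy$, which is finite by hypothesis, so the Neumann series converges and one gets $|h| \le |f|/(1 - \|\mathcal K\|)$, yielding \eqref{eq:eqm31} after absorbing the geometric factor into the increasing function $e^{C\int\cdots}$. The $C^1 \cap W^{1,\infty}$ statement in $x$ follows by differentiating the integral equation in $x$ (the kernel and its $x$-derivative obey the same bounds), and continuity of $z \mapsto m_1^+(z;x)$ up to $z = \pm 1$ follows from dominated convergence inside the Volterra iteration, since every term in the Neumann series is now continuous on $K$ with a uniform integrable majorant.

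For the derivative estimates \eqref{eq:eqm21n_1} with general $n$, I would differentiate the Volterra equation $n$ times in $z$ and set up a triangular system for $\partial_z^j m_1^+$, $j = 0,\dots,n$: the equation for $\partial_z^n m_1^+$ is again Volterra with the \emph{same} operator $\mathcal K$, driven by an inhomogeneous term built from $\partial_z^k(\text{kernel})$ ($k \le n$) paired against $\partial_z^{n-k} m_1^+$ for $n-k < n$ (already controlled by induction) plus the $n$-th derivative of the inhomogeneous vector $(1,z^{-1})^T$, whose entries are $O(1)$ on $K$. Each $z$-derivative of the kernel produces at worst one extra factor of $\langle x - y\rangle$ (from differentiating $e^{\im(x-y)\zeta(z)}$ and the rational functions $z^{-1}$), which is why one needs $L^{1,n+1}$ rather than $L^{1,n}$ — the $+1$ being the price of the $z^{-1}$ in the inhomogeneous term and of matching Lemma~\ref{lem:jf1b}'s moment count shifted by one. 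Assembling these gives the claimed bound with an increasing $F_n$. The local Lipschitz dependence \eqref{eq:eqm13jf3} on $q$ is proved in parallel: subtracting the Volterra equations for $q$ and $\widetilde q$, the difference solves a Volterra equation with the same kernel structure plus a forcing term proportional to $\|q - \widetilde q\|_{L^{1,1}}$ times the already-bounded $m_1^+$, exactly mirroring \eqref{eq:eqm21nn_1}.

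\textbf{Main obstacle.} The crux is the bookkeeping of the $\langle x - y\rangle$ weights: one must verify that the regular (but not bounded) growth of the Jost solution combined with the $z^{-1}$ entries of $B_\pm(z)$ produces \emph{exactly one} extra power of $\langle x-y\rangle$ per integration and per $z$-derivative — no more — so that the hypothesis $q - \tanh(x) \in L^{1,n+1}$ suffices and is not merely $L^{1,n+2}$ or worse. This requires writing $X^+(x,z)X^+(y,z)^{-1}$ out explicitly, noting the cancellation of the leading oscillatory factor $e^{\im\zeta(z)(x-y)\sigma_3}$ which is $O(1)$ near $z=\pm 1$ (rather than oscillatory), and checking that the only surviving algebraic growth in $x$ sits in entries that multiply the \emph{decaying} coefficient $q(y) - 1$. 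Everything else — Neumann series convergence, $x$-regularity, $z$-continuity at $\pm 1$, Lipschitz-in-$q$ — is then a routine Volterra-equation argument essentially identical to the proof of Lemma~\ref{lem:jf1}.
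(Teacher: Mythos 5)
The paper itself never proves this lemma: Section~\ref{sec:jost} opens by saying its results are stated without proof, with references to the standard treatments \cite{FT,BC,DT}. Your outline — subtract the free vector, run the weighted Volterra/Neumann-series argument of Lemma~\ref{lem:jf1} with a kernel bound $C\langle x-y\rangle|q(y)-1|$ uniform on $K$, get continuity at $\pm1$ by dominated convergence, prove \eqref{eq:eqm21n_1} by differentiating the integral equation $n$ times (one extra power of $\langle x-y\rangle$ per $z$-derivative, whence $L^{1,n+1}$), and \eqref{eq:eqm13jf3} by differencing the equations for $q$ and $\widetilde q$ — is exactly that standard route, and all of those steps are indeed routine.

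The one step that is not routine is the uniform kernel bound itself, and there your justification is wrong. You claim $B_\pm(\pm1)$ are ``finite and invertible''; they are not: $\det B_\pm(z)=1-z^{-2}$ vanishes precisely at $z=\pm1$, and this vanishing — not the inhomogeneous term $(1,\pm z^{-1})^T$, which is perfectly regular at $\pm1$ — is the entire source of the singular behavior of Lemma~\ref{lem:jf1} at these points and the reason the present lemma needs an extra moment. If $B_\pm(\pm1)^{-1}$ existed, the kernel in \eqref{eq:eqm11} would be uniformly bounded on $K$, no factor $\langle x-y\rangle$ would appear, and $L^1$ data would suffice — contradicting the very statement you are proving. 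The correct mechanism is a cancellation between the vanishing of $\det B_+$ and the vanishing of $\zeta$ at $\pm1$: using $1-z^{-2}=2\zeta(z)/z$ one has
\begin{equation*}
X^+(x,z)X^+(y,z)^{-1}e^{\im\zeta(z)(x-y)}
= B_+(z)\begin{pmatrix} 1 & 0 \\ 0 & e^{2\im\zeta(z)(x-y)}\end{pmatrix}B_+(z)^{-1}
= \frac{1+w}{2}\,I+\frac{(1-w)\,z}{4\,\zeta(z)}\lb (1+z^{-2})\sigma_3-2\im z^{-1}\sigma_2 \rb,
\end{equation*}
with $w=e^{2\im\zeta(z)(x-y)}$. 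Since $\Re\big(2\im\zeta(z)(x-y)\big)\le 0$ for $z\in\overline{\C^-}$ and $y\ge x$, one has $|1-w|\le 2|\zeta(z)|\,|x-y|$, so the apparent pole of $B_+(z)^{-1}$ is absorbed and the kernel is $\bigo{\langle x-y\rangle}$ uniformly on $K$; in the limit $z\to 1$ it equals $I-\im(x-y)(\sigma_3-\im\sigma_2)$ with $\sigma_3-\im\sigma_2$ nilpotent, i.e.\ the free system is resonant at $\pm1$ and produces a secular, linearly growing solution. This is exactly the computation your ``main obstacle'' paragraph says must be done, but the reasons you give there (invertibility of $B_\pm(\pm1)$, ``cancellation of the leading oscillatory factor'', the $z^{-1}$ of the inhomogeneous term costing the extra moment) would not produce it and in part contradict it. Replace that justification by the identity above — and note that each $\partial_z$ applied to it costs one further factor $\langle x-y\rangle$, which is where $L^{1,n+1}$ really enters — and the rest of your outline (the $\langle x^-\rangle$ bookkeeping for $x<0$, the Neumann series, the induction in $n$, the Lipschitz estimate in $q$) goes through as written.
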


The final lemma in this section concerns the behavior of the Jost functions as $|z| \to \infty$.
Set
\begin{equation} \label{eq:d+}
	\begin{aligned}
	&   	D_+ (x)= \|   {q}  - 1 \| _{W ^{2,1} (x,\infty )}
			(1+ \|   {q}  - 1   \| _{W ^{2,1} (x,\infty )}) ^2
			e^{\|  q-  1  \| _{L^1 (x , \infty )}}, \\
	&   	D_- (x)=  \|   {q}  + 1 \| _{W ^{2,1} (-\infty,x)}
			(1+ \|   {q}  + 1   \| _{W ^{2,1} (-\infty, x)}) ^2	
			e^{\|  q +1  \| _{L^1 (-\infty, x)}}.
	\end{aligned}
\end{equation}

\begin{lemma} \label{lem:zinfty}
Suppose that $q- \tanh(x)  \in L^{1}(\R  )$ and that $q' \in W^{1,1}(\R)$.
Then as $z \to \infty$ with $\Im z \le  0$ we have
\begin{align}
\label{eq:winfty1}
	m_1^+(z; x) &= e_1  +
	\frac{1}{z} \twovec{ \phantom{-} \im \int_x^\infty \lp 1 - |q(y)|^2 \rp dy }{q(x) }
	+ \bigo{D_+(x) z^{-2}},  \\
\label{eq:winfty2}
	m_2^-(z; x) & = e_2 +
	\frac{1}{z} \twovec{ \widebar q(x) }{ \phantom{-} \im \int^x_{-\infty} \lp 1 - | q(y) |^2  \rp dy}
	+ \bigo{D_-(x) z^{-2}},
\end{align}
and for $\Im z \ge  0$ as $z \to \infty$ we have 	
\begin{align}
\label{eq:winfty3}
	m_1^-(z; x) &= e_1 +
	\frac{1}{z} \twovec{ -\im \int^x_{-\infty} \lp 1 - | q(y) |^2 \rp dy} { q(x) }
	+ \bigo{D_-(x) z^{-2}}, \\
\label{eq:winfty4}
	m_2^+(z; x) &= e_2 +
	\frac{1}{z} \twovec{\widebar q(x) } { -\im \int_x^\infty \lp  1 -| q(y)  |^2  \rp dy}
	+ \bigo{D_+(x) z^{-2}},
\end{align}
where the constant in each $\bigo{D_\pm(x) z^{-2}}$ is independent of $z$.

If $q - \tanh(x) \in L^{1,n}(\R)$ as well, then there exists an increasing function $F_{n}(t)$ independent of $q$ such that as $z \to \infty$
 \begin{equation} \label{eq:eqm21n}
 	\begin{aligned} &
		\left |  \partial ^j _{z} [m_1 ^{+}(z; x) ]    \right |  \le  |z |^{-1}  F_{n}((1+|x| ) ^{n}
		\| q - 1   \| _{L^{1,n}(x, \infty  )}).
	\end{aligned}
\end{equation}
Finally, given two potential $q$ and $\widetilde{q}$
sufficiently close together we have
\begin{equation} \label{eq:eqm21nn}
	\begin{aligned} &
	\left |  \partial ^n _{z} [m_1 ^{+}(z; x )- \widetilde{m}  _1 ^{+}(z; x ) ]\right |
	\le  | z |^{-1} \| q-  \widetilde{q}   \|_{L^{1,n}(x, \infty  )} F_{n}((1+| x | )^{n}
	\| q-  1 \| _{L^{1,n}(x, \infty  )}) \text{ for $0\le j \le n$}.
	\end{aligned}
\end{equation}
Similar estimates hold for the other Jost functions.

\end{lemma}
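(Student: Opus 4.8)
The plan is to prove \eqref{eq:winfty1} by a fixed-point argument on the Volterra integral equation \eqref{eq:eqm11} for $m_1^+$ with the leading profile factored out, and then to obtain \eqref{eq:winfty2}--\eqref{eq:winfty4} from the discrete symmetries of \eqref{eq:zs}. Since $X^+(x,z)X^+(y,z)^{-1}=B_+(z)e^{-\im\zeta(z)(x-y)\sigma_3}B_+(z)^{-1}$, the matrix under the integral sign in \eqref{eq:eqm11}, after absorbing the scalar $e^{\im(x-y)\zeta(z)}$, is
\[
	K(x,y,z)=B_+(z)\,\Diag\!\big(1,\,e^{2\im\zeta(z)(x-y)}\big)\,B_+(z)^{-1}\,\im\sigma_3\big(Q(y)-\sigma_1\big).
\]
For $z\in\overline{\C^-}$ one has $\Im\zeta(z)\le 0$, hence $|e^{2\im\zeta(z)(x-y)}|\le 1$ for $y\ge x$; with $\|B_+(z)^{\pm1}\|\le C$ for $|z|$ bounded away from $1$ this gives the $L^1$ kernel bound $\|K(x,y,z)\|\le C|q(y)-1|$. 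Consequently the Volterra operator $T_x[h]:=\int_\infty^x K(x,y,z)h(y)\,dy$ satisfies $\|T_x^{\,n}\|_{L^\infty((x_0,\infty),\C^2)}\le (C\|q-1\|_{L^1(x_0,\infty)})^n/n!$, so $(I-T_x)^{-1}$ is bounded with norm $\le e^{C\|q-1\|_{L^1(x_0,\infty)}}$ --- the only analytic fact about \eqref{eq:eqm11} I will use, and the origin of the exponential factor in $D_+$. Existence and uniqueness of $m_1^+$ is Lemma~\ref{lem:jf1}.

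\textbf{Kernel expansion and the oscillatory estimate.} Writing $B_+(z)=I+\sigma_1 z^{-1}$ and $B_+(z)^{-1}=(1-z^{-2})^{-1}(I-\sigma_1 z^{-1})$, expand $K=K_0+z^{-1}K_1+\widetilde K_2$, where $K_0(x,y,z)$ is off-diagonal with $(1,2)$-entry $\im(\widebar q(y)-1)$ and $(2,1)$-entry $-\im e^{2\im\zeta(z)(x-y)}(q(y)-1)$, $K_1$ is diagonal and explicit (each entry proportional to $(e^{2\im\zeta(z)(x-y)}-1)$ times $q(y)-1$ or $\widebar q(y)-1$), and $\|\widetilde K_2(x,y,z)\|\le C|z|^{-2}|q(y)-1|$. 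The one analytic lemma everything rests on: if $f\in W^{2,1}(x,\infty)$ with $f,f'\to 0$ at $+\infty$ --- which holds for $f=q-1$ (this is where $q-\tanh\in L^1$ and $q'\in W^{1,1}$ enter) and for products of $q-1$ with bounded antiderivatives of $L^1$ functions --- then integrating by parts twice gives
\[
	\int_x^\infty e^{2\im\zeta(z)(x-y)}f(y)\,dy
	=\frac{f(x)}{2\im\zeta(z)}+\frac{f'(x)}{(2\im\zeta(z))^2}
	+\frac{1}{(2\im\zeta(z))^2}\int_x^\infty e^{2\im\zeta(z)(x-y)}f''(y)\,dy ,
\]
the last term being $\bigo{|z|^{-2}\|f''\|_{L^1(x,\infty)}}$; since $\tfrac1{2\zeta(z)}=z^{-1}+\bigo{z^{-3}}$, such integrals equal $-\im z^{-1}f(x)+\bigo{|z|^{-2}}$, and one integration by parts already yields $\bigo{z^{-1}}$.

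\textbf{The expansion.} Substitute the ansatz $m_1^+(z;x)=e_1+z^{-1}m^{[1]}(x)+z^{-2}r(z;x)$ into \eqref{eq:eqm11}; using $\binom{1}{z^{-1}}=e_1+z^{-1}e_2$, the algebra reduces to the \emph{same} Volterra equation for the residual, $r-T_x[r]=\mathcal I(z;x)$, with the explicit source
\[
	\mathcal I(z;x)=z^2\Big[\,z^{-1}e_2-z^{-1}m^{[1]}(x)
	+\int_\infty^x K(x,y,z)e_1\,dy+z^{-1}\!\int_\infty^x K(x,y,z)m^{[1]}(y)\,dy\,\Big].
\]
Evaluating the two integrals with the kernel expansion and the oscillatory estimate, the bracket equals $z^{-1}$ times the vector whose first component is $-m^{[1]}_1(x)-\im\int_x^\infty\!\big[(q-1)+(\widebar q-1)q\big]\,dy$ and whose second component is $1-m^{[1]}_2(x)+(q(x)-1)$, plus $\bigo{|z|^{-2}}$. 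Since $(q-1)+(\widebar q-1)q=|q|^2-1$, the choices $m^{[1]}_1(x)=\im\int_x^\infty(1-|q(y)|^2)\,dy$ and $m^{[1]}_2(x)=q(x)$ kill the $z^{-1}$ part, so $\mathcal I(z;x)=\bigo{1}$ uniformly for $z\in\overline{\C^-}$, $|z|\ge 2$; moreover $\mathcal I$ is assembled from finitely many integrals of $q-1$ and of products of $q-1$ with bounded $W^{2,1}$ antiderivatives of $L^1$ functions, plus the twofold-integration-by-parts remainders carrying $q''$, so $\sup_{y\ge x}|\mathcal I(z;y)|\le C\|q-1\|_{W^{2,1}(x,\infty)}(1+\|q-1\|_{W^{2,1}(x,\infty)})^2$ with $C$ absolute. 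Applying $(I-T_x)^{-1}$ gives $|r(z;x)|\le CD_+(x)$, i.e.\ \eqref{eq:winfty1}, with $z$-independent constant; by uniqueness in Lemma~\ref{lem:jf1} this is the Jost function. The estimates \eqref{eq:winfty2}--\eqref{eq:winfty4} follow by running the identical argument on \eqref{eq:eqm12} and the ``$-$'' equations, or by using the symmetries of \eqref{eq:zs} --- reflection $x\mapsto-x$ with $Q(x)\mapsto Q(-x)$ (interchanging $+\infty$ and $-\infty$) and the involution $v(z;x)\mapsto\sigma_1\overline{v(\bar z;x)}$ --- which permute the four Jost functions.

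\textbf{Derivative estimates and the main difficulty.} For \eqref{eq:eqm21n}, differentiate $r-T_x[r]=\mathcal I$ in $z$: as $m^{[1]}$ is $z$-independent, $\partial_z^{\,j}m_1^+=\partial_z^{\,j}(z^{-1})\,m^{[1]}+\partial_z^{\,j}(z^{-2}r)$, so the extra $|z|^{-1}$ for $j\ge1$ is automatic, and $(\partial_z^{\,k}r)_{k\le n}$ solves a triangular Volterra system in which each $\partial_z$ falling on $K$ costs at most a factor $(x-y)\zeta'(z)=\bigo{\langle x\rangle\langle y\rangle}$, absorbed by the moment hypothesis $q-\tanh\in L^{1,n}$; the bounded resolvent then controls the solution by an increasing $F_n$ of $(1+|x|)^n\|q-1\|_{L^{1,n}(x,\infty)}$. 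For \eqref{eq:eqm21nn} one subtracts the residual equations for $q$ and $\widetilde q$; the difference solves a Volterra equation with the same kernel and a source linear in $q-\widetilde q$ in $L^{1,n}(x,\infty)$, and the same resolvent bound closes it. I expect the main difficulty to be the bookkeeping in the third paragraph: verifying that the $z^{-1}$ part of the bracket cancels for exactly the stated $m^{[1]}$ --- the telescoping $(q-1)+(\widebar q-1)q=|q|^2-1$ is the crux and one must be careful which oscillatory integrals survive one versus two integrations by parts --- and then reading off from the explicit source the precise norm $D_+(x)$ of \eqref{eq:d+}, whose quadratic factor comes from the products $(q-1)m^{[1]}$ inside $\int K m^{[1]}\,dy$ and whose exponential is the Gronwall constant for $(I-T_x)^{-1}$. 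Everything else is routine Volterra analysis.
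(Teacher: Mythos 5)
The paper never proves this lemma: Section~\ref{sec:jost} explicitly states the Jost-function results ``without proof,'' deferring to \cite{FT,BC,DT}, so there is no in-paper argument to compare against; your proof is the standard one those references use, and it checks out. The computations that matter are all correct: the kernel $B_+(z)\Diag\!\big(1,e^{2\im\zeta(z)(x-y)}\big)B_+(z)^{-1}\im\sigma_3(Q-\sigma_1)$ with $|e^{2\im\zeta(z)(x-y)}|\le 1$ for $\Im z\le 0$, $y\ge x$; the two integrations by parts (legitimate since $q'\in W^{1,1}$ forces $q-1,\,q'\to 0$ at $+\infty$); the identification of the $z^{-1}$ coefficient through the cancellation $(q-1)+(\widebar q-1)q=|q|^2-1$ and $1-m^{[1]}_2+(q-1)=0$, which reproduces \eqref{eq:winfty1}; and the residual Volterra equation with backward Gronwall, which closes pointwise in $x$ because $\|q-1\|_{W^{2,1}(y,\infty)}$ is nonincreasing in $y$, yielding the $\bigo{D_+(x)z^{-2}}$ remainder. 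Two minor caveats. First, the ``reflection $x\mapsto -x$'' is not by itself a symmetry of \eqref{eq:zs}: it flips the sign of $\sigma_3\partial_x$ and must be composed with conjugation by $\sigma_1$, which replaces $q(x)$ by $\overline{q(-x)}$; your primary route of simply rerunning the Volterra argument on \eqref{eq:eqm12} and on the minus equations (with background $-\sigma_1$, producing $D_-$) is the clean one, while the involution $v\mapsto\sigma_1\overline{v(\widebar z;x)}$ does correctly convert \eqref{eq:winfty1} into \eqref{eq:winfty4} and \eqref{eq:winfty3} into \eqref{eq:winfty2}. Second, the Gronwall step gives a factor $e^{C\|q-1\|_{L^1(x,\infty)}}$ rather than the literal $e^{\|q-1\|_{L^1(x,\infty)}}$ written in \eqref{eq:d+}; this is harmless for every later use of the lemma (only the form $F(\|q-1\|_{W^{2,1}})$ is needed, as in Corollary~\ref{lem:w0}), but flag it if you want $D_+$ verbatim. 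The sketches of \eqref{eq:eqm21n} and \eqref{eq:eqm21nn} via the triangular system for $\partial_z^k r$, with each $z$-derivative of the kernel costing a factor $|x-y|\le\langle x\rangle\langle y\rangle$ absorbed by the $L^{1,n}$ moment, are the right mechanism and consistent with \eqref{eq:eqm21n_111}.
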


The previous lemma and the symmetry \eqref{eq:symm2} imply the following corollary which
describes the singularities of the Jost solutions at the origin.
\begin{corollary}
\label{lem:w0}
Let $q$ be as in Lemma \ref{lem:zinfty}. Then for $z \in \C^-$, as $z\to 0$ we have
 \begin{equation} \label{eq:w01}
 	\begin{aligned} &
		m_1 ^{+}(z; x)  = \frac{1}{z} e_2  + \bigo{1}   
		\quad \text{and} \quad
		m_2 ^{-}(z; x)   = -\frac{1}{z} e_1  + \bigo{1}
	\end{aligned}
\end{equation}
where $| \bigo{ 1} |\le F (\|   {q}  - 1  \| _{W ^{2,1} ( x, \infty)}  ) $,
and for $z \in \C^+$, as $z \to 0$ we have
\begin{equation} \label{eq:w02}
	\begin{aligned} &
	m_1 ^{-}(z; x)  = -\frac{1}{z} e_2  +  \bigo{1}
	\quad \text{and} \quad
	m_2 ^{-}(z; x)   = \frac{1}{z} e_1  +  \bigo{1}
	\end{aligned}
\end{equation}
where $|\bigo{1} | \le F (\|   {q}  + 1\| _{W ^{2,1} ( - \infty ,x)}  ) $
for some growing functions $F(t)$.
\end{corollary}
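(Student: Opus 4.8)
The plan is to read the singular behaviour at $z=0$ directly off the large-$z$ expansions of Lemma~\ref{lem:zinfty}, transported to the origin by the reflection symmetry $z\mapsto 1/z$ of the Zakharov--Shabat problem. Because \eqref{eq:zs} depends on $z$ only through $\lambda(z)=\tfrac12(z+z^{-1})$ and $\lambda(1/z)=\lambda(z)$, whenever $v(z;x)$ solves \eqref{eq:zs} so does $v(1/z;x)$. Comparing the $x\to\pm\infty$ boundary values of the Jost functions --- using $B_\pm(1/z)=I\pm\sigma_1 z$ from \eqref{eq:eqB} and $\zeta(1/z)=-\zeta(z)$ --- and invoking the uniqueness in Lemma~\ref{lem:jf1}, one obtains the symmetry \eqref{eq:symm2}, which in the normalisation \eqref{eq:jf2} reads
\begin{equation*}
	m_1^\pm(z;x)=\pm\frac{1}{z}\,m_2^\pm(1/z;x), \qquad m_2^\pm(z;x)=\pm\frac{1}{z}\,m_1^\pm(1/z;x).
\end{equation*}
The scalar $z^{-1}$ on the right-hand side is exactly what converts boundedness of a Jost function near $z=\infty$ into a simple pole at $z=0$.

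The rest is a case-by-case substitution. For $z\in\C^-$ with $z\to 0$ I would set $w:=1/z\in\C^+$, $w\to\infty$, and plug the expansion \eqref{eq:winfty4} of $m_2^+(w;x)$ into $m_1^+(z;x)=\tfrac{1}{z}m_2^+(1/z;x)$: the leading term $e_2$ gives $\tfrac{1}{z}e_2$, the $\bigo{w^{-1}}$ correction becomes the fixed $\bigo{1}$ vector $\twovec{\widebar q(x)}{-\im\int_x^\infty(1-|q(y)|^2)\,dy}$, and the remainder $\bigo{D_+(x)w^{-2}}$ becomes $\bigo{D_+(x)\,z}=\bigo{1}$, controlled for $|z|$ bounded by an increasing function of $\|q-1\|_{W^{2,1}(x,\infty)}$; this gives the first line of \eqref{eq:w01}, and the second follows identically from $m_2^-(z;x)=-\tfrac{1}{z}m_1^-(1/z;x)$ and \eqref{eq:winfty3}. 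For $z\in\C^+$ with $z\to 0$ one has $w=1/z\in\C^-$, $w\to\infty$, and substituting \eqref{eq:winfty2} into $m_1^-(z;x)=-\tfrac{1}{z}m_2^-(1/z;x)$ and \eqref{eq:winfty1} into $m_2^+(z;x)=\tfrac{1}{z}m_1^+(1/z;x)$ produces the singular vectors $-\tfrac{1}{z}e_2$ and $\tfrac{1}{z}e_1$ and the $\bigo{1}$ remainders of \eqref{eq:w02}; the bounding function $F$ is the one supplied by the $D_\pm(x)$ in the invoked expansion, so it depends on whichever of $\|q-1\|_{W^{2,1}(x,\infty)}$, $\|q+1\|_{W^{2,1}(-\infty,x)}$ governs the relevant formula among \eqref{eq:winfty1}--\eqref{eq:winfty4}.

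Since this is an immediate consequence of Lemma~\ref{lem:zinfty} and the symmetry \eqref{eq:symm2}, there is no real obstacle --- only bookkeeping. The points to watch are: (i) that the hypotheses here, $q-\tanh(x)\in L^1(\R)$ and $q'\in W^{1,1}(\R)$, are precisely those of Lemma~\ref{lem:zinfty}, so that \eqref{eq:winfty1}--\eqref{eq:winfty4} are available; (ii) for each $z$, using the version of the $z\to\infty$ asymptotics valid in the half-plane containing $1/z$ (the $\Im z\le 0$ forms \eqref{eq:winfty1}, \eqref{eq:winfty2} when $z\in\C^+$, the $\Im z\ge 0$ forms \eqref{eq:winfty3}, \eqref{eq:winfty4} when $z\in\C^-$); and (iii) that the $\bigo{w^{-1}}$ corrections, multiplied by the factor $z^{-1}=w$, stay $\bigo{1}$, which is clear since they are $\tfrac{1}{w}$ times vectors whose entries ($\widebar q(x)$, $q(x)$, and the partial masses) are bounded by $1$ plus a localized $W^{2,1}$ norm, all finite under the hypotheses. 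Assembling the four cases yields \eqref{eq:w01} and \eqref{eq:w02}.
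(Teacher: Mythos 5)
Your argument is correct and is exactly the paper's intended proof: the corollary is presented as an immediate consequence of Lemma~\ref{lem:zinfty} combined with the symmetry \eqref{eq:symm2}, which is precisely the $z\mapsto 1/z$ substitution and bookkeeping you carry out (including using the $\Im z\le 0$ expansions when $1/z\in\C^-$ and the $\Im z\ge 0$ ones when $1/z\in\C^+$). Your observation that the $\bigo{1}$ bound for each Jost function is governed by the $D_\pm(x)$ of the expansion actually invoked, i.e.\ by $\|q-1\|_{W^{2,1}(x,\infty)}$ or $\|q+1\|_{W^{2,1}(-\infty,x)}$ as appropriate, is if anything slightly more precise than the statement in the text.
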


\section{The scattering data}
\label{sec:ab}

We start with the following elementary lemma.
\begin{lemma} \label{lem:symm}
Let $q - \tanh(x) \in L^1(\R)$. Then
\begin{enumerate}[1.]
	\item For $z \in \R \backslash \{ -1,0,1 \}$ both of the matrix-valued functions
	\begin{equation}\label{eq:JostMatrix}
		\Psi^\pm(z; x ) = \( \psi^\pm_1(z; x),\ \psi^{\pm}_2(z; x) \) = \( m_1^\pm(z; x),\ m_2^\pm(z; x) \)
		e^{ - \im \zeta(z) x \sigma_3}
	\end{equation}
	are nonsingular solutions of \eqref{eq:zs} and
	\begin{equation} \label{eq:Jostdet}
		\det \Psi_\pm = \det \Psi_\pm(z) = 1 - z^{-2}.
	\end{equation}

	\item For $z \in \overline{\C^+} \backslash \{ -1,0,1 \}$ the Jost functions $\psi_j^\pm$ satisfy the symmetries
	\begin{align}
	\label{eq:symm1}
		&\left\{ \begin{aligned}
			\psi_1^-(z; x) &= \sigma_1 \overline{\psi_2^-(\overline{z}; x)} \\
			\psi_2^+(z; x) &= \sigma_1 \overline{\psi_1^+(\overline{z}; x)} \bigskip
		\end{aligned} \right. 
	\shortintertext{and}
	\label{eq:symm2}
		& \left\{ \begin{aligned}
			\psi_1^-(z; x) &= - z^{-1} \psi_2^-(  z^{-1} ; x) \\
			\psi_2^+(z; x) &= \phantom{-} z^{-1} \psi_1^+(  z^{-1} ; x).
		\end{aligned} \right.
	\end{align}
	
\end{enumerate}
\end{lemma}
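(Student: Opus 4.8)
The plan is to verify the two assertions by direct computation from the defining integral equations \eqref{eq:eqm11}--\eqref{eq:eqm12} and the structure of the Lax operator, exploiting the fact that all the objects involved are solutions of the \emph{same} linear ODE \eqref{eq:zs}. For part 1, the key observation is that \eqref{eq:zs} can be written as $v_x = \La v$ with $\La = \im \sigma_3(Q - \lambda(z))$, so $\partial_x \det \Psi^\pm = \Tr(\La) \det \Psi^\pm = 0$ because $\Tr(\im\sigma_3(Q-\lambda)) = 0$ (both $\sigma_3 Q$ and $\sigma_3$ are traceless). Hence $\det \Psi^\pm(z;x)$ is independent of $x$, and I would evaluate it in the limit $x \to \pm\infty$ using \eqref{eq:jf2}: there $m_1^\pm \to (1, \pm z^{-1})^T$ and $m_2^\pm \to (\pm z^{-1}, 1)^T$, so $\det(m_1^\pm, m_2^\pm) \to 1 - z^{-2}$, and the scalar exponential factors $e^{\mp \im \zeta x}$ in \eqref{eq:JostMatrix} have determinant $e^{-\im\zeta x}e^{\im\zeta x} = 1$. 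Since $1 - z^{-2} \ne 0$ for $z \in \R\setminus\{-1,0,1\}$, $\Psi^\pm$ is nonsingular; that each column solves \eqref{eq:zs} is immediate from Lemma~\ref{lem:jf1}. This part is essentially a two-line argument.

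For part 2, the symmetries \eqref{eq:symm1} and \eqref{eq:symm2} both follow by a uniqueness argument: I would show that each side of each identity solves \eqref{eq:zs} and satisfies the same boundary condition as $x \to -\infty$ (for the ``$-$'' Jost functions) or $x \to +\infty$ (for the ``$+$'' Jost functions), then invoke uniqueness of the Jost solution. For \eqref{eq:symm1}, I would take the complex conjugate of \eqref{eq:zs}: since $\overline{Q} = Q^T$ and $Q$ here is Hermitian ($\overline Q = Q^T$, with $Q_{12} = \overline q$, $Q_{21} = q$), and $\sigma_1 \overline{(\im\sigma_3)}\sigma_1 = \sigma_1(-\im\sigma_3)\sigma_1 = \im\sigma_3$ while $\sigma_1 Q \sigma_1 = Q^T$, conjugating \eqref{eq:zs} and conjugating by $\sigma_1$ maps a solution at $z$ to a solution at $\bar z$ (using $\lambda(\bar z) = \overline{\lambda(z)}$); checking the boundary values via \eqref{eq:jf2} — e.g. $\sigma_1 \overline{(\bar z^{-1}, 1)^T} = \sigma_1(z^{-1},1)^T = (1, z^{-1})^T$ in the $-$ case — pins down the identification. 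For \eqref{eq:symm2}, I would use $\lambda(z^{-1}) = \lambda(z)$ and $\zeta(z^{-1}) = -\zeta(z)$ together with the conjugation $B_\pm(z^{-1})$ / the relation $\sigma_1$: substituting $z \mapsto z^{-1}$ in \eqref{eq:zs} leaves $Q - \lambda$ unchanged, and one checks that $-z^{-1}\psi_2^-(z^{-1};x)$ has the correct $x\to-\infty$ limit, namely $-z^{-1}\cdot(-z)(\cdot) $ reshuffling $(\,-z, 1)^T$-type boundary data into $(1,-z^{-1})^T$.

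The main obstacle — really the only place that requires care rather than bookkeeping — is getting the \emph{reflection/conjugation bookkeeping} exactly right: tracking how the scalar exponential prefactors $e^{\pm\im\zeta(z)x\sigma_3}$ transform under $z \mapsto \bar z$ and $z \mapsto z^{-1}$, and confirming that the constant matrices $(1,\pm z^{-1})^T$, $(\pm z^{-1}, 1)^T$ in \eqref{eq:jf2} map to each other with precisely the claimed signs and powers of $z$. One must also note that the symmetries are stated for $z \in \overline{\C^+}\setminus\{-1,0,1\}$, where the relevant Jost functions $\psi_1^-, \psi_2^+$ admit analytic continuation (Lemma~\ref{lem:jf1}), so it suffices to verify the identities on $\R\setminus\{-1,0,1\}$ and extend by analyticity — the continuation of the right-hand sides being governed by the same lemma, since $z \mapsto \bar z$ and $z \mapsto z^{-1}$ both preserve $\overline{\C^+}$ appropriately (with $\overline{(\cdot)}$ on $\psi_2^-$ turning the lower-half-plane analyticity into upper-half-plane analyticity). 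Once the boundary data are matched, uniqueness of solutions to the Volterra integral equations \eqref{eq:eqm11}--\eqref{eq:eqm12} closes each case.
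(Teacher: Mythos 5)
Your proposal is correct and takes essentially the same route as the paper's proof: the tracelessness of $\La$ makes $\det \Psi^\pm$ independent of $x$ and the limits \eqref{eq:jf2} evaluate it to $1-z^{-2}$, while the symmetries follow from the invariance of \eqref{eq:zs} under $z \mapsto \bar z$ (after conjugation by $\sigma_1$) and $z \mapsto z^{-1}$, matching of the boundary data, uniqueness of the Jost solutions, and column-by-column analytic continuation off the real axis. One small slip in your illustrative check of the ``$-$'' case: the limit of $m_2^-$ is $(-z^{-1},1)^T$, so the matched value is $(1,-z^{-1})^T$, not $(1,z^{-1})^T$ --- exactly the sign bookkeeping you already flagged as the place requiring care.
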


\begin{proof} 
The matrices $\Psi^\pm$ are solutions of \eqref{eq:zs}, which follows from Lemma~\ref{lem:jf1}. To establish \eqref{eq:Jostdet} and thus that $\Psi^\pm$ is nonsingular, observe that $\Tr(\La) = 0$, where $\La$ is the matrix \eqref{eq:laxL} appearing in \eqref{eq:lax1}, so that $\det \Psi^\pm (z; x) = \det \Psi^\pm(z)$. Finally, $\lim_{x \to \pm \infty} \det \Psi^\pm = \det B_\pm = 1 - z^{-2}$.
	
To prove the symmetries \eqref{eq:symm1}-\eqref{eq:symm2} start with $z \in \R \backslash \{-1,0,1\}$. The symmetries of the Lax matrix: $\La(z) = \sigma_1 \overline{ \La(\widebar z)} \sigma_1 = \La(z^{-1})$ and of the ``free" solution:
$X^\pm(x,z) = \sigma_1 \overline{X^\pm(x,\widebar z)} \sigma_1 = \pm z^{-1} X^\pm(x,z^{-1})\sigma_1$
imply that for $z \in \R \backslash \{-1,0,1 \}$ the Jost matrices satisfy
\begin{equation*}
	\Psi^\pm(z; x) = \sigma_1 \overline{\Psi^\pm(  \widebar  z; x)} \sigma_1
	= \pm z^{-1} \Psi( z^{-1} ; x) \sigma_1.
\end{equation*}
Analytically extending each column vector solution $\psi_j^\pm(z; x)$ off the real axis into the half plane indicated by Lemma~\ref{lem:jf1} gives \eqref{eq:symm1}-\eqref{eq:symm2}.

\end{proof}

\begin{corollary}
	Let $q - \tanh(x) \in L^1(\R)$. Then each of the Jost functions $\psi_k^\pm(z; x)$ satisfy
	\begin{equation} \label{eq:symm23}
		\overline{\psi_k^\pm (  \overline{z^{-1}} ; x )} = \pm z \sigma_1 \psi_k^\pm(z; x)
	\end{equation}
	upon reflecting $z$ through the unit circle in the half-plane in which each Jost function is defined.
\end{corollary}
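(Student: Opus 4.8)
The plan is to derive \eqref{eq:symm23} purely algebraically by composing the two symmetries already established in Lemma~\ref{lem:symm}: the Schwarz conjugation symmetry \eqref{eq:symm1} and the inversion symmetry \eqref{eq:symm2}. Note that the map $z \mapsto \overline{z^{-1}} = 1/\bar z$ is the inversion in the circle $\partial D(0,1)$, which fixes each of the open half-planes $\C^\pm$; hence for a Jost function defined on $\overline{\C^\pm}$ both $\psi_k^\pm(z;x)$ and $\psi_k^\pm(\overline{z^{-1}};x)$ make sense for the same $z$, so that \eqref{eq:symm23} is an identity within a single domain of analyticity.

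First I would rewrite the two symmetries column by column. Conjugating \eqref{eq:symm1} and using that $\sigma_1$ is real gives $\overline{\psi_1^-(z;x)} = \sigma_1\,\psi_2^-(\bar z; x)$ and $\overline{\psi_2^+(z;x)} = \sigma_1\,\psi_1^+(\bar z; x)$, and rearranging these (e.g. swapping $z \leftrightarrow \bar z$) furnishes the companion conjugation identities for $\psi_2^-$ and $\psi_1^+$ that are not written out in \eqref{eq:symm1}. Likewise \eqref{eq:symm2}, read at the point $z^{-1}$, yields $\psi_2^-(z^{-1};x) = -z\,\psi_1^-(z;x)$ and $\psi_1^+(z^{-1};x) = z\,\psi_2^+(z;x)$, together with the analogous inversion identities for $\psi_1^-$ and $\psi_2^+$.

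Second, I would carry out the composition in one model case, say $k=1$ with superscript $-$, for $z \in \overline{\C^+}$ (so that $\overline{z^{-1}} = 1/\bar z \in \overline{\C^+}$ as well). Substituting $z \mapsto \overline{z^{-1}}$ in $\overline{\psi_1^-(z;x)} = \sigma_1\,\psi_2^-(\bar z;x)$ and using $\overline{\,\overline{z^{-1}}\,} = z^{-1}$ gives $\overline{\psi_1^-(\overline{z^{-1}};x)} = \sigma_1\,\psi_2^-(z^{-1};x)$; inserting $\psi_2^-(z^{-1};x) = -z\,\psi_1^-(z;x)$ from \eqref{eq:symm2} yields exactly $\overline{\psi_1^-(\overline{z^{-1}};x)} = -z\,\sigma_1\,\psi_1^-(z;x)$. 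The three remaining cases — $\psi_2^+$ on $\overline{\C^+}$, and $\psi_1^+$, $\psi_2^-$ on $\overline{\C^-}$ — are handled identically, each time using the conjugation identity for that column followed by the inversion identity for the matching superscript. In every case one obtains the prefactor $+z$ when the superscript is $+$ and $-z$ when it is $-$; that is, the $\pm$ in \eqref{eq:symm23} is precisely the superscript of $\psi_k^\pm$.

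I do not expect any genuine obstacle here: the statement is a formal consequence of Lemma~\ref{lem:symm}, with no new analysis required. The only points that demand care are the bookkeeping — checking that $\overline{z^{-1}}$ stays in the same half-plane as $z$, so that the continued Jost function really appears on both sides — and deriving the conjugation and inversion symmetries of $\psi_1^+$ and $\psi_2^-$, which Lemma~\ref{lem:symm} records only for $\psi_1^-$ and $\psi_2^+$; these follow by rearranging \eqref{eq:symm1}--\eqref{eq:symm2} rather than by rerunning the argument of the lemma.
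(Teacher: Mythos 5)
Your proposal is correct and is exactly the argument the paper intends: the corollary is stated without proof as an immediate consequence of Lemma~\ref{lem:symm}, obtained by composing the conjugation symmetry \eqref{eq:symm1} with the inversion symmetry \eqref{eq:symm2}, and your column-by-column bookkeeping (including the observation that $z \mapsto \overline{z^{-1}}$ preserves each half-plane and that the sign $\pm z$ tracks the superscript) checks out in all four cases.
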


The columns of $\Psi^+(z; x)$ and $\Psi^-(z; x)$ each form a solution basis of \eqref{eq:zs} for $z \in \R \backslash \{-1,0,1\}$. It follows that the matrices must satisfy the linear relation
\begin{equation}\label{eq:coeff}
	\Psi^-(z; x) = \Psi^+(z; x) S(z), \qquad
	S(z) = \begin{pmatrix} a(z) & \overline{ b(z) } \\ b(z) & \overline{a(z)} \end{pmatrix},
	\quad z \in \R \backslash \{-1,0,1\}
\end{equation}
where the form of the \emph{scattering matrix} $S(z)$ follows from \eqref{eq:symm1}.
The scattering coefficients $a(z)$ and $b(z)$ define   the \emph{reflection coefficient}
\begin{equation} \label{eq:r}
	r(z) := \frac{ b(z) }{ a(z) }.
\end{equation}	
The following lemma records several important properties of   $a(z)$ and $b(z)$.

\begin{lemma} \label{lem:coeff}
Let $z \in \R \backslash \{-1,0,1 \}$ and $a(z)$, $b(z)$, and $r(z)$ be the   data in \eqref{eq:coeff}-\eqref{eq:r} generated by some $q \in \tanh(x) + L^1(\R)$. Then
\begin{enumerate}[1.]
	\item The scattering coefficients can be expressed in terms of the Jost functions as
	\begin{equation}\label{eq:wron2}
		a(z) = \frac{   \det [ \psi_1^-(z; x), \psi_2^+(z; x) ] }{ 1- z^{-2} },
		\qquad
		b(z) = \frac{ \det [ \psi_1^+(z; x), \psi_1^-(z; x) ]  }{ 1- z^{-2} } .
	\end{equation}
It follows that $a(z)$ extends analytically to $z \in \C^+$
	while $b(z)$ and $r(z)$ are defined only for $z \in \R \backslash \{-1,0,1 \}$.
	
	\item For each $z \in \R \backslash \{-1,0,1 \}$
	\begin{equation}\label{eq:coeff3}
		| a(z) |^2 - | b(z) |^2 = 1.
	\end{equation}
	In  particular, for  $z \in \R \backslash \{-1,0,1 \}$
we have
\begin{equation}\label{eq:coeff31}
		| r(z) |^2  =  1- | a(z) | ^{-2}<1.
	\end{equation}

	\item The scattering data satisfy the symmetries
	\begin{equation}\label{eq:symm31}
		-\overline{ a(\widebar z^{-1} ) }  =  a(z), \qquad
		-\overline{ b(\widebar z^{-1} ) } =  b(z), \qquad
		\overline{ r(\widebar z^{-1} ) } =  r(z)
	\end{equation}
	wherever they are defined.

	\item If additionally $q' \in W^{1,1}(\R)$, then for $z \in \overline{\C^+}$,
	\begin{align}
	\label{eq:as11}	
		& \lim _{ z \to \infty } \left (    a(z) - 1  \right )z
		=\im  \int _{\R } \left (   |q(x)|^2 - 1  \right )  dx, \\
	\label{eq:as01}
		& \lim _{ z \to 0 } ( a(z)+1) z ^{-1}
		= \im \int _{\R } \left (   |q(x)| ^2 - 1 \right )  dx,
	\end{align}
	and for $z \in \R$
	\begin{equation}\label{eq:asb11}
		\begin{aligned}
			& |   b(z)|= \bigo{ |z| ^{-2} } \text{  as $|z|\to \infty $,}  \\
			&  |   b(z)|= \bigo{ |z| ^{ 2}} \text{  as $|z|\to 0 $.}
		\end{aligned}
	\end{equation}
\end{enumerate}
\end{lemma}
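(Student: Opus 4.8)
The plan is to obtain everything by unwinding the scattering relation \eqref{eq:coeff} and feeding it the properties of the Jost functions collected in Section~\ref{sec:jost}; we treat the four items in order. For item~1, reading off the first column of \eqref{eq:coeff} gives $\psi_1^-(z;x) = a(z)\psi_1^+(z;x) + b(z)\psi_2^+(z;x)$ on $\R\setminus\{-1,0,1\}$; pairing this identity with $\psi_2^+$ and with $\psi_1^+$ inside a determinant and using $\det[\psi_1^+,\psi_2^+] = \det\Psi^+ = 1-z^{-2}$ from \eqref{eq:Jostdet} produces \eqref{eq:wron2}. The scalar factors $e^{\mp\im\zeta(z)x}$ of \eqref{eq:jf1} cancel, so $\det[\psi_1^-,\psi_2^+] = \det[m_1^-,m_2^+]$ whereas $\det[\psi_1^+,\psi_1^-] = e^{-2\im\zeta(z)x}\det[m_1^+,m_1^-]$; since $m_1^-,m_2^+$ are analytic in $\C^+$ by Lemma~\ref{lem:jf1} and $1-z^{-2}\neq 0$ there, the first formula in \eqref{eq:wron2} furnishes the analytic continuation of $a$ to $\C^+$, while the second couples $\psi_1^+$ (analytic only in $\C^-$) with $\psi_1^-$ (analytic only in $\C^+$), so neither $b$ nor $r=b/a$ continues off $\R\setminus\{-1,0,1\}$. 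For item~2, taking determinants in \eqref{eq:coeff} and using $\det\Psi^\pm = 1-z^{-2}\neq 0$ gives $\det S(z) = 1$, which is \eqref{eq:coeff3} by the form of $S$; then $|a|^2 = 1+|b|^2 \ge 1$ shows $a$ is nonvanishing on $\R\setminus\{-1,0,1\}$ and $|r|^2 = |b|^2/|a|^2 = 1-|a|^{-2} < 1$, which is \eqref{eq:coeff31}.

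For item~3 the plan is to substitute the Jost symmetries \eqref{eq:symm2} and their images under $z\mapsto z^{-1}$ into \eqref{eq:wron2}. Using $\psi_1^-(z;x) = -z^{-1}\psi_2^-(z^{-1};x)$ and $\psi_2^+(z;x) = z^{-1}\psi_1^+(z^{-1};x)$ together with the companion identity $\det[\psi_1^+(w;x),\psi_2^-(w;x)] = \overline{a(w)}(1-w^{-2})$ — obtained from $\Psi^+ = \Psi^- S^{-1}$ and $\det S=1$ exactly as in item~1 — and the algebraic fact $z^{-2}(1-z^2) = -(1-z^{-2})$, one finds $a(z) = -\overline{a(z^{-1})}$ on $\R$; the parallel computation for $b$ uses $\psi_1^+(z;x) = z^{-1}\psi_2^+(z^{-1};x)$, $\psi_1^-(z;x) = -z^{-1}\psi_2^-(z^{-1};x)$ and $\det[\psi_2^+(w;x),\psi_2^-(w;x)] = -\overline{b(w)}(1-w^{-2})$, and gives $b(z) = -\overline{b(z^{-1})}$; dividing gives $r(z) = \overline{r(z^{-1})}$. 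On $\R$ these are exactly \eqref{eq:symm31}, and the identity for $a$ propagates to $\C^+$ by analytic continuation since both sides are holomorphic there and agree on $\R$.

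For item~4, write $a(z) = \det[m_1^-(z;x),m_2^+(z;x)]/(1-z^{-2})$ and insert the large-$z$ expansions \eqref{eq:winfty3}--\eqref{eq:winfty4}, valid for $\Im z\ge 0$: the product of the $(1,1)$ and $(2,2)$ entries equals $1 - \frac{\im}{z}\int_\R(1-|q|^2)\,dy + \bigo{z^{-2}}$ while the anti-diagonal product is $\bigo{z^{-2}}$, so after dividing by $1-z^{-2} = 1+\bigo{z^{-2}}$ we get \eqref{eq:as11}. Applying the symmetry $a(z) = -\overline{a(\bar z^{-1})}$ of item~3 and letting $z\to 0$, so that $\bar z^{-1}\to\infty$ in $\overline{\C^+}$, turns \eqref{eq:as11} into \eqref{eq:as01}. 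For $b$, fix $x=0$ in $b(z)(1-z^{-2}) = e^{-2\im\zeta(z)x}\det[m_1^+(z;x),m_1^-(z;x)]$; by \eqref{eq:winfty1} and \eqref{eq:winfty3} the columns $m_1^\pm$ agree with $e_1$ up to $\bigo{z^{-1}}$ and carry the \emph{same} $z^{-1}$-coefficient $q(x)$ in the second slot, so those terms cancel in the determinant and $\det[m_1^+,m_1^-] = \bigo{z^{-2}}$; since $|e^{-2\im\zeta(z)x}| = 1$ and $|1-z^{-2}|\to 1$ on $\R$, this yields $|b(z)| = \bigo{|z|^{-2}}$ as $|z|\to\infty$, and then $|b(z)| = |b(z^{-1})|$ from item~3 gives $|b(z)| = \bigo{|z|^2}$ as $|z|\to 0$, proving \eqref{eq:asb11}. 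The error terms $\bigo{D_\pm(x)z^{-2}}$ in Lemma~\ref{lem:zinfty} are finite for fixed $x$ because $q-1\in W^{2,1}(x,\infty)$ and $q+1\in W^{2,1}(-\infty,x)$ under the hypotheses $q-\tanh(x)\in L^1(\R)$, $q'\in W^{1,1}(\R)$.

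The arithmetic is elementary throughout; the step that needs the most care is item~4 for $b$, where one must account for the oscillatory prefactor $e^{-2\im\zeta(z)x}$ (harmless on $\R$, but obstructing a naive limit), exploit the alignment of the leading terms of $m_1^+$ and $m_1^-$ to extract the extra power of $z$, and deduce the behaviour near $z=0$ from the reflection symmetry $z\mapsto z^{-1}$ rather than from Corollary~\ref{lem:w0}, whose leading-order expansions alone would give only $\bigo{|z|}$. A lesser bookkeeping hazard is keeping the several companion Wronskian identities and the $z\mapsto z^{-1}$ images of \eqref{eq:symm2} straight in item~3.
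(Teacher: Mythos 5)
Your proposal is correct and follows essentially the same route as the paper: Cramer's rule with \eqref{eq:Jostdet} for the Wronskian formulae, $\det S=1$ for \eqref{eq:coeff3}, the Jost-function symmetries for \eqref{eq:symm31}, and the large-$z$ expansions of Lemma~\ref{lem:zinfty} combined with the $z\mapsto\bar z^{-1}$ symmetry for \eqref{eq:as11}--\eqref{eq:asb11}. The only cosmetic difference is in item~3, where the paper plugs the combined symmetry \eqref{eq:symm23} directly into \eqref{eq:wron2} (valid at once in $\overline{\C^+}$), while you use \eqref{eq:symm2} plus the companion Wronskians from $\Psi^+=\Psi^-S^{-1}$ on $\R$ and then continue the $a$-identity into $\C^+$ -- a valid, equivalent variant.
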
	

\begin{proof}
The first property follows from applying Cramer's rule to \eqref{eq:coeff} and using \eqref{eq:Jostdet}; one then observes that Lemma~\ref{lem:jf1} implies that the formula for $a(z)$ is analytic for $z \in \C^+$. The second property is just the fact that $\det S = 1$ which follows from taking the determinant on each side of \eqref{eq:coeff} using \eqref{eq:Jostdet}. The symmetry conditions follow immediately from \eqref{eq:wron2} after using \eqref{eq:symm1}-\eqref{eq:symm2};
for instance
\begin{align*}
	 \overline{a( \widebar z^{-1} )} = \frac{ \det \left[
	 \overline{ \psi_1^-(  \widebar z^{-1} ; x)} ,\ \overline{ \psi_2^+(  \widebar z^{-1}; x)} \right] }
	 { 1 - z^2}
	 &= \frac{1}{1- z^{2}} \det \left[  \sigma_1 . ( -z\psi_1^-(z; x), z\psi_2^+ (z; x) ) \right] \\
	 &= -\frac{1}{1-z^{-2}} \det \left[ \psi_1^-(z; x), \, \psi_2^+ (z; x)  \right] = - a(z).
\end{align*}
To prove \eqref{eq:as11} first observe that
\[
	| q(y) \pm 1 |^2 - ( 2 \pm q(y) \pm \widebar q(y))  = |q(y)^2| - 1.
\]
Inserting \eqref{eq:winfty3}-\eqref{eq:winfty4} from Lemma~\ref{lem:zinfty}
into \eqref{eq:wron2} gives
\[
\begin{aligned}
	(1-z^{-2}) a(z)
	&= \det \begin{bmatrix}
 	1 + \im z^{-1} \int_{-\infty}^x \lp |q(y)|^2 - 1 \rp dy &  z^{-1}  \widebar q(x) \\
	z^{-1} q(x) & 1 + \im z^{-1} \int_x^\infty \lp |q(y)|^2 -1 \rp dy
	\end{bmatrix} + \bigo{z^{-2}}  \\
	&= 1 + \im z^{-1} \int_\R  \lp |q(y)|^2 -1 \rp \, dy + \bigo{z^{-2}}.
\end{aligned}
\]
To prove \eqref{eq:as01} write $z = \widebar \varsigma^{-1}$ and use
\eqref{eq:symm31} and \eqref{eq:as11}; the formulae for $b(z)$ in \eqref{eq:asb11} are proved similarly.
\end{proof}

Though Lemma~\ref{lem:jf3} gives conditions on $q$ which guarantee that the Jost functions $\psi_j^\pm(z;x)$ are continuous for $z \to \pm 1$, the scattering coefficients $a(z)$ and $b(z)$ will generally have simple poles at these points due to the vanishing of the denominators in \eqref{eq:wron2}. Moreover, their residues are proportional: the symmetry \eqref{eq:symm2} implies that $\psi_1^+(\pm1;x) = \pm \psi_2^+(\pm1;x)$, which in turn gives
\begin{equation}\label{eq:coeff sing}
	\begin{aligned}
		a(z) &= \phantom{\pm}\frac{a_\pm}{z \mp 1} + \bigo{1}, \\
		b(z) &= \mp \frac{a_\pm}{z \mp 1} + \bigo{1},
	\end{aligned}
	\qquad a_\pm = \det [ \psi_1^-( \pm1 ; x),\, \psi_2^+( \pm1  ; x) ].
\end{equation}	
In this generic situation the reflection coefficient remains bounded at $z = \pm 1$ and we have
\begin{equation}\label{eq:r sing}
	\lim_{z \to \pm 1} r(z) = \mp 1.
\end{equation}

The next lemma show that, given data $q_0$ with sufficient smoothness and decay properties, the reflection coefficient will also be smooth and decaying.

\begin{lemma} \label{lem:a3bis}
For any given
$q \in \tanh ( x) +L ^{1, 2}(\R  ) $,  $q'\in  W^{1,1}(\R  )$
  we have $r \in H^1(\R   ) $.
\end{lemma}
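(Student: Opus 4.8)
The plan is to verify the two conditions $r\in L^2(\R)$ and $r'\in L^2(\R)$, which together are equivalent to $r\in H^1(\R)$. Throughout I use the representation $r=b/a$ of \eqref{eq:r} together with the Wronskian formulas \eqref{eq:wron2}: cancelling the common factor $1-z^{-2}$ and setting $x=0$ (permissible since $r$ is independent of $x$) gives
\begin{equation*}
	r(z)=\frac{\det\big[\psi_1^+(z;0),\,\psi_1^-(z;0)\big]}{\det\big[\psi_1^-(z;0),\,\psi_2^+(z;0)\big]}
	=\frac{\det\big[m_1^+(z;0),\,m_1^-(z;0)\big]}{\det\big[m_1^-(z;0),\,m_2^+(z;0)\big]},\qquad z\in\R\setminus\{-1,0,1\}.
\end{equation*}
Under the hypothesis of the lemma the hypotheses of Lemmas~\ref{lem:jf1} and~\ref{lem:zinfty}, and of Lemmas~\ref{lem:jf1b} and~\ref{lem:jf3} with $n=1$, are all met.

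The inclusion $r\in L^2(\R)$ is immediate: \eqref{eq:coeff31} gives $|r(z)|<1$ for $z\in\R\setminus\{-1,0,1\}$, so $r\in L^\infty(\R)$, while \eqref{eq:asb11} together with \eqref{eq:as11} give $|r(z)|=|b(z)|/|a(z)|=\bigo{|z|^{-2}}$ as $|z|\to\infty$; a bounded function which is $\bigo{|z|^{-2}}$ at infinity lies in $L^2(\R)$.

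For $r'\in L^2(\R)$ I would cover $\R$ by a compact set $K_0$ bounded away from $\{-1,0,1\}$, neighborhoods $K_{+1},K_{-1}$ of $\pm1$, a neighborhood $K_\infty$ of infinity, and a punctured neighborhood $K_0'$ of $0$, and estimate $r'$ on each. On $K_0$ the Jost functions and their $z$-derivatives are continuous (Lemmas~\ref{lem:jf1}--\ref{lem:jf1b}) and the denominator is bounded below since $|a|\ge1$, so $r\in C^1(K_0)$. On $K_{\pm1}$, Lemma~\ref{lem:jf3} with $n=1$ — this is the step that uses the moment condition $q-\tanh(x)\in L^{1,2}(\R)$ — shows that $m_1^\pm,m_2^\pm$ and their first $z$-derivatives extend continuously to $\pm1$; since the singular factors $(1-z^{-2})^{-1}$ have been cancelled, numerator and denominator of the displayed ratio are $C^1$ near $\pm1$ and the denominator equals $a_\pm\neq0$ there (in the generic case; when instead $a_\pm=0$, so that $r(\pm1)\neq\mp1$, the analogous argument applies and is simpler, cf.~the remark following~\eqref{eq:r sing}), whence $r\in C^1(K_{\pm1})$. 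On $K_\infty$, substituting the expansions \eqref{eq:winfty1}, \eqref{eq:winfty3} and \eqref{eq:winfty4} — available because $q'\in W^{1,1}(\R)$ — and their $z$-derivatives into the two Wronskians, the $\bigo{1}$ and $\bigo{|z|^{-1}}$ contributions to the numerator cancel while the denominator tends to $\det[e_1,e_2]=1$, yielding $r(z)=\bigo{|z|^{-2}}$ and, in the same way, $r'(z)=\bigo{|z|^{-2}}$ as $|z|\to\infty$, so $r'\in L^2(K_\infty)$. Finally, on $K_0'$ I use the inversion symmetry $\overline{r(\overline{z}^{-1})}=r(z)$ of \eqref{eq:symm31}, which for real $z$ reads $r(z)=\overline{r(1/z)}$, hence $r'(z)=-z^{-2}\,\overline{r'(1/z)}$; with $w=1/z$ and $K_0'$ chosen small enough that $1/K_0'\subset K_\infty$,
\begin{equation*}
	\int_{K_0'}|r'(z)|^2\,dz=\int_{1/K_0'}w^{2}\,|r'(w)|^2\,dw\ \le\ \int_{K_\infty}w^{2}\,|r'(w)|^2\,dw\ <\ \infty,
\end{equation*}
using $r'(w)=\bigo{|w|^{-2}}$. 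Since $r'$ is bounded on the compact set $K_0\cup K_{+1}\cup K_{-1}$ and square-integrable on $K_\infty$ and $K_0'$, we get $r'\in L^2(\R)$; combined with the previous paragraph this gives $r\in H^1(\R)$.

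The single delicate point is the behavior near $z=0$ — equivalently, via \eqref{eq:symm31}, near $z=\infty$. There the Jost functions have genuine $z^{-1}$ singularities (Corollary~\ref{lem:w0}), so the two Wronskians defining $r$ are each far larger than $r$ itself and the crude pointwise bounds of Lemmas~\ref{lem:jf1b}--\ref{lem:zinfty} do not suffice; the decay $r(z),r'(z)=\bigo{|z|^{-2}}$ must instead be extracted from the exact cancellations in the explicit $1/z$ expansions of Lemma~\ref{lem:zinfty} (and their $z$-derivatives), which is precisely why the hypothesis $q'\in W^{1,1}(\R)$ is imposed. Everything else is routine bookkeeping.
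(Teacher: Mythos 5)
Your overall architecture (boundedness plus $\bigo{|z|^{-2}}$ decay for $r\in L^2$; Wronskian ratios and Lemma~\ref{lem:jf3} with $n=1$ near $\pm1$; the inversion symmetry \eqref{eq:symm31} to transfer $z=\infty$ to $z=0$) parallels the paper's proof, but two steps do not hold up. First, the non-generic case $a_\pm=0$ is not ``simpler'': there the denominator Wronskian $\det[\psi_1^-(\pm1),\psi_2^+(\pm1)]=a_\pm$ vanishes, and by the symmetry $\psi_1^+(\pm1)=\pm\psi_2^+(\pm1)$ so does the numerator, so your ratio is $0/0$ at $\pm1$ and the argument ``denominator bounded below, numerator and denominator $C^1$'' collapses. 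Falling back on $r=b/a$ with $|a|\ge 1$ does not rescue it, because $a$ and $b$ are themselves $0/0$ ratios at $\pm1$ and their differentiability there does not follow from the $C^1$ regularity of the Wronskians available under $L^{1,2}$. This sub-case is exactly where the paper does real work: writing $r=(\,-a_++\int_1^zF\,)/(\,a_++\int_1^zG\,)$ as in \eqref{eq:refl13} and showing, by differentiating the identity \eqref{eq:ida} and invoking $|a|^2-|b|^2=1$, that $G(1)=2a(1)\neq0$, so the denominator vanishes only to first order. Some version of that argument is needed; it cannot be waved off.

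Second, your control near $z=0$ rests on the claim $r'(z)=\bigo{|z|^{-2}}$ as $|z|\to\infty$, obtained by ``differentiating'' the expansions \eqref{eq:winfty1}, \eqref{eq:winfty3}, \eqref{eq:winfty4}. Those expansions carry error terms $\bigo{D_\pm(x)z^{-2}}$ whose $z$-derivatives are not controlled, and the derivative bounds the paper actually provides, \eqref{eq:eqm21n}, give only $|\partial_z m|\lesssim|z|^{-1}$; the resulting bound is \eqref{3bis}, i.e.\ $|r'(z)|\lesssim\langle z\rangle^{-1}$, not $|z|^{-2}$. The $\langle z\rangle^{-1}$ rate is enough for $r'\in L^2$ near infinity, but after your change of variables the integrand $w^2|r'(w)|^2$ is then merely bounded, and the integral over $1/K_0'$ need not be finite — so the square-integrability of $r'$ near the origin is left unproved. (The cancellation you describe is genuine for $r$ itself, which is why \eqref{eq:asb11} holds, but it does not transfer to $r'$ without an additional argument, e.g.\ an expansion of $\partial_z m^\pm_j$ derived from a differentiated Volterra equation; the paper itself disposes of the origin only by noting $r(0)=0$ via symmetry, so this is the one place where a careful write-up must supply more than either text currently does.)
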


\begin{proof}
Because $\| r \| _{  L^\infty (\R )}\le 1$ and,
by Lemma \ref{lem:coeff}, we have  $r(z) = \bigo{z^{-2}}$ as $z\to \pm \infty$ it's clear that 
\begin{equation}\label{eq:L2}
   q\in \tanh ( x) +\Sigma _2 \Rightarrow  r\in L^2 (\R ) .
\end{equation}
It remains to show that the derivative $r'$ is also $L^2(\R)$.

For any $\delta_0>0$ sufficiently small, the maps
\begin{equation}\label{eq:wronsk}
 \text{$q \to \det [ \psi ^{-} _1(z; x),\psi _2^{+} (z; x) ] $
\quad and \quad
$q \to \det [ \psi ^{+} _1(z; x),\psi _1^{-} (z; x) ]  $}
\end{equation}
are locally Lipschitz maps from
\begin{equation} \label{eq:a13}
	\{ q: q' \in W^{1,1}(\R  ) \text{ and } q \in  \tanh(x) + L ^{1, n+1}(\R )  \}
	\to W^{n,\infty} ( \R \backslash (-\delta _0 ,\delta _0 ))   \text{  for $n\ge 0$.}
\end{equation}
Indeed, $q \to \psi ^{+} _1(z,0)$  is, by Lemmas \ref{lem:jf1b} and \ref{lem:jf3} (\cf\ in particular \eqref{eq:eqm13} and \eqref{eq:eqm21n_1}), a locally Lipschitz  map with values in
$W^{n,\infty} (\overline{\C}^- \backslash D(0,\delta _0 ), \C^2  )$.
For  $q \to \psi ^{+} _2(z,0)$  and $q \to \psi ^{-} _1(z,0)$ the same is true but with $\C _-$ replaced by $\C _+$.
This and \eqref{eq:as11}--\eqref{eq:asb11}    implies that
$q\to r(z)$ is a locally Lipschitz map from the domain in \eqref{eq:a13}
 into
\begin{equation*}
 W^{n,\infty} ( I _{\delta _0}) \cap H^{n } ( I _{\delta _0}) \text{ with }
  I _{\delta _0}:= \R \backslash ((-\delta _0 ,\delta _0 )   \cup (1-\delta _0 ,1+\delta _0 ) \cup  (-1-\delta _0 ,-1 +\delta _0 ) .
\end{equation*}
Now fix $\delta _0>0$ so  small that the 3 intervals
$\text{dist}(z, \{\pm 1\}) \le  \delta _0 $
and $|z|\le \delta _0  $ have empty intersection. In the complement of their union
\begin{equation}\label{3bis}
	| \partial ^{j}_{z} r(z)| \le
	C_{\delta _0 } \langle z \rangle ^{-1}
	\text{ for $j=0,1 $}
\end{equation}
by \eqref{eq:eqm21nn}, its analogues for the other Jost functions, and the discussion above.

\noindent
Let $|z-1|< \delta _0$.
Then, using the $a_+$ in \eqref{eq:coeff sing} we have
\begin{equation} \label{eq:refl13}
	\begin{aligned} &
  	r (z)  =  \frac{b(z)}{a(z)}  =
	\frac{ \det [ \psi ^{+} _1(z; x),\psi _1^{-} (z; x ) ] }{\det [ \psi ^{-} _1(z; x),\psi _2^{+} (z; x) ] }
	= \frac{-a_++\int _1^z   F (s)ds} { a_++\int _1^z G (s)ds}
	\end{aligned}
\end{equation}
for
$F (z) =\partial _z\det [  \psi ^{+} _1(z; x),\psi _1^{-} (z; x ) ]$
and
$G (z)=\partial _z\det [ \psi ^{-} _1(z; x),\psi _2^{+} (z; x) ] $.
If  $a_+\neq 0$ then  it is clear from the above formula that $r'(z)$
is defined and bounded around $1$.

\noindent If $a_+=0$  we have
\begin{equation} \label{eq:rq}
	\begin{aligned} &
 		 r (z)  =       \frac{ \int _{1}^{z} F(s) ds} { \int _{1}^{z} G(s) ds }    .
	\end{aligned}
\end{equation}
Now, $a_+=0$  is the same as  $[ \psi ^{-} _1(z; x),\psi _2^{+} (z; x) ] _{|z=1}=0$.
Differentiating \eqref{eq:ida} at $z=1$    we get \begin{equation*}
   2 a(1) =   \partial _z\det[ \psi_1^-(z;x), \psi_2^+(x;z) ]  _{|z=1} =G(1).
\end{equation*}
This implies that $G(1)\neq 0$, since otherwise $| a(1) |^2 - | b(1) |^2 = 1$, which holds
by continuity at $z=1$, would not be true. It follows that the derivative $r'(z)$ is bounded
near 1.

The same discussion holds at $-1$. At $z=0$ we can use the symmetry $r(z ^{-1})=\overline{r}(z)$ to conclude that $r$ vanishes at the origin. It follows that $r' \in L^2(\R)$.
\end{proof}

We also have the following result, which is used later in the proof.
\begin{lemma}\label{lem:log bound}
For any initial data $q_0$ such that $q_0 - \tanh (x) \in \Sigma_2$ the reflection coefficient satisfies
\[
	\| \log( 1 - |r|^2 ) \|_{L^p(\R)} < \infty \text{ for any $p \geq 1$.}
\]

\end{lemma}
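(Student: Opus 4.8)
The plan is to split the integral $\int_\R \lvert \log(1-|r(z)|^2)\rvert^p\,dz$ into three regions: a neighborhood of the origin, neighborhoods of the singular points $\pm 1$, and the region where $z$ is bounded away from $\{-1,0,1\}$ and large. Away from these points the estimate is immediate: by Lemma~\ref{lem:a3bis} (or already \eqref{3bis} in its proof) we have $\lvert r(z)\rvert \le C_{\delta_0}\langle z\rangle^{-1} < 1$ on such sets, so $\lvert\log(1-|r(z)|^2)\rvert \sim |r(z)|^2 \le C\langle z\rangle^{-2}$, which is integrable to any power $p\ge 1$ (and in particular the tails at $\pm\infty$ contribute a finite amount). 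Near $z=0$ we have $r(0)=0$ by the symmetry $r(z^{-1})=\overline{r(z)}$ together with continuity (as noted at the end of the proof of Lemma~\ref{lem:a3bis}), and since $r\in H^1(\R)$, $r$ is Lipschitz near $0$, so $\lvert r(z)\rvert \lesssim |z|$ there; hence $\lvert\log(1-|r(z)|^2)\rvert \lesssim |z|^2$ is bounded near the origin and contributes nothing problematic.

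The only delicate region is a neighborhood of $z = \pm 1$, where generically $\lvert r(\pm 1)\rvert = 1$ (see \eqref{eq:r sing}), so $\log(1-|r(z)|^2) \to -\infty$ and we need this blow-up to be $L^p$-integrable. The key is to quantify the rate at which $1-|r(z)|^2$ vanishes. From \eqref{eq:coeff31} we have $1-|r(z)|^2 = |a(z)|^{-2}$, so it suffices to control the growth of $|a(z)|$ as $z\to\pm 1$. By \eqref{eq:coeff sing}, $a(z) = a_\pm/(z\mp 1) + \bigo{1}$ with $a_\pm = \det[\psi_1^-(\pm1;x),\psi_2^+(\pm1;x)]$, which is well-defined and finite by Lemma~\ref{lem:jf3} (applicable since $q_0 - \tanh(x)\in\Sigma_2 \subset \tanh(x)+L^{1,1}(\R)$ together with $q_0'\in W^{1,1}$, the latter following from $q_0-\tanh(x)\in H^2$). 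In the generic case $a_\pm \ne 0$ we therefore get $|a(z)| \asymp |z\mp 1|^{-1}$, hence $1-|r(z)|^2 \asymp |z\mp 1|^2$, so $\lvert\log(1-|r(z)|^2)\rvert \lesssim \lvert\log|z\mp1|\rvert$ near $\pm 1$, and $\int_{|z\mp1|<\delta_0} \lvert\log|z\mp1|\rvert^p\,dz < \infty$ for every $p\ge 1$ since logarithmic singularities are integrable to any power. In the non-generic case $a_\pm = 0$, the analysis in the proof of Lemma~\ref{lem:a3bis} shows $a(z)$ extends continuously (indeed $C^1$) past $\pm 1$ with $a(\pm1) \ne 0$ (because $|a(1)|^2 - |b(1)|^2 = 1$ forces $a(1)\ne 0$), so $1-|r(z)|^2$ is bounded below by a positive constant near $\pm 1$ and the integrand is bounded there; this case is strictly easier.

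The main obstacle is organizing the case distinction at $\pm 1$ cleanly and making sure all the cited regularity (continuity of $a$ and $b$ up to $\pm 1$ after clearing the Jost-function singularities, the exact residue structure \eqref{eq:coeff sing}, and the local behavior $r(z) = (-a_\pm + \bigo{z\mp1})/(a_\pm + \bigo{z\mp 1})$ from \eqref{eq:refl13}) is legitimately available under the hypothesis $q_0-\tanh(x)\in\Sigma_2$; in particular one must verify $q_0'\in W^{1,1}(\R)$, which holds because $q_0 - \tanh(x) \in H^2(\R)$ implies two weak derivatives in $L^2$ and, combined with the $L^{2,2}$ decay, in $L^1$. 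Once these inputs are in place the three-region decomposition and the integrability of $\lvert\log|z\mp 1|\rvert^p$ finish the proof with no further work.
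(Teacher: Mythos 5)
Your overall mechanism is the right one --- the identity $1-|r|^2=|a|^{-2}$ together with the fact that $(z^2-1)a(z)$ stays bounded near $z=\pm1$ turns the blow-up of $\log(1-|r|^2)$ into at worst a logarithmic singularity, and $r\in L^2$ handles the rest --- but your treatment of the neighborhoods of $\pm1$ rests on regularity that the hypothesis $q_0-\tanh(x)\in\Sigma_2$ does not supply. In the non-generic case $a_\pm=0$ you invoke the argument of Lemma~\ref{lem:a3bis} (the functions $F,G$, the identity $G(1)=2a(1)$, the continuous/$C^1$ extension of $a$ past $\pm1$) and, near the origin, $r\in H^1$. According to \eqref{eq:a13} and Remark~\ref{rem:regularity}, those facts require $q_0-\tanh(x)\in L^{1,2}(\R)$, i.e.\ $m=3$ data; but $\Sigma_2=L^{2,2}\cap H^2$ only embeds into $L^{1,s}$ for $s<3/2$ (Cauchy--Schwarz against $\langle x\rangle^{s-2}$), so $L^{1,2}$ and hence $r\in H^1$ are not available here. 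Two auxiliary claims are also wrong as stated: $H^1(\R)$ gives H\"older-$1/2$ continuity, not Lipschitz (harmless, since boundedness of $\log(1-|r|^2)$ near $0$ only needs $r$ continuous with $r(0)=0$), and $q_0-\tanh(x)\in H^2\cap L^{2,2}$ does not yield $q_0'\in W^{1,1}$ (one gets $\langle x\rangle v'\in L^2$, hence $v'\in L^1$, by interpolation, but not $v''\in L^1$).

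The gap is easily repaired, and the repair collapses your case distinction: you never need the two-sided asymptotics or the dichotomy $a_\pm\neq0$ versus $a_\pm=0$. By \eqref{eq:ida}, $(z^2-1)a(z)=z^2\det[\psi_1^-,\psi_2^+]$ is locally bounded near $\pm1$ as soon as the Jost functions are continuous there, which holds for $q_0\in\tanh(x)+L^{1,1}(\R)\supset\tanh(x)+\Sigma_2$; hence $|a(z)|\le C|z^2-1|^{-1}$ and $|\log(1-|r(z)|^2)|=\log|a(z)|^2\le C'+2\bigl|\log|z^2-1|\bigr|$ near $\pm1$ in \emph{both} cases, which is $L^p$ there for every $p$. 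This is exactly how the paper proceeds, except that instead of splitting by location it splits by the value of $1-|r|^2$: on $K=\{1-|r|^2\ge\delta\}$ concavity of the logarithm gives $|\log(1-|r|^2)|\le M_\delta|r|^2$, so only $r\in L^2$ is used (no pointwise decay of $r$, no bound of $|r|$ away from $1$ on compact sets), while the complement of $K$ has finite measure by Chebyshev and is handled by the $\log|z^2-1|$ bound above. That version needs nothing beyond $r\in L^2$ and \eqref{eq:ida}, which is why the lemma can be stated at the $\Sigma_2$ level; if you keep your location-based decomposition, replace the $\pm1$ analysis by the uniform upper bound on $|a|$ and drop the appeal to Lemma~\ref{lem:a3bis} and to $r\in H^1$.
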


\begin{proof}
Fix $\delta \in (0,1)$. Let $K = \{ z \in \mathbb{R} \, : \, 1- |r(z)|^2 \in [\delta, 1] \}$ and let $\chi$ denote the indicator function of $K$. As $r \in L^2(\R)$ clearly $(1-\chi)$ has finite support containing intervals surrounding $z = \pm 1$.  Using the concavity of the logarithm, for $z \in K$ we have
$
	|\log( 1 - |r(z)|^2)|  \leq M_\delta  |r(z)|^2,\ M_\delta = \frac{\log(1/\delta)}{1-\delta}.
$
The previous inequality and the identity $1 - |r(z)|^2 = |a(z)|^{-2}$ give
\begin{gather*}
	\left\| \log(1 - | r |^2 ) \right\|_{L^p(\R)} =
	\left\| \chi \log(1 - | r |^2 ) \right\|_{L^p(\R)}
	+ \left\| (1-\chi) \log(1 - | r |^2 ) \right\|_{L^p(\R)} \\
	\leq M_\delta \| \chi\, r^{2} \| _{L^p(\R)}
	+  \left\| (1-\chi) \log(  | a |^2 ) \right\|_{L^p(\R )}  \\
	\leq M_\delta \| r \|_{L^2(\R)}^{2/p}
	+ \left\| (1-\chi) \log( | a |^2 ) \right\|_{L^p(\R)}, 	
\end{gather*}
where the last step follows from observing that $\| r \|_{L^\infty(\R)} \leq 1$.

To estimate the second term we observe that by the identity
\begin{equation}\label{eq:ida}
   (z^2-1) a(z) = z^2 \det[ \psi_1^-(z;x), \psi_2^+(x;z) ]
\end{equation}
we have
$(z^2-1) a(z)   \in L^\infty_{loc}(\R)$
for initial data $q_0 \in \tanh(x) + L^{1,1}(\R)$.
It follows that
\begin{gather*}
	\left\| (1-\chi) \log( | a |^2 ) \right\|_{L^p(\R)} \leq
	\left\| (1-\chi) \log \left( \frac{1}{|\lozenge^2-1|} \right) \right\|_{L^p(\R)}
	+ \left\| (1-\chi) \log \left( \left| (\lozenge^2-1) a \right| \right) \right\|_{L^p(\R)} \\
	\leq \left\| (1-\chi) \log \left( \frac{1}{|\lozenge^2-1|} \right) \right\|_{L^p(\R)}
	+ \left\| \log \left( \left| (\lozenge^2-1) a \right| \right) \right\|_{L^\infty_{loc}(\R)}
	\| 1-\chi \|_{L^1(\R)}^{1/p}.
\end{gather*}
\end{proof}

\begin{remark}
\label{rem:reg1} In terms of the regularity needed, among other things, in the latter proofs in this paper
we use often the fact, proved in Lemma \ref{lem:a3bis}, that $r\in H^1(\R )$.
Another fact, used only to prove inequality \eqref{eq:extR21}, is  that
the Wronskians in \eqref{eq:wronsk} 
have bounded  derivatives  up to order 2 in  a small  neighborhood of $z=1$  in $\R$.
For both facts it is sufficient to require that  $q_0 \in \tanh(x) + \Sigma_4$.
\end{remark}

We conclude this subsection with a result  on  small perturbations of an $N$-soliton solution.
\begin{lemma} \label{lem:numbzeros}
Given an $M$-soliton $q^{(sol),M}(x,t)$ and initial data $q_0(x)$ satisfying the hypotheses of Theorem \ref{thm:main2} the number of solutions
$z\in \C _+$ of $\det \left[ \psi_1^-(z;x,0), \psi_2^+( z;x,0) \right] =0$,
where the Jost functions correspond to $q_0(x)$, is at least $M$ and is finite when $q_0$ satisfies the hypotheses of Theorem \ref{thm:main2} for $\varepsilon _0$ small enough.
Furthermore \eqref{eq:prop1} holds.
\end{lemma}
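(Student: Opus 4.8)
The plan is to regard the zeros in $\C^+$ of the Wronskian $W(z):=\det[\psi_1^-(z;x,0),\psi_2^+(z;x,0)]$ generated by $q_0$ --- equivalently, since $W(z)=(1-z^{-2})a(z)$, the zeros of $a$ in $\C^+$ --- as a perturbation of the corresponding object $W^{(sol)}(z)=(1-z^{-2})a^{(sol)}(z)$ attached to the unperturbed $M$--soliton. Recall that $a^{(sol)}$ is reflectionless, so $|a^{(sol)}(z)|=1$ for $z\in\R\setminus\{0,\pm1\}$; its only zeros in $\overline{\C^+}$ are the $M$ simple zeros $z_0,\dots,z_{M-1}$, while at $z=\pm1$ the factor $(1-z^{-2})$ forces $W^{(sol)}$ to vanish \emph{simply} (with $a^{(sol)}$ finite and nonzero there). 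First I note that finiteness of the zero set of $W$ in $\C^+$ is immediate from Lemma~\ref{lem:simple}, since $q_0\in\tanh(x)+\Sigma_4\subset\tanh(x)+L^{1,2}(\R)$; no smallness of $\epsilon$ is needed for this part.

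For the remaining assertions I would run a Rouch\'e argument. By Lemmas~\ref{lem:jf1b} and \ref{lem:jf3} the maps $q\mapsto\psi_1^-(z;\cdot\,,0)$ and $q\mapsto\psi_2^+(z;\cdot\,,0)$ are locally Lipschitz into $L^\infty$ over $\overline{\C^+}\setminus\{0\}$ --- in particular uniformly across neighbourhoods of $\pm1$, which is where the finite first moment in Lemma~\ref{lem:jf3} is used --- so $\|W-W^{(sol)}\|_{L^\infty(K)}\le C_K\,\epsilon$ on every compact $K\subset\overline{\C^+}\setminus\{0\}$. From Lemmas~\ref{lem:zinfty} and \ref{lem:coeff} one has $a(z)\to1$ as $z\to\infty$ and $a(z)\to-1$ as $z\to0$ with rates uniform for $\epsilon$ small, so $W$ is zero--free in $\C^+$ for $|z|\ge R$ and for $|z|$ small; and on $\R\setminus\{0,\pm1\}$ the relation $|a|^2=1+|b|^2\ge1$ rules out zeros there too. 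Fixing $\delta>0$ so small that $\overline{D(1,\delta)}$, $\overline{D(-1,\delta)}$ are disjoint, avoid all $z_j$, and lie in $\{\delta\le|z|\le R\}$, and setting $K_\delta:=\{z\in\overline{\C^+}:\delta\le|z|\le R,\ \dist(z,\{\pm1\})\ge\delta\}$, the function $W^{(sol)}$ vanishes on $K_\delta$ only at the simple zeros $z_0,\dots,z_{M-1}$ and $m_\delta:=\min_{\partial K_\delta}|W^{(sol)}|>0$. Choosing $\varepsilon_0$ so that $C_{K_\delta}\varepsilon_0<m_\delta$ and also $C\varepsilon_0$ is smaller than $\min_{\partial D(z_j,\rho)}|W^{(sol)}|$ for fixed disjoint disks $D(z_j,\rho)\subset K_\delta$, Rouch\'e gives exactly $M$ zeros of $W$ in $K_\delta$, one near each $z_j$; hence $W$ has at least $M$ zeros in $\C^+$, any further zero lies in $D(1,\delta)\cup D(-1,\delta)$, and re-running Rouch\'e on $D(z_j,\rho)$ with $\rho$ of order $\epsilon$ shows the $M$ persistent zeros --- relabelled $z_{j+L}'$ after ordering by decreasing real part --- satisfy $|z_{j+L}'-z_j|\le C\epsilon$ for $0\le j\le M-1$.

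It remains to sharpen the location of the new zeros, to treat the coupling constants, and to settle the indexing. Since $W^{(sol)}$ has a \emph{simple} zero at $z=1$ we have $|W^{(sol)}(z)|\ge c\,|z-1|$ near $1$; thus at any zero $z'$ of $W$ near $1$, $c\,|z'-1|\le|W^{(sol)}(z')|=|(W-W^{(sol)})(z')|\le C\epsilon$, so $|z'-1|\le C\epsilon$, and symmetrically $|z'+1|\le C\epsilon$ near $-1$. For $\varepsilon_0$ small relative to the fixed positive gaps separating the $z_j$ from one another and from $\pm1$, ordering the full zero set by decreasing real part puts the zeros near $+1$ at indices $0,\dots,L-1$, the $M$ persistent zeros at indices $L,\dots,L+M-1$ matched to $z_0>\dots>z_{M-1}$, and the zeros near $-1$ at the remaining top indices --- exactly the bookkeeping behind \eqref{eq:prop1}. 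Finally $c_{j+L}'$ is built from the proportionality factor between $\psi_1^-$ and $\psi_2^+$ at $z_{j+L}'$ and from $a'(z_{j+L}')$, both locally Lipschitz in $q$ by the Jost--function lemmas and continuous in $z$, with $a'(z_{j+L}')$ bounded away from $0$ because $a^{(sol)}$ has a simple zero at $z_j$; combining $\|q_0-q^{(sol),M}(x,0)\|_{\Sigma_4}<\epsilon$ with $|z_{j+L}'-z_j|\le C\epsilon$ yields $|c_{j+L}'-c_j|\le C\epsilon$, completing \eqref{eq:prop1}. I expect the main obstacle to be the behaviour at $z=\pm1$: one must have a priori that the Jost functions, hence $W$, extend continuously up to $\pm1$ with non--degenerate Lipschitz dependence on $q$ there, and then leverage the simple vanishing of $W^{(sol)}$ at $\pm1$ to turn ``the new zeros sit in a $\delta$--disk'' into the quantitative bound $|z'\mp1|\le C\epsilon$; the rest is standard Rouch\'e and continuity.
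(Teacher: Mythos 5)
Your proposal is correct and follows essentially the same route as the paper: finiteness is taken from Lemma~\ref{lem:simple}, and the remaining assertions in \eqref{eq:prop1} are deduced from the local Lipschitz dependence of the Jost functions (Lemmas~\ref{lem:jf1b} and \ref{lem:jf3}) together with the explicit reflectionless form of $a^{(sol)}$ from \eqref{eq:afactor}, which is exactly what the paper's (much terser) proof invokes. Your Rouch\'e argument and the quantitative bound $|z'\mp1|\le C\epsilon$ obtained from the simple vanishing of $(1-z^{-2})a^{(sol)}(z)$ at $\pm1$ are a faithful, more detailed spelling-out of that sketch.
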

\begin{proof}
In Lemma~\ref{lem:simple} it is proved that when $q_0 - \tanh(x)  \in L ^{1,2}(\R )$ then the number of zeros is finite.
The other statements are elementary consequences of the theory which we review in Sections \ref{sec:jost}--\ref{sec:ab}, and specifically of the Lipschitz dependence of the Jost functions in terms of
$q_0$ and of the specific form of the determinants in the case of a multisoliton, which follows immediately from \eqref{eq:wron2} and the formula for $a(z)$, see \eqref{eq:afactor} below.
\end{proof}

\subsection{The discrete spectrum}
At any zero  $z=z_k \in \overline{\C^+}$ of $a(z)$ it follows from \eqref{eq:wron2} that the pair $\psi_1^-(  {z}_k; x  )$ and $\psi_2^+( {z}_k; x )$ are linearly related; the symmetry \eqref{eq:symm1} implies that $\psi_2^-(\overline{z}_k; x)$ and $\psi_1^+(\overline{z}_k; x)$ are also linearly related. That is, there exists a constant $\gamma_k \in \C$ such that
\begin{equation} \label{eq:gamma}
	\psi_1^-(z_k, x  )  = \gamma_k \psi_2^+(z_k; x),
	\qquad
	\psi_2^-(\overline{z}_k; x)  = \widebar \gamma_k \psi_1^+(\overline{z}_k; x).
\end{equation}
These $\gamma_k$ are called the \emph{connection coefficients} associated to the discrete spectral values $z_k$.

If $z_k \in \C^+$ then it follows that $\psi_1^-( {z}_k; x)$ and $\psi_2^-(\overline{z}_k; x)$ are $L^2(\R)$ eigenfunctions of \eqref{eq:zs} with eigenvalue $\lambda(z_k)$ and $\widebar \lambda(z_k)$ respectively.
If $z_k \in \R$ then $\psi( {z}_k; x)$ is bounded but not $L^2(\R)$ and we say that $z_k$ is an embedded eigenvalue.
However, it follows from \eqref{eq:coeff3} and \eqref{eq:as01} that $|a(z)| \geq 1$ for $z \in \R \backslash \{ -1,1 \}$, so the only possible embedded eigenvalues are $\pm 1$.
Then as \eqref{eq:zs} is self-adjoint, the non-real zeros of $a(z)$ in $\C^+$ are restricted to the unit circle, i.e.,  $|z_k| = 1$, so that $\lambda(z_k)$ is real. The following lemma demonstrates that, unlike the case of vanishing data for focusing NLS, the discrete spectral data takes a very restricted form.

\begin{lemma}\label{lem:simple}
	Let $q - \tanh(x) \in L^{1,2}(\R)$. Then 
	\begin{enumerate}[1.]
		\item The zeros of $a(z)$ in $\C^+$ are simple and finite.
		\item At each $z_k$, a zero of $a(z)$:
			\begin{enumerate}[i.]
				\item
				$\pd{a}{\lambda} (z_k)$ and  $\gamma_k$ are pure imaginary;
				\item their arguments satisfy
				\begin{equation}\label{eq:gamma sign}
					\sgn ( -\im \gamma_k) = - \sgn \( - \im \pd{a}{\lambda}(z_k) \).
				\end{equation}
			\end{enumerate}
	\end{enumerate}
\end{lemma}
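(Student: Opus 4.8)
The plan is to reduce the whole lemma to the single identity
\begin{equation}\label{eq:key-simple}
	\gamma_k\,\partial_\lambda a(z_k)\;=\;\frac{\|\phi\|_{L^2(\R)}^2}{2\,\Im z_k}\;>\;0,
	\qquad \phi:=\psi_1^-(z_k;\,\cdot\,),
\end{equation}
valid at every zero $z_k\in\C^+$ of $a$. Here $\phi\not\equiv0$ is an $L^2(\R)$ eigensolution of \eqref{eq:zs}: by \eqref{eq:gamma} it also equals $\gamma_k\psi_2^+(z_k;\cdot)$, and since $|z_k|=1$ forces $\Im\zeta(z_k)>0$, the two representations show $\phi$ decays exponentially as $x\to-\infty$ and as $x\to+\infty$; moreover $\phi_1\not\equiv0$ by \eqref{eq:symm23}. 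Granting \eqref{eq:key-simple}, the right-hand side is a strictly positive real number, so $\partial_\lambda a(z_k)\neq0$; since $z\mapsto\lambda(z)$ is biholomorphic near $z_k$ (because $\lambda'(z_k)=\tfrac12(1-z_k^{-2})\neq0$ as $z_k\neq\pm1$) this means $a$ has a \emph{simple} zero at $z_k$, the first half of part 1. For part 2, applying the reflection symmetry \eqref{eq:symm23} to $\psi_1^-=\gamma_k\psi_2^+$ at $z=z_k$ — using $\overline{z_k^{-1}}=z_k$ and the opposite signs that \eqref{eq:symm23} carries for $\psi_1^-$ and $\psi_2^+$ — gives $-\psi_1^-(z_k;\cdot)=\overline{\gamma_k}\,\psi_2^+(z_k;\cdot)$, hence $\overline{\gamma_k}=-\gamma_k$, so $\gamma_k\in\im\R\setminus\{0\}$; then \eqref{eq:key-simple} forces $\partial_\lambda a(z_k)\in\im\R\setminus\{0\}$ as well, which is 2(i). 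Writing $\gamma_k=\im c$ and $\partial_\lambda a(z_k)=\im d$ with $c,d\in\R\setminus\{0\}$, \eqref{eq:key-simple} reads $-cd>0$, i.e. $\sgn c=-\sgn d$; since $-\im\gamma_k=c$ and $-\im\partial_\lambda a(z_k)=d$ this is precisely \eqref{eq:gamma sign}, giving 2(ii).

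To prove \eqref{eq:key-simple} I would run the classical squared-eigenfunction computation. With $\mathcal L(z)=\im\sigma_3(Q-\lambda(z))$ and $u(z;x)=\psi_1^-(z;x)$, so that $u_x=\mathcal L u$, differentiation in $z$ gives $(\partial_z u)_x=\mathcal L\,\partial_z u-\im\lambda'(z)\sigma_3 u$; since $\Tr\mathcal L=0$ and $\det[\,v\mid\sigma_3 v\,]=-2v_1v_2$ for every $v\in\C^2$, the $2\times2$ determinant bracket satisfies $\partial_x\det[\,u\mid\partial_z u\,]=2\im\lambda'(z)\,u_1u_2$. Evaluating at $z=z_k$ (so $u=\phi$) and integrating over $\R$, the boundary contribution at $-\infty$ vanishes because $\phi$ and $\partial_z\psi_1^-(z_k;\cdot)$ both decay there — the latter because the polynomial factor produced by $\partial_z e^{-\im\zeta x}$ is dominated by the exponential decay once one knows $\partial_z m_1^-$ is bounded near $-\infty$, which is exactly the content of (the minus-case analogue of) Lemma~\ref{lem:jf1b} with $n=1$, and is the place where the hypothesis $q-\tanh(x)\in L^{1,2}(\R)$ is used. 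A short computation with the $+\infty$ asymptotics of the Jost functions (using $\lim_{x\to+\infty}m_1^-(z;x)=a(z)(e_1+z^{-1}e_2)$, $a(z_k)=0$, and $\phi=\gamma_k\psi_2^+(z_k;\cdot)$) identifies the remaining boundary term as $-\gamma_k\,a'(z_k)(1-z_k^{-2})$, $a'=\partial_z a$. Thus $-\gamma_k a'(z_k)(1-z_k^{-2})=2\im\lambda'(z_k)\int_\R\phi_1\phi_2\,dx$, and cancelling $\lambda'(z_k)=\tfrac12(1-z_k^{-2})$ yields $-\gamma_k a'(z_k)=\im\int_\R\phi_1\phi_2\,dx$. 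Finally \eqref{eq:symm23} gives $\phi_2=-\overline{z_k}\,\overline{\phi_1}$, so $\int_\R\phi_1\phi_2\,dx=-\overline{z_k}\,\|\phi_1\|_{L^2}^2$ and $\|\phi\|^2=2\|\phi_1\|^2$; converting $a'(z_k)$ to $\partial_\lambda a(z_k)$ via $\lambda'$ and using $z_k(1-z_k^{-2})=2\im\,\Im z_k$ produces \eqref{eq:key-simple}.

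It remains to prove finiteness of the zero set of $a$ in $\C^+$. By \eqref{eq:coeff3} one has $|a|\ge1$ on $\R\setminus\{-1,1\}$, by \eqref{eq:as01} $a(z)\to\mp1$ as $z\to0$, and $a(z)\to1$ as $z\to\infty$, so all zeros stay in a fixed annulus; combined with the self-adjointness argument already recorded before the lemma, every zero lies on the \emph{compact} arc $\partial D(0,1)\cap\overline{\C^+}$. Since $a$ is analytic and not identically zero on $\C^+$, its zeros are isolated, so they could accumulate only at the endpoints $\pm1$ of the arc. For $q-\tanh(x)\in L^{1,2}(\R)$, Lemma~\ref{lem:jf3} with $n=1$ makes $D(z):=\det[\psi_1^-(z;x),\psi_2^+(z;x)]$ of class $C^1$ up to $z=\pm1$, and $a(z)=z^2D(z)/(z^2-1)$. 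If $D(\pm1)=a_\pm\neq0$, then by \eqref{eq:coeff sing} $a$ has a pole at $\pm1$ and hence no zero in a punctured neighborhood; if $D(\pm1)=0$, differentiating $(z^2-1)a(z)=z^2D(z)$ at $z=\pm1$ gives $D'(\pm1)=\pm2\,a(\pm1)$, and $a(\pm1)\neq0$ because $b$ is then bounded at $\pm1$ and $|a(\pm1)|^2-|b(\pm1)|^2=1$ by continuity, so $D(z)=D'(\pm1)(z\mp1)+o(z\mp1)$ with $D'(\pm1)\neq0$, forcing $a(z)\to D'(\pm1)/2\neq0$ and again excluding zeros near $\pm1$. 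Hence the zero set is finite, completing part 1.

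The main obstacle is the boundary-term bookkeeping in the derivation of \eqref{eq:key-simple}: one must justify carefully that $\partial_z\psi_1^-(z_k;\cdot)$ decays at $-\infty$ — the step that genuinely needs the weight $\langle x\rangle^2$, i.e. $L^{1,2}$ rather than merely $L^1$ — and identify its leading exponentially growing term at $+\infty$, which is what feeds the factor $\partial_\lambda a(z_k)$ into the identity. Everything else (the determinant identity, the restriction of the discrete spectrum to $\partial D(0,1)$ via self-adjointness, and the consequences of the symmetry \eqref{eq:symm23}) is either already in hand or routine.
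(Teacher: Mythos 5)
Your proposal is correct, and your key identity $\gamma_k\,\partial_\lambda a(z_k)=\|\phi\|_{L^2}^2/(2\Im z_k)$ is exactly equivalent to the formula \eqref{eq:dera} the paper derives; the deductions that $\gamma_k\in\im\R$ (via \eqref{eq:symm23}) and the sign relation \eqref{eq:gamma sign} then follow just as in the paper. Where you genuinely diverge is the route to that identity and the organization of the finiteness argument. The paper differentiates the Wronskian representation \eqref{eq:wron2} in $\lambda$ and computes $\partial_x$ of the two mixed determinants $\det[\partial_\lambda\psi_1^-,\psi_2^+]$ and $\det[\psi_1^-,\partial_\lambda\psi_2^+]$; since by \eqref{eq:gamma} both columns of each determinant decay exponentially at the relevant infinities, all boundary terms vanish and no asymptotic identification at $+\infty$ is ever needed. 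Your computation with $\det[\psi_1^-,\partial_z\psi_1^-]$ instead requires extracting the boundary contribution at $+\infty$, which rests on the identification $\lim_{x\to+\infty}m_1^-(z;x)=a(z)(e_1+z^{-1}e_2)$ for $z\in\C^+$ together with its $z$-derivative at $z_k$; this is classical, but it is not among the facts established in the paper, and the interchange of $\partial_z$ with the limit $x\to+\infty$ (locally uniform convergence plus Cauchy estimates) is exactly the bookkeeping you flag without carrying out --- the paper's choice of Wronskians is designed to sidestep it. Your finiteness argument uses the same ingredients as the paper's (continuity of $D=\det[\psi_1^-,\psi_2^+]$ and $D'$ up to $\pm1$ from Lemma~\ref{lem:jf3} with $n=1$, the relation $D'(\pm1)=\pm2a(\pm1)$ coming from \eqref{eq:ida}, the proportional residues \eqref{eq:coeff sing}, and $|a|^2=1+|b|^2$), but arranges them as a direct dichotomy excluding accumulation of zeros at $\pm1$ rather than the paper's contradiction with $|a|\ge1$ on $\R\setminus\{-1,0,1\}$; this variant is fine and arguably a bit more direct. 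One small correction of emphasis: the boundedness of $\partial_z m_1^-(z_k;\cdot)$ at $-\infty$ only needs Lemma~\ref{lem:jf1b} with $n=1$, i.e.\ the weight $L^{1,1}$; the hypothesis $q-\tanh(x)\in L^{1,2}(\R)$ is what the finiteness step consumes, since Lemma~\ref{lem:jf3} with $n=1$ requires $q-\tanh(x)\in L^{1,n+1}(\R)$.
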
	

\begin{proof}
 	Suppose $z_k$ is a zero of $a(z)$, and $\gamma_k$ the connection coefficient in \eqref{eq:gamma}. Then as $z_k$ lies on the unit circle we have $\overline{z_k^{-1} } = z_k$. Applying \eqref{eq:symm23}
to \eqref{eq:gamma}	gives
\begin{equation*}
	\begin{gathered}
		\overline{ \psi_1^-(\overline{z}_k^{-1}; x ) }  = \widebar \gamma_k \overline{ \psi_2^+(\overline{z}_k^{-1}; x ) } \\
		-z_k \sigma_1 \psi_1^-( {z}_k; x) = \widebar \gamma_k z_k \sigma_1 \psi_2^+( {z}_k; x) \\
		\psi_1^-( {z}_k; x) = - \widebar \gamma_k \psi_2^+( {z}_k; x).
	\end{gathered}
\end{equation*}	
Comparing this to \eqref{eq:gamma} shows that $\widebar \gamma_k = - \gamma_k$, or $\gamma_k \in i\R$.

To prove the remaining facts, note that $q - \tanh(x) \in L^{1,1}(\R)$ implies $\pd{a}{\lambda}$ exists and we have from \eqref{eq:wron2}
\begin{equation*}
	\pd{a}{\lambda} \bigg|_{z=z_k}
		= \frac{ \det \left[ \partial_\lambda \psi_1^-( {z}  ; x),\ \psi_2^+( {z} ; x) \right] +
		\det \left[  \psi_1^-( {z} ; x),\ \partial_\lambda \psi_2^+( {z} ; x) \right] }{1-z^{-2} } \bigg|_{z=z_k}  \, .
\end{equation*}
Using \eqref{eq:laxL} one finds that
\begin{align*}
	\pd{}{x} \det \left[ \partial_\lambda \psi_1^-,\ \psi_2^+ \right] &=
		\det \left[ \La_\lambda \psi_1^-,\ \psi_2^+ \right] +
		\det \left[ \La \partial_\lambda \psi_1^-,\ \psi_2^+ \right] +
		\det \left[ \partial_\lambda \psi_1^-,\ \La \psi_2^+\right] \\
	 &= -\im \det \left[\sigma_3 \psi_1^-,\ \psi_2^+ \right] 
	\shortintertext{and}
	\pd{}{x} \det \left[ \psi_1^-,\ \partial_\lambda  \psi_2^+ \right] &=
		\det \left[ \psi_1^-,\ \La_\lambda \psi_2^+ \right] +
		\det \left[ \La \psi_1^-,\ \partial_\lambda  \psi_2^+ \right] +
		\det \left[ \psi_1^-,\ \La \partial_\lambda  \psi_2^+\right] \\
	 &= -\im \det \left[\psi_1^-,\ \sigma_3  \psi_2^+ \right]
\end{align*}
where the cancellation in each equality follows from observing that $\adj \La = - \La$.\footnote{
$\adj M$ denotes the adjugate of the matrix $M$, it satisfies $M \adj M = (\det M) I$.}
Recalling that at each $z_k$ the columns are linearly related by \eqref{eq:gamma} and decay exponentially as $|x| \to \infty$,
\begin{align*}
	 \det \left[ \partial_\lambda \psi_1^-,\ \psi_2^+ \right]  &=
	 -\im \gamma_k \int^{x}_{-\infty} \det [ \sigma_3 \psi_2^+( {z}_k;s),
	  \psi_2^+({z}_k; s) ] ds, \\
	  \det \left[  \psi_1^-,\ \partial_\lambda \psi_2^+ \right]  &=
	 -\im \gamma_k \int_{x}^{\infty} \det [ \sigma_3 \psi_2^+({z}_k; s),
	  \psi_2^+( {z}_k; s) ] ds.
\end{align*}	
Then using \eqref{eq:symm23} to write $\psi_2^+( {z}_k; x) = z_k^{-1} \sigma_1 \overline{ \psi_2^+( {z}_k; x) }$
in the first column of the determinants, we have, after putting the terms together,
\begin{equation}\label{eq:dera}
	\pd{a}{\lambda} \bigg|_{z = z_k}
	= \frac{- \im \gamma_k}{2 \zeta(z_k)} \int_\R  | \psi_2^+( {z}_k; x)  |^2 dx.
\end{equation}
Recalling that both $\gamma_k$ and $\zeta(z_k)$ are imaginary, \eqref{eq:dera} is both nonzero and imaginary.
The simplicity of the zeros of $a$ and the signature restriction on $\gamma_k$ follow immediately.

To prove that the number of zeros is finite, observe that if the number were infinite they would necessarily accumulate at one (or both) of $z = \pm 1$.
From \eqref{eq:wronsk}-\eqref{eq:a13} in Lemma~\ref{lem:a3bis} for $q - \tanh(x) \in L^{1,2}(\R)$ the functions
$$
	f_k(\theta) = \left | \partial_\theta^k \det[ \psi_1^-( e^{i\theta}; x), \psi_2^+( e^{i \theta}; x) ] \right|, 	\quad k = 0,1
$$
are continuous for $\theta \in [0,\pi]$.
Now if $z=1$ is an accumulation points there exist sequences $\theta^{(k)}_j$, $k=0,1,$ with $\lim_{j\to \infty} \theta^{(k)}_j = 0$ and $f_k(\theta^{(k)}_j) = 0$ for each
$j$.
It then follows from \eqref{eq:wron2} and the continuity in \eqref{eq:a13} that $a(z) = \littleo{1}$ as $z \to 1$.
This contradicts the fact that $|a(z)|^2 \geq 1$ for $z \in \R \backslash \{-1,0,1\}$ by \eqref{eq:coeff3}.
The proof when $z=-1$ is an accumulation point is identical.
\end{proof}

\begin{remark}
The argument given above to prove that the number of zeros of $a(z)$ is finite is essentially the same as that given in \cite{DPMV}. Our contribution is a weaker condition on the potential in order to obtain smooth derivatives, which allows us to prove the result for $q_0-\tanh(x) \in L^{1,2}(\R)$ instead of the $L^{1,4}(\R)$ condition appearing in \cite{DPMV}.
\end{remark}

The zeros of $a(z)$ are simple, finite and restricted to the circle.
As $a(z)$ is analytic in $\C^+$, and approaches unity for large $z$, it admits an inner-outer factorization, see  \cite{FT} p.50, which using \eqref{eq:coeff3} takes the form
\begin{equation}\label{eq:afactor}
	a(z) = \prod_{k=0}^{N-1} \( \frac{ z - z_k}{z - \widebar z_k} \)
	\exp \( -\frac{1}{ 2 \pi \im} \int_\R \frac{ \log (1 - |r(s)|^2 )}{s-z} ds\) ,
\end{equation}
where $\{ z_k \}_{k=0}^{N-1}$ are the zeros of $a(z)$ in $\C^+$. This trace formula implies a dependence between the discrete spectrum $z_k$ and the reflection coefficient. Using  \eqref{eq:as01},
$a(0):=\lim _{z\to 0}a(z)=-1$, which gives
\begin{equation} \label{eq:thetacond}
	 \prod_{k=0}^{N-1} z_k^2
	 = a(0) \exp \( \frac{1}{ 2 \pi \im} \int_\R \frac{ \log (1 - |r(s)|^2 )}{s} ds\)  =
- \exp \( \frac{1}{ 2 \pi \im} \int_\R \frac{ \log (1 - |r(s)|^2 )}{s} ds\)  .
\end{equation}
The more general case  $a(0) =e^{\im \theta}$ is  the ``$\theta$-condition"  in \cite{FT}, formula  (7.19) in Ch. 2.
\subsection{Time evolution of the scattering data}
\label{subsec:tev}
Thus far we have considered only fixed potentials $q = q(x)$. The advantage of the inverse scattering transform is that if $q(x,t)$  evolves according to \eqref{eq:nls} then the evolution of the scattering data is linear and trivial
as we  see now.

By  Theorem \ref{thm:existence} we have $q(x,t)-\tanh \left ( x \right ) \in  C^1([0,\infty ) ,\Sigma _2) $. It can be shown that this implies that
the Jost functions $m^{\pm}_j( z;x,t)$ in Sect.~\ref{sec:jost} are differentiable in $t$
with $\partial _{t}m^{\pm}_j(z;x,t)\stackrel{x\to \pm \infty}{\rightarrow}0$.
This can be seen applying $\partial _t$ to \eqref{eq:eqm11}--\eqref{eq:eqm12}
and obtaining a  Volterra equation for  $\partial _{t}m^{\pm}_j(z;x,t)$.
By standard arguments, see for example \cite{FT,DZ2,GZ},  which we sketch now, the evolution of the scattering coefficients and discrete data are as follows:
\begin{equation} \label{eq:coeffevo}
	\begin{aligned}
		a(z,t) &= a(z,0) , \\
		b(z,t) &= b(z,0) e^{-4 \im \zeta(z) \lambda(z) t}  ,\\
		r(z,t) & = r(z,0) e^{-4 \im \zeta(z) \lambda(z) t} .
	\end{aligned}	
	\qquad
	\begin{aligned}
	 \qquad z_k(t) &= z_k(0) ,\\
	\gamma_k(t) &= \gamma_k(0)  e^{-4 \im \zeta(z) \lambda(z) t},
	\end{aligned}
\end{equation}
In particular here we sketch the first two equalities on the left.
Due to \eqref{eq:lax0} we can write   equalities  $(\im \partial _t +  \mathcal{B})\Psi ^{\pm} (z;x,t)=\Psi ^{\pm}(z;x,t)C_\pm ( z,t)   $, with the $\Psi ^{\pm}$ in \eqref{eq:JostMatrix}.
 This yields
\begin{equation*}
	\begin{aligned}
	&    \partial _t m ^{\pm} (z;x,t)
	+ \im    \mathcal{B}(z; x,t)   m ^{\pm} (z;x,t)
	= m ^{\pm} (z;x,t) e^{ -\im \zeta x\sigma _3 } C_\pm ( z,t)e^{ \im \zeta x\sigma _3 }  .
	\end{aligned}
\end{equation*}
Using
\begin{equation*}
	\begin{aligned}
	&  \lim_{x \to \pm \infty}  m^{\pm} (z;x,t)
	= 1\pm \frac{\sigma _1}{z}, \  \lim_{x \to \pm \infty} \partial _t m^{\pm} (z;x,t)=0
	\text{  and }
	\lim_{x \to \pm \infty} \mathcal{B}(z; x,t)
	= 2\lambda  \sigma _3(\mp \sigma _1 -\lambda )
\end{aligned}
\end{equation*}
we obtain that  $C_\pm ( z,t) $
  is diagonal  with
\begin{multline*}
	C_\pm ( z, t) = -2\im \lambda \left (1 \pm  \frac{\sigma _1}{z} \right )^{-1}
	\sigma _3 ( \lambda \mp \sigma _1 ) \left( 1 \pm  \frac{\sigma _1}{z} \right )
	=  -\frac{2\im \lambda z }{2 \zeta}
	\left (1 \mp  \frac{\sigma _1}{z}\right )^2  (\lambda \pm \sigma_1 ) \sigma_3 \\
	= -\frac{\im \lambda z}{\zeta} \left(1 +z^{-2}   \mp  2\frac{\sigma _1}{z} \right)
	 (\lambda \pm \sigma_1 ) \sigma _3
	=  -\frac{2\im \lambda  }{\zeta}
	\left ( \lambda   \mp    {\sigma _1} \right ) (\lambda \pm \sigma_1 ) \sigma _3
	= -\frac{2\im \lambda  }{   \zeta} (\lambda ^2 -1)  \sigma _3  = -2 \im \lambda \zeta \sigma _3.
\end{multline*}
  Applying now
  $\im \partial _t +  \mathcal{B}$
to the first equality in \eqref{eq:coeff}, that is to $\Psi ^{-}(z;x,t)=\Psi ^{+}(z;x,t)S ( z,t)$, after elementary computations we get $\partial _t S=2 \im \lambda \zeta [\sigma _3 ,S]$. This yields  the     left column  in \eqref{eq:coeffevo}.

\section{Inverse scattering: set  up of the Riemann Hilbert problem}
\label{sec:rh}

For $z \in \C \backslash \R$, for $q(x,t)$ the solution to  \eqref{eq:nls},
and for $m_j^{\pm }(  z ;x,t)$, $j=1,2,$ the (normalized) Jost functions we set
\begin{equation} \label{eq:defm}
	m(z) = m(z; x,t) := \begin{cases}
		\begin{pmatrix}
			\frac{m_1^{-}(z; x,t )}{a(z)}, & m_2^{+}(z; x,t )
		\end{pmatrix} &  z \in \C _+   \medskip \\
		\begin{pmatrix}
			m_1^{+}(z; x,t ), & \frac{m_2^{-}(z; x,t )} {\widebar a(\widebar z )}
		\end{pmatrix} & z \in \C _- \quad .
	\end{cases}
\end{equation}

\begin{lemma}\label{lem:barm}
We have
\begin{subequations} \label{eq:barm11}
	\begin{align}
	\label{eq:barm}	\overline{m(\widebar z)} = \sigma_1  m(z)  \sigma_1, \\
 	\label{eq:mzinv}	m( z^{-1}) = z m(z) \sigma_1.
 	\end{align}
\end{subequations}
 \end{lemma}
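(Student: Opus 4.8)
The plan is to reduce both identities in \eqref{eq:barm11} to symmetries of the Jost functions already recorded in Lemma~\ref{lem:symm} and Lemma~\ref{lem:coeff}, namely \eqref{eq:symm1}, \eqref{eq:symm2} and the first relation in \eqref{eq:symm31}. Fixing $(x,t)$ throughout, I would first rewrite those Jost symmetries in terms of the normalized columns $m_j^\pm$. Since $\psi_1^\pm = m_1^\pm e^{-\im\zeta(z)x}$ and $\psi_2^\pm = m_2^\pm e^{\im\zeta(z)x}$ by \eqref{eq:jf1}, and since $\overline{\zeta(\overline z)} = \zeta(z)$ and $\zeta(z^{-1}) = -\zeta(z)$ are immediate from \eqref{eq:zeta}, the scalar exponential factors cancel and \eqref{eq:symm1}--\eqref{eq:symm2} turn into $\overline{m_1^+(\overline z)} = \sigma_1 m_2^+(z)$, $\overline{m_2^-(\overline z)} = \sigma_1 m_1^-(z)$, $m_1^+(z^{-1}) = z\,m_2^+(z)$ and $m_2^-(z^{-1}) = -z\,m_1^-(z)$, each on the half plane where the relevant functions are defined. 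I would also note $\overline{z^{-1}} = \overline{z}^{-1}$, so that the first identity in \eqref{eq:symm31} reads $\overline{a(\overline{z^{-1}})} = -a(z)$.

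For \eqref{eq:barm} I would take $z \in \C^+$, so that $\overline z \in \C^-$, substitute the lower--half--plane line of \eqref{eq:defm} into $m(\overline z)$, conjugate, and apply the two conjugation identities above together with $\overline{\overline{a(z)}} = a(z)$. This turns $\overline{m(\overline z)}$ into the matrix whose columns are $\sigma_1 m_2^+(z)$ and $\sigma_1 m_1^-(z)/a(z)$, i.e. $\sigma_1$ times the matrix with columns $m_2^+(z)$ and $m_1^-(z)/a(z)$; since right multiplication by $\sigma_1$ interchanges the two columns of $m(z) = (m_1^-(z)/a(z),\, m_2^+(z))$, this is exactly $\sigma_1 m(z)\sigma_1$. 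The case $z \in \C^-$ then follows formally: replace $z$ by $\overline z$ in the identity just proved and take complex conjugates.

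For \eqref{eq:mzinv} I would again take $z \in \C^+$, so $z^{-1} \in \C^-$, feed the lower--half--plane line of \eqref{eq:defm} into $m(z^{-1})$, and substitute the two inversion identities for $m_1^+$ and $m_2^-$ together with $\overline{a(\overline{z^{-1}})} = -a(z)$. The first column becomes $z\,m_2^+(z)$ and the second becomes $(-z\,m_1^-(z))/(-a(z)) = z\,m_1^-(z)/a(z)$, so $m(z^{-1}) = z\,(m_2^+(z),\, m_1^-(z)/a(z)) = z\,m(z)\sigma_1$. The case $z \in \C^-$ follows by replacing $z$ with $z^{-1}$ in this identity.

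I do not expect any genuine obstacle; the statement is essentially bookkeeping. The two points needing care are: tracking the exponential factors so that they cancel, which is exactly where $\overline{\zeta(\overline z)} = \zeta(z)$ and $\zeta(z^{-1}) = -\zeta(z)$ are used; and, in \eqref{eq:mzinv}, checking that the minus sign coming from $\psi_1^-(z;x) = -z^{-1}\psi_2^-(z^{-1};x)$ in \eqref{eq:symm2} is cancelled precisely by the minus sign in $\overline{a(\overline{z^{-1}})} = -a(z)$, so that no stray sign survives in the second column.
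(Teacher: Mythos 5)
Your proof is correct and is essentially the paper's argument: the paper's own proof simply states that both identities are immediate consequences of the symmetries in Lemma~\ref{lem:symm} and Lemma~\ref{lem:coeff}, which is exactly the computation you carry out, including the cancellation of the exponential factors via $\overline{\zeta(\overline z)}=\zeta(z)$, $\zeta(z^{-1})=-\zeta(z)$ and of the sign from \eqref{eq:symm2} against that from \eqref{eq:symm31}.
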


 \begin{proof}	
 Both are immediate consequences of the symmetries contained in
 Lemma~\ref{lem:symm} and Lemma~\ref{lem:coeff}.
 \end{proof}

 Assume $q\in  \tanh(x) + L^1(\R  )$ and $q' (x)\in W^{1,1}(\R )$.
\begin{lemma} \label{lem:Minfty}
For $\pm \Im z >0$
\begin{align}
\label{eq:Minfty1}
	\lim_{z \to \infty}  z \(  m (z;x) - I \)
	&= \begin{pmatrix}
		-\im \int_x^\infty |q(y)|^2 -1\, dy & \overline{q(x)} \\
		q(x) & \im \int_x^\infty |q(y)|^2 -1\, dy
	\end{pmatrix} \quad ,  \\[10pt]
\label{eq:Mat01}	
	\lim_{z \to 0} \( m(z;x) - \frac{\sigma_1}{z} \)
	&= \begin{pmatrix}
		 \overline{q(x)} & - \im \int_x^\infty |q(y)|^2 -1\, dy \\
		 \im \int_x^\infty |q(y)|^2 -1\, dy & q(x)
	\end{pmatrix} \quad .
\end{align}
\end{lemma}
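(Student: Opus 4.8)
The plan is to read both asymptotics directly off the large-$z$ expansions of the Jost functions in Lemma~\ref{lem:zinfty}, the expansion \eqref{eq:as11} of the scattering coefficient $a$, and the discrete symmetry \eqref{eq:mzinv}; no new analysis is needed.

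First I would establish \eqref{eq:Minfty1}. Fix $x$. For $\Im z>0$ the definition \eqref{eq:defm} gives $m(z)=\bigl(a(z)^{-1}m_1^-(z;x),\ m_2^+(z;x)\bigr)$. I would insert the expansions \eqref{eq:winfty3}--\eqref{eq:winfty4} for the two columns together with $a(z)^{-1}=1-\im z^{-1}\int_\R(|q|^2-1)\,dy+\bigo{z^{-2}}$, which follows from \eqref{eq:as11} (noting the integral is real). Collecting the coefficient of $z^{-1}$ in each entry and using $\int_\R=\int_{-\infty}^x+\int_x^\infty$ to simplify the resulting combinations of integrals — e.g. $-\int_{-\infty}^x(1-|q|^2)\,dy-\int_\R(|q|^2-1)\,dy=\int_x^\infty(1-|q|^2)\,dy$ — reproduces the matrix on the right of \eqref{eq:Minfty1}. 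For $\Im z<0$ one uses instead $m(z)=\bigl(m_1^+(z;x),\ \widebar a(\widebar z)^{-1}m_2^-(z;x)\bigr)$, the expansions \eqref{eq:winfty1}--\eqref{eq:winfty2}, and $\widebar a(\widebar z)^{-1}=1-\im z^{-1}\int_\R(|q|^2-1)\,dy+\bigo{z^{-2}}$ (the conjugate of \eqref{eq:as11} evaluated at $\widebar z$); the identical bookkeeping yields the same limiting matrix, so \eqref{eq:Minfty1} holds for $\pm\Im z>0$.

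For \eqref{eq:Mat01} I would avoid redoing any estimate and instead invoke the symmetry \eqref{eq:mzinv} of Lemma~\ref{lem:barm} in the form $m(z)=z^{-1}m(z^{-1})\sigma_1$. As $z\to 0$ we have $z^{-1}\to\infty$ in the opposite half-plane, so by the previous step $m(z^{-1})=I+z\,M_\infty(x)+\bigo{z^2}$, where $M_\infty(x)$ is the matrix on the right of \eqref{eq:Minfty1}. Multiplying on the left by $z^{-1}$ and on the right by $\sigma_1$ gives $m(z)=\sigma_1/z+M_\infty(x)\sigma_1+\bigo{z}$, and since right multiplication by $\sigma_1$ swaps the two columns of $M_\infty(x)$, the constant term $M_\infty(x)\sigma_1$ is precisely the matrix in \eqref{eq:Mat01}; the displayed $\sigma_1/z$ singularity is of course consistent with Corollary~\ref{lem:w0}.

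There is no genuine obstacle here — the argument is pure bookkeeping on top of results already established. The only points that require care are the sign manipulations among the several $\int_x^\infty(|q|^2-1)\,dy$ terms, the conjugation in $\widebar a(\widebar z)$ in the lower half-plane, and the trivial but worth-stating observation that the $\bigo{z^{-2}}$ remainders of Lemma~\ref{lem:zinfty} become $\bigo{z}$ after division by $z$ and hence vanish in the limit $z\to 0$.
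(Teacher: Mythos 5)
Your route coincides with the paper's own proof, which consists of exactly two sentences: the behavior at infinity ``follows immediately from Lemma~\ref{lem:zinfty} and \eqref{eq:as11}'', and the behavior at the origin ``is then a consequence of Lemma~\ref{lem:barm}''. Your expansion-plus-symmetry argument fills in precisely that bookkeeping, and the upper half-plane computation as well as the reduction of \eqref{eq:Mat01} to \eqref{eq:Minfty1} via $m(z)=z^{-1}m(z^{-1})\sigma_1$ (with $M_\infty\sigma_1$ swapping the two columns) are correct.

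One sign needs fixing in the lower half-plane, and it is exactly at the spot you flag as requiring care. Writing $I:=\int_\R(|q(y)|^2-1)\,dy\in\R$, equation \eqref{eq:as11} gives $a(w)=1+\im I w^{-1}+\littleo{w^{-1}}$ as $w\to\infty$ in $\overline{\C^+}$, hence for $\Im z<0$
\begin{equation*}
\overline{a(\overline z)}=1-\im I z^{-1}+\littleo{z^{-1}},
\qquad
\bigl(\overline{a(\overline z)}\bigr)^{-1}=1+\im I z^{-1}+\littleo{z^{-1}},
\end{equation*}
whereas you asserted $\bigl(\overline{a(\overline z)}\bigr)^{-1}=1-\im I z^{-1}+\bigo{z^{-2}}$ (``the conjugate of \eqref{eq:as11} evaluated at $\overline z$'' is the expansion of $\overline{a(\overline z)}$ itself, not of its reciprocal; also the remainder from \eqref{eq:as11} is only $\littleo{z^{-1}}$, which is all one needs). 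With your sign the $(2,2)$ coefficient of $z^{-1}$ would be $-\im\int_{-\infty}^x(|q|^2-1)\,dy-\im I$, which differs from the claimed $\im\int_x^\infty(|q|^2-1)\,dy$ by $2\im I$, so ``the identical bookkeeping'' would not close; with the corrected sign one gets $\im\int_{-\infty}^x(1-|q|^2)\,dy+\im I=\im\int_x^\infty(|q|^2-1)\,dy$, and \eqref{eq:Minfty1} follows in $\C^-$ as stated. Everything else in the proposal stands.
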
	

\begin{proof}
The behavior at infinity follows immediately from Lemma~\ref{lem:zinfty} and \eqref{eq:as11}.
The behavior at the origin is then a consequence of Lemma~\ref{lem:barm}.
\end{proof}

It is an easy consequence of Lemma~\ref{lem:jf1}, Lemma~\ref{lem:zinfty},
Lemma~\ref{lem:coeff}, \eqref{eq:gamma}, and \eqref{eq:coeffevo} that $m(z;x,t)$ satisfies
the following Riemann Hilbert problem.
\begin{rhp} \label{rhp:m}
Find a $2 \times 2$ matrix valued function $m(z;x,t)$ such that
\begin{enumerate}[1.]
	\item $m$ is meromorphic for $z \in \C \backslash \R $.
	\item $\phantom{z}m(z;x,t) = I + \bigo{z^{-1}}$  as  $z \to \infty$. \\
		$z m(z;x,t) = \sigma_1 + \bigo{z}$  as  $z \to 0$
	\item The non-tangential limits
	$m_{\pm}(z; x,t) =\displaystyle\lim_{\C_\pm \ni \varsigma \to z}  m(\varsigma; x,t )$
	exist for any $z\in \R  \backslash \{  0\}$  and satisfy the \emph{jump relation} $m_+(z;x,t) = m_-(z; x,t) V(z)$ where
	\begin{equation} \label{eq:Mvx}
		V(z):= V_{tx}(z) = \begin{pmatrix}
    		1-|r(z)| ^2  &  - \overline{r (z)} e^{-\Phi(z; x,t)} \\
		r(z) e^{\Phi(z; x,t)} & 1
		\end{pmatrix},
	\end{equation}
	and
	\begin{equation*}
		\Phi(z) = \Phi(z; x,t) =  2 \im x \zeta(z) - 4 \im \zeta(z) \lambda(z) t =
		\im x (z-z^{-1})  - \im t (z^2 - z^{-2}).
	\end{equation*}
	\item $m(z;x,t)$ has simple poles at the points
	$\mathcal{Z} = \mathcal{Z}^+ \cup \overline{\mathcal{Z}^+}$,
	$\mathcal{Z}^+ = \{ z_k \}_{k=0}^{N-1} \subset \{ z=e^{i \theta}\, :\, 0 < \theta< \pi \}$,
	with residues satisfying
	\begin{equation} \label{eq:resm}
	 	\begin{aligned}
			\res_{z =z_k} m(z;x,t) &=
			\lim_{z \to z_k} m(z;x,t) \tril[0]{ c_k(x,t)}, \\
			\res_{z = \widebar z_k} m(z;x,t) &=
			\lim_{z \to \widebar z_k} m(z;x,t) \triu[0]{ \overline{c_k}(x,t) },
		\end{aligned}
	\end{equation}
	where
	\begin{equation}\label{explicit-V-k}
		c_k(x,t) = \frac{\gamma_k(0)  } {a'(z_k)} e^{\Phi(z_k; x,t)} 
		= c_k e^{\Phi(z_k; x,t)}, 
		\qquad 
		c_k = \frac{ \gamma_k(0)}{a'(z_k)} 
		= \frac{4 \im z_k}{ \int_\R   | \psi_2^+(z_k;x,0 )  |^2 dx} = \im z_k |c_k| .
	\end{equation}
	
\end{enumerate}
\end{rhp}	

\bigskip

The potential $q(x,t) $  is found by the  \textit{reconstruction
formula}, see Lemma \ref{lem:Minfty},
\begin{equation}\label{eq:invscat1}
q(x,t)    = \lim _{z\to \infty }z  \, m _{21}(z; x,t ).
\end{equation}
$N$--solitons are potentials corresponding to the case when $r(z)\equiv 0$.

\begin{lemma}\label{lem:det}
If a solution $m(z;x,t)$ of RHP~\ref{rhp:m} exists, it is unique if and only if it satisfies the symmetries of Lemma~\ref{lem:barm}, additionally for such a solution $\det m(z;x,t)  = 1 - z^{-2}$.
\end{lemma}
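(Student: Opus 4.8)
The plan is to prove the determinant identity first, assuming a solution $m$ exists that satisfies the symmetries of Lemma~\ref{lem:barm}, and then to deal with the uniqueness equivalence. For the determinant, set $d(z) = \det m(z;x,t)$. Away from the poles $\mathcal Z$ and the real axis, $m$ is analytic, so $d$ is analytic there. The jump relation $m_+ = m_- V$ together with $\det V(z) = 1 - |r(z)|^2 + |r(z)|^2 = 1$ (using the explicit form \eqref{eq:Mvx}) shows $d_+ = d_-$ across $\R\setminus\{0\}$, so $d$ extends analytically across the real axis. Next I would check that the poles of $m$ are removable for $d$: near $z_k$, the residue condition \eqref{eq:resm} says $m(z) = \left(\frac{A_k}{z-z_k} + B_k + \cdots\right)$ where the leading coefficient $A_k = \lim_{z\to z_k} m(z)\stril[0]{c_k(x,t)}$ has rank one (its columns are proportional), hence the $(z-z_k)^{-2}$ and $(z-z_k)^{-1}$ terms in $\det m$ vanish by the usual Wronskian-type computation (the coefficient of $(z-z_k)^{-1}$ is $\det[A_k, B_k\text{-column}] + \det[B_k\text{-column}, A_k]$ type expression which cancels because $A_k$ has a zero column or proportional columns); similarly at $\widebar z_k$. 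So $d$ extends to an entire function of $z$.

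Then I would pin down $d$ using the prescribed behavior at $z = \infty$ and $z = 0$. From condition 2 of RHP~\ref{rhp:m}, $m(z) = I + \bigo{z^{-1}}$ as $z\to\infty$, so $d(z) \to 1$; hence by Liouville $d$ is bounded, and more precisely $d(z) - 1 = \bigo{z^{-1}}$. But $d$ is entire and $d - 1$ decays at infinity, so $d - 1$ would be identically zero — except we must account for the behavior at the origin. From $z m(z;x,t) = \sigma_1 + \bigo{z}$ as $z\to 0$, we get $z^2 d(z) = \det(\sigma_1 + \bigo z) = \det\sigma_1 + \bigo{z} = -1 + \bigo z$, so $d(z) = -z^{-2} + \bigo{z^{-1}}$ near $0$; that is, $d$ has a double pole at the origin, so it is not entire but rather meromorphic on $\C$ with a single double pole at $0$. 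Thus $d(z) + z^{-2}$ is entire, $\bigo{z^{-1}}$ at infinity, hence $\equiv 0$ — wait, I should be more careful: $d(z) + z^{-2} \to 1$ at infinity, is entire, so it is the constant $1$, giving $d(z) = 1 - z^{-2}$ as claimed. (The cleaner bookkeeping: $d - (1 - z^{-2})$ is entire, vanishes at infinity, hence is zero.)

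For the uniqueness part I would argue both directions. If $m$ is the unique solution, then since $\sigma_1\overline{m(\widebar z)}\sigma_1$ and $z^{-1} m(z^{-1})\sigma_1$ are also solutions of RHP~\ref{rhp:m} (one checks each RHP condition transforms correctly under these involutions, using that $r$, the jump, the normalizations at $0,\infty$, and the residue relations \eqref{eq:resm} are all compatible with the symmetries — this is essentially the content of Lemma~\ref{lem:barm} applied to the constructed $m$ from \eqref{eq:defm}), uniqueness forces $m$ to equal each of them, i.e. $m$ satisfies \eqref{eq:barm11}. Conversely, if a solution satisfying the symmetries exists, I would show it is the unique solution: given any other solution $\widetilde m$, the ratio $R(z) = m(z)\widetilde m(z)^{-1}$ (well-defined since $\det\widetilde m = 1 - z^{-2} \neq 0$ off $\{0,\pm1\}$ by the determinant computation applied to $\widetilde m$, and one checks the apparent singularities at the poles and at $\pm 1$ are removable because the residue structure is shared) has no jump across $\R$, is entire, bounded (equals $I$ at infinity), hence $\equiv I$ by Liouville.

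The main obstacle I expect is the careful verification that the candidate solutions obtained by applying the two involutions (complex conjugation composed with $\sigma_1$-conjugation, and inversion $z\mapsto z^{-1}$) genuinely satisfy \emph{all four} conditions of RHP~\ref{rhp:m} — in particular that the residue conditions \eqref{eq:resm}–\eqref{explicit-V-k} are preserved, which requires tracking how $c_k(x,t)$ behaves under $z_k \mapsto \widebar z_k$ and $z_k \mapsto z_k^{-1} = \widebar z_k$ and using that $|z_k| = 1$ and $c_k = \im z_k|c_k|$; and, for the converse direction, checking that $R(z) = m\widetilde m^{-1}$ really has only removable singularities at the poles $\mathcal Z$ and at the special points $0, \pm 1$ where $\det\widetilde m$ vanishes — this last point uses that both $m$ and $\widetilde m$ have the \emph{same} rank-one residue structure, so the principal parts cancel in the product, but it needs to be spelled out. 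The determinant computation itself is routine once the removability of the pole contributions is in hand.
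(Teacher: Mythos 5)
Your overall route coincides with the paper's: unit determinant of the jump, removability of the pole contributions to $\det m$ via the rank-one residue structure of \eqref{eq:resm}, a Liouville argument for the determinant and for uniqueness, and the two involutions $m\mapsto\sigma_1\overline{m(\widebar z)}\sigma_1$, $m\mapsto z\,m(z^{-1})\sigma_1$ for the ``unique $\Rightarrow$ symmetric'' direction. The genuine gap is in your determinant computation at the origin. From $z\,m(z)=\sigma_1+\bigo{z}$ you only obtain $z^2\det m(z)\to -1$, i.e.\ the coefficient of $z^{-2}$ in the Laurent expansion at $0$ is $-1$; this says nothing about the $z^{-1}$ coefficient. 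Hence at that stage all you may conclude is $\det m(z)=1+\alpha z^{-1}-z^{-2}$ for some constant $\alpha$, and your step ``$d$ is meromorphic with a single double pole at $0$, thus $d(z)+z^{-2}$ is entire'' (equivalently, that $d-(1-z^{-2})$ is entire) silently sets $\alpha=0$ without justification. The paper closes precisely this gap with the inversion symmetry \eqref{eq:mzinv}: taking determinants in $m(z^{-1})=z\,m(z)\sigma_1$ gives $\det m(z)=-z^{-2}\det m(z^{-1})$, which forces $\alpha=-\alpha=0$. Since you assumed the symmetries of Lemma~\ref{lem:barm} for this part, the missing ingredient is available to you, but it must actually be invoked; without it the identity $\det m=1-z^{-2}$ does not follow from the normalizations alone.

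A secondary point concerns the converse (``symmetric $\Rightarrow$ unique'') direction: you justify inverting the arbitrary competitor $\widetilde m$ by ``the determinant computation applied to $\widetilde m$,'' but that computation — even once repaired — uses the symmetries, which $\widetilde m$ is not assumed to satisfy; a priori one only knows $\det\widetilde m=1+\alpha z^{-1}-z^{-2}$ with unknown $\alpha$. The clean formulation, and what the paper's argument amounts to, is to invert the \emph{symmetric} solution $m$ (for which $\det m=1-z^{-2}$ is now known) and apply Liouville to $R=\widetilde m\,m^{-1}$, checking removability at the points of $\mathcal{Z}$, at $z=\pm1$ (the zeros of $\det m$), and at $z=0$ via the cofactor formula $m^{-1}=(\det m)^{-1}\sigma_2 m^{T}\sigma_2$, exactly the checks you flag at the end. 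With these two repairs your proposal matches the paper's proof; the removability of the poles of $\det m$ and the verification that the two involutions preserve the data (using $r(z^{-1})=\overline{r(z)}$ and $c_k=\im z_k|c_k|$) are handled in your sketch essentially as in the paper.
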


\begin{proof}

Suppose a solution $m(z)$ exists. It is trivial to verify using the symmetry $\overline{r(z)} = r(z^{-1})$ and the condition $\widebar{z_k} c_k \in i \R$ on the norming constants that
both $ \sigma_1 \overline{m(\widebar z)} \sigma_1$ and $ z m(z^{-1}) \sigma_1$ are solutions as well. So uniqueness immediately implies symmetry.

Suppose the solution $m$ possesses the symmetries. Taking the determinant of both sides of the jump relation gives $\det m _+ = \det m_-$ for $z\in \R$ since $\det V \equiv 1$. It follows from this, the normalization condition and the residue conditions that $\det m$ is rational in $z$ with poles at some subset of $\mathcal{Z} \cup \{0\}$. However, the form of the residue relation \eqref{eq:resm} implies that at each $p \in \mathcal{Z}$ a single column has a pole whose residue is proportional to the value of the other column. It follows then that $\det m$ is regular at each point $p \in \mathcal{Z}$. As $z \to 0$ the normalization condition gives $z^2 \det m(z) \to -1$. So $\det m = 1 + \alpha z^{-1} - z^{-2}$ for some constant $\alpha$. However, the symmetry \eqref{eq:mzinv} implies that $\det m(z) = - z^2 \det m(z^{-1})$ so $\alpha \equiv 0$.

Uniqueness then follows from applying Liouville's theorem to the ratio $m (\widetilde m)^{-1}$ of any two solutions $m$, $\widetilde m$ of RHP~\ref{rhp:m}, noting that at the origin we have
\[
\lim_{z \to 0} m(z) (\widetilde m(z) )^{-1} =
\lim_{z \to 0} (z^2-1)^{-1} z m(z) \sigma_2 z \widetilde m(z)^T \sigma_2
= -(\sigma_1 \sigma_2)^2 = I,
\]
where by elementary computation $ M ^{-1} =  (\det M) ^{-1}\sigma_2   M^T \sigma_2$ for any invertible
$2\times 2$ matrix.
\end{proof}

\section{ The long time analysis }
\label{sec:time}
\subsection{Step 1: Interpolation and conjugation}
\label{sec:step1}
In order to perform the long time analysis using the Deift-Zhou steepest descent method we need to perform two essential operations:
\begin{enumerate}[(i)]
	 \item interpolate the poles by trading them for jumps along small closed loops enclosing each pole;
	
	 \item use factorizations of the jump matrix along the real axis to deform the contours onto those on which the oscillatory jump on the real axis is traded for exponential decay.
\end{enumerate}

The second step is aided by two well known factorizations of the jump matrix $V$  in \eqref{eq:Mvx} :
\begin{gather}
	\label{factorizations}
	V(z) = \begin{pmatrix} 1 - |r(z) |^2 & - \overline{r(z)} e^{- \Phi } \\ r(z) e^{\Phi} & 1 \end{pmatrix}
		= b(z) ^{-\dagger} b(z) = B(z) T_0(z) B(z)^{-\dagger} \\
	\label{UL factor}
	b(z)^{-\dagger} = \triu{ -\overline{r(z)} e^{- \Phi}} \quad
	b(z) = \tril { r(z) e^{\Phi} }  \\
	\label{LDU factor}
 	B(z) = \tril { \frac { r(z)} { 1 - |r(z)|^2} e^\Phi },   \quad
	T_0(z) = (1- |r(z) |^2)^{\sigma_3}, \quad
	B(z)^{-\dagger} = \triu{ -\frac{ \overline{r(z)}}{1-|r(z)|^2} e^{-\Phi} }
 \end{gather}
where $A^\dagger$ denotes the Hermitian conjugate of $A$. Briefly, the leftmost term in the factorization can be deformed into $\C^-$, the rightmost factor into $\C^+$, while any central terms remain on the real axis.
These deformations are useful when they deform the factors into regions in which the corresponding off-diagonal exponentials $e^{\pm \Phi}$ are decaying. We will use a gently modified version of these factorizations to deform the jump contours, but first we introduce the pole interpolate which help account for these small changes.

Our method for dealing with the poles in the Riemann-Hilbert problem follows the ideas in \cite{DKKZ}, \cite{GT}, \cite{BKMM}.
To motivate the method we observe that on the unit circle the phase appearing in the residue conditions \eqref{eq:resm}-\eqref{explicit-V-k} satisfies
\begin{equation}\label{soliton phase}
	\Re(\Phi( e^{\im \theta}; x, t))=\Phi( e^{\im \theta}; x, t)  = - 4 t \sin \theta ( \xi - \cos \theta)  , \qquad \xi := \frac{x}{2t}	 .
\end{equation}
It follows that the poles $z_k \in  \mathcal{Z}$ are naturally split into three sets: those for which $\Re(z_k) < \xi$, corresponding to a connection coefficient $c_k(x,t) = c_k e^{\Phi(z_k,x,t)}$ which is exponentially decaying as $t \to \infty$; those for which $\Re(z_k) >\xi$, which have growing connection coefficients; and the singleton case $\Re(z_k) = \xi$ in which the connection coefficient is bounded in time. Given a finite set of discrete data $ \mathcal{Z} = \mathcal{Z}^+ \cup \overline{\mathcal{Z}^+}$, $\mathcal{Z}^+ = \{ z_k \in \C^+\,: \, k=0,1,\dots,{N-1}\}$ and $\overline{\mathcal{Z}^+}$ formed by the complex conjugates of $\mathcal{Z}^+$,
fix $\rho> 0$ small enough that
 \begin{equation}\label{conrho}
	\text{the sets $ | \Re (z - z_k) | \leq \rho$ are pairwise disjoint and
$\min\limits_{z_k \in \mathcal{Z^+} } \Im( z_k) > \rho$.} .
\end{equation}

We partition the set $\{0,\dots, {N-1} \}$ into the pair of sets
\begin{equation}\label{delta}
	\Delta = \{ j \, : \, \Re z_j > \xi  \} 
	\quad \text{and} \quad
	\nabla = \{ j \, : \, \Re z_j \leq \xi \}.
\end{equation}
These sets index all of the discrete spectra in the upper (and lower) half-plane. Those $j \in \Delta$ correspond to poles $z_j$ for which $| e^{\Phi(z_j)} | > 1$ and $j' \in \nabla$ to poles $z_{j'}$ for which $| e^{\Phi(z_{j'})} | \leq 1$. Additionally, we define
\begin{equation}\label{j0}
	j_0 = j_0(\xi) =
	\begin{cases}
		j & \text{if } | \Re (z_j) - \xi | < \rho \text{ for some } j \in \{0,\dots, {N-1} \} \\
		-1 & \text{otherwise}
	\end{cases}
\end{equation}
which is nonnegative only when some $z_{j_0}$ is near the line $\Re z = \xi$, so that $e^{\Phi(z_{j_0})} = \bigo{1}$.

The connection coefficients $c_j(x,t)$ for $j \in \Delta$ are exponentially large for $t \gg 1$ and for the purpose of steepest descent analysis we want our pole interpolate to ``exchange" the $e^\Phi$ in these residues for $e^{-\Phi}$ in the new jump matrix.

Define the function
\begin{equation}\label{T def}
	T(Z) = T(z, \xi) =
		\prod_{ k \in \Delta} \lp \frac{ z - z_k} {z z_k - 1} \rp
    		\exp \lp - \frac{1}{2\pi \im} \int_0^\infty  \log ( 1 - |r(s) |^2 )
		\lp \frac{1}{s-z} - \frac{1}{2s} \rp   ds \rp  \, .
\end{equation}
\begin{lemma}\label{lem:T}
The function $T(z,\xi)$ is meromorphic in $\C \backslash [0,\infty)$ with simple poles at the $\widebar z_k$ and simple zeros at the $z_k$ such that $\Re (z_k) > \xi$, and satisfies the jump condition
\begin{equation}\label{T jump}
	T_+(z,\xi) = T_-(x,\xi) (1 - |r(z)|^2), \qquad z \in ( 0, \infty).
\end{equation}
Additionally, the following propositions hold:
\begin{enumerate}[i.]
	\item $\overline{T( \widebar z, \xi)}  = T(z,\xi)^{-1} = T( z^{-1}, \xi)$;
	\item $\displaystyle T(\infty, \xi):=\lim_{z \to \infty} T(z, \xi)
		=  \lp \prod_{k \in \Delta} \widebar z_k \rp \,
		\exp \lp \frac{1}{4\pi \im} \int_0^\infty \frac{ \log(1 - |r(s)|^2)}{s} ds \rp $
		and $|T(\infty, \xi)|^2 =1$;

	\item $|T(z,\xi) | = 1$ for $z \leq 0$;

\item As $z\to \infty$ we have the asymptotic expansion
\begin{equation}\label{T asy}  T(z ,\xi) =
    T(\infty ,\xi)     \left ( I -z^{-1}  \left ( \sum_{k\in \Delta} 2\im \Im (z_k)  -
 \frac{1}{2\pi \im }   \int_0^\infty \log (1 - |r(s)|^2) ds\right  )   +o(z^{-1} ) \right  ) ;
\end{equation}

	\item  The ratio $\frac{a(z)}{T(z,\xi)}$ is holomorphic in $\C _+$ and there is a constant $ C (q_0)$ s.t.
 \begin{equation}\label{eq:lemT1}
	 \left | \frac{a(z)}{T(z,\xi)}\right |  \le C (q_0) \text{ for $z\in \C _+$ s.t. $\Re z >0$ }.
\end{equation}
 Additionally, the ratio extends as a continuous  function on $\R _+$  with $|\frac{a(z)}{T(z,\xi)}| =1$ for  $z \in (0, \infty)$.
\end{enumerate}
\end{lemma}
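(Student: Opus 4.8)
The plan is to factor $T = T_{\mathrm B}\,T_{\mathrm C}$, where $T_{\mathrm B}(z) = \prod_{k\in\Delta}\frac{z-z_k}{z z_k-1}$ is the finite Blaschke-type factor and $T_{\mathrm C}(z) = \exp\bigl(-\tfrac{1}{2\pi\im}\int_0^\infty \log(1-|r(s)|^2)\bigl(\tfrac{1}{s-z}-\tfrac{1}{2s}\bigr)ds\bigr)$, and to verify each assertion by inspecting the two pieces separately. First I would check that the defining integral converges: Lemma~\ref{lem:log bound} gives $\log(1-|r|^2)\in L^1(\R)$, near $s=0$ the estimate $r(s) = \bigo{s^2}$ from Lemma~\ref{lem:coeff} yields $\log(1-|r(s)|^2) = \bigo{s^4}$ so the subtracted $\tfrac{1}{2s}$ makes the integrand integrable at the origin, and $r(s) = \bigo{s^{-2}}$ controls $s\to\infty$. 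Hence $T_{\mathrm C}$ is holomorphic on $\C\backslash[0,\infty)$, while $T_{\mathrm B}$ is rational with simple zeros at the $z_k$, $k\in\Delta$, and simple poles at $z_k^{-1} = \widebar z_k$ (using $|z_k|=1$); since $\Im z_k>0$ these points lie off $[0,\infty)$, which gives the stated meromorphy and the zero/pole structure. The jump \eqref{T jump} then follows because $T_{\mathrm B}$ is analytic across $(0,\infty)$, so only $T_{\mathrm C}$ jumps, and the Plemelj--Sokhotski formula applied to its Cauchy integral (the $\tfrac1{2s}$ term contributing a $z$-independent constant) produces the factor $1-|r|^2$.

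For (i) I would compute directly. Using $\widebar z_k = z_k^{-1}$ one gets $T_{\mathrm B}(z^{-1}) = \prod_{k\in\Delta}\frac{z z_k-1}{z-z_k} = T_{\mathrm B}(z)^{-1}$, and since $\log(1-|r|^2)$ is real, conjugating the integral gives $\overline{T_{\mathrm C}(\widebar z)} = T_{\mathrm C}(z)^{-1}$; for $T_{\mathrm C}(z^{-1}) = T_{\mathrm C}(z)^{-1}$ I would change variables $s\mapsto s^{-1}$ in the integral and use $|r(s^{-1})| = |r(s)|$ (from \eqref{eq:symm31}) together with the partial fraction identity $\tfrac{z}{u(z-u)} = \tfrac1u - \tfrac1{u-z}$. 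Statement (ii) is the $z\to\infty$ limit: $T_{\mathrm B}(z)\to\prod_{k\in\Delta}z_k^{-1} = \prod_{k\in\Delta}\widebar z_k$ and $T_{\mathrm C}(z)\to\exp\bigl(\tfrac{1}{4\pi\im}\int_0^\infty\tfrac{\log(1-|r(s)|^2)}{s}ds\bigr)$, whose modulus is $1$ because the exponent is purely imaginary; equivalently $|T(\infty,\xi)|^2 = T(\infty)\,\overline{T(\infty)} = T(\infty)T(\infty)^{-1} = 1$ by (i). For (iii), for real $z$ each factor satisfies $\bigl|\tfrac{z-z_k}{z z_k-1}\bigr| = \tfrac{|z-z_k|}{|z-\widebar z_k|} = 1$, and for $z<0$ the integrand in $T_{\mathrm C}$'s exponent is real, so the exponent is purely imaginary and $|T|=1$. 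The expansion (iv) comes from Laurent-expanding both factors: $T_{\mathrm B}(z) = \bigl(\prod_{k\in\Delta}\widebar z_k\bigr)\bigl(1 - z^{-1}\sum_{k\in\Delta}2\im\Im z_k + \bigo{z^{-2}}\bigr)$ and $T_{\mathrm C}(z) = T_{\mathrm C}(\infty)\bigl(1 + \tfrac{1}{2\pi\im z}\int_0^\infty\log(1-|r(s)|^2)ds + \littleo{z^{-1}}\bigr)$, and multiplying yields \eqref{T asy}.

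For (v) I would combine the inner--outer factorization \eqref{eq:afactor} of $a$ with the definition of $T$. Using $\widebar z_k = z_k^{-1}$, the Blaschke part of $a$ divided by that of $T$ equals the unimodular constant $\prod_{k\in\Delta}z_k$ times the residual finite Blaschke product $\prod_{k\in\nabla}\frac{z-z_k}{z-\widebar z_k}$, which is holomorphic in $\C^+$ and bounded there by $1$ since $|z-\widebar z_k|^2 - |z-z_k|^2 = 4\Im z\,\Im z_k>0$. The exponential part of $a$ divided by that of $T$ equals a unimodular constant times $\exp\bigl(-\tfrac{1}{2\pi\im}\int_{-\infty}^0\tfrac{\log(1-|r(s)|^2)}{s-z}ds\bigr)$; this Cauchy integral runs over $(-\infty,0]$, hence the factor is holomorphic across $(0,\infty)$, and, since $\log(1-|r|^2)\in L^1$ and vanishes to high order at $s=0$, it stays bounded on $\{\Re z>0\}$. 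Together these give that $a/T$ is holomorphic in $\C^+$ and satisfies \eqref{eq:lemT1}. Finally, on $(0,\infty)$ the Blaschke factors are unimodular, and taking real parts in the exponent of the Cauchy factor gives $|T_+(z)|^2 = (1-|r(z)|^2)^{-1}$, which equals $|a_+(z)|^2$ by \eqref{eq:coeff31}; hence $|a/T|=1$ on $(0,\infty)$. The continuous extension to $\R_+$ holds because the (generically present) simple pole of $a$ at $z=1$ — coming from the $s\approx1$ singularity of $\log(1-|r(s)|^2)$ in $\int_0^\infty$ — is cancelled by the identical pole of $T$.

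The main obstacle, as flagged in the introduction, is the behavior near $z=\pm1$ in part (v), where generically $|r|=1$ and $\log(1-|r(s)|^2)\sim 2\log|s-1|$ is (only) logarithmically singular. The point to get right is that $a$ and $T$ are built from Cauchy transforms of the same density over overlapping contours, so the potentially dangerous factor appearing in $a$ near $z=1$ is exactly the one hidden in $T$ there and drops out of the ratio $a/T$; what survives is the Cauchy transform over $(-\infty,0]$, which is harmless for $\Re z>0$. Making this cancellation clean, and checking that \eqref{eq:lemT1} is uniform up to the positive real axis including as $z\to0$ (where one again invokes $r=\bigo{z^2}$), is the only step requiring more than bookkeeping; everything else is direct manipulation of the explicit formulas.
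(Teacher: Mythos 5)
Your proposal is correct and follows essentially the same route as the paper's proof: you verify (i)--(iv) by direct computation on the Blaschke and Cauchy--exponential factors using the symmetry \eqref{eq:symm31} and Lemma~\ref{lem:log bound}, and for (v) you combine \eqref{eq:afactor} with the definition of $T$ to arrive at exactly the representation \eqref{eq:lemT2}, bounding the residual Blaschke product by $1$ in $\C^+$ and the Cauchy integral over $(-\infty,0]$ on $\{\Re z>0\}$ via the $L^1$ bound on $\log(1-|r|^2)$ together with its boundedness near $s=0$, which is the same splitting the paper carries out explicitly. The only point treated more briefly than in the paper is the quantitative Poisson-kernel estimate for that last exponential, but you identify precisely the ingredients it uses, so nothing essential is missing.
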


\begin{proof}
From \eqref{T def} it's obvious that $T$ has simple zeros at each $z_k$ and poles at each $\widebar z_k$, $k \in \Delta$. The jump relation \eqref{T jump} follows from the Plemelj formula. The first symmetry property follows immediately from the symmetry \eqref{eq:symm31} of $r(z)$ .
The second and  third  properties are simple computations,
as is the fourth  property, using Lemma~\ref{lem:log bound} with $p=1$.
Finally, consider the ratio $\frac{a(z)}{T(z,\xi)}$.
Using the representation \eqref{eq:afactor} for $a(z)$ we can write
\begin{equation} \label{eq:lemT2}
	\frac{a(z)}{T(z,\xi)} =
	\Big( \prod_{j \in \Delta} z_j \Big)
	e^{ -\frac{1}{2\pi \im} \int_0^\infty \frac { \log(1 - |r(s)|^2)}{2s} ds }
	\prod_{k \in \nabla} \lp \frac{z- z_k}{ z - \widebar z_k} \rp
	\exp \lp -\frac{1}{2\pi \im} \int_{-\infty}^0 \frac{ \log(1 - |r(s)|^2) }{ s- z} ds \rp .
\end{equation}
 In the r.h.s. all factors before the last one  have absolute value $\le 1$ for $z\in \C _+$ while the real
 part of the exponential can be bounded as follows,
 \begin{multline*}
	\frac{ -\Im (z)}{2\pi } \lp \int _{-\infty} ^{-\frac{1}{2}}  + \int ^{0} _{-\frac{1}{2}} \rp
	\frac{\log(1 - |r(s)|^2 ) }{(s- \Re( z) )^2+ \Im (z)^2} ds  \\
	\le  \frac{4 \Im(z)}{1+4 \Im(z)^2} \| \log (1-|r|^2)\| _{L^1(\R _- )}
	+ 2^{-1}\| \log (1-|r|^2)\| _{L^\infty (-2^{-1} ,0  )}
\end{multline*}
where we  bound the 1st term of the r.h.s. using  Lemma~\ref{lem:log bound}
and the 2nd  using $\| r\| _{L^\infty (-2^{-1} ,0  )}  <1  $.
Obviously the function in \eqref{eq:lemT2}   extends in a continuous way to $\R_+$ where it has absolute value 1.
\end{proof}

We are now ready to implement the interpolations and conjugations discussed
at the beginning of this section.
\begin{figure}[t]
	\centering
		\includegraphics[width = .4\textwidth]{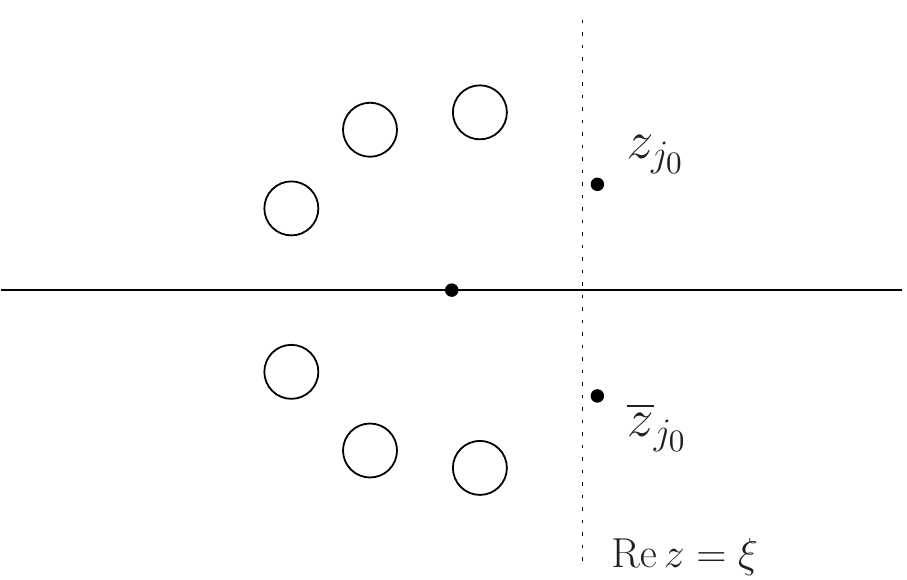}
		\hfill
		\includegraphics[width=.4\textwidth]{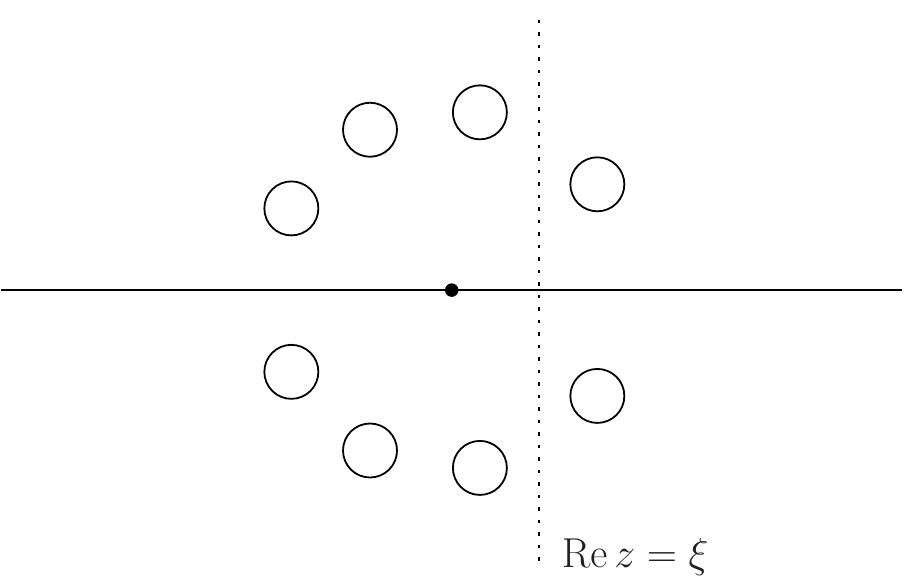}
	\caption{
	The contours defining the interpolating transformation $m \mapsto \mk{1}$ (c.f. \eqref{m1}).
	Around each of the poles $z_k \in \mathcal{Z}^+$ and its conjugate $\bar z_k \in \mathcal{Z}_-$
	we insert a small disk, oriented counterclockwise in $\C^+$ and clockwise in $\C^-$,
	of fixed radius $\rho$ sufficiently small such that the disks intersect neither
	each other nor the real axis. The set $\Delta$ (resp. $\nabla$) consist of those poles
	to the right (resp. left) of the line $\Re z = \xi$.
	If a pair $z_{j_0}, \ \overline{z}_{j_0}$ lies within $\rho$ of the line $\Re z = \xi$
	we leave that pair uninterpolated (left figure),
	otherwise all poles are interpolated (right figure).
	In either case, the singularity at the origin remains.
	\label{fig:m1contours}
	}
\end{figure}
Let $T(z)=T(z,\xi)$.
We remove the poles by the following transformation which trades the poles for jumps on small contours encircling each pole
\begin{equation}\label{m1}
	\mk{1}(z) = \begin{cases}
		T(\infty )^{-\sigma _3}    m(z) \tril{ -\frac{c_j e^{\Phi(z_j)}}{z-z_j}} T(z )^{ \sigma _3},
		& | z - z_j | < \rho , \ j \in \nabla \text{ and } | \Re(z_j ) - \xi | > \rho, \smallskip \\
	T(\infty )^{-\sigma _3} m(z) \triu{ -\frac{z-z_j}{ c_j e^{\Phi(z_j)}} }  T(z )^{ \sigma _3},
		& |z-z_j | < \rho, \ j \in \Delta \text{ and } | \Re(z_j ) - \xi |> \rho, \smallskip  \\
		T(\infty )^{-\sigma _3} m(z)
		\triu{ -\frac{ \widebar{c}_j e^{-\Phi_j(\widebar{z}_j)}}{z- \widebar{z}_j} }  T(z )^{ \sigma _3},
		&| z - \widebar{z}_j | < \rho, \ j \in \nabla \text{ and } | \Re(z_j ) - \xi |> \rho, \smallskip \\
	T(\infty )^{-\sigma _3} m(z)
		\tril{ -\frac{z- \widebar z_j}{  \widebar{c}_j e^{-\Phi(\widebar{z}_j)}} }  T(z )^{ \sigma _3},
		& | z - \widebar{z}_j | < \rho, \ j \in \Delta \text{ and } | \Re(z_j ) - \xi |> \rho, \\
		T(\infty )^{-\sigma _3} m(z) T(z )^{ \sigma _3} & \text{elsewhere}.
	\end{cases}
\end{equation}

Consider the following contour, depicted in Figure~\ref{fig:m1contours}:
\begin{equation}\label{eq:Sigma1}
	\Sigma^{(1)} = \R \cup \left (\bigcup_{  \substack{j \in \nabla \cup \Delta \\ j \neq j_0(\xi)}   }
	\{ z\in \C \,:\,    | z - z_j | = \rho \text{ or } | z - \widebar z_j | = \rho \} \right ).
\end{equation}
Here, $\R$ is oriented left-to-right and the disk boundaries are oriented counterclockwise in $\C^+$ and clockwise in $\C^-$.
\begin{lemma}
\label{lem:m1}
The Riemann-Hilbert problem for $\mk{1}(z)$ resulting from \eqref{m1} is
RHP~\ref{rhp:m1} formulated below. Furthermore, $\mk{1}(z)$
satisfies the symmetries of Lemma~\ref{lem:barm}.
\end{lemma}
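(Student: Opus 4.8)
The plan is to verify, one condition at a time, that the piecewise-defined function $\mk{1}$ of \eqref{m1} meets every requirement in the statement of RHP~\ref{rhp:m1} --- sectional meromorphy with the claimed pole set and jump contour $\Sigma^{(1)}$ of \eqref{eq:Sigma1}, the normalizations at $z=\infty$ and $z=0$, and the residue relations at whatever poles survive --- and then, separately, to confirm that $\mk{1}$ inherits the two symmetries of Lemma~\ref{lem:barm}. Since \eqref{m1} alters $m$ only by left multiplication with the constant $T(\infty)^{-\sigma_3}$, right multiplication with the sectionally analytic $T(z)^{\sigma_3}$, and, inside each small disk, an extra triangular factor, everything localizes, and I would treat in turn each pole $z_j,\widebar z_j$, each circle $|z-z_j|=\rho$, the real line, and the origin.

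\textbf{Removing the poles.} The essential computations are the pole cancellations. For $j\in\nabla$ with $j\ne j_0$, near $z_j$ the factor $T(z)^{\sigma_3}$ is holomorphic and invertible --- by Lemma~\ref{lem:T}, $T$ has zeros and poles only at the $z_k,\widebar z_k$ with $k\in\Delta$ --- so it suffices to check that $m(z)\stril{-c_j e^{\Phi(z_j)}/(z-z_j)}$ is holomorphic at $z_j$. Right multiplication by this lower-triangular factor fixes the second column of $m$ and subtracts $\tfrac{c_j(x,t)}{z-z_j}$ times that column from the first; the residue condition \eqref{eq:resm}--\eqref{explicit-V-k}, which says the first column of $m$ has residue $c_j(x,t)$ times the regular second column at $z_j$, makes this difference holomorphic. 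For $j\in\Delta$ the mechanism is dual: $T$ has a simple zero at $z_j$, and after right multiplication by $\striu{-(z-z_j)/(c_j e^{\Phi(z_j)})}$ the first column keeps its simple pole but is then multiplied by $T(z)$, while the new second column vanishes to first order at $z_j$ (again by the residue relation) and is multiplied by $T(z)^{-1}$; both products are holomorphic. The conjugate poles $\widebar z_j$ follow identically after applying $\overline{m(\widebar z)}=\sigma_1 m(z)\sigma_1$. In each disk the price is a new jump on the bounding circle: subtracting the ``elsewhere'' formula from the in-disk one, the common factor $m(z)$ drops out and the circle jump equals $T(z)^{-\sigma_3}W_j(z)^{\pm1}T(z)^{\sigma_3}$, with $W_j$ the triangular factor from \eqref{m1} (its power dictated by the disk's orientation); since $T$ is bounded and nonvanishing on the circle and $j\ne j_0$, this is a bounded matrix whose off-diagonal entry is $O(e^{-ct})$ for some $c>0$.

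\textbf{Jump on $\R$, normalizations, surviving residues.} On $\R\setminus\{0\}$ the ``elsewhere'' branch gives $\mk{1}_\pm=T(\infty)^{-\sigma_3}m_\pm T_\pm^{\sigma_3}$, so $V^{(1)}=T_-^{-\sigma_3}V T_+^{\sigma_3}$ with $V$ as in \eqref{eq:Mvx}. On $(-\infty,0)$ one has $T_+=T_-$ and $|T|=1$ (Lemma~\ref{lem:T}(iii)), so $V^{(1)}$ is $V$ with its off-diagonal entries rotated by the unimodular factors $T^{\mp2}$; on $(0,\infty)$ I would feed the Plemelj jump $T_+=T_-(1-|r|^2)$ into the factorization $V=B\,T_0\,B^{-\dagger}$, $T_0=(1-|r|^2)^{\sigma_3}$, of \eqref{factorizations}--\eqref{LDU factor} to put $V^{(1)}$ into the factored form recorded in RHP~\ref{rhp:m1}. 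For the normalizations, $m\to I$ and $T\to T(\infty)$ as $z\to\infty$ yield $\mk{1}\to I$, while $z\,m(z)\to\sigma_1$ and $T(z)\to T(\infty)^{-1}$ as $z\to0$ (the latter from $T(z^{-1})=T(z)^{-1}$), together with $\sigma_1 A^{\sigma_3}=A^{-\sigma_3}\sigma_1$, yield $z\,\mk{1}(z)\to\sigma_1$; in particular the simple pole at the origin persists with the right leading term. If $j_0\ge 0$, the pair $z_{j_0},\widebar z_{j_0}$ is left uninterpolated, and conjugating the residue condition \eqref{eq:resm} by $T(\infty)^{-\sigma_3}(\,\cdot\,)T(z_{j_0})^{\sigma_3}$ simply multiplies the norming constant $c_{j_0}(x,t)$ by $T(z_{j_0})^{2}$ (and correspondingly at $\widebar z_{j_0}$), which is the residue relation stated in RHP~\ref{rhp:m1}.

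\textbf{Symmetries, and the expected main obstacle.} For the symmetries I would first establish them on the ``elsewhere'' region by a direct computation: from $\overline{m(\widebar z)}=\sigma_1 m(z)\sigma_1$, $m(z^{-1})=z\,m(z)\sigma_1$, $\overline{T(\widebar z)}=T(z)^{-1}=T(z^{-1})$, $\overline{T(\infty)}=T(\infty)^{-1}$ (as $|T(\infty)|=1$), and $\sigma_1 A^{\sigma_3}\sigma_1=A^{-\sigma_3}$, one gets $\overline{\mk{1}(\widebar z)}=T(\infty)^{\sigma_3}\sigma_1 m(z)\sigma_1 T(z)^{-\sigma_3}=\sigma_1\mk{1}(z)\sigma_1$ and, likewise, $\mk{1}(z^{-1})=z\,\mk{1}(z)\sigma_1$. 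Then I would check that these two operations permute the four triangular branches of \eqref{m1} among themselves --- the $\C^-$ cases being exactly the symmetry images of the $\C^+$ ones --- using the constraint $\widebar z_k c_k\in i\R$ on the norming constants (already used in Lemma~\ref{lem:det}) and the symmetries of $T$ and of $e^{\Phi}$. The step I expect to be the real obstacle is the holomorphy of $\mk{1}$ at the $\Delta$-poles $z_j$ and their conjugates: there the singularity of $m$ is not removed by the triangular factor by itself, only through its interplay with the simple zero of $T$ at $z_j$, so the argument must combine the residue relation \eqref{eq:resm} with the zero count of Lemma~\ref{lem:T} and keep the orders of vanishing exactly matched. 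Everything else --- in particular writing $V^{(1)}$ in factored form on $(0,\infty)$ and bookkeeping the jump of $T$ --- is routine once \eqref{factorizations}--\eqref{LDU factor} is available.
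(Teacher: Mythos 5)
Your overall strategy is the same as the paper's: a direct, case-by-case verification of the conditions of RHP~\ref{rhp:m1} together with the two symmetries, and your treatment of the interpolated poles, the circle jumps, the real-axis factorization and the normalizations at $z=\infty$ and $z=0$ is correct (indeed more detailed than the paper, which declares most of these computations elementary and skips them). The genuine gap is in your last step, the residue conditions at the uninterpolated pair $z_{j_0},\widebar z_{j_0}$. You assert that one simply conjugates \eqref{eq:resm} by $T(\infty)^{-\sigma_3}(\,\cdot\,)T(z_{j_0})^{\sigma_3}$, multiplying the norming constant by $T(z_{j_0})^2$. That is correct only when $j_0\in\nabla$ and yields \eqref{eq:dism1a}. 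When $j_0\in\Delta$ --- which happens whenever the nearby pole lies on the side $\Re z_{j_0}>\xi$ of the critical line, cf.\ \eqref{j0} and \eqref{delta} --- the prescription fails because $T(z_{j_0})=0$: the matrix $T(z_{j_0})^{\sigma_3}$ is singular, and ``multiply $c_{j_0}$ by $T(z_{j_0})^2$'' would give a vanishing residue, which is not what RHP~\ref{rhp:m1} asserts. In that case the simple zero of $T$ cancels the pole of the first column of $m$ (in $\C^+$ that column is $m_1^-/a$, and $a$ and $T$ both vanish simply at $z_{j_0}$) and creates a pole in the \emph{second} column through the factor $T(z)^{-1}$; the residue of $\mk{1}$ at $z_{j_0}$ is $T(\infty)^{-\sigma_3}\bigl(0,\ m_2^+(z_{j_0})/T'(z_{j_0})\bigr)$, and to turn this into a residue relation one must invoke the proportionality $\psi_1^-(z_{j_0})=\gamma_{j_0}\psi_2^+(z_{j_0})$ from \eqref{eq:gamma} (together with \eqref{explicit-V-k}) so as to express $m_2^+(z_{j_0})$ in terms of the regular value $\lim_{z\to z_{j_0}} m_1^-(z)T(z)/a(z)=m_1^-(z_{j_0})T'(z_{j_0})/a'(z_{j_0})$. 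This is exactly what produces the flipped triangularity, the inverted constant $c_{j_0}^{-1}$ and the factor $T'(z_{j_0})^{-2}$ in \eqref{eq:dism1b}; none of these appear in your argument, and this computation is in fact the main nontrivial step of the paper's proof.

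The repair is close to something you already did: the column-swapping mechanism you used to prove holomorphy of $\mk{1}$ inside the $\Delta$-disks (simple pole of the first column against simple zero of $T$, vanishing of the new second column against the simple pole of $T^{-1}$) is precisely the mechanism at an uninterpolated $\Delta$-pole, except that with no triangular factor present the second column retains a genuine simple pole whose residue must be computed as above. The companion relation at $\widebar z_{j_0}$ then follows, as you say, from $\overline{\mk{1}(\widebar z)}=\sigma_1\mk{1}(z)\sigma_1$. With case (b) of item 4 of RHP~\ref{rhp:m1} established in this way, your verification matches the paper's.
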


\begin{rhp}\label{rhp:m1}
Find a $2 \times 2$ matrix-valued function $\mk{1}(z;x,t)$ such that
\begin{enumerate}[1.]
\item $\mk{1}(z;x,t)$ is meromorphic in $\C \backslash \Sigma^{(1)}$.

\item $\phantom{z} \mk{1}(z;x,t) = I + \bigo {z^{-1} }$ as $z \to \infty, \\
	z \mk{1}(z;x,t) = \sigma_1 + \bigo{z}$ as $z \to 0$.
	
\item The non-tangential boundary values $\mk{1}_\pm(z;x,t)$ exist for $z \in \Sigma^{(1)}$, and satisfy the jump relation $m_+(z;x,t) = m_-(z;x,t) \vk{1}(z)$ where
\begin{equation*}
	\vk{1}(z) = \begin{cases}
		\triu{ - \overline{r(z)} T(z)^{-2} e^{-\Phi} } \tril{  {r(z)} T(z)^{2} e^{\Phi} }
		& z \in (-\infty, 0) \smallskip \\
		\tril{  \frac{r(z)}{1-|r(z)|^2} T_-(z)^2 e^{\Phi} }
		\triu{ \frac{-\overline{r(z)}}{1-|r(z)|^2} T_+(z)^{-2}  e^{-\Phi} }
		& z \in (0, \infty) \smallskip \\
		\tril{ - \frac{ c_j}{z-z_j} T(z)^2 e^{\Phi(z_j)} }
		& | z - z_j | = \rho, \ j \in \nabla \smallskip \\
		\triu{ - \frac{ z-z_j}{c_j} T(z)^{-2} e^{-\Phi(z_j)}}
		& | z - z_j | = \rho, \ j \in \Delta \smallskip \\
		\triu{  \frac{   \overline{c_j}}{z-   \overline{z_j}} T(z)^{-2} e^{-\Phi(\widebar{z}_j)} }
		& | z - \widebar z_j | = \rho, \ j \in \nabla \smallskip \\
		\tril{  \frac{ z-  \overline{z_j}}{  \overline{c_j}} T(z)^{2} e^{\Phi(\widebar{z}_j)}}
		& | z - \widebar z_j | = \rho, \ j \in \Delta.
	\end{cases}
\end{equation*}		

\item If $(x,t)$ are such that there exist (at most one) $j_0 \in \{ 0,\dots, N-1 \}$
such that $| \Re z_{j_0} - \xi | \leq \rho$, $\xi = \frac{x}{2t}$,
then $\mk{1}(z;x,t)$ has simple poles at the points $z_{j_0}, \widebar z_{j_0} \in \mathcal{Z}$ satisfying one of the following alternatives:
\begin{enumerate}
	\begin{subequations}
	\item If $j_0 \in \nabla$,
		\begin{equation}\label{eq:dism1a}
			\begin{aligned}
			\res_{z_{j_0}} \mk{1}(z;x,t) &= \lim_{z \to z_{j_0}} \mk{1}(z;x,t)
			\tril[0]{ c_{j_0} T(z_{j_0})^2 e^{\Phi(z_{j_0})} }, \\
			\res_{\widebar z_{j_0}} \mk{1}(z;x,t) &= \lim_{z \to \widebar z_{j_0}} \mk{1}(z;x,t)
			\triu[0]{   \overline{c}_{j_0} \overline{T}(z_{j_0})^{ 2} e^{ \Phi(  z_{j_0})} },
			\end{aligned}	
		\end{equation}
	\item If $j_0 \in \Delta$,
		\begin{equation}
			\begin{aligned}\label{eq:dism1b}
			\res_{z_{j_0}} \mk{1}(z;x,t) &= \lim_{z \to z_{j_0}} \mk{1}(z;x,t)
			\triu[0]{ c_{j_0}^{-1}    T'(z_{j_0})   ^{-2} e^{ -\Phi(z_{j_0})} }, \\
			\res_{\widebar z_{j_0}} \mk{1}(z;x,t) &= \lim_{z \to \widebar z_{j_0}} \mk{1}(z;x,t)
			\tril[0]{  \overline{ c}_{j_0}^{-1}    \overline{T'}(z_{j_0})   ^{- 2} e^{-\Phi(  z_{j_0})} }.
			\end{aligned}	
		\end{equation}
	\end{subequations}
\end{enumerate}	
Otherwise, $\mk{1}$(z;x,t) is analytic in $\C \backslash \Sigma^{(1)}$.
\end{enumerate}
\end{rhp}

\bigskip

\begin{remark}
The function $T(z,\xi)$ and the transformation $m \mapsto \mk{1}$ defined by \eqref{m1} can be thought of in two parts. In the first step the Blaschke product in $T$ swaps the columns in which the poles $z_j,\ j \in \Delta$, appear and gives new connection coefficient proportional to $c_j(x,t)^{-1}$ as desired. The triangular factors in \eqref{m1} then interpolate the poles trading them for jumps on the disk boundaries $|z - z_j| = \rho$. In the second step, the Cauchy integral term in $T$ is responsible for removing the diagonal factor \eqref{LDU factor} from the jump matrix factorization $V = B T_0 B^{-\dagger}$ (cf. \eqref{factorizations}) on the half-line $(0,\infty)$.
Finally, we point out that factors $(z z_k -1)$ in the Blaschke product---instead of simply $(z - \widebar z_k)$---and the $\frac{1}{2s}$ term in the Cauchy integral are introduced so that $T$ satisfies property $i.$ in Lemma~\ref{lem:T} which is needed to preserve the symmetries in Lemma~\ref{lem:barm}.
\end{remark}

\textit{Proof of Lemma \ref{lem:m1}}. The proof consists of  a  lengthy but elementary series of computations which we sketch only partially.
First of all we start  with the symmetries of Lemma \ref{lem:barm}.
For instance, the region outside the disks in \eqref{m1}  is invariant
by the transformations $z\to \overline{z}$ and $z\to z ^{-1}$. We have
\begin{gather*}     \begin{aligned} &
   \overline{\mk{1}}(\overline{z} )=  \overline{T}(\infty)^{-\sigma _3}     \overline{m}(\overline{z} )   \overline{T}(\overline{z})^{ \sigma _3} ={T}(\infty)^{ \sigma _3} \sigma _1 {m}( {z} ) \sigma _1 {T}( {z})^{- \sigma _3}= \sigma _1 \mk{1}( {z} ) \sigma _1
\end{aligned} 
\shortintertext{and}
\begin{aligned} &
    {\mk{1}}( z ^{-1} )=   {T}(\infty)^{-\sigma _3}     {m}(z ^{-1})   {T}(z ^{-1})^{ \sigma _3} =z T(\infty)^{-\sigma _3}   {m}( {z} ) \sigma _1 {T}( {z})^{- \sigma _3}= z \mk{1}( {z} ) \sigma _1,
\end{aligned} 
\end{gather*}
where we have used the symmetries of $m(z)$ and of $T(z)$.
Using    also  the symmetries of $\Phi (z)$  these
equalities  can be similarly extended on the whole domain of $\mk{1}( {z} )$.

While Claim 1  and the 1st equality in Claim 2  in RHP~\ref{rhp:m1}  are  obvious
consequences of the corresponding ones  in RHP~\ref{rhp:m},
the 2nd  equality in Claim 2 follows from
\begin{equation*}
 \begin{aligned} &  z \mk{1}(z ) =   T(\infty)^{-\sigma _3}   z m(z )   T(z)^{ \sigma _3}  =  T(\infty)^{-\sigma _3}(   \sigma_1 + \bigo{z})  T(z ^{-1})^{ -\sigma _3} \\& =  T(\infty)^{-\sigma _3}(   \sigma_1 + \bigo{z}) (  T(\infty)+ \bigo{z} ) ^{-\sigma _3}= \sigma_1 + \bigo{z},
    \end{aligned}
\end{equation*}
where we used the symmetry and the expansion $T(z)= T(\infty)+ \bigo{z ^{-1}}$ as $z\to \infty$ in Claims $i$ and $iv$ of Lemma~\ref{lem:T} respectively.

We skip the proof of Claim 3 which is an immediate consequence of
\eqref{m1} and of Claim 3  in RHP~\ref{rhp:m}.
The proof of the 1st limit in  \eqref{eq:dism1a} follows immediately
from
\begin{multline*}
	\res_{z_{j_0}} \mk{1}(z ) =  	
	\res_{z_{j_0}}  T(\infty)^{-\sigma _3}  m(z )T(z)^{ \sigma _3} = \\
 	\lim_{z \to z_{j_0}}  T(\infty)^{-\sigma _3}  m(z ) T(z)^{ \sigma _3}T(z)^{ -\sigma _3}
 	\tril[0]{ c_{j_0}   e^{\Phi(z_{j_0})}} T(z)^{ \sigma _3}
	= \lim_{\mathclap{z \to z_{j_0} } }  \mk{1}(z ) \tril[0]{ c_{j_0} T(z_{j_0})^2  e^{\Phi(z_{j_0})}} .
\end{multline*}
 We now turn to the 1st limit in  \eqref{eq:dism1b}. We have
\begin{equation*}
				\begin{aligned}&
		  \res_{z_{j_0}} m ^{(1)}(z ) =\lim_{z \to z_{j_0}}   (z-z_{j_0}) T(\infty )^{-\sigma _3} \left (
			\frac{m_1^{-}(z )T(z)}{a(z)} ,   \frac{m_2^{+}(z )}{T(z)}
		\right )   = T(\infty )^{-\sigma _3}\left (
			0 ,   \frac{m_2^{+}( z_{j_0} )}{T'(z_{j_0})}
	\right ) \\& =  \lim_{z \to z_{j_0}}    T(\infty )^{-\sigma _3}\left (
			\frac{m_1^{-}(z )T(z)}{a(z)} ,   \frac{m_2^{+}(z )}{T(z)}
		\right )   \triu[0]{ c_{j_0}^{-1}   T'(z_{j_0})   ^{-2} e^{-\Phi(z_{j_0})} }
			\end{aligned}	
		\end{equation*}
 which yields the 1st limit in  \eqref{eq:dism1b}. In the last equality we've used the fact that
 \begin{equation*}
  m_2^{+}(z_{j_0}) T'(z_{j_0})   ^{-1}  =  \gamma ^{-1}_{j_0}(0)  T'(z_{j_0})   ^{-1} e^{-\Phi(z_{j_0})}   m_1^{-}(z_{j_0} )=  {a'(z_{j_0}) }^{-1} c_{j_0}^{-1}   T'(z_{j_0})   ^{-1} e^{-\Phi(z_{j_0})} {m_1^{-}(z ) }
 \end{equation*}
 which follows from  \eqref{eq:gamma}, \eqref{eq:coeffevo} and \eqref{explicit-V-k}. The limits in the  2nd lines of \eqref{eq:dism1a}--\eqref{eq:dism1b} follow from
 the symmetry \eqref{eq:barm}, which is satisfied by $m ^{(1)}(z )$.

\qed

\subsection{Step 2: opening $\dbar$ lenses}
\label{subsec:lense}

We now want to remove the jump from the real axis in such a way that the new problem takes advantage of the decay/growth of $\exp \lp \Phi(z) \rp$ for $z \not\in \R$. Additionally we want to ``open the lens"  in such a way that the lenses are bounded away from the disks introduced previously to remove the poles from the problem.

To that end, fix an angle $\theta_0 >0$ sufficiently small such that the set
$\{ z \in \C\,:\, \left| \frac{\Re z}{z} \right| > \cos \theta_0 \}$ does not intersect any of the disks $|z- z_k| \leq  \rho$. For any $ \xi \in (-1,1) $, let
\begin{equation*}
	 \phi(\xi) = \min \left\{ \theta_0, \  \arccos \lp \frac{2 |\xi|}{1+|\xi|} \rp \right\},
\end{equation*}
and define $\Omega = \bigcup_{k=1}^4 \Omega_k$, where
\begin{equation*}
	\begin{aligned}
		\Omega_1 &= \{z \, : \, \arg z \in (0, \phi(\xi) ) \}, \qquad \qquad  \quad \
		\Omega_2 = \{z \, : \, \arg z \in (\pi - \phi(\xi), \pi) \}, \\
		\Omega_3 &= \{z \, : \, \arg z \in (-\pi , -\pi+ \phi(\xi) ) \}, \qquad
		\Omega_4 = \{z \, : \, \arg z \in ( - \phi(\xi), 0) \}.
	\end{aligned}
\end{equation*}
Finally, denote by
\begin{equation*}
	\begin{aligned}
		\Sigma_1 &=e^{\im \phi(\xi)} \R_+ , \ \qquad \qquad
		\Sigma_2 = e^{\im (\pi - \phi(\xi) )} \R_+ \\
		\Sigma_3 &= e^{-\im (\pi - \phi(\xi) )} \R_+, \qquad
		\Sigma_4 = e^{-\im \phi(\xi) } \R_+,
	\end{aligned}
\end{equation*}
the left-to-right oriented boundaries of $\Omega$, see Figure~\ref{fig:m2 regions}.

\begin{lemma} \label{lem:phase1a}
	Set $ \xi :=\frac{x}{2 t}$  and let $|\xi | < 1$.
	Then for $z= |z| e^{\im \theta}$ and $F(s) = s+ s^{-1}$,
	the phase $\Phi$ defined in \eqref{eq:Mvx} satisfies
  	\begin{equation}\label{eq:phase1}
		\begin{aligned}
		\Re [\Phi (z;x,t)] & \ge \phantom{-}\frac{t}{4}( 1 -| \xi |)  F(|z|)^2 \left|  \sin 2 \theta \right|
		\quad \text{for }   z  \in \Omega _1 \cup \Omega _3   , \\
		\Re [\Phi (z;x,t)] & \le -\frac{t}{4}( 1 -| \xi |)  F(|z|)^2 \left| \sin 2 \theta \right|
		\quad \text{for }   z  \in \Omega _2 \cup \Omega _4   . \\
		\end{aligned}
	\end{equation}
\end{lemma}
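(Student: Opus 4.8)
The plan is to compute $\Re[\Phi(z;x,t)]$ directly in polar coordinates and extract the claimed sign and lower/upper bound from a single elementary estimate on the angular factor. Writing $z = |z| e^{\im\theta}$ and recalling from \eqref{eq:Mvx} that $\Phi(z;x,t) = \im x (z - z^{-1}) - \im t(z^2 - z^{-2})$, I would first observe that $z^n - z^{-n} = (|z|^n - |z|^{-n})\cos n\theta + \im (|z|^n + |z|^{-n})\sin n\theta$, so that taking real parts kills the cosine terms and leaves
\begin{equation*}
	\Re[\Phi(z;x,t)] = - x (|z| + |z|^{-1})\sin\theta + t (|z|^2 + |z|^{-2})\sin 2\theta.
\end{equation*}
Using $\xi = x/2t$ and $\sin 2\theta = 2\sin\theta\cos\theta$ this becomes
\begin{equation*}
	\Re[\Phi(z;x,t)] = 2t\sin\theta \left[ (|z|^2 + |z|^{-2})\cos\theta - \xi (|z| + |z|^{-1}) \right].
\end{equation*}
With $F(s) = s + s^{-1}$ one has $|z|^2 + |z|^{-2} = F(|z|)^2 - 2$ and $|z| + |z|^{-1} = F(|z|)$, and since $F(|z|) \ge 2$ a short computation gives $|z|^2 + |z|^{-2} \ge \tfrac12 F(|z|)^2$; combined with $F(|z|) \le \tfrac12 F(|z|)^2$ this will let me compare the bracket to $F(|z|)^2$.

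Next I would exploit the definition of $\phi(\xi)$. On $\Omega_1 \cup \Omega_3$ we have $|\theta| < \phi(\xi) \le \arccos\big(\tfrac{2|\xi|}{1+|\xi|}\big)$, hence $\cos\theta > \tfrac{2|\xi|}{1+|\xi|}$, and I want to show the bracket above is bounded below by $\tfrac18(1-|\xi|)F(|z|)^2$. Dividing through by $F(|z|)^2$ it suffices to show $(1 - 2F(|z|)^{-2})\cos\theta - |\xi| F(|z|)^{-1} \ge \tfrac18(1-|\xi|)$. Setting $u = F(|z|)^{-1} \in (0, \tfrac12]$, the left side is a concave function of $u$ whose minimum over $[0,\tfrac12]$ occurs at an endpoint; at $u=0$ it equals $\cos\theta > \tfrac{2|\xi|}{1+|\xi|} \ge |\xi|$, while at $u = \tfrac12$ it equals $\tfrac12\cos\theta - \tfrac12|\xi| > \tfrac12\big(\tfrac{2|\xi|}{1+|\xi|} - |\xi|\big) = \tfrac12 \cdot \tfrac{|\xi|(1-|\xi|)}{1+|\xi|} \ge 0$ — so actually the bracket is nonnegative but I need a quantitative bound. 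The cleaner route: just bound $(|z|^2 + |z|^{-2})\cos\theta - \xi(|z|+|z|^{-1}) \ge \cos\theta \cdot |z|^2 + |z|^{-2}\cos\theta - \tfrac{1+|\xi|}{2}F(|z|)$ and use $\cos\theta \ge \tfrac{2|\xi|}{1+|\xi|}$ on the $\xi$-term only where it helps, then finish with $F(|z|)^2 \le 2(|z|^2 + |z|^{-2}) + \text{l.o.t.}$ I would reconcile the exact constant $\tfrac14$ in \eqref{eq:phase1} against the $2t\sin\theta$ prefactor by noting $|\sin 2\theta| = 2\sin\theta|\cos\theta|$, so the claimed bound $\tfrac{t}{4}(1-|\xi|)F(|z|)^2|\sin 2\theta| = \tfrac{t}{2}(1-|\xi|)F(|z|)^2 \sin\theta|\cos\theta|$, and since $|\cos\theta| \le 1$ it suffices to prove the (stronger-looking but equivalent for our purposes) bound with $\sin\theta$ alone; I will be careful to keep $|\cos\theta|$ on the right side so the statement matches verbatim.

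On $\Omega_2 \cup \Omega_4$ the argument is symmetric: here $\arg z \in (\pi - \phi(\xi), \pi) \cup (-\phi(\xi), 0)$, so $\sin\theta$ and the bracket have opposite signs (for $\Omega_2$, $\sin\theta > 0$ but $\cos\theta < 0$ with $|\cos\theta| > \tfrac{2|\xi|}{1+|\xi|}$, forcing the bracket negative; for $\Omega_4$, $\sin\theta < 0$ and the bracket is positive), giving $\Re[\Phi] \le -\tfrac{t}{4}(1-|\xi|)F(|z|)^2|\sin 2\theta|$ by the identical estimate applied to $|\theta|$ or $\pi - \theta$ as appropriate. I expect the only mildly delicate point to be organizing the chain of elementary inequalities relating $F(|z|)$, $F(|z|)^2$, $|z|^2 + |z|^{-2}$ and the prefactor so that the constant $\tfrac14$ comes out exactly as stated rather than some smaller constant; this is bookkeeping, not a real obstacle. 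Everything else — the polar computation of $\Re\Phi$, the use of $\cos\theta \ge \tfrac{2|\xi|}{1+|\xi|}$ from the definition of $\phi(\xi)$, and the sign analysis per quadrant — is routine.
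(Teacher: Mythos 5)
Your polar computation of $\Re[\Phi]$ is correct and identical to the paper's starting point, and your treatment of the signs on $\Omega_2,\Omega_3,\Omega_4$ is indeed routine. The problem is the one quantitative step you dismiss as ``bookkeeping''. Writing $\Re[\Phi]=2t\sin\theta\, B$ with $B=(|z|^2+|z|^{-2})\cos\theta-\xi F(|z|)$, the inequality you must prove is $B\ge \tfrac14(1-|\xi|)F(|z|)^2\,|\cos\theta|$; your proposed reduction is to discard the factor $|\cos\theta|$ via $|\cos\theta|\le 1$ and prove $B\ge \tfrac14(1-|\xi|)F(|z|)^2$ instead. That stronger statement is false. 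Take $|z|=1$, so $F=2$ and $B=2(\cos\theta-\xi)$ for $\xi>0$: as $\theta$ approaches $\phi(\xi)=\arccos\bigl(\tfrac{2\xi}{1+\xi}\bigr)$ (the binding case once $|\xi|$ is large enough that the arccosine is smaller than $\theta_0$), one gets $B\to \tfrac{2\xi}{1+\xi}(1-\xi)$, which is strictly below the target $1-\xi$ for every $\xi<1$. In fact, with $|\cos\theta|$ kept on the right the required inequality at $|z|=1$ reads $2(\cos\theta-\xi)\ge(1-\xi)\cos\theta$, i.e.\ exactly $\cos\theta\ge\tfrac{2\xi}{1+\xi}$ — so the constant $\tfrac14$ is sharp there and cannot survive dropping $|\cos\theta|$. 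Your own analysis shows the symptom: at the endpoint $u=\tfrac12$ you only obtain nonnegativity, not a bound proportional to $(1-|\xi|)F^2$, the ``cleaner route'' sentence is never carried out, and the $\tfrac18$ you aim for is never reconciled with the $\tfrac14$ in the statement.

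The fix is the paper's factorization: keep $\sin 2\theta$ (equivalently the factor $|\cos\theta|$) out front, so that
\begin{equation*}
\Re[\Phi(z;x,t)]=t\,\psi(z)\sin 2\theta,\qquad \psi(z)=F(|z|)^2-\xi F(|z|)\sec\theta-2 .
\end{equation*}
Then $\cos\theta\ge\tfrac{2|\xi|}{1+|\xi|}$ gives $\xi\sec\theta\le\tfrac{1+|\xi|}{2}$ (trivially when $\xi\le 0$), and the single quadratic inequality $F^2-\tfrac{1+|\xi|}{2}F-2\ge\tfrac{1-|\xi|}{4}F^2$ for $F\ge 2$ — which holds with equality at $F=2$ and is increasing in $F$ — yields \eqref{eq:phase1} with the stated constant; the other sectors follow by the same computation with the appropriate sign of $\sin 2\theta$. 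So the skeleton of your argument is the right one, but as written the central estimate is both unproved and, in the form you propose to prove it, untrue.
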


\begin{proof}
We will  consider only the case $z\in \Omega _1$. By elementary computation we have
\begin{equation} \label{eq:phase11}
	\Re [\Phi (z;x,t)]
	=  t \ \psi(z) \sin 2\theta
	\quad \text{with}
	\quad \psi(z) =  F(|z|)^2 - \xi F(|z|) \sec \theta - 2.  \\
\end{equation}
Then, observing that $F(|z|) \geq 2$, we have for $z \in \Omega_1$,
\begin{equation*}
	\begin{aligned}
		\psi(z) & \geq  F(|z|)^2 - \frac{1+| \xi |}{2} F(|z|) - 2   \geq \frac{1-| \xi |}{4} F(|z|)^2	
	\end{aligned}
\end{equation*}
so that
\begin{equation*}
	\Re \Phi(z;x,t) \geq \frac{t}{4}(1-| \xi |) F(|z|)^2  \sin 2\theta.	
\end{equation*}
\qed
\end{proof}

The estimates suggest that we should open lenses using (modified versions of) factorization \eqref{UL factor} for $z<0$ and \eqref{LDU factor} for $z>0$. To do so, we need to define extensions of the off-diagonal entries of $b(z)$ and $B(z)$ off the real axis, which is the content of the following lemma.

\begin{lemma}\label{lem:extR}
Let $q_0 \in  \tanh(x) + L^{1,3}(\R)$ and $q_0' \in W^{1,1}(\R)$.
Then it is possible to define functions $R_j: \overline{\Omega}_j \to \C$, $j=1,2,3,4,$ continuous on $\overline{\Omega_j}$, with continuous first partials on $\Omega_j$, and boundary values
\begin{align*}
	&\begin{cases}
		R_1(z) = \frac{   \overline{r (z)} T_+(z)^{-2} }{1 - |r(z) |^2}  & z \in (0,\infty) \\
		R_1(z) = 0 & z \in \Sigma_1
	 \end{cases} \\
	&\begin{cases}
		R_2(z) = r(z) T(z)^2 & z \in (-\infty, 0) \\
		R_2(z) = 0 & z \in \Sigma_2
	\end{cases} \\
	&\begin{cases}
		R_3(z) = \overline{r(z)} T(z)^{-2} & z \in (-\infty, 0) \\
		R_3(z) = 0 & z \in \Sigma_3
	\end{cases} \\
	&\begin{cases}
		R_4(z) = \frac{r(z) T_-(z)^2}{1 - |r(z)|^2}  & z \in (0, \infty) \\
		R_4(z) = 0 & z \in \Sigma_4
	\end{cases}
\end{align*}	
such that for $j=1$ and $4$; a fixed constant  $c_1= c_1(q_0)$; and a fixed cutoff function $\varphi \in C_0^\infty (\R , [0,1])$ with small support near $1$; we have
\begin{align}
	\left| \dbar R_j(z) \right|
	& \leq c_1    |z|^{-1/2} + c_1  | r'(|z| ) | + c_1  \varphi (|z| )     \text{ for all $z \in  \Omega_j$ and} \label{eq:extR2}\\ \left| \dbar R_j(z) \right|
	&    \le   c_1  |z-1|  \text{ for all $z \in  \Omega_j$ in a small fixed neighborhood of 1}   \label{eq:extR21} \end{align}		
while for $j=2,3$ we have \eqref{eq:extR2} with $|z|$ replaced by $-|z|$  in the argument
of   $r'$  and without the term  $c  _1   \varphi (|z| )  $.

Setting $R: \Omega \to \C$ by $R(z) \Big|_{z \in \Omega_j} = R_j(z)$,
the extension can preserve the symmetry $R(z) = \widebar{R(\widebar{z^{-1}})} $.
\end{lemma}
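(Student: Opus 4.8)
The plan is to write down each $R_j$ by an explicit Dieng--McLaughlin-type interpolation, modified near $z=0$ and near $z=\pm1$, and then to estimate $\dbar R_j$ by direct computation. Throughout I use, in the sector $\Omega_j$, ``polar'' coordinates $z=|z|e^{\im\theta}$, in which $\dbar=\tfrac{e^{\im\theta}}2\big(\partial_{|z|}+\tfrac{\im}{|z|}\partial_\theta\big)$, and I fix a smooth cutoff $\mathcal X=\mathcal X(\theta)$ equal to $1$ for $\theta$ near the real axis and to $0$ for $\theta$ near the bounding ray $\Sigma_j$. Denote by $f_j$ the prescribed boundary value of $R_j$ on the real axis.

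\emph{Step 1 (the generic extension and estimate).} On the part of $\Omega_j$ bounded away from $0$ and $\pm1$ I set $R_j(z)=f_j(|z|)\,\mathcal X(\theta)$, suitably reflected so that $R_j$ is continuous up to both bounding rays; since $r\in H^1(\R)$ by Lemma~\ref{lem:a3bis} and the factors $T^{\pm1}$ and (where present) $(1-|r|^2)^{-1}$ are smooth with bounded derivatives on compact subsets of $\R\setminus\{0,\pm1\}$, this $R_j$ has continuous first partials there. Then
\[
	\dbar R_j(z)=\tfrac{e^{\im\theta}}2\Big(f_j'(|z|)\,\mathcal X(\theta)+\tfrac{\im}{|z|}\,f_j(|z|)\,\mathcal X'(\theta)\Big),
\]
so $|\dbar R_j|\le\tfrac12|f_j'(|z|)|+\tfrac1{2|z|}|f_j(|z|)|$. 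The first term is bounded by $c_1|r'(|z|)|$ (with argument $-|z|$ for $j=2,3$) plus a contribution from the derivatives of the $T$-factors that is absorbed into $c_1|z|^{-1/2}$; the second is estimated using $f_j(0)=0$ (because $r(0)=0$), the Cauchy--Schwarz bound $|f_j(|z|)|=|f_j(|z|)-f_j(0)|\le\|r'\|_{L^2}|z|^{1/2}$, and the behaviour of the Jost functions at the origin (Corollary~\ref{lem:w0}), giving $\tfrac1{|z|}|f_j(|z|)|\le c_1|z|^{-1/2}$. For $j=2,3$ this already yields \eqref{eq:extR2}, with no term $c_1\varphi(|z|)$ and no point $z=\pm1$ to worry about, because $|T|=1$ on $(-\infty,0)$ by property $iii$ of Lemma~\ref{lem:T} and the boundary data $rT^2,\ \overline rT^{-2}$ carry no singular denominator.

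\emph{Step 2 (the obstacle: $R_1,R_4$ near $z=1$).} This is the only delicate point. Generically $|r(\pm1)|=1$, so in $f_1=\frac{\overline rT_+^{-2}}{1-|r|^2}$ and $f_4=\frac{rT_-^2}{1-|r|^2}$ the factor $(1-|r|^2)^{-1}$ has a double pole at $z=1$ while $T_\pm^{\mp2}$ degenerates, and the naive extension of Step 1 would have a $\dbar$-derivative that is not even locally $L^2$ near $z=1$. I resolve this with the algebraic identity $\frac{\overline r}{1-|r|^2}=\overline b\,a$ (a consequence of $|a|^2-|b|^2=1$), which on $(0,\infty)$ lets me write
\[
	f_1(z)=\overline{b(z)}\,a(z)\,T_+(z)^{-2}=\overline{W_b(z)}\,g(z),\qquad
	W_b:=\det[\psi_1^+,\psi_1^-],\quad g:=\frac{a\,T^{-2}}{1-z^{-2}}\ \text{(boundary value from }\C^+\text{)},
\]
where $g$ is holomorphic in $\C^+$ and remains bounded near $z=1$ (the simple pole of $a$ there being compensated by the second-order degeneracy of $T^{-2}$), while $W_b$ is $C^2$ in a real neighbourhood of $z=1$. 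It is exactly here that the hypothesis of the lemma --- guaranteed under the assumptions of Theorem~\ref{thm:main1} by $q_0\in\tanh(x)+\Sigma_4$ --- is used, via \eqref{eq:a13} with $n=2$; cf.\ Remark~\ref{rem:reg1} and Lemma~\ref{lem:jf3}. Letting $P$ be the second-order Taylor polynomial of $\overline{W_b}$ at $z=1$ and extending the remainder $\rho:=\overline{W_b}-P$ into $\Omega_1$ by $\rho(|z|)\mathcal X(\theta)$, I put $R_1(z)=\big(P(z)+\rho(|z|)\mathcal X(\theta)\big)g(z)$ near $z=1$. Since $Pg$ is holomorphic in $\Omega_1\subset\C^+$ one gets $\dbar R_1=g\,\dbar[\rho(|z|)\mathcal X(\theta)]$, and because $\rho$ and $\rho'$ vanish at $z=1$ to orders $2$ and $1$ this is $O(|z-1|)$, which is \eqref{eq:extR21}; gluing to the generic extension across an annulus about $z=1$ with the cutoff $\varphi$ produces the remaining term $c_1\varphi(|z|)$ in \eqref{eq:extR2}. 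The extension $R_4$ near $z=1$ is built in the same way from $\frac r{1-|r|^2}=b\,\overline a$ and the Wronskian $\det[\psi_1^-,\psi_2^+]$. I expect checking the regularity and boundedness of $g$ at $z=1$, and carrying out this gluing cleanly, to be the only non-routine parts.

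\emph{Step 3 (the symmetry).} To obtain $R(z)=\overline{R(\overline{z^{-1}})}$ I perform the construction of Steps 1--2 only on $\{|z|\le1\}\cap\Omega_j$ and then \emph{define} $R_j(z):=\overline{R_j(\overline{z^{-1}})}$ on $\{|z|>1\}\cap\Omega_j$; this is legitimate because $\overline{z^{-1}}$ has the same argument as $z$, hence stays in $\Omega_j$. Consistency with the prescribed boundary data follows from $\overline{r(\overline{z^{-1}})}=r(z)$ in \eqref{eq:symm31} and $\overline{T(\bar z,\xi)}=T(z^{-1},\xi)$ (property $i$ of Lemma~\ref{lem:T}), and the bounds \eqref{eq:extR2}--\eqref{eq:extR21} are preserved because under $z\mapsto\overline{z^{-1}}$ the derivative contributes a factor $|z|^{-2}$ while differentiating $r(1/s)=\overline{r(s)}$ gives $|r'(1/|z|)|=|z|^2|r'(|z|)|$, so the three terms map into themselves; one chooses $\varphi$ invariant under $|z|\mapsto|z|^{-1}$.
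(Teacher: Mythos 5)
Your construction follows the paper's essential path at the one genuinely delicate point: the identity $\frac{\overline r}{1-|r|^2}=\overline b\,a$, recast through the Wronskians, is exactly \eqref{eq:extR4}, and like the paper you exploit the $W^{2,\infty}$ regularity of $\overline{J_b}/J_a$ near $z=1$ coming from \eqref{eq:a13} with $n=2$ together with the boundedness of the analytic factor $(a/T)^2$ from Lemma~\ref{lem:T}. Note that your $g=\frac{aT^{-2}}{1-z^{-2}}$ is just $(a/T)^2/J_a$, so its boundedness near $1$ additionally uses $J_a(1)=a_+\neq0$, i.e.\ the generic case (the same implicit assumption as in the paper's proof). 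Your device for \eqref{eq:extR21} --- subtracting the Taylor polynomial of $\overline{W_b}$ at $1$ so that the polynomial rides on the holomorphic factor and only the remainder is extended radially --- is a legitimate alternative to the paper's extra $\sin(k\arg z)$ corrector in \eqref{R12 def}; with only $W^{2,\infty}$ regularity you should use the first-order Taylor polynomial (integral remainder then gives $\rho=\bigo{|s-1|^2}$, $\rho'=\bigo{|s-1|}$), since $W_b''(1)$ need not exist pointwise.

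There are, however, two gaps as written. First, in Step 1 you extend the \emph{entire} boundary datum, including $T_\pm^{\mp2}$, as a function of $|z|$, so $\dbar R_j$ contains the derivative along $\R$ of the boundary value of $T^{\mp2}$. This is a nonlocal, Hilbert-transform type quantity (essentially the boundary value of $\int_0^\infty \log(1-|r(s)|^2)(s-z)^{-2}ds$), and the claim that it is ``absorbed into $c_1|z|^{-1/2}$'' is not justified: pointwise it is not dominated by $|r'(|z|)|$ plus a constant without an argument, and near $z=1$ it actually blows up like $|\,|z|-1|^{-1}$, so your gluing annulus must sit at a fixed distance from $1$ and the transition terms must be estimated there. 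One can prove boundedness away from $\{0,\pm1\}$ (using that $\log(1-|r|^2)$ is $W^{2,\infty}$ away from $\pm1$, integrable globally, and treating the log-singular part at $1$ by distance), but that estimate is precisely the work your write-up omits; the paper's ansatz is designed to avoid it altogether by extending $T(z)^{\mp2}$, respectively $g(z)=(a/T)^2$, \emph{analytically} in $z$ (see \eqref{R11 def}--\eqref{R12 def}), so that $\dbar$ never falls on the $T$-factors and only $\overline{r(|z|)}/(1-|r(|z|)|^2)$ and the Wronskian ratio are radial. Second, the symmetrization by reflection, $R(z):=\overline{R(\overline{z^{-1}})}$ for $|z|>1$, preserves the boundary data and the bounds (your bookkeeping $|r'(1/s)|=s^2|r'(s)|$ is correct), and for the generic piece it even reproduces the same formula; but for the near-$1$ piece it generically breaks continuity of the first partials across the arc $\{|z|=1\}\cap\Omega_j$, since matching requires the compatibility condition $\dbar R(z)=-z^{2}\,\overline{\dbar R(z)}$ there, which your free construction on $|z|\le1$ does not arrange. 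This conflicts with the stated requirement of continuous first partials on $\Omega_j$; it is fixable either by building the symmetry into the ansatz as the paper does (e.g.\ $\chi_1(s)=\chi_1(s^{-1})$ and symmetric radial/angular factors) or by verifying the matching condition explicitly, but as it stands it is a defect of the proposal.
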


\begin{proof}

We will give the details of the proof for $R_1$. The estimates for the $\dbar$-derivative for $ j=4$ are nearly identical to the case $ j=1$;
the definitions of $R_2$ and $R_3$ and their $\dbar$ estimates are similar and are a simpler version of
\cite[Proposition 2.1]{DM}.

\noindent As observed in \eqref{eq:coeff sing}-\eqref{eq:r sing},   $a(z)$ and $b(z)$ are singular at $z = \pm 1$, and $r(z) \to \mp 1$ as $z\to \pm 1$.
This suggests that $R_1(z)$ is singular at $z = 1$. However, the singular behavior is exactly balanced by the factor $T(z)^{-2}$.   From \eqref{eq:r}-\eqref{eq:coeff3} we have
\begin{equation}\label{eq:extR4}
	\frac{ \overline{ r(z) }}{ 1 - |r(z)|^2 } T_+(z)^{-2}
	= \frac{ \overline{ b(z) }}{a(z)} \lp \frac{ a(z) } { T_+(z) } \rp^2
	= \frac{ \overline{ J_b(z) } } { J_a(z) }
	\lp \frac{ a(z) } { T _+(z) } \rp^2,
\end{equation}
where we have temporarily introduced the notation
\begin{equation}\label{Jab}
	J_b(z) = \det \lb \psi_1^+( z;x,t), \psi_1^-(z;x,t) \rb,
	\qquad
	J_a(z) = \det \lb \psi_1^-(z;x,t), \psi_2^+(z;x,t ) \rb .
\end{equation}
Recall that though the columns of the right/left normalized Jost functions, $\psi_j^\pm(z;x,t),\, j =1,2,$ depend on $x$, the determinants are independent of $x$ as $\Tr \mathcal{L} =0$.
Using Lemmas~\ref{lem:jf1} and \ref{lem:T}, the denominator of each factor in the r.h.s. of  \eqref{eq:extR4}  is non-zero and analytic in $\Omega_1$, with a well defined nonzero limit on $\partial \Omega_1$. Notice also that in $\Omega _1$ away from
the point $z=1$ the  factors in the  l.h.s. of \eqref{eq:extR4} are well behaved.

We introduce cutoff functions $\chi_0, \chi_1 \in C^\infty_0(\R, [0,1])$ with small support near $0$ and $1$ respectively, such that for any sufficiently small real $s$, $\chi_0(s) = 1 = \chi_1(1+s)$. Additionally, we impose the condition that $\chi_1(s) = \chi_1(s^{-1})$ to preserve symmetry.
We then rewrite the function $R_1(z)$  in $\R _+$  as  $R_1(z)= R _{11}(z)+R_{12}(z)  $ with
\begin{equation}\label{eq:extR5} \begin{aligned}
	& R _{11}(z):= (1-\chi_1(z))
	\frac{ \widebar{ r(z) }}{ 1 - |r(z)|^2 } T_+(z)^{-2}\ , \quad
	R _{12}(z):= \chi_1(z)
	\frac{ \widebar{ J_b(z) } } { J_a(z) }
	\lp \frac{ a(z) } { T _+(z) } \rp^2. \end{aligned}
\end{equation}
The purpose of \eqref{eq:extR5} is to neutralize the effect of the singularity
at $1$ due to $|r(1)|=1$.
Fix a small $\delta _0> 0$.
Then extend the functions $R _{11}$ and $R _{12}$  in  $\Omega _1$  by
\begin{align}
        R _{11}(z) &= \lp 1-\chi_1(|z|) \rp
	\frac{ \widebar{ r(|z|) }}{ 1 - |r(|z|)|^2 } T (z)^{-2}
	\cos \lp k \arg z \rp, \label{R11 def} \\
	R _{12}(z)&= f(|z|) g(z) \cos (k  \arg z) +
	\frac{{\im}|z|}{ k}     \chi_0  \lp \frac{ \arg z }{\delta _0} \rp f'(|z|) g(z) \sin(  k \arg(z) )
	\label{R12 def}
\end{align}

where $f'(s) $ is the derivative of $f(s)$ and
\begin{equation*}
    k :=\frac{ \pi}{2 \theta_0} \, , \quad
    g(z):=  \lp \frac{ a(z) } { T (z) } \rp^2  \, , \quad
    f(s):=   \chi_1 (s) \frac{\overline{ J_b ( s) } }{J_a(s)}.
\end{equation*}
Both extensions are similar to Prop. 2.1 \cite{DM}, but \eqref{R12 def} is somewhat more
elaborate.
Observe that the definition of $R_1$ above preserves the symmetry $R_1(s) = \widebar{R_1(\widebar{s^{-1}})}$.  Aided by the symmetry conditions \eqref{eq:symm31}, \eqref{eq:symm23}, Claim $i$ of Lemma~\ref{lem:T}, and $\chi_1(s) = \chi_1(s^{-1})$ one shows that $R_{11}$, $f$, and $g$ satisfy the desired symmetry; the rest is a trivial exercise.

We now bound the $\dbar $ derivatives of \eqref{R11 def}--\eqref{R12 def}.
We have
\begin{align}\label{dbar1}
	\dbar R _{11}(z)  &=
 	-\frac{ \dbar  \chi _1(|z|)}{T (z)^{ 2}} \,
	\frac{ \overline{ r(|z|)} \cos \lp k \arg z \rp}{ 1 - |r(|z|)|^2 }   +  	
	\frac{   1- \chi _1(|z|)}{T (z)^{ 2}} \,
	\dbar  \lp \frac{ \overline{ r(|z|) }  \cos \lp k \arg z \rp }{ 1 - |r(|z|)|^2 } \rp.	
\end{align}
Observe that $ 1-|r(|z|)|^2 >c>0$ in $\supp (1-\chi_1(|z|))$ and  $|T (z)^{-2} |\le C e ^{   -\log c   } $ in $\Omega _1\cap \supp (1-\chi_1(|z|))$  for some  fixed  constants $c$ and $C$. Then for some new fixed constant $C$ we have
\begin{equation*}
   \left |  \dbar   \frac{ \overline{ r(|z|) }  \cos \lp k \arg z \rp }{ 1 - |r(|z|)|^2 } \right | \le C    |r'(|z|)|
   +C  \left |\sin \lp k \arg z \rp \right | \frac{|r (|z|)|}{|z|} .
\end{equation*}
As $r(0)=0$ it follows that  $|r (|z|)|\le \sqrt{|z|} \| r '\| _{L^2(\R )} $.
Notice also that the first term in the r.h.s. of \eqref{dbar1} can be bounded by $c_1 \varphi  (|z|)$  for an appropriate $\varphi \in C_0^\infty (\R , [0,1])$ with  a small support near $1$ and with $\varphi =1$
on $\text{supp}  \chi_1$.
It follows that the r.h.s. of \eqref{dbar1} satisfies  \eqref{eq:extR2} .

We turn now to $\dbar  R _{12}$.  For $z = u + \im v = \rho e^{\im \phi}$ we have $
\dbar = \frac{1}{2} \lp \partial_u + \im  \partial_v \rp
= \frac{ e^{\im   \phi} }{2} \lp \partial_\rho + \frac{\im     }{\rho} \partial_\phi \rp$ . Then
\begin{multline*}
	\dbar  R _{12}(z)  =  \frac{e^{\im \phi} g(z)}{2} \Bigg[
 		f'(\rho ) \cos ( k \phi )  \lp 1 - \chi_0 \lp \frac{ \phi }{\delta_0}  \rp \rp
		- \frac{\im k  f(\rho ) }{ \rho} \sin ( k \phi ) \\
		+ \frac{ {\im} }{k} (\rho f'(\rho ))'   \sin ( k \phi ) \chi_0 \lp \frac{ \phi}{\delta_0} \rp
		+ \frac{\im }{ k \delta_0} f'(\rho )  \sin ( k \phi )   \chi_0' \lp \frac{ \phi }{ \delta_0 }  \rp
		\Bigg]
\end{multline*}
where the 1st term in the bracket is obtained by applying $\partial _\rho$ to $f(\rho )$ in  the 1st term in \eqref{R12 def}   and   $\im \rho ^{-1}\partial _\phi$ to $\sin ( k \phi )$ in the 2nd term of \eqref{R12 def}.

Then we claim $|\dbar  R _{12}(z) |\le c  _1   \varphi (|z| ) $  for a  $\varphi \in C_0^\infty (\R , [0,1])$ supported near 1, thus yielding  \eqref{eq:extR2}.
The prefactor including $g(z)$ is bounded by \eqref{eq:lemT1}.
The first, third, and fourth terms in the brackets are bounded by observing that, for $q$ satisfying the hypotheses of the Lemma, we have $\widebar{J_b(s)}/J_a(s) \in W^{\infty,2}(\R)$---this follows from a small modification of Lemma~\ref{lem:a3bis} where the extra moment is needed for second derivatives in the term $(\rho f'(\rho ))'$ appearing in the expression for $\dbar  R _{12}$ above (\cf\ \eqref{eq:a13}). The second term is bounded because $\supp \chi_1$ is bounded away from zero.
Finally, for $z \sim 1$, 
$|\dbar R_{12}| \leq C \lb \, |\sin ( k \phi ) | + (1 - \chi_0(\phi/\delta_0)\, \rb = \bigo{ \phi } $, from which
\eqref{eq:extR21}  follows immediately.

\end{proof}

We use the extensions of Lemma~\ref{lem:extR} to define modified versions of the factorizations \eqref{factorizations} which extend into the lenses $\Omega_j$.
We have on the real axis
\begin{equation*}
	V^{(1)}(z) = \widehat{b}^{-\dagger}(z) \widehat{b}(z) = \widehat{B}(z) \widehat{B}^{-\dagger}(z)
\end{equation*}
where
\begin{equation*}
	\begin{aligned}
		\widehat{b}(z) &= \tril{ R_2(z) e^{ \Phi} }, \qquad
		\widehat{b}^\dagger(z) = \triu{R_3(z) e^{-\Phi} }, \\
		\widehat{B}(z) &= \tril{ R_4(z) e^{ \Phi} }, \qquad
		\widehat{B}^\dagger(z) = \triu{R_1(z) e^{-\Phi} }.
	\end{aligned}
\end{equation*}
We use these to define a new unknown
\begin{equation}\label{m2 def}
	\mk{2}(z) = \begin{cases}
		\mk{1}(z) \widehat{B}^\dagger(z) & z \in  \Omega_1 \\
		\mk{1}(z) \widehat{b}(z)^{-1} & z \in \Omega_2 \\
		\mk{1}(z) \widehat{b}(z)^{-\dagger} (z) & z \in \Omega_3 \\
		\mk{1}(z) \widehat{B}(z) & z \in \Omega_4 \\  	\mk{1}(z)   & z \in \C \backslash \overline{\Omega} .
	\end{cases}
\end{equation}

\begin{figure}
	\centering
		\includegraphics[width=.4\textwidth]{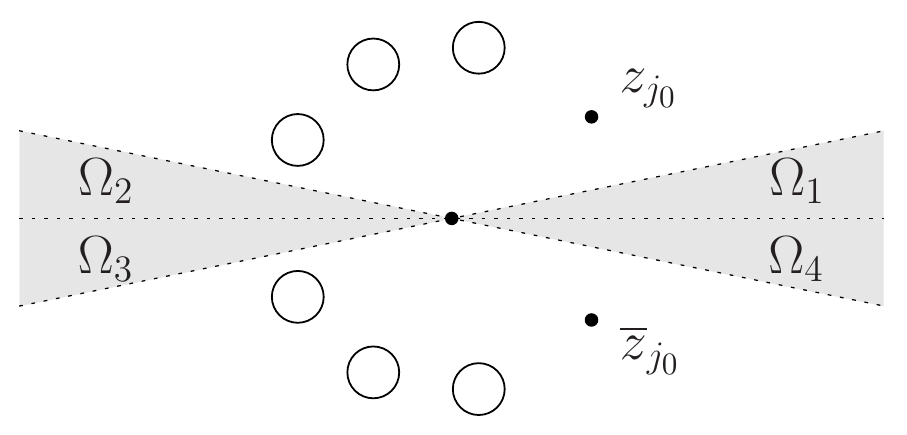}
	\caption{
	The unknown $\mk{2}(z)$ defined by \eqref{m2 def} has nonzero $\dbar$ derivatives
	in the regions $\Omega_j$, and jump discontinuities on the disk boundaries $|z-z_j| = \rho$.
	The dashed boundaries of $\Omega_j$ indicate that $\mk{2}$ is continuous at these boundaries.
	\label{fig:m2 regions} 	
	}
\end{figure}

Let
\begin{equation}\label{Sigma2}
	\Sigma^{(2)} = \bigcup_{\mathclap{ \substack{j \in \nabla \cup \Delta \\ j \neq j_0(\xi)}}}
		\{ z \in \C\, : \, |z-z_j| = \rho \text{ or } |z - \widebar z_j| = \rho  \}
\end{equation}
be the union of the circular boundaries of each interpolation disk oriented as in $\Sigma^{(1)}$.
It is an immediate consequence of \eqref{m2 def} and Lemmas \ref{lem:m1} and \ref{lem:extR} that
$\mk{2}$ satisfies the following $\dbar$--Riemann-Hilbert problem.

\begin{rhp}[$\dbar$--\!]\label{rhp: m2}
Find a $2\times2$ matrix-valued function $\mk{2}(z; x,t)$ such that
\begin{enumerate}[1.]
	\item $\mk{2}$ is continuous in $\C \backslash \Sigma^{(2)}$
	 and takes continuous boundary values $\mk{2}_+(z;x,t)$
	 (respectively $\mk{2}_-(z;x,t)$) on $\Sigma^{(2)}$ from the left (respectively right).
	
	\item $\phantom{z} \mk{2}(z;x,t) = I + \bigo {z^{-1} }$ as $z \to \infty, \\
	z \mk{2}(z;x,t) = \sigma_1 + \bigo{z}$ as $z \to 0$.
	
	 \item The boundary values are connected by the jump relation $\mk{2}_+(z;x,t) = \mk{2}_-(z;x,t) \vk{2}(z)$ where
	 \begin{equation}\label{V2}
	 	\vk{2}(z) = \begin{cases}
			\tril{ - \frac{ c_j}{z-z_j} T(z)^2 e^{\Phi (z_j)} }
			& | z - z_j | = \rho, \ j \in \nabla \smallskip \\
			\triu{ - \frac{ z-z_j}{c_j} T(z)^{-2} e^{-\Phi (z_j)}}
			& | z - z_j | = \rho, \ j \in \Delta \smallskip \\
			\triu{ \frac{ \widebar c_j}{z- \widebar z_j} T( z)^{-2} e^{\Phi (z_j)} }
			& | z - \widebar z_j | = \rho, \ j \in \nabla \smallskip \\
			\tril{ \frac{ z-\widebar z_j}{\widebar c_j} T( z)^{2} e^{-\Phi (z_j)}}
			& | z - \widebar z_j | = \rho, \ j \in \Delta   .
		\end{cases}
	\end{equation}
	
	\item For $z \in \C$ we have
	\begin{gather}\label{W2}
		\dbar \mk{2}(z;x,t) = \mk{2}(z;x,t) W(z)
	\shortintertext{where}
		W(z) = \begin{cases}
				\triu[0]{ \dbar R_1(z) e^{-\Phi} } & z \in \Omega_1 \smallskip \\
				\tril[0]{ -\dbar R_2(z) e^{\Phi} } & z \in \Omega_2 \smallskip \\
				\triu[0]{ -\dbar R_3(z) e^{-\Phi} } & z \in \Omega_3 \smallskip \\
				\tril[0]{\dbar R_4(z) e^{\Phi} } & z \in \Omega_4 \smallskip \\
				\qquad \boldsymbol{0} & z { \ \rm{ elsewhere} .}
			\end{cases}	 \nonumber
	\end{gather} 	
		
	\item   $\mk{2}(z ;x,t)$ is analytic in  the region $\C \backslash ( \widebar{\Omega}  \cup \Sigma^{(2)})$ if $j_0=-1$.
If $(x,t)$ are such that there exists $j_0 \in \{ 0,\dots,N-1 \}$
	such that $| \Re z_{j_0} - \xi | \leq \rho$, $\xi = \frac{x}{2t}$,
	then $\mk{2}(z;x,t)$ is meromorphic in $\C \backslash  ( \widebar{\Omega}  \cup \Sigma^{(2)})  $ with exactly two poles, which are simple,  at the points $z_{j_0}, \widebar z_{j_0} \in \mathcal{Z}$ satisfying one of the following cases.
\begin{itemize}
\item[(a)]
	  If $j_0 \in \nabla$, letting $C_{j_0}= c_{j_0} T(z_{j_0})^2$, we have
		\begin{equation}\label{m2 nabla con}
			\begin{aligned}
			\res_{z_{j_0}} \mk{2}(z;x,t) &= \lim_{z \to z_{j_0}} \mk{2}(z;x,t)
			\tril[0]{ C_{j_0}   e^{\Phi(z_{j_0})} }  , \\
			\res_{\widebar z_{j_0}} \mk{2}(z;x,t) &= \lim_{z \to   \overline{z}_{j_0}} \mk{2}(z;x,t)
			\triu[0]{   \overline{C}_{j_0}  e^{ \Phi(  z_{j_0})} }.
			\end{aligned}	
		\end{equation}
	\item[(b)] If $j_0 \in \Delta$, letting $C_{j_0} :=c_{j_0}^{-1} T'(z_{j_0})^{-2}$, we have
		\begin{equation}\label{m2 delta con}
			\begin{aligned}
			\res_{z_{j_0}} \mk{2}(z;x,t) &= \lim_{z \to z_{j_0}} \mk{2}(z;x,t)
			\triu[0]{C_{j_0}  e^{-\Phi(z_{j_0})} }      \\
			\res_{\widebar z_{j_0}} \mk{2}(z;x,t) &= \lim_{z \to \widebar z_{j_0}} \mk{2}(z;x,t)
			\tril[0]{   \overline{C}_{j_0}       e^{-\Phi(  z_{j_0})} }.
			\end{aligned}	
		\end{equation}
\end{itemize}

\end{enumerate}
\end{rhp}

\subsection{Step 3: removing the poles; the asymptotic N-soliton solution}
\label{subsec:rem}
Our next step is to remove the Riemann-Hilbert component of the solution, so that all that remains is a new unknown with nonzero $\dbar$-derivatives in $\Omega$, and is otherwise bounded and approaching identity for $|z| \to \infty$. Once this is complete, the remaining problem is analyzed using the ``small-norm" theory for the solid Cauchy operator. This is done in the following section, Section~\ref{sec: step4}.

\begin{lemma}\label{lem: m sol}
Let $\mk{sol}$ denote the solution of the Riemann-Hilbert problem which results from simply ignoring the $\dbar$ component of RHP~\ref{rhp: m2}, that is, let
\begin{equation*}
	\text{$\mk{sol}(z)$ solves $\dbar$--RHP~\ref{rhp: m2} with $W \equiv 0$.}
\end{equation*}
For any admissible scattering data $\{ r(z), \{ z_j, c_j\}_{j=0}^{N-1} \}$ in RHP~\ref{rhp: m2}, the solution $\mk{sol}$ of this modified problem exists, and is equivalent, by an explicit transformation, to a reflectionless solution of the original Riemann Hilbert problem, RHP~\ref{rhp:m}, with the modified scattering data $\{ 0, \{ z_j, \widetilde{c}_j\}_{j=0}^{N-1} \}$
where, the modified connection coefficients $\widetilde{c}_j$ are given by
\begin{equation}\label{perturbed phase}
	\widetilde{c}_j(x,t) = c_j(x,t) \exp \lp -\frac{1}{\im \pi }
	\int_0^\infty \log (1 - |r(s)|^2) \lp \frac{1}{s-z_j} - \frac{1}{2s} \rp ds \rp.
\end{equation}
where $r(s)$ is the reflection coefficient, generated by the initial datum $q_0(x)$, given in RHP~\ref{rhp: m2}
\end{lemma}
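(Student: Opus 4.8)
The plan is to reverse, one at a time, the transformations $m\mapsto\mk{1}\mapsto\mk{2}$ of Steps 1 and 2, using that with $W\equiv0$ nothing of the $\dbar$-deformation survives, so the reversal is purely algebraic and local. First I would recall that a reflectionless Riemann--Hilbert problem --- RHP~\ref{rhp:m} with $r\equiv0$ and finitely many simple poles on the unit circle --- is solved by an explicit finite-dimensional linear algebra problem and always has a unique solution (this is carried out in Appendix~\ref{sec:msol}, where the $N$-soliton is built). It therefore suffices to exhibit an \emph{explicit, invertible} change of variables relating $\mk{sol}$ to the reflectionless solution $\widetilde m$ of RHP~\ref{rhp:m} with data $\{0,\{z_j,\widetilde c_j\}_{j=0}^{N-1}\}$: existence of $\mk{sol}$ then follows from that of $\widetilde m$ by running the map backwards, and the stated equivalence is precisely the content of the Lemma.

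The change of variables has two ingredients. Factor $T(z,\xi)=B_\Delta(z)\,\delta(z)$, where $B_\Delta(z)=\prod_{k\in\Delta}\frac{z-z_k}{zz_k-1}$ is the rational Blaschke factor and $\delta(z)=\exp\!\big(-\frac1{2\pi\im}\int_0^\infty\log(1-|r(s)|^2)(\frac1{s-z}-\frac1{2s})\,ds\big)$, so that the jump of $T$ on $(0,\infty)$ is carried entirely by $\delta$, while $B_\Delta$ is analytic across $\R\setminus\{0\}$ and obeys $B_\Delta(\infty)B_\Delta(0)=\prod_{k\in\Delta}|z_k|^2=1$ and $\overline{B_\Delta(\bar z)}=B_\Delta(z)^{-1}=B_\Delta(z^{-1})$ (these identities are the reason for the particular form of \eqref{T def}). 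From $\mk{sol}$ I would define $\widetilde m$ by (a) stripping only the Blaschke part of the conjugation in \eqref{m1}, i.e.\ replacing $\mk{sol}(z)$ by $B_\Delta(\infty)^{\sigma_3}\mk{sol}(z)\,B_\Delta(z)^{-\sigma_3}$, and (b) inside each interpolation disk $|z-z_j|<\rho$, $j\ne j_0$, conjugating by the triangular factor of \eqref{m1} so as to trade the jump on the circle $|z-z_j|=\rho$ back for a simple pole at $z_j$; step (b) is legitimate because $\mk{sol}$ is analytic in $\Omega$ (as $W\equiv0$), the disks are bounded away from $\Omega$, and $T$ is analytic inside each disk.

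Next I would check that $\widetilde m$ solves RHP~\ref{rhp:m} with the asserted data. Its jump across $\R$ is trivial, since $\mk{sol}$ has none there and $B_\Delta$ is analytic across $\R\setminus\{0\}$, so $\widetilde m$ is reflectionless; the normalizations are $\widetilde m(\infty)=B_\Delta(\infty)^{\sigma_3}I\,B_\Delta(\infty)^{-\sigma_3}=I$ and $\lim_{z\to0}z\widetilde m=B_\Delta(\infty)^{\sigma_3}\sigma_1 B_\Delta(0)^{-\sigma_3}=\sigma_1$, using $B_\Delta(\infty)B_\Delta(0)=1$. For the residue at $z_j$ when $j=j_0$ (never interpolated), the norming constant in \eqref{m2 nabla con}--\eqref{m2 delta con} is $c_{j_0}(x,t)^{\pm 1}T(z_{j_0})^{\pm 2}$ (with $T'(z_{j_0})$ in place of $T(z_{j_0})$ when $j_0\in\Delta$, since $T$ has a simple zero there), and stripping $B_\Delta(z_{j_0})^{\pm 2}$ --- respectively $B_\Delta'(z_{j_0})^{\pm 2}$, using $T'(z_{j_0})=B_\Delta'(z_{j_0})\delta(z_{j_0})$ --- replaces $T(z_{j_0})^{\pm 2}$ by $\delta(z_{j_0})^{\pm 2}$, which is exactly \eqref{perturbed phase} (in the reciprocal form for $j_0\in\Delta$); when $j\ne j_0$ the circle jump is triangular with off-diagonal entry $-c_j(x,t)h(z)/(z-z_j)$ where $h(z)=T(z)^2$ is analytic in the disk, so it is equivalent to a simple pole at $z_j$ with norming $c_j(x,t)h(z_j)=c_j(x,t)T(z_j)^2$ (resp.\ its $\Delta$-analogue), and the same stripping gives $\widetilde c_j(x,t)$. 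The conjugate-pole conditions at $\bar z_j$ and the preservation of the Schwarz symmetries $\overline{\widetilde m(\bar z)}=\sigma_1\widetilde m(z)\sigma_1$, $\widetilde m(z^{-1})=z\widetilde m(z)\sigma_1$ follow from the corresponding symmetries of $\mk{sol}$ (Lemma~\ref{lem:m1}), of $B_\Delta$ and $\delta$ (claim~$i$ of Lemma~\ref{lem:T}), and of the triangular factors. Running the construction in reverse on the explicit reflectionless $\widetilde m$ --- reintroducing the $B_\Delta$-conjugation and interpolating the poles onto the circles --- produces a solution of $\dbar$--RHP~\ref{rhp: m2} with $W\equiv0$: one need only observe that the extensions $R_j$ never enter (so analyticity in $\Omega$ is automatic) and that no jump is created on $\R$ because $\widetilde m$ and $B_\Delta$ are continuous there. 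This establishes existence together with the equivalence.

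The part I expect to be most delicate is the bookkeeping at the poles $z_j$ with $j\in\Delta$: there $B_\Delta$ has a simple zero at $z_j$ and a simple pole at $\bar z_j$, so stripping $B_\Delta^{\sigma_3}$ simultaneously interchanges the column carrying the pole --- this is why \eqref{eq:dism1b} and \eqref{m2 delta con} involve $c_j^{-1}$ and the derivative $T'(z_j)^{-2}$ rather than $c_jT(z_j)^2$ --- and one must check column by column that the resulting relation is again of the standard type \eqref{eq:resm} with norming constant $\widetilde c_j(x,t)$, after using that a pole in the second column with norming $\mu$ is the same condition as a pole in the first column with norming $\mu^{-1}$. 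Tracking how the factor $T'(z_j)^{-2}$ recombines with the $B_\Delta$-stripping is the step most prone to sign and reciprocal errors; everything else is routine matrix algebra localized near $\R$, near the interpolation circles, and at $z=0$ and $z=\infty$.
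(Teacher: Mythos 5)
Your proposal is correct and follows essentially the same route as the paper: the paper's proof also undoes the interpolation with the triangular factors inside the disks and strips the Blaschke part of $T$ via the single conjugation \eqref{m tilde}, identifies the resulting poles and residues as those of the reflectionless RHP~\ref{rhp:m} with norming constants \eqref{perturbed phase}, and invokes Appendix~\ref{sec:msol} for existence. Your splitting $T=B_\Delta\delta$ and the column-swap bookkeeping at $z_{j}$, $j\in\Delta$ (where $T'(z_j)^{-2}$ recombines with $B_\Delta'(z_j)^{2}$ to leave $\delta(z_j)^{\pm2}$) is exactly the ``straightforward calculation'' the paper leaves implicit, and it checks out.
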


\begin{proof}
With $W \equiv 0$, the $\dbar$-RHP for $\mk{sol}$ reduces to a Riemann Hilbert problem for a sectionally meromorphic function with jump discontinuities on the union of circles $\Sigma ^{(2)}$, see \eqref{Sigma2}.
The following transformation contracts each of the circular jumps so that the result $\widetilde{m}(z)$ has simple poles at each $z_k$ or $\overline z_k$ in $\mathcal{Z}$, and reverses the triangularity effected by \eqref{T def} and \eqref{m1}:
\begin{equation}\label{m tilde}
	\widetilde{m}(z) =
	\lb \prod_{k \in \Delta} \lp \frac{ 1}{z_k} \rp \rb^{\sig}	
	\mk{sol}(z) F(z)
	\lb \prod_{k \in \Delta} \lp \frac{ z - z_k}{z z_k -1} \rp \rb^{-\sig},
\end{equation}
where
\begin{equation*}
	F(z) = \begin{cases}
			\tril{ \frac{ c_j}{z-z_j} T(z)^2 e^{\Phi (z_j)} }
			& | z - z_j | = \rho, \ j \in \nabla \smallskip \\
			\triu{ \frac{ z-z_j}{c_j} T(z)^{-2} e^{-\Phi (z_j)}}
			& | z - z_j | = \rho, \ j \in \Delta \smallskip \\
			\triu{ \frac{ \widebar c_j}{z- \widebar z_j} T( z)^{-2} e^{\Phi (z_j)} }
			& | z - \widebar z_j | = \rho, \ j \in \nabla \smallskip \\
			\tril{ \frac{ z-\widebar z_j}{\widebar c_j} T( z)^{2} e^{-\Phi (z_j)}}
			& | z - \widebar z_j | = \rho, \ j \in \Delta   .
		\end{cases}
\end{equation*}
Clearly, the transformation to $\widetilde m$ preserves the normalization conditions at the origin and infinity.
Comparing \eqref{m tilde} to \eqref{V2} it is clear that the new unknown $\widetilde{m}$ has no jumps. From \eqref{T def}, RHP~\ref{rhp: m2}, and \eqref{m tilde} it follows that $\widetilde m(z)$ has simple poles at each of the points in $\mathcal{Z}$, the discrete spectrum of the original Riemann Hilbert problem, RHP~\ref{rhp:m}. A straightforward calculation shows that the residues satisfy \eqref{eq:resm}, but with \eqref{explicit-V-k} replaced by \eqref{perturbed phase}. Thus, $\widetilde m(z)$ is precisely the solution of RHP~\ref{rhp:m} with scattering data
$\{  \{z_k, \widetilde c_k\}_{k=0}^{N-1}, r \equiv 0 \}$.
The symmetry $r(s^{-1}) = \overline{ r(s) },\ s \in \R$, implies that the argument of the exponential in \eqref{perturbed phase} is purely real so that the perturbed connection coefficients maintain the reality condition $\widetilde{c}_j  = \im z_j | \widetilde{c}_j | $. Thus, $\mk{sol}$ is the solution of RHP~\ref{rhp:m} corresponding to an $N$-soliton, reflectionless, potential $\widetilde{q}(x,t)$ which generates the same discrete spectrum $\mathcal{Z}$ as our initial data, but whose connection coefficients \eqref{perturbed phase} are perturbations of those for the original initial data by an amount related to the reflection coefficient of the initial data. The solution of this discrete RHP is a rational function of $z$, whose (unique) exact solution always exists and can be obtained as described in Appendix~\ref{sec:msol}.
\end{proof}

As claimed above and proven in Appendix~\ref{sec:msol}, the RHP for $\mk{sol}$ can be solved exactly in closed form, but we will instead give the solution using the small norm theory of Riemann-Hilbert problems, see Appendix B \cite{KMM},  as this more naturally leads to the asymptotic form of the solution for $t \gg 1$.

The Riemann Hilbert problem for $\mk{sol}$ is ideally set up for asymptotic analysis.
The jump matrix $\vk{2}(z)$ satisfies
\begin{equation}\label{v2 bound}
	\| \vk{2}  -I \|_{L^p(\Sigma^{(2)})} \leq
	K_p \sup _{j \in \nabla \cup \Delta} e^{- C t \Im z_k \ | \xi - \Re z_j  |  }
	\le K_p e^{-C \rho^2 t}, \quad 1 \leq p \leq \infty,
\end{equation}
for some constant $K_p \geq 0$
independent of $(x,t)$. This implies that the jump matrices do not meaningfully, contribute to the asymptotic behavior of the solution. Instead,
the dominant contribution to the solution comes from the simple poles of $\mk{sol}$; those at $z = 0$ and, if the critical line $\Re z = \xi$ is passing through the neighborhood of one of the discrete spectra $z_{j} \in \mathcal{Z}$ of the original problem RHP~\ref{rhp:m}, those at $z_{j_0}$ and $\widebar z_{j_0}$.
Indeed, the following lemma describes this further simplification of $m^{(sol)}$ explicitly.

\begin{lemma}\label{lem: ex3rad}
Let $\xi = \frac{x}{2t}$ and let $j_0 =j_0(\xi) \in \{-1,0,1,\dots,N-1 \}$, be defined by \eqref{j0}. Suppose
\begin{equation}\label{mjsol 1}
	\text{
	$m_{j_0}^{(sol)}(z)$ solves RHP~\ref{rhp: m2} with $W(z) \equiv 0$ and $V^{(2)} \equiv I $.
	}
\end{equation}
Then, for any $(x,t)$ such that $|x/t| < 2$ and $t \gg 1$,  uniformly for $z \in \C$ we have
\begin{gather}\nonumber
  \mk{sol}(z)   =    \mk{sol}_{j_0}(z)   \left[
	I + \bigo{e^{-2\rho^2 t} }
	\right],
\shortintertext{and, in particular, for large $z$ we have}
	\mk{sol}(z) = \mk{sol}_{j_0}(z) \left[I + z^{-1}  {\bigo{e^{-2\rho^2 t} } }  + \bigo{z^{-2}} \right].\label{solas}
\end{gather}
Moreover, the unique solution $m_{j_0}^{(sol)}(z)$ to the above Riemann Hilbert problem, \eqref{mjsol 1}, is as follows:

\begin{subequations}\label{soliton solution}
\begin{enumerate}[i.]
\item if $j_0(\xi) = -1$, then all the $z_j$ are away from the critical line and
	\begin{equation}\label{eq:eq10}
		\mk{sol}_0(z) = I + z^{-1} {\sigma_1} ;	
	\end{equation}

\item if $j_0(\xi) \in \nabla$, then
\begin{gather}\label{eq:eq1a}
	\begin{gathered}
	\mk{sol}_{j_0}(z) = I + \frac{\sigma_1}{z}  +
	\begin{pmatrix}
		\frac{\alpha^{\nabla}_{j_0}(x,t)}{ z - z_{j_0} } & \frac{ \overline{ \beta^{\nabla}_{j_0}}(x,t)}{ z - \widebar z_{j_0} } \smallskip \\
		\frac{\beta^{\nabla}_{j_0}(x,t) }{ z-z_{j_0} } & \frac{\overline{ \alpha^{\nabla}_{j_0}}(x,t)}{ z-\widebar z_{j_0} }
	\end{pmatrix} \\
	\alpha^\nabla_{j_0}(x,t) = - z_{j_0}   \overline{\beta_{j_0}^\nabla}(x,t), \qquad
	\beta^\nabla_{j_0}(x,t) = \frac{ 2 \im \Im( z_{j_0}) z_{j_0} e^{-2 \varphi_{j_0}} }{1 + e^{-2 \varphi_{j_0}} };
	\end{gathered}
\end{gather}	
\item if $j_0(\xi) \in \Delta$, then
\begin{gather}\label{eq:eq1b}
	\begin{gathered}
		\mk{sol}_{j_0}(z) = I + \frac{\sigma_1}{z} +
		\begin{pmatrix}
			\frac{   \alpha^{\Delta}_{j_0}(x,t)}{z - \widebar z_{j_0}} & \frac{ \overline{\beta^{\Delta}_{j_0}}(x,t) }{z - z_{j_0} } \smallskip \\
			\frac{   \beta^{\Delta}_{j_0}(x,t) }{z - \widebar z_{j_0}} & \frac{ \overline{\alpha^{\Delta}_{j_0}}(x,t) }{z- z_{j_0} }
		\end{pmatrix}   \\
		\alpha^{\Delta}_{j_0}(x,t) = -   \overline{z} _{j_0}  \overline{\beta^{\Delta}_{j_0} } (x,t), \qquad
		\beta^{\Delta}_{j_0}(x,t) =- \frac{2 \im \Im(z_{j_0} ) \overline{z}_{j_0} e^{2 \varphi_{j_0}} }{1 + e^{2\varphi_{j_0}} }.
	\end{gathered}
\end{gather}
In cases ii. and iii. the real phase $\varphi_{j_0}$ is given by
\begin{gather}\label{eq:eq1c}
	\begin{gathered}
		\varphi_{j_0} = \Im(z_{j_0}) (x - 2 \Re (z_{j_0}) t - x_{j_0}) \\
		x_{j_0} = \frac{1}{2 \Im(z_{j_0})} \lp \log \lp \frac{|c_{j_0} |}{2 \Im(z_{j_0})}
		\prod_{\substack{{k \in \Delta} \\ k \neq j_0}}
		\left| \frac{ z_{j_0} - z_k}{z_{j_0} z_k - 1} \right|^2 \rp
		- \frac{\Im(z_{j_0})}{\pi} \int_0^\infty \frac{\log(1 - | r(s) |^2)}{| s- z_{j_0}|^2} ds \rp .
	\end{gathered}
\end{gather}
\end{enumerate}
\end{subequations}
\end{lemma}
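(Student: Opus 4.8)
The plan is to establish the estimate \eqref{solas} by small-norm RHP theory and then compute the explicit solution $m^{(sol)}_{j_0}$ of the finite-pole, jumpless problem in each of the three cases. First I would observe that by \eqref{v2 bound} the jump matrix $V^{(2)}$ on $\Sigma^{(2)}$ satisfies $\|V^{(2)}-I\|_{L^p(\Sigma^{(2)})} \leq K_p e^{-C\rho^2 t}$, so the ratio $E(z) := m^{(sol)}(z)\, m^{(sol)}_{j_0}(z)^{-1}$ solves a RHP with jump supported on $\Sigma^{(2)}$ whose jump matrix is exponentially close to $I$ and which is analytic (no poles) in $\C\setminus\Sigma^{(2)}$ — the poles at $z_{j_0},\widebar z_{j_0}$ (when present) and at $z=0$ cancel since $m^{(sol)}$ and $m^{(sol)}_{j_0}$ share the same residue relations there, and the conjugating factor $m^{(sol)}_{j_0}(z)^{-1}$ is analytic and bounded near those points. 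Standard small-norm theory (e.g.\ Appendix~B of \cite{KMM}) then gives $E(z) = I + \bigo{e^{-2\rho^2 t}}$ uniformly in $z$, with the $z^{-1}$ coefficient of its large-$z$ expansion also $\bigo{e^{-2\rho^2 t}}$; multiplying through by $m^{(sol)}_{j_0}(z)$ yields both displayed estimates.

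Next I would solve \eqref{mjsol 1} explicitly. Since $W\equiv 0$ and $V^{(2)}\equiv I$, the unknown $m^{(sol)}_{j_0}(z)$ is a rational function of $z$ whose only singularities are the prescribed simple pole structure at $z=0$ (from the normalization $zm \to \sigma_1$) and, if $j_0\geq 0$, simple poles at $z_{j_0}$ and $\widebar z_{j_0}$ with residues given by \eqref{m2 nabla con} or \eqref{m2 delta con}. In case $j_0=-1$ there are no discrete poles, so $m^{(sol)}_0(z) = I + z^{-1}\sigma_1$ is forced by the two normalization conditions together with Liouville's theorem; one checks this satisfies the symmetries of Lemma~\ref{lem:barm}. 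In cases $j_0\in\nabla$ or $j_0\in\Delta$ I would make the rational ansatz $m^{(sol)}_{j_0}(z) = I + \sigma_1/z + A/(z-p) + B/(z-\bar p)$ with $p$ equal to $z_{j_0}$ or $\widebar z_{j_0}$ as dictated by which column carries the pole, and determine the matrices $A,B$ by imposing the residue conditions. The residue relation at one pole expresses one column of $A$ (resp.\ $B$) in terms of the value of the opposite column of $m^{(sol)}_{j_0}$ evaluated at that pole, which by Lemma~\ref{lem:barm} (the symmetry $\overline{m(\bar z)} = \sigma_1 m(z)\sigma_1$) is tied to the conjugate residue; this reduces the unknowns to a single scalar (called $\beta^\nabla_{j_0}$ or $\beta^\Delta_{j_0}$), and the off-diagonal structure in \eqref{eq:eq1a}–\eqref{eq:eq1b} together with the relations $\alpha = -z_{j_0}\overline{\beta}$ (resp.\ $\alpha = -\overline{z}_{j_0}\overline{\beta}$) follows from matching. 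Solving the resulting scalar linear equation gives $\beta^\nabla_{j_0} = 2\im\Im(z_{j_0})z_{j_0}e^{-2\varphi_{j_0}}/(1+e^{-2\varphi_{j_0}})$ and its $\Delta$-analogue, where the combination $e^{-2\varphi_{j_0}}$ collects the factors $C_{j_0} e^{\Phi(z_{j_0})}$ (resp.\ $C_{j_0}e^{-\Phi(z_{j_0})}$) from \eqref{m2 nabla con}–\eqref{m2 delta con} together with the Blaschke and $T$-factors.

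Finally I would verify that the scalar phase collected in the previous step is exactly $\varphi_{j_0}$ of \eqref{eq:eq1c}. This is a bookkeeping computation: using \eqref{soliton phase} one has $\Re\Phi(z_{j_0};x,t) = -4t\Im(z_{j_0})(\xi - \Re z_{j_0})\cdot\frac{\Re z_{j_0}}{|z_{j_0}|^2}\cdot(\ldots)$ — more precisely on $|z|=1$, $\Phi(z_{j_0}) = 2\im\Im(z_{j_0})(x - 2\Re(z_{j_0})t)$ up to the sign conventions — and combining with $|c_{j_0}| = -\im z_{j_0}^{-1}c_{j_0}$, $|T(z_{j_0})|^2$-type factors coming from the Blaschke product $\prod_{k\in\Delta, k\neq j_0}|(z_{j_0}-z_k)/(z_{j_0}z_k-1)|^2$, and the real Cauchy-integral contribution $-\frac{\Im(z_{j_0})}{\pi}\int_0^\infty \log(1-|r(s)|^2)|s-z_{j_0}|^{-2}ds$ extracted from $|T(z_{j_0})|^2$ via the Plemelj-type identity for $T$ in Lemma~\ref{lem:T}, one recovers the stated formula for $x_{j_0}$. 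The main obstacle, and the step demanding the most care, is precisely this last reconciliation: one must keep scrupulous track of the factors $T(\infty)$, $T(z_{j_0})$, $T'(z_{j_0})$, the conjugation by $[\prod_{k\in\Delta}1/z_k]^{\sigma_3}$ and $[\prod(z-z_k)/(zz_k-1)]^{-\sigma_3}$ from \eqref{m tilde}, the soliton phase \eqref{soliton phase}, and the reality constraint $\widetilde c_{j_0} = \im z_{j_0}|\widetilde c_{j_0}|$, checking at each stage that the symmetry $\overline{m(\bar z)} = \sigma_1 m(z)\sigma_1$ is preserved so that $\varphi_{j_0}$ comes out real; everything else is elementary linear algebra on $2\times 2$ rational matrices.
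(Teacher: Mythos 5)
Your overall strategy is the same as the paper's: solve the model problem \eqref{mjsol 1} exactly by a rational ansatz whose residue relations, combined with the symmetries of Lemma~\ref{lem:barm}, collapse to a single scalar equation for $\beta_{j_0}$, and then control $\mk{err}(z)=\mk{sol}(z)\bigl(\mk{sol}_{j_0}(z)\bigr)^{-1}$ by small-norm theory on $\Sigma^{(2)}$ using \eqref{v2 bound}. However, one step is justified incorrectly as written. You assert that the poles cancel because ``the conjugating factor $\mk{sol}_{j_0}(z)^{-1}$ is analytic and bounded near those points.'' This is false: since $\det \mk{sol}_{j_0}(z)=1-z^{-2}$ is finite and nonzero at $z_{j_0},\widebar z_{j_0}$, the adjugate (hence the inverse) inherits a simple pole there, located in a row rather than a column; moreover $\det \mk{sol}_{j_0}$ vanishes at $z=\pm1$, so the inverse is singular there as well. (Note also that if the inverse really were bounded at $z_{j_0}$, the product would simply retain the pole of $\mk{sol}$, contradicting your conclusion.) The correct mechanism is the one you mention first — the two factors satisfy the \emph{same} residue relations with the same coefficients — and it must be implemented through the local rank-one expansions, exactly as in \eqref{eq:m3 sol free 1}--\eqref{eq:m3 sol free 2}, together with the symmetry argument used at the end of the proof of Lemma~\ref{lem: m3} to handle $z=\pm1$, a point your sketch omits entirely. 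With that repair, the small-norm conclusion and the extraction of the $z^{-1}$ coefficient are standard, as you say.

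Two smaller points in your phase bookkeeping: on the unit circle $\Phi(z_{j_0};x,t)$ is \emph{real}, equal to $-2\Im(z_{j_0})\,(x-2\Re(z_{j_0})t)$ (equivalently \eqref{soliton phase}); your expression $2\im\,\Im(z_{j_0})(x-2\Re(z_{j_0})t)$ carries a spurious factor of $\im$ and the wrong sign, which would render $\varphi_{j_0}$ imaginary, and your intermediate formula for $\Re\Phi$ with the extra factor $\Re z_{j_0}/|z_{j_0}|^2$ is not correct. Also, the $T$-factors are already contained in the coefficients $C_{j_0}$ of \eqref{m2 nabla con}--\eqref{m2 delta con}, so they should not be counted again ``together with'' $C_{j_0}e^{\pm\Phi(z_{j_0})}$; with the identification $e^{-2\varphi_{j_0}}=\bigl|C_{j_0}\bigr|e^{\Phi(z_{j_0})}/(2\Im z_{j_0})$ in the $\nabla$ case (and its $\Delta$ analogue) the formula \eqref{eq:eq1c} follows as in the paper.
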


\begin{proof}
We begin by proving that \eqref{soliton solution} solves \eqref{mjsol 1}.
The assumption that $V \equiv I$ and $W \equiv 0$ implies that $m_{j_0}^{(sol)}(z)$ is meromorphic with simple poles at $z=0$ and, if $j_0 \ne -1$, at both $z_{j_0}$ and $\widebar z_{j_0}$. If $j_0 = -1$, then \eqref{eq:eq10} is an immediate consequence of the condition $2$ in RHP~\ref{rhp: m2} and Liouville's theorem. For $j \neq -1$, observe that $C_0 := c_{j_0} T(z_{j_0})^2$ satisfies $C_0 = \im z_{j_0} |C_0|$ since $c_{j_0} = \im z_{j_0} |c_{j_0}|$ and $T(z) \in \R$ for $|z| = 1$,which follows from claim $i.$ in Lemma~\ref{lem:T}. For $j_0 \in \nabla$, this means that the RHP for $m_{j_0}^{(sol)}(z)$, is equivalent to the reflectionless, i.e., $r=0$, version of RHP~\ref{rhp:m} with poles at the origin and at the points $z_{j_0}$ and $\overline{z_{j_0}}$ with associated connection coefficient $C_0$. Then the symmetries \eqref{eq:barm}-\eqref{eq:mzinv} inherited by $\mk{sol}_{j_0}$ and \eqref{m2 nabla con} imply that  $\alpha^\nabla_{j_0} = - z_{j_0} \overline{ \beta^{\nabla}_{j_0}}$ and
\begin{equation*}
	\mk{sol}_{j_0} (z) = I + \frac{\sigma_1}{z} +
	\begin{pmatrix} \overline{ \beta^{\nabla}_{j_0}} & \beta^{\nabla}_{j_0} \end{pmatrix}
	\begin{pmatrix}
		-z_{j_0} (z- z_{j_0})^{-1} & \phantom{-z_{j_0}} (z- \overline{z}_{j_0})^{-1} \\
		\phantom{-z_{j_0}} (z- z_{j_0})^{-1} & - \overline{z}_{j_0} (z- \overline{z}_{j_0})^{-1}
	\end{pmatrix}.
\end{equation*}
The residue conditions \eqref{m2 nabla con} then yield four linearly dependant equations for the single unknown $\beta^{\nabla}_{j_0}$, each equivalent to $\beta^{\nabla}_{j_0} = C_0( 1 - \overline{z}_{j_0} \beta^{\nabla}_{j_0} (z_{j_0}- \overline{z_{j_0}})^{-1} )$, which gives \eqref{eq:eq1a} upon setting $\frac{|C_0|}{2\Im(z_j)} = e^{-2 \varphi_{j_0}}$.

For $j \in \Delta$, the computation is similar, but the new pole conditions \eqref{m2 delta con} exchanges the columns in which the two poles occur; we have $\alpha_{j_0}^{\Delta} = - \overline{z_{j_0}} \, \overline{ \beta^{\Delta}_{j_0}}$ and
\begin{equation*}
	\mk{sol}_{j_0} (z) = I + \frac{\sigma_1}{z} +
	\begin{pmatrix} \overline{ \beta^{\Delta}_{j_0}} & \beta^{\Delta}_{j_0} \end{pmatrix}
	\begin{pmatrix}
		- \overline{z}_{j_0} (z- \overline{z}_{j_0})^{-1} & \phantom{-z_{j_0}} (z- z_{j_0})^{-1} \\
		\phantom{-z_{j_0}} (z- \overline{z}_{j_0})^{-1} & - z_{j_0} (z- z_{j_0})^{-1}
	\end{pmatrix}.
\end{equation*}
Then residue relation \eqref{m2 delta con} leads to one linearly independent equation which can be solved trivially yielding the second line of \eqref{eq:eq1b}.

Now we show that $\mk{sol}_{j_0}$ gives the leading order behavior to $\mk{sol}$ for $t \gg 1$.
The ratio $\mk{err}(z) = \mk{sol}(z) \lp \mk{sol}_{j_0}(z) \rp^{-1}$ has no poles
(the computation proving this is identical to \eqref{eq:m3 sol free 1}-\eqref{eq:m3 sol free 2} below) and its jump matrix $\vk{err}(z) = \lp \mk{sol}_{j_0}(z)\rp \vk{2}(z) \lp \mk{sol}_{j_0}(z)\rp^{-1}$ satisfies the same estimate as in \eqref{v2 bound} since $\left|\mk{sol}_{j_0}(z) - I  - \frac{\sigma_1}{z} \right| = \bigo{ e^{-2\rho^2 t} }$ for $z \in \Sigma^{(2)}$.
	
It then follows from the small norm theory for Riemann Hilbert problems, \cite[Appendix B]{KMM} \cite[Appendix A]{DKMVZ},  that
\begin{equation*}
	\mk{sol}(z) = \mk{sol}_{j_0}(z)
	\left[
	I + \frac{1}{2\pi \im} \int_{\Sigma^{(2)}} \frac{ (I + \mu(s))(\vk{err}(s) - I)}{s-z} ds
	\right]
\end{equation*}
where $\mu \in L^2(\Sigma^{(2)})$ is the unique solution of $(1 - C_\vk{err}) \mu = C_\vk{err} I$, where $C_\vk{err}: L^2(\Sigma^{(2)}) \to L^2(\Sigma^{(2)})$ is the Cauchy projection operator
\begin{equation*}
	C_\vk{err}[f](z) = C_-[f (\vk{err}-I)] = \lim_{z' \to z}
	\int_{\Sigma^{(2)} } \frac{ f(s) (\vk{err}(s) - I)}{s-z'}  ds
\end{equation*}
where the limit is understood (possibly in the $L^2$ sense) to be taken non-tangentially from the minus (right) side of the oriented contour $\Sigma^{(2)}$. Existence and uniqueness of $\mu$ follows from the boundedness of the Cauchy projection operator $C_-$, which immediately implies
\[
	\| C_\vk{err} \|_{L^2(\Sigma^{(2)}) \to L^2(\Sigma^{(2)})} = \bigo{ e^{-2\rho^2 t}}.
\]
\end{proof}

\begin{remark}~\label{remark: solitons}
The different formulae for $\mk{sol}_j(z)$ for $j \in \nabla$ or $j \in \Delta$ in Lemma~\ref{lem: ex3rad} is an artifact of the conjugation by $T(z)$ in \eqref{m1} which transforms exponentially growing pole residues into decaying residues. As is shown below, near the line $x = 2t \Re(z_j)$ the dominant contribution to $m(z)$ the solution of the original Riemann Hilbert problem is
of the form
\begin{equation}
	\begin{aligned}
	q_j^{(sol)}(x,t) &\equiv T(\infty,\xi)^{-2} \lim_{z \to \infty} z (\mk{sol}_j)_{21} (z; x,t) \\
		&= \begin{cases}
		\phantom{z_j^2}e^{\im \vartheta_+} (1 + \beta_j^\nabla(x,t) ) & x < 2t \Re(z_j) \\
		z_j^2 e^{\im \vartheta_+}  (1 + \beta_j^\Delta(x,t) ) & x > 2t \Re(z_j),
	\end{cases}
	\end{aligned}
\end{equation}
where $\vartheta_+$ is a real constant, and $\beta_j^\nabla$ and $\beta_j^\Delta$ are given by \eqref{eq:eq1a} and \eqref{eq:eq1b} respectively and the extra factor of $z_j^2$ for $x> 2t \Re(z_j)$ accounts for the additional factor in $T(\infty, \xi)$ for $j \in \Delta$.
However, since $\overline{z_j} = z_j^{-1}$, it's a simple algebraic exercise to show that the two formulae are identical, so that either formula gives
\begin{equation*}
	q_j^{(sol)}(x,t) = e^{i \vartheta_+} \frac{1 + z_j^2 e^{-2\varphi_j} }{1+ e^{-2\varphi_j}} = e^{i \vartheta_+} \sol(x-x_j, t ; z_j  )
\end{equation*}
where $\sol(x,t ;  z)$ defined by \eqref{eq:defsol} is the formula for the dark 1-soliton.
\end{remark}

We now complete the original goal of this section by using $\mk{sol}$ to reduce $\mk{2}$ to a pure $\dbar$-problem which will be analyzed in the following section.

\begin{lemma}\label{lem: m3}
Define the function
\begin{equation}\label{m3 def}
	\mk{3}(z) = \mk{2}(z) \lp \mk{sol}(z) \rp^{-1}.
\end{equation}
Then, $\mk{3}$ satisfies the following $\dbar$-problem.

\begin{DBAR}\label{dbar: m3}
Find a $2\times2$ matrix-valued function $\mk{3}(z)$ such that
\begin{enumerate}
	\item $\mk{3}(z)$ is continuous in $\C$, and analytic in $\C \backslash \overline{\Omega}$.
	\item $\mk{3}(z) = I + \bigo{z^{-1}}$ as $z \to \infty$.
	\item For $z \in \C$ we have
	\begin{equation} \label{eq:RHm3}
		\dbar \mk{3}(z) = \mk{3}(z) W^{(3)}(z)
	\end{equation}
	where
	$W^{(3)} := \mk{sol}(z) W(z) \lp \mk{sol}(z) \rp^{-1}$---with
	  $W(z)$ defined after \eqref{W2}---is supported in $ \Omega$.
\end{enumerate}
\end{DBAR}
\end{lemma}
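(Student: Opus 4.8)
The plan is to run the standard last step of the $\dbar$-steepest descent method: set $\mk{3}:=\mk{2}(\mk{sol})^{-1}$ and verify piece by piece that the factor $(\mk{sol})^{-1}$ cancels every jump and pole of $\mk{2}$, leaving only the $\dbar$-data. First I would record that $\det \mk{sol}(z)=1-z^{-2}$: both $\mk{2}$ and $\mk{sol}$ inherit the symmetries of Lemma~\ref{lem:barm} (the extensions $R_j$ were built in Lemma~\ref{lem:extR} to respect $R(z)=\overline{R(\widebar{z^{-1}})}$, which propagates the symmetries through \eqref{m2 def}); $\mk{sol}$ is normalized by condition~2 of RHP~\ref{rhp: m2}, and at its only possible poles $z_{j_0},\widebar z_{j_0}$ the residue relations \eqref{m2 nabla con}--\eqref{m2 delta con} render $\det \mk{sol}$ regular, so the Liouville/symmetry argument of Lemma~\ref{lem:det} forces $\det\mk{sol}=1-z^{-2}$; likewise $\det\mk{2}=1-z^{-2}$ since the triangular conjugators in \eqref{m1} and \eqref{m2 def} are unimodular. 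Hence $(\mk{sol})^{-1}=(1-z^{-2})^{-1}\sigma_2(\mk{sol})^T\sigma_2$ is meromorphic, holomorphic off $\Sigma^{(2)}$, $z=\pm1$ and $z_{j_0},\widebar z_{j_0}$, and $\det\mk{3}\equiv1$.

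Next I would check that $\mk{3}$ has no jumps and no poles. On $\Sigma^{(2)}$ both $\mk{2}$ and $\mk{sol}$ satisfy $m_+=m_-\vk{2}$, so $\mk{3}_+=\mk{2}_+(\mk{sol}_+)^{-1}=\mk{2}_-\vk{2}(\vk{2})^{-1}(\mk{sol}_-)^{-1}=\mk{3}_-$; and $\mk{3}$ is continuous across the lens boundaries $\Sigma_j$ because the triangular factors in \eqref{m2 def} equal $I$ there. At $z_{j_0},\widebar z_{j_0}$ (present only if $j_0\neq-1$) $\mk{2}$ and $\mk{sol}$ carry the same residue relation, so the singular parts of $\mk{2}(\mk{sol})^{-1}$ cancel exactly; this is the computation referenced in \eqref{eq:m3 sol free 1}--\eqref{eq:m3 sol free 2}, which I would invoke rather than repeat. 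At $z=0$, both $\mk{2}$ and $\mk{sol}$ behave like $\sigma_1/z+\bigo{1}$ with the same leading coefficient, and $(\mk{sol})^{-1}=z\,\sigma_1+\bigo{z^2}$ there, so $\mk{3}(z)=I+\bigo{z}$ and the origin causes no trouble.

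The one genuinely delicate point is $z=\pm1$: the non-generic equality $|r(\pm1)|=1$ forces $\det\mk{sol}$ (hence $\det\mk{2}$) to vanish there, so $(\mk{sol})^{-1}$ a priori has a simple pole at $\pm1$. Writing $\mk{3}=\frac{z^{2}}{z^{2}-1}\,\mk{2}\,\sigma_2\,(\mk{sol})^{T}\,\sigma_2$, it suffices to show $\mk{2}(\pm1)\,\sigma_2\,\mk{sol}(\pm1)^{T}\,\sigma_2=0$; once this holds $\mk{3}$ is bounded near $\pm1$, and since $\det\mk{3}\equiv1$ the singularity is removable. To get the vanishing I would use the symmetry $m(z^{-1})=z\,m(z)\,\sigma_1$ satisfied by both $\mk{2}$ and $\mk{sol}$: at the fixed point $z=\pm1$, where both matrices are single-valued because the real-axis jump of RHP~\ref{rhp:m} has been absorbed into the lenses (this is exactly what the careful construction of $T$ in Lemma~\ref{lem:T} and of the $R_j$ in Lemma~\ref{lem:extR} buys us), the symmetry gives $\mk{2}(\pm1)=\mk{2}(\pm1)\sigma_1$ and $\mk{sol}(\pm1)=\mk{sol}(\pm1)\sigma_1$, so both are rank-one (by $\det=0$) with equal columns; a one-line computation with $\sigma_1$ and $\sigma_2$ then yields the required identity. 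I expect this $z=\pm1$ analysis to be the main obstacle — it is precisely the place where the interplay of the singularities of $a$, $r$, $T$ and the $R_j$ must be tracked.

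Finally the remaining items are immediate. As $z\to\infty$, $\mk{3}=\mk{2}(\mk{sol})^{-1}=(I+\bigo{z^{-1}})(I+\bigo{z^{-1}})=I+\bigo{z^{-1}}$. Since $\mk{sol}$ is meromorphic, $\dbar\big((\mk{sol})^{-1}\big)=0$ wherever defined, so using $\dbar\mk{2}=\mk{2}W$ (RHP~\ref{rhp: m2}, condition~4) we get $\dbar\mk{3}=(\dbar\mk{2})(\mk{sol})^{-1}=\mk{2}\,W\,(\mk{sol})^{-1}=\mk{3}\,\big(\mk{sol}\,W\,(\mk{sol})^{-1}\big)=\mk{3}\,W^{(3)}$, with $W^{(3)}$ supported in $\Omega$ because $W$ is (its nonzero blocks involve the $\dbar R_j$, supported in $\overline{\Omega_j}$). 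Combining this with the analyticity in $\C\setminus\overline\Omega$ and the global continuity established above gives the $\dbar$-problem DBAR~\ref{dbar: m3}.
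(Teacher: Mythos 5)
Your proposal is correct and follows essentially the same route as the paper's proof: cancel the common jumps on $\Sigma^{(2)}$, use the cofactor form of $\lp \mk{sol}\rp^{-1}$ at the origin, cancel the matching residue relations at $z_{j_0},\overline{z}_{j_0}$, and use the symmetries of Lemma~\ref{lem:barm} to show that the value of $\mk{2}$ at $\pm 1$ annihilates the singular part of $\lp \mk{sol}\rp^{-1}$, which is exactly the paper's mechanism. Two small corrections: at $z=-1$ the symmetry gives $\mk{2}(-1)=-\mk{2}(-1)\sigma_1$ (you dropped the sign), the vanishing of $\det \mk{sol}=1-z^{-2}$ at $\pm1$ is unconditional rather than a consequence of the generic equality $|r(\pm1)|=1$, and boundedness of the ratio at $\pm1$ needs the $\bigo{z\mp1}$ error in the local expansion of $\mk{2}$ (as written in the paper's proof), not merely the vanishing of the leading product.
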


\begin{proof}
It follows directly from \eqref{m3 def} that $\mk{3}$ has no jumps on the
disk boundaries $| z- z_j | = \rho$ nor $| z- \overline{z}_j | = \rho$
since $\mk{sol}$ has exactly the same jumps as $\mk{2}$ on these contours. The normalization condition and $\dbar$ derivative of $\mk{3}$ follow immediately from the properties of $\mk{2}$ and $\mk{sol}$. It remains to show that the ratio also has no isolated singularities.
At the origin we have  $\lp \mk{sol}(z) \rp^{-1} = (1 - z^{-2})^{-1} \sigma_2 \lp \mk{sol}(z) \rp^T \sigma_2$, formula already used in Lemma \ref{lem:det},
so that
\begin{equation}\label{eq:m3 sol free 1}
	\lim_{z \to 0} \mk{3}(z) = \lim_{z \to 0}  \frac{ \lp z \mk{2}(z) \rp \sigma_2 \lp z \mk{sol}(z)^T  \rp \sigma_2}{z^2-1} = -(\sigma_1 \sigma_2)^2 = I
\end{equation}
so $\mk{3}(z)$ is regular at the origin. If $\mk{2}$ has poles at $z_{j_0}$ and $\widebar z_{j_0}$ on the unit circle then from the form of the residue relation we have local expansions in a neighborhood of $z_{j_0}$ of the form
\begin{equation}\label{eq:m3 sol free 2}
	\begin{gathered}
	\mk{2}(z) =
		\begin{bmatrix} \mk{2}_{12} (z_{j_0}) \bigskip \\ \mk{2}_{22} (z_{j_0}) \end{bmatrix}
		\begin{bmatrix} \frac{ c_{j_0}}{z-z_{j_0}} &1 \end{bmatrix}
		+ \begin{pmatrix} *_{11} & 0 \\ *_{21} & 0 \end{pmatrix}
		+ \bigo{z-z_{j_0}} \\
	\mk{sol}(z)^{-1} =
		\frac{z_{j_0}^2}{z_{j_0}^2-1} \lp
	 	\begin{bmatrix}  1 \medskip \\ \frac{-c_{j_0}}{z-z_{j_0}} \end{bmatrix}
	 	\begin{bmatrix} \mk{sol}_{22}(z_{j_0}) & -\mk{sol}_{12}(z_{j_0}) \end{bmatrix}
	 	+\begin{pmatrix} 0 & 0 \\ \$_{21} & \$_{22} \end{pmatrix}
	 	+\bigo{ z-z_{j_0} } \rp \\
	\end{gathered}
\end{equation}
where $*_{jk}$ and $\$_{jk}$ are constants. Taking the product gives
\begin{equation*}
	\mk{3}(z) =\frac{z_{j_0}^2}{z_{j_0}^2-1} \lp
		\begin{bmatrix} *_{11} \medskip \\ *_{21} \end{bmatrix}
		\begin{bmatrix} \mk{sol}_{22}(z_{j_0}) & -\mk{sol}_{12}(z_{j_0}) \end{bmatrix}
		+ \begin{bmatrix} \mk{2}_{12} (z_{j_0}) \bigskip \\ \mk{2}_{22} (z_{j_0}) \end{bmatrix}
		\begin{bmatrix} \$_{21} & \$_{22} \end{bmatrix} + \bigo{1} \rp
\end{equation*}
which shows that $\mk{3}(z)$ is bounded locally and the pole is removable. A similar argument shows that the pole at $\widebar z_{j_0}$ is removable. Finally, because $\det \mk{sol}(z) = (1 - z^{-2})$ we must check that the ratio is bounded at $z = \pm 1$. This follows from observing that the symmetries $m(z) = \sigma_1 \overline{ m(\widebar z) } \sigma_1 = z^{-1} m(z^{-1}) \sigma_1$, given in Lemma~\ref{lem:barm}, applied to the local expansion of $\mk{2}$ and $\mk{sol}$ imply that
\begin{equation*}
		\mk{2}(z) = \begin{pmatrix} c & \pm c \\ \pm \overline{c} & \overline{c} \end{pmatrix}
			+ \bigo{z\mp 1}
		\qquad
		\mk{sol}(z)^{-1} = \frac{\pm 1}{2(z \mp 1)} \begin{pmatrix} \overline{\gamma} & \mp \gamma \\
		\mp \overline{\gamma} & \gamma \end{pmatrix}
			+ \bigo{1}	
\end{equation*}	
for some constants $c$ and $\gamma$. Taking the product it's immediately clear the singular  part of $\mk{3}(z)$ vanishes at $z =\pm 1$.
\end{proof}

In Sect. \ref{sec: step4} we will prove the following lemma.
\begin{lemma}\label{lem:0crE1}
There exist constants $t_1 $ and $c $ such that the $z$--independent coefficient $\mk{3}_1(x,t)$  in the asymptotic expansion
\begin{equation*}
 \mk{3}(z) =I+   \frac{\mk{3}_1(x,t)}{z} + o( z ^{-1})
\end{equation*}
satisfies
\begin{equation*}
 \text{$|\mk{3}_1(x,t) |\le c   t ^{-1}$ for $|x/ t |<2$ and $t\ge t_1$.}
\end{equation*}
\end{lemma}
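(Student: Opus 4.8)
The plan is to reduce the $\dbar$--problem of Lemma~\ref{lem: m3} to a linear integral equation for $\mk{3}$ built from the solid Cauchy transform, to show that the associated operator has norm decaying in $t$, and then to read off $\mk{3}_1$ from the $z\to\infty$ expansion of the resulting integral representation; this follows the scheme of Sections~2.4--2.5 of \cite{DM}. Since $\mk{3}$ is continuous on $\C$, analytic off $\overline{\Omega}$, normalized to $I$ at infinity, and satisfies $\dbar\mk{3}=\mk{3}W^{(3)}$ with $W^{(3)}$ supported in $\Omega$ and (as we will check) integrable, the Cauchy--Pompeiu formula yields
\begin{equation*}
	\mk{3}(z)=I-\frac{1}{\pi}\int_{\Omega}\frac{\mk{3}(s)W^{(3)}(s)}{s-z}\,dA(s)=:I+\mathcal{J}[\mk{3}](z),\qquad (I-\mathcal{J})\mk{3}=I,
\end{equation*}
with $dA$ planar Lebesgue measure.

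The next step is to bound $W^{(3)}=\mk{sol}W(\mk{sol})^{-1}$. On each lens $\Omega_j$ both $\mk{sol}$ and $(\mk{sol})^{-1}$ are bounded: away from the origin because $\Omega_j$ avoids the disks $|z-z_k|\le\rho$ containing the only poles of $\mk{sol}$ apart from $z=0$ (see the explicit formulas in Lemma~\ref{lem: ex3rad}), and near the origin via the exact identity $(\mk{sol})^{-1}=(1-z^{-2})^{-1}\sigma_2(\mk{sol})^{T}\sigma_2$ from the proof of Lemma~\ref{lem:det}, whose simple zero at $0$ cancels the simple pole of $\mk{sol}$ there. Hence $|W^{(3)}(s)|\le C\,|W(s)|$, and Lemma~\ref{lem:extR} together with the phase estimate of Lemma~\ref{lem:phase1a} give, for $s=ue^{\im\theta}\in\Omega_j$ and $F(u)=u+u^{-1}$,
\begin{equation*}
	|W^{(3)}(s)|\le C\bigl(u^{-1/2}+|r'(\pm u)|+\varphi(u)\bigr)\,e^{-\frac{t}{4}(1-|\xi|)F(u)^2|\sin 2\theta|},
\end{equation*}
and, near $s=1$, the sharper bound $|W^{(3)}(s)|\le C\,|s-1|\,e^{-c\,t(1-|\xi|)|\sin 2\theta|}$ coming from \eqref{eq:extR21}.

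The heart of the argument is the estimate of $\int_{\Omega}|W^{(3)}|\,dA$ and of $\|\mathcal{J}\|_{L^\infty\to L^\infty}$, and the key point is to integrate in the angular variable first. Since $\phi(\xi)\le\theta_0$ is small, $\sin 2\theta\ge\tfrac{4}{\pi}\theta$ on $(0,\phi(\xi))$, so
\begin{equation*}
	\int_0^{\phi(\xi)}e^{-\frac{t}{4}(1-|\xi|)F(u)^2\sin 2\theta}\,d\theta\le\frac{\pi}{t(1-|\xi|)F(u)^2},
\end{equation*}
which already produces the factor $t^{-1}$; the remaining radial integrals converge because $u^{1/2}/F(u)^2$ and $u/F(u)^2$ are bounded and lie in $L^1(\R_+)\cap L^2(\R_+)$, and the $|r'|$ contribution is handled with Cauchy--Schwarz using $\|r'\|_{L^2(\R)}<\infty$ (Lemma~\ref{lem:a3bis}). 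This gives $\int_{\Omega}|W^{(3)}|\,dA\le C(\xi_0)\,t^{-1}$. A similar splitting --- bounding the part $|s-z|\le1$ by H\"older against $\||\,\cdot-z|^{-1}\|_{L^p}$ with $p<2$ and treating the $|r'|$ part by Cauchy--Schwarz along the lens after noting that $\Re\Phi\gtrsim t(1-|\xi|)|\Im s|$ on $\Omega_j$ (a consequence of Lemma~\ref{lem:phase1a}) --- gives $\|\mathcal{J}\|_{L^\infty(\C)\to L^\infty(\C)}\le C(\xi_0)\,t^{-1/4}$. Thus there is $t_1=t_1(q_0,\xi_0)$ with $\|(I-\mathcal{J})^{-1}\|\le2$, hence $\|\mk{3}\|_{L^\infty(\C)}\le2$ for $t\ge t_1$; expanding $(s-z)^{-1}=-z^{-1}+\bigo{z^{-2}}$ in the integral representation identifies $\mk{3}_1(x,t)=\frac1\pi\int_{\Omega}\mk{3}(s)W^{(3)}(s)\,dA(s)$, whence $|\mk{3}_1(x,t)|\le\frac2\pi\int_{\Omega}|W^{(3)}(s)|\,dA(s)\le c\,t^{-1}$.

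I expect the main obstacle to be the contributions to these integrals near $s=\pm1$ and, secondarily, near $s=0$, the points where scattering data and phase degenerate together: near $\pm1$ the exponential gain $\Re\Phi\gtrsim t(1-|\xi|)|\sin2\theta|$ is weakest, and one must use the improved vanishing $|\dbar R_j|\lesssim|s-1|$ of \eqref{eq:extR21} --- exactly the estimate requiring the moment condition $q_0\in\tanh(x)+\Sigma_4$ --- while near $s=0$ the $u^{-1/2}$ blow-up of $\dbar R_j$ and the $z^{-1}$ singularities of $\mk{sol}$ have to be reconciled, which is precisely what the angular-first integration and the exact formula for $(\mk{sol})^{-1}$ accomplish. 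Tracking the constants so that they depend only on $(q_0,\xi_0)$ (equivalently, so that the bound is uniform for $|x/t|$ bounded away from $2$) is routine bookkeeping.
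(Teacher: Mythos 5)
Your overall scheme is the same as the paper's (integral equation $\mk{3}=I+J\mk{3}$ via the solid Cauchy transform, smallness of the operator norm for large $t$, then $\mk{3}_1=\pm\frac1\pi\int\mk{3}W^{(3)}dA$ bounded by $\int_\Omega|W^{(3)}|dA$), and your angular-first integration of the phase is a perfectly viable, even clean, way to get the $t^{-1}$ decay. But there is a genuine error at the key step where you bound $W^{(3)}=\mk{sol}W(\mk{sol})^{-1}$: it is false that $\mk{sol}$ and $(\mk{sol})^{-1}$ are bounded on the lenses, and hence false that $|W^{(3)}|\le C|W|$ there. First, $\mk{sol}(z)\sim\sigma_1/z$ at the origin, which is the vertex of every $\Omega_j$ (only the product $\mk{sol}\,W\,(\mk{sol})^{-1}$ stays controlled there, because $(\mk{sol})^{-1}$ vanishes like $|z|$). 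More importantly, $\det\mk{sol}(z)=1-z^{-2}$ vanishes at $z=\pm1$ while the adjugate does not, so $(\mk{sol})^{-1}$ blows up like $|z\mp1|^{-1}$ at $z=\pm1$ (this is stated explicitly in the proof of Lemma~\ref{lem: m3}), and $\pm1$ lie on the closures of the lenses. The correct bound is the paper's \eqref{eq:lemJ-22}, $|W^{(3)}(z)|\le C\langle z\rangle^2|z^2-1|^{-1}|W(z)|$, and this extra $|z\mp1|^{-1}$ singularity --- not any weakness of the exponential decay of $e^{-\Re\Phi}$ near $\pm1$, which is the reason you give --- is precisely why the refined extension \eqref{R12 def} and the vanishing estimate \eqref{eq:extR21} were built into Lemma~\ref{lem:extR}. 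As written, your bound near $s=1$, $|W^{(3)}|\lesssim|s-1|e^{-c t(1-|\xi|)|\sin2\theta|}$, is obtained by stacking \eqref{eq:extR21} on top of the false inequality $|W^{(3)}|\le C|W|$; with the correct singular factor one only gets $|W^{(3)}|\lesssim e^{-c t(1-|\xi|)|\sin2\theta|}$ near $1$, which is still sufficient, so the error is repairable, but the justification of the central estimate (both for $\int_\Omega|W^{(3)}|dA$ and, implicitly, inside your operator-norm bound) is wrong as it stands.

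Two smaller points. Your derivation of the integral equation by Cauchy--Pompeiu needs a justification that is not automatic (you do not know a priori that $\mk{3}$ is bounded or that $\mk{3}W^{(3)}$ is integrable); the paper instead pairs against test functions, uses Fubini on $\frac{\mk{3}(w)W^{(3)}(w)\dbar\phi(z)}{w-z}\in L^1(\C^2)$, and concludes $\mk{3}-J\mk{3}$ is entire by elliptic regularity before identifying it with $I$ --- some argument of this kind should be supplied. Also, in your radial estimates $u/F(u)^2\notin L^1(\R_+)$ (it decays only like $u^{-1}$); this is harmless because the $u^{-1/2}$ term only needs $u^{1/2}/F(u)^2\in L^1$ and the $|r'|$ term only needs $u/F(u)^2\in L^2$ via Cauchy--Schwarz, but the claim should be corrected. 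With the bound \eqref{eq:lemJ-22} in place of $|W^{(3)}|\le C|W|$, and \eqref{eq:extR21} used to cancel $|z-1|^{-1}$ near $1$, your angular-first computation does deliver $\int_\Omega|W^{(3)}|dA\le C(q_0,\xi_0)t^{-1}$ and hence the lemma, along lines parallel to, though organized slightly differently from, \eqref{eq:eqE121}--\eqref{eq:eqE122} in the paper.
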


\subsection{Step 4: Solution of the  $\dbar$ problem \ref{dbar: m3} and asymptotics as $t \to \infty$}\label{sec: step4}

\begin{lemma}\label{lem:lemJ}
Consider  the    following   operator $J$:
\begin{equation}  \label{eq:lemJ1}
	JH( z):= \frac{1}{\pi} \int _{\C}
		\frac{H( \varsigma )W^{(3)}( \varsigma )}{\varsigma -z} dA(\varsigma ).
\end{equation}
Then  we have  $J:L^ \infty (\C )\to L^ \infty  (\C )\cap C^0 (\C )$
and for any fixed $\xi _0 \in (0,1)$  there exists a    $C =C(q_0, \xi _0)$  s.t.
\begin{equation}  \label{eq:lemJ2}
	\| J \| _{L^ \infty  (\C ) \to L^ \infty  (\C )} \le C t^{-\frac{1}{2}}
	\text{ for all $t \gg1$ and for $\left |\frac{x}{2t}\right |\le \xi _0 $}.
\end{equation}
\end{lemma}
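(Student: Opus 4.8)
The plan is to reduce the operator estimate to a pointwise bound on the solid Cauchy transform and then to the model integrals of the $\dbar$--analysis of \cite{DM}. Since $JH(z)=\frac{1}{\pi}\int_{\C}\frac{H(\varsigma)W^{(3)}(\varsigma)}{\varsigma-z}\,dA(\varsigma)$ and $W^{(3)}$ is supported in $\Omega=\bigcup_{j=1}^{4}\Omega_{j}$, it is enough to prove
\[
	\sup_{z\in\C}\ \int_{\Omega}\frac{|W^{(3)}(\varsigma)|}{|\varsigma-z|}\,dA(\varsigma)\ \le\ C(q_{0},\xi_{0})\,t^{-1/2}\qquad\text{for }|x/2t|\le\xi_{0},\ t\gg1,
\]
which is \eqref{eq:lemJ2}; the mapping $J:L^{\infty}(\C)\to L^{\infty}(\C)\cap C^{0}(\C)$ then follows from the standard continuity properties of the solid Cauchy transform of a density lying in $L^{1}(\C)\cap L^{p}(\C)$ for some $p>2$.

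The main analytic input is the pointwise bound
\[
	|W^{(3)}(\varsigma)|\ \le\ C\,\big(|\varsigma|^{-1/2}+|r'(\pm|\varsigma|)|+\varphi(|\varsigma|)\big)\,e^{-c(\xi_{0})\,t\,|\Im\varsigma|},\qquad\varsigma\in\Omega,
\]
with the sign in the argument of $r'$ dictated by which $\Omega_{j}$ contains $\varsigma$. Writing $W^{(3)}=\mk{sol}\,W\,(\mk{sol})^{-1}$ with $W$ as in $\dbar$--RHP~\ref{rhp: m2}, the scalar factors $\dbar R_{j}$ are bounded by Lemma~\ref{lem:extR} (using both \eqref{eq:extR2} and the refined bound \eqref{eq:extR21} valid near $z=1$), while the exponentials $e^{\pm\Phi}$ are bounded by Lemma~\ref{lem:phase1a}: on each $\Omega_{j}$ the quantity $\Re\Phi$ has the sign required by $W$, and since $F(|\varsigma|)^{2}|\sin 2\theta|\ge 2\cos\theta_{0}\,|\Im\varsigma|$ and $|\xi|\le\xi_{0}$ one gets $|e^{\pm\Phi}|\le e^{-\frac12(1-\xi_{0})\cos\theta_{0}\,t\,|\Im\varsigma|}$. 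The point requiring care is the conjugation by $\mk{sol}$: by the choice of $\theta_{0}$ the set $\Omega$ is disjoint from fixed neighborhoods of the discrete spectrum $\mathcal{Z}$, so on $\Omega$ both $\mk{sol}$ and $(\mk{sol})^{-1}$ are analytic apart from $\mk{sol}(\varsigma)\sim\sigma_{1}/\varsigma$ at $\varsigma=0$ and simple poles of $(\mk{sol})^{-1}$ at $\varsigma=\pm1$ (where $\det\mk{sol}=1-\varsigma^{-2}$ vanishes). In the product the $\varsigma^{-1}$ at the origin is cancelled by the matching power carried by $(\mk{sol})^{-1}$, and the $(\varsigma\mp1)^{-1}$ poles are absorbed by the vanishing of $\dbar R_{j}$ there --- this is \eqref{eq:extR21} at $z=1$ and the analogous property of $R_{2},R_{3}$ at $z=-1$ --- so $W^{(3)}$ is bounded near $\pm1$ and behaves at worst like $|\varsigma|^{-1/2}$ near $\varsigma=0$, in every case retaining the exponential gain where $|\Im\varsigma|>0$.

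Next I would estimate $\int_{\Omega_{j}}\frac{|W^{(3)}|}{|\varsigma-z|}\,dA$ for each $j$ separately, along the lines of \cite[Sect.~2.4--2.5]{DM}. Parametrizing $\Omega_{1}$ by $\varsigma=u+\im v$, where $|\varsigma|\asymp u$ and $dA\asymp du\,dv$, I would split $|W^{(3)}|$ into its three summands. For the $r'$--summand the $u$--integral is estimated by Cauchy--Schwarz, using $r'\in L^{2}(\R)$ (Lemma~\ref{lem:a3bis}) together with $\|(\,\cdot\,+\im v-z)^{-1}\|_{L^{2}(|u-\Re z|<1)}\le C|v-\Im z|^{-1/2}$ and a trivial bound on $|u-\Re z|\ge1$; for the $|\varsigma|^{-1/2}$--summand one splits $|\varsigma|\lessgtr1$ and uses the quadratic factor $F(|\varsigma|)^{2}$ in Lemma~\ref{lem:phase1a} to absorb the borderline non-integrability of $u^{-1/2}$ at $\varsigma=0$ and $\varsigma=\infty$; the $\varphi$--summand is bounded, supported near $\varsigma=1$, and contributes $O(t^{-1}\log t)$. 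In each case the $u$--integral is $\lesssim|v-\Im z|^{-1/2}$, and the remaining $v$--integral is controlled by the elementary estimate
\[
	\int_{0}^{\infty}e^{-c t v}\,|v-\beta|^{-1/2}\,dv\ \le\ C\,t^{-1/2},\qquad\text{uniformly in }\beta\in\R,
\]
proved by splitting at $v=\beta$ (a rescaled $\Gamma$--integral on $v>\beta$, and the substitution $v\mapsto\beta-w$ together with the large-argument asymptotics of the incomplete $\Gamma$--function on $0<v<\beta$). Summing over $j$ gives the bound; uniformity in $\xi$ is automatic, since every constant depends on $\xi_{0}$ only through the lower bounds $\phi(\xi)\ge\phi(\xi_{0})>0$ on the opening angle and $c(\xi_{0})=\tfrac12(1-\xi_{0})\cos\theta_{0}>0$ on the phase rate.

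The main obstacle is twofold. First, the pointwise control of $W^{(3)}$ at $\varsigma=0$ and $\varsigma=\pm1$: one has to track precisely how the genuinely singular conjugation by $\mk{sol}$ interacts with the structure of the $\dbar$--extensions, which is where the hypotheses underlying Lemma~\ref{lem:extR} (and the extra moment noted in Remark~\ref{rem:reg1}) really enter. Second, extracting the sharp power $t^{-1/2}$ rather than $t^{-1/2+\epsilon}$ from the low-regularity term: since $r$ is only known to lie in $H^{1}(\R)$, a crude H\"older split costs an $\epsilon$, and one must instead combine the $\beta$--uniform estimate above with the quadratic gain in the phase --- the part that must be adapted carefully from the vanishing-background analysis of \cite{DM}.
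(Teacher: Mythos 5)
Your proposal follows essentially the same route as the paper's proof of \eqref{eq:lemJ2}: you bound $|W^{(3)}|\le |\mk{sol}|\,|W|\,|(\mk{sol})^{-1}|$, use $\det \mk{sol}=1-\varsigma^{-2}$ and the explicit form of $\mk{sol}$ on $\overline{\Omega}$ to see that the conjugation costs at most a factor $\langle\varsigma\rangle^{2}|\varsigma^{2}-1|^{-1}$ (so it is harmless at $\varsigma=0$ and is cancelled at $\varsigma=1$ by \eqref{eq:extR21}), invoke Lemma~\ref{lem:phase1a} for the exponential gain off the axis, split $\dbar R$ into the three summands of \eqref{eq:extR2}, and close with Cauchy--Schwarz/H\"older in $u$ followed by the elementary bound $\int_{0}^{\infty}e^{-ctv}v^{-a}|v-\beta|^{-b}dv\lesssim t^{-(a+b)}$ with $a+b=\tfrac12$; this is exactly the paper's scheme on $\{|\varsigma|\ge1\}$. (Two small imprecisions there: for the $r'$ summand the region $|u-\Re z|\ge1$ must also be handled by Cauchy--Schwarz, not a ``trivial'' $L^{1}$ bound, since $r'$ is only known to be in $L^{2}$; and for the $|\varsigma|^{-1/2}$ summand the $u$--integral is not literally $\lesssim|v-\Im z|^{-1/2}$ but $\lesssim v^{1/p-1/2}|v-\Im z|^{-1/p}$, which the same two-exponent elementary estimate handles.)

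The one genuine divergence is the unit-disc portion $\{|\varsigma|<1\}$. The paper maps it to the exterior region by the inversion $\varsigma\mapsto\overline{1/\varsigma}$, using $R(\overline{1/\varsigma})=\overline{R(\varsigma)}$ and $\Phi(\overline{1/\varsigma};x,t)=\overline{\Phi(\varsigma;x,t)}$, so that no new estimate is needed; you instead propose a direct estimate exploiting the stronger phase rate coming from $F(|\varsigma|)^{2}\sim|\varsigma|^{-2}$ near the origin. This can be made to work --- with the effective rate $tv/u^{3}$, $u<1$, the $|\varsigma|^{-1/2}$ summand in fact contributes $O(t^{-1}\log t)$ --- but it is the one step you only sketch, and it genuinely requires the iterated $u$--$v$ integration with the $u$-dependent rate: a two-dimensional H\"older of $|\varsigma|^{-1/2}e^{-\Re\Phi}$ against the Cauchy kernel only yields $t^{-1/4}$, so ``absorb the borderline non-integrability'' conceals the real work that the paper's inversion trick avoids. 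Finally, your justification of $J:L^{\infty}\to L^{\infty}\cap C^{0}$ via ``the density lies in $L^{1}\cap L^{p}$, $p>2$'' is not supported by the available bounds, because the $r'(|\varsigma|)$ part of $W^{(3)}$ is only square integrable in the radial variable; continuity should instead be extracted from the same uniform iterated estimates used for \eqref{eq:lemJ2}, as the paper implicitly does.
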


\begin{proof}
To prove \eqref{eq:lemJ2} we follow the argument in  Prop. 2.2 \cite{DM}.
It is not restrictive to consider only the proof of
$\| J H \| _{ L^\infty (\C )} \le C t^{-\frac{1}{2}}  \| H  \| _{L^\infty (\C)}$
for  $H\in L^\infty (\Omega _1)$.
Recall the definition of $W^{(3)}(z) := \mk{sol}(z) W(z) \lp \mk{sol}(z) \rp^{-1}$.
From Lemma~\ref{lem:det} we have $\det \mk{sol}(z)  = 1 - z^{-2}$,
and Lemma~\ref{lem: ex3rad} implies that for $z \in \overline{\Omega_1}$
there exists a fixed constant $C_1$ s.t. the matrix norm
$| \mk{sol}(z) | \leq C_1 |z|^{-1} \sqrt{ 1 + |z|^2 } = C |z|^{-1} \langle z \rangle$.
Then
\begin{equation} \label{eq:lemJ-22}
    | W^{(3)}(z) | \le |\mk{sol}(z) | ^2   |1-z ^{-2}|^{-1}   | W (z) |
    \le C_1     \langle z \rangle^2     |z^2-1|^{-1}   | W (z) |.
\end{equation}
Since
$\langle \varsigma  \rangle | \varsigma +1| ^{-1} =\bigo{1}$  in $ \Omega _1$,
for a fixed constant $c_1$ we have
 \begin{equation} \label{eq:lemJ22}
 	|JH (z)| \le c _1 \| H  \| _{L^\infty (\C)}  \int _{  \Omega _1  }
  	\frac{  \langle \varsigma  \rangle \left| \dbar R (\varsigma  )  e^{- \Re \Phi (\varsigma )} \right|}
	{| \varsigma - z| \ |\varsigma -1|} dA(\varsigma  ).
\end{equation}
By Lemma~\ref{lem:phase1a} the hypothesis that there is a  constant $ \xi _0   \in (0, 1)$
s.t. $| \xi |\le  \xi _0  $ is crucial  in order to have
$|\Re \Phi (\varsigma )  | \ge c t |uv|$ for a fixed $c=c(\xi _0 )>0$.
Notice also that \eqref{eq:lemJ22} contains an extra singularity with respect to
Proposition 2.2 in \cite{DM}.
It is to offset this that our extensions of $R(z)$ in Lemma~\ref{lem:extR},
in particular formula \eqref{R12 def}, are somewhat more elaborate than in \cite{DM}.
To  simplify notation we will normalize the problem and  suppose $\theta _0 =\pi /4$
so  that $\Omega _1$ is the sector defined by $\arg (z)\in [0, \pi /4]$.
Into the integral in the r.h.s. of \eqref{eq:lemJ22},  we insert the partition of unity:
$\chi _{[0,1)}(|\varsigma |)+\chi _{[ 1, 2)}(|\varsigma |) + \chi _{[ 2, \infty )}(|\varsigma |) $.
We prove first  the following, where the 1st inequality is obvious since
$\langle \varsigma  \rangle | \varsigma -1| ^{-1} \le \kappa$
for $|\varsigma |\ge 2$, for a fixed $\kappa$:
\begin{equation} \label{eq:lemJ22c}
	\int _{  \Omega _1  }
	\frac{ \left| \dbar R (\varsigma  )  e^{- \Re \Phi (\varsigma )} \right|
	\chi _{[ 2, \infty )}(|\varsigma | ) }
	{| \varsigma   -z| \ |\varsigma -1|} \langle \varsigma  \rangle dA(\varsigma  )
	\le
	\kappa \int _{  \Omega _1  }
	\frac{  \left| \dbar R  (\varsigma ) e^{- \Re \Phi (\varsigma )} \right|
	\chi _{[ 1, \infty )}(|\varsigma | ) }{| \varsigma   -z|  } dA(\varsigma  )
	\le C t^{-\frac{1}{2}}  .
\end{equation}
Set  $ \varsigma =u+\im v$, $z=z_R +\im z_I$, $1/q+1/p=1$  with $p>2$.
To prove the 2nd inequality in \eqref{eq:lemJ22c} we replace $| \dbar R |$ by the 3 terms
in the r.h.s. of \eqref{eq:extR2}.
For $\varsigma \in \Omega_1$ with $| \varsigma| \geq 1$ we use Lemma~\ref{lem:phase1a} to write $ \Re \Phi(\varsigma) > ct u v > c' t v$.

When replacing $| \dbar R  (\varsigma )  |$, the terms in
 in \eqref{eq:extR2} involving
$f(| \varsigma |) =  r'(| \varsigma |)$ or $f(| \varsigma |) = \varphi (| \varsigma |)$ give
\begin{equation} \label{eq:lemJ22b}
	\begin{aligned}
	&\int _0^\infty dv  e^{- c' t v}\int _{v}^{\infty}
  	\frac{   \chi _{[ 1, \infty )}(|\varsigma |)   |f(| \varsigma |)|}{  | \varsigma   -z|} du
	\le
	c'' \|  f\| _{L^2(\R)}  \int _0^\infty dv  e^{- c' t v}  | v-z_I  |^{-\frac{1}{2} }
   	\le C t^{-\frac{1}{2}} \|  f\| _{L^2(\R) }.
	\end{aligned}
\end{equation}
Here we have used
\begin{equation}  \label{eq:lemJ22b1}
	\int _{v}^{\infty}| f(\sqrt{u^2+v^2}) |^2 du =
	\int _{\sqrt{2}v}^{\infty} | f(\tau ) |^2  \frac{\sqrt{u^2+v^2}}{u}  d\tau
	\le  \sqrt{2}\int _{\sqrt{2}v}^{\infty} | f(\tau ) |^2     d\tau  .
\end{equation}
The term  $|\varsigma| ^{-\frac{1}{2}}$ in \eqref{eq:extR2} gives
\begin{equation} \label{eq:lemJ22a}
	\begin{aligned} &
	\int _0^\infty dv \ e^{- c' t v}\int _{v}^{\infty}
  	\frac{   \chi _{[ 1, \infty )}(|\varsigma |) }{| \varsigma | ^{ \frac{1}{2}} | \varsigma   -z|} du
	\le
	\int _0^\infty dv \ e^{- c' t v}
	\| \, |\varsigma | ^{- \frac{1}{2}} \|_{L^p(v,\infty )}
	\|  \, |\varsigma-z | ^{- 1} \|_{L^q(v,\infty )} \\&
	\le
	c''  \int _0^\infty dv\, e^{- c' t v} v^{1/p-1/2}  \left| v-  z_I \right|^{-\frac{1}{p}}
	\le
	4 c'' t^{-1/2} \int _0^\infty ds \ e^{- c' s}   s^{ -\frac{1}{2}}
	\le C t^{-\frac{1}{2}}.
\end{aligned}
\end{equation}
In the penultimate step above we've made the elementary observation that for any $a,b,c > 0$,
\[
	\begin{aligned}
		\int_{0}^\infty e^{-c v} v^{-a} |v-v_0|^{-b} dv
		& \leq \int\limits_{\mathclap{v> |v-v_0|}} e^{-c |v-v_0|} |v-v_0|^{-(a+b)} dv
		+\int\limits_{\mathclap{0< v< |v-v_0|}} e^{-c v} v^{-(a+b)} dv  \\
		& \leq 2\int_{-\infty}^\infty e^{-c |s|} |s|^{-(a+b)} ds = 4\int_{0}^{\infty} e^{-cs} s^{-(a+b)} ds.
	\end{aligned}
\]
Thus we have proved \eqref{eq:lemJ22c}.
The next inequality is
\begin{equation} \label{eq:lemJ22z}
	\int _{  \Omega _1  } \frac{ \langle \varsigma \rangle \, |\dbar R (\varsigma  )  e^{- \Re \Phi (\varsigma )}|
	 \chi _{[ 1, 2)}(|\varsigma | ) }
	{| \varsigma   -z| \ |\varsigma -1|}dA(\varsigma  )
	\le \sqrt{5} c_1 \int _{  \Omega _1  } \frac{   e^{- \Re \Phi (\varsigma )} \chi _{[ 1, 2)}(|\varsigma | ) }
	{| \varsigma   -z|  } dA(\varsigma  )
	\le C t^{-\frac{1}{2}}  .
\end{equation}
The first inequality is obtained from \eqref{eq:extR21},
that is $\left| \dbar R_j(\varsigma ) \right| \le   c_1  |\varsigma -1| $, and
noting that $ \langle \varsigma \rangle \leq \sqrt{5}$ for $|\varsigma| \leq 2$. The second inequality is    \eqref{eq:lemJ22b} applied to $f(|z | )=  \chi _{[ 1, 2)}(|z | )$.
From \eqref{eq:lemJ22c} and \eqref{eq:lemJ22z} we conclude that for some $C(q_0, \xi _0)$
\begin{equation} \label{eq:lemJ22x}
	\int _{  \Omega _1  } \frac{ \left| \dbar R (\varsigma  )  e^{- \Re \Phi (\varsigma )} \right|
	\chi _{[ 1, \infty)}(|\varsigma | ) } {| \varsigma   -z| \ |\varsigma -1|} dA(\varsigma  )
	\le  C(q_0, \xi _0) t^{-\frac{1}{2}}  .
\end{equation}

Finally, consider the last inequality, namely
\begin{equation} \label{eq:lemJ22d}
	\int _{  \Omega _1  } \frac{ \left| \dbar R (\varsigma  )  e^{- \Re \Phi (\varsigma )} \right|
	\chi _{[ 0,1)}(|\varsigma | ) } {| \varsigma   -z| \ |\varsigma -1|} dA(\varsigma  )
	\le  C t^{-\frac{1}{2}}.
\end{equation}
Introducing the change of variables $w = \widebar{1/z}$ and $\tau = \widebar{1/ \varsigma}$,
noting that $dA(\varsigma) = | \tau  | ^{- 4}dA(\tau)$,  
$\Phi (  \widebar{\tau^{-1}}; x,t) = \widebar{\Phi ( \tau ; x,t)}$
(\cf\ \eqref{eq:Mvx}), 
and using the symmetry $R(\widebar{ \tau^{-1}}) = \widebar{R(\tau)}$ (\cf\ Lemma~\ref{lem:extR}), 
equation \eqref{eq:lemJ22d} becomes
\begin{equation} \label{last1}
	\int_{  \Omega _1  } \frac{ \left| \partial_{\tau} \widebar{R ( \tau) } e^{ -\Re \Phi (\tau )} \right|
	\chi _{[ 1,\infty)}( |\tau | ) } { | \tau^{-1}   - w^{-1}| \ |\tau^{-1} -1| |\tau|^4 } 
	\left| \pd{\tau}{\widebar{\varsigma}} \right| dA(\tau)
	=
	|w| \int_{  \Omega _1  } \frac{ \left| \dbar R ( \tau) e^{ -\Re \Phi (\tau )} \right|
	\chi _{[ 1,\infty)}( |\tau | ) } { | \tau   - w| \ |\tau -1|  } dA(\tau) .
\end{equation}
%
%
%
Now consider separately large and small values of $|w|$:
if $ |w| \le 3$ we are back to \eqref{eq:lemJ22x};
if $ |w| \ge 3$ we can bound the r.h.s. of \eqref{last1} by
\begin{equation*}
 	3 \int\limits_{\mathclap{  |\tau | \ge  \frac{|w|}{2}  }}
  	\frac{ |   \dbar R (\tau ) e^{ -\Re \Phi ( \tau )} |  }{| \tau - w |    }
	\chi _{\Omega_1} (\tau) dA( \tau )
	+ 2  \int\limits_{\mathclap{ 1\le |\tau | \le \frac{|w|}{2}  }}
  	\frac{ |  \dbar R (\tau ) e^{ -\Re \Phi ( \tau )} |  }{| \tau-1|    }
	\chi _{\Omega_1 } (\tau) dA(\tau ).
\end{equation*}
Both terms are bounded by $C t^{-\frac{1}{2}}$ for a fixed $C=C(q_0,\xi _0)$
since they can be treated like the middle term in \eqref{eq:lemJ22c}.
So we have proved \eqref{eq:lemJ2}.
\end{proof}

Lemma~\ref{lem:lemJ} implies $\mk{3}=I+J \mk{3} $.
Indeed, since $\frac{1}{\pi}  \ \frac{1}{z}*\overline{\partial} \phi = \phi$
for any test function $\phi \in C^\infty _0 (\C , \C ),$ see \cite[Proposition 4.8 p.210 ]{taylor},
we can write
\begin{multline*}
	\int _\C  \mk{3}(w) W^{(3)}(w)  \phi (w) dA(w) =
	\int _\C  \mk{3}(w) W^{(3)}(w) \lb \frac{1}{\pi} \int_{\C}\frac{\dbar \phi( z )}{z -w}dA(z) \rb dA(w) \\
	= - \int _\C  J\mk{3}(z) \dbar \phi( z )  dA(z )
\end{multline*}
where we exploit the fact, proved in the course of Lemma~\ref{lem:lemJ},
that $\frac{\mk{3}(w) W^{(3)}(w) \dbar \phi( z ) }{w -z} \in L^1 (\C ^2)$,
so that we can exchange order of integration.
Since  Lemma~\ref{lem:lemJ}   implies that $J\mk{3}(z)$ is a continuous function in $z$
uniformly bounded in $\C$,  we conclude that
$\dbar (\mk{3} - J\mk{3})=0 $ in the distributional sense.
By elliptic regularity  $\mk{3} - J\mk{3}$ is smooth, see \cite[Theorem 11.1  p.379]{taylor},
and so it is holomorphic in $\C$.
Finally, by point 2. in RHP~\ref{dbar: m3} we get $\mk{3}=I+J \mk{3} $.

\begin{proof}[Lemma \ref{lem:0crE1}] \
The above discussion allows us to write
\begin{equation} \label{eq:eqE11}
	\mk{3}_1  = - \frac{1}{\pi} \int _{\C}  \mk{3}(z) W^{(3)}( z )  dA(z ).
\end{equation}
Since $\mk{3}=I+J \mk{3} $, Lemma~\ref{lem:lemJ} implies that for $t$ large
we have $ \| \mk{3}   \| _{L^\infty (\C )} \le c $ for a fixed constant  $c$ and for all $|\xi |\le \xi _0$.
The proof proceeds along the same lines as the proof of Lemma~\ref{lem:lemJ}. 
Again, we restrict to $z \in \Omega_1$ for simplicity, the proof in the rest of plane being similar. 
Using \eqref{eq:lemJ-22}, like in  \eqref{eq:lemJ22}, we have
\[
	\left| \frac{1}{\pi} \int _{\Omega_1}  \mk{3}(z) W^{(3)}( z )  dA(z ) \right| 
	\leq C \int _{  \Omega _1  }  \langle z \rangle \left|  \dbar R (z) \right| e^{- \Re \Phi(z) } 
	|z-1| ^{-1} \chi _{[1,\infty )}(|z|)   {dA(z)}.
\]
Inserting the partition of unity $\chi _{[0,1)}(| z |)+\chi _{[ 1, 2)}(| z |) + \chi _{[ 2, \infty )}(| z |) $ into the above integral we consider each term separately. 
For the term with $\chi _{[2,\infty )}(|z|)$ the factor $\langle z \rangle   |z-1| ^{-1} = \bigo{ 1}$, and
fixing a $p>2$ (so that $q \in (1,2)$\,) we get the upper bound 
\begin{equation}  \label{eq:eqE121}
	\begin{aligned}
	& \int_{\Omega _1}  e^{- \Re \Phi(z) } \left|  \dbar R (z) \right|  \chi _{[2,\infty )}(|z|) dA(z)
	\leq C \int _{  \Omega _1  } e^{- \Re \Phi  (z)}  
	\lp |z | ^{-\frac{1}{2}} + \sum_{\mathclap{f \in \{ r',\varphi \}} } | f (| z |)| \rp  
	\chi _{[1,\infty )}(|z|) {dA(z)}   \\
	& \le   C_1 \lb
	\int _0^\infty dv \, \| e^{-c t uv}\| _{L^2 ( \max\{v, 1/\sqrt{2}\}, \infty)}
	+ \int _0^\infty dv \,  \| e^{-c t uv}\| _{L^p ( \max\{v, 1/\sqrt{2}\}, \infty)}
	\||z | ^{-\frac{1}{2}} \| _{L^q ( v , \infty)} \rb \\
	& \le C_2 \int _0^\infty dv \, e^{- c' t v}
	( t ^{-\frac{1}{2}} v^{-\frac{1}{2}}  + t ^{-\frac{1}{p}} v^{-\frac{1}{p}+\frac{1}{q}-\frac{1}{2}})
	\le C_3 (t ^{-1} + t ^{-\frac{1}{2}-\frac{1}{q}})  \le C_3 t^{-1}.
\end{aligned}
\end{equation}
For $z \in [0,2]$, $\langle z \rangle \leq \sqrt{5}$, so it will be omitted from the remaining estimates. 
For the term with $\chi_{[1,2]}(|z|)$, using  \eqref{eq:extR21} for the first inequality and applying the
inequalities in \eqref{eq:eqE121} to  $f= \chi _{[1,2 ]}$,  we obtain
\begin{equation}  \label{eq:eqE122}
	\int _{  \Omega _1  } e^{-\Re \Phi } | \dbar R (z)|  \, 
	|z-1| ^{-1}  \chi _{[1,2 ]}(|z|)   {dA(z)}
	\le c_1 \int _{  \Omega _1  }  e^{-\Re \Phi }   \chi _{[1,2 ]}(|z|)   {dA(z)}  \le C t ^{-1} .
\end{equation}
For the term $\chi_{[0,1]}$, the change of variables $ w= \widebar{z^{-1}}$ gives, as in \eqref{last1} 
\begin{multline*}
 	\int _{  \Omega _1  } e^{-\Re\Phi(z) } 
	| \dbar R (z)|  |z-1| ^{-1} \chi _{[0,1] )} (|z|)  {dA(z)}  \\
	= \int _{  \Omega _1  } e^{ -\Re \Phi(w) } | \dbar R (w)|   | w -1| ^{-1}
	\chi _{[1,\infty )}(|w|) | w | ^{-1} {dA(\zeta )},
\end{multline*}
which is bounded by the previous estimates \eqref{eq:eqE121}-\eqref{eq:eqE122}. 
Summing the last three inequalities yields the desired estimate.
\end{proof}

\subsection{Proofs of Theorems~\ref{thm:main1} and \ref{thm:main2}}\label{sec:pfmain1} \ \\

\textit{Proof of Theorem~\ref{thm:main1}.} 
For $z$ large and in $\C \backslash \overline{\Omega}$ we have
$m^{(1)}(z)=m^{(2)}(z)$. So by  \eqref{T asy}  and  \eqref{m3 def}
\begin{multline*}
	m(z) = T(\infty ,\xi)^\sig  \mk{2}(z)    T(z,\xi) ^{-\sig} =
	T(\infty ,\xi)^\sig   \mk{3}(z)  \mk{sol}(z)   T(\infty ,\xi) ^{-\sig} \\
	\times  \left ( I -z^{-1}  \left ( \sum_{k\in \Delta} 2\im \Im (z_k)  -
	\frac{1}{2\pi \im }   \int_0^\infty \log (1 - |r(s)|^2) ds\right  ) ^{-\sig}
	+\littleo{z^{-1} } \right  ).
\end{multline*}
Since the first two terms of the factor in the last line are  diagonal,
by $\mk{3}(z) = I + z ^{-1}\bigo{t^{-1}}+\littleo{z ^{-1}}$,
by \eqref{solas} and by  \eqref{eq:eq10}--\eqref{eq:eq1b}
we obtain  for $ | x - 2 \Re(z_{j_0}) t | \leq \rho t$ and  $j_0\in \nabla $
 \begin{equation}\label{pfmain11}
	q(x,t) = \lim_{z \to \infty} zm_{21}(z)
	=  -T(\infty ,\xi)^{-2}\im z _{j_0} \left( \im  \Re (z _{j_0}) +\Im (z _{j_0}) \tanh \varphi  _{j_0} \right )
	+ \bigo{t^{-1}} .
\end{equation}
For  $ | x - 2 \Re(z_{j_0}) t | \leq \rho t$ and $j_0\in \Delta $ we have instead
\begin{equation}\label{pfmain12}
	q(x,t)= \lim_{z \to \infty} zm_{21}(z)
	=  -T(\infty ,\xi)^{-2}\im \overline{z} _{j_0} \left ( \im  \Re (z _{j_0})
	+ \Im (z _{j_0}) \tanh \varphi  _{j_0} \right ) +  \bigo{t^{-1}} .
\end{equation}
In  \eqref{pfmain11}, the main term can be written as
\begin{equation} \label{pfmain13}
       \delta _+^{-1}\prod_{k <  {j_0} }   z_k  ^2 \, \sol ( x-x _{j_0}, t ; z _{j _0}  )
	\text{ where  }
	\delta_+ := \exp  \lp \frac{1}{2\pi \im} \int_0^\infty  \frac{ \log(1 - |r(s)|^2 )}{s} ds \rp
\end{equation}
using the formula for $T(\infty ,\xi)$, the obvious fact that
$\Delta  =\Delta \backslash \{ {j_0} \}$ for  $j_0\in \nabla $ and  by \eqref{eq:ord1},
which implies  $ \Delta \backslash \{  {j_0} \}=\{  k: k< {j_0}   \}$.
Equation~\eqref{pfmain13} also represents the main term in \eqref{pfmain12}.
By $\displaystyle \lim _{x \to \infty} \sol ( x  _{j }, t; z _{j } )=1$  and
$\displaystyle \lim _{x \to - \infty} \sol (x  _{j }, t; z _{j } )=z_j^2$
it  is elementary to see that  \eqref{pfmain13} differs from the r.h.s. of
\eqref{soliton separation} by  $\bigo{ t^{-1} }$.
We obtain similarly  \eqref{soliton separation}  also when $j_0=-1$,
that is when  $ | x - 2 \Re(z_{j_0}) t |  > \rho t$, where we have
\begin{equation}\label{pfmain121}
	q(x,t) =\lim_{z \to \infty} zm_{21}(z)  =
	-T(\infty ,\xi)^{-2} + \bigo{t^{-1}} =
	\delta _+^{-1}\prod_{k \le   \sup \Delta }   z_k  ^2  +    \bigo{t^{-1}} .
\end{equation}
Clearly, \eqref{pfmain121} differs from the r.h.s. of \eqref{soliton separation} by   $\bigo{ t^{-1} }$.
Finally notice that for $q^{(sol),N}(x,t)$,  the $N$--soliton potential related to the solution $\mk{sol}(z)$ in Lemma~\ref{lem: m sol}, our analysis proves \eqref{eq:thmmain1} since  formulas  \eqref{pfmain11},
\eqref{pfmain12} and \eqref{pfmain121}  hold also for $q^{(sol),N}(x,t)$.
\qed

\textit{Proof of Theorem~\ref{thm:main2}.}
Given $q_0$ close to the $M$--soliton $ q^{(sol),M} (x ,0)$  we obtain the information
on the poles and coupling constants in \eqref{eq:prop1} by the
Lipschitz continuity of maps such \eqref{eq:eqm13} in Lemma~\ref{lem:jf1b}
and \eqref{eq:eqm13jf3} in Lemma~\ref{lem:jf3}.
Furthermore, we can apply  Lemma~\ref{lem:a3bis} to $q_0$.
Hence we can apply Theorem~\ref{thm:main1} to $q_0$ obtaining \eqref{soliton separation}.
By elementary computations \eqref{soliton separation} yields  \eqref{soliton separation0}.
\qed

\appendix

\section{$N$-solitons}\label{sec:msol}
  Consider $N$   points
$z_j =  e^{\im \theta_j}$, labeled such that $0 < \theta_0 < \dots < \theta_{N-1} < \pi$ and set
\begin{equation}\label{eq:sol a}
	a(z) =  \prod_{k=0}^{N-1}  \frac{z- z_k }{z - \widebar z_k}  .
\end{equation}
Notice that
\begin{equation} \label{eq:thetacond2}
	 \prod_{k=0}^{N-1} z_k^2 =a(0) .
\end{equation}
Consider also corresponding coupling constants $c_j$ with $c_j=  \im z_j|c_j|$
and let $c_j(x,t)= c_j   e^{\Phi(z_k; x,t)} $ like in \eqref{explicit-V-k}.
Then consider the unique (by the proof of Lemma \ref{lem:det}) solution of the corresponding RHP~\ref{rhp:m} (with $r(z)\equiv 0$) satisfying the symmetries of    Lemma~\ref{lem:barm}.
It is   a meromorphic function approaching identity as $z \to \infty$ with $2N+1$ simple poles $m (z; x,t)$ with a partial fraction expansion of the form
\begin{equation} \label{eq:mpartial1}
	m (z;x,t) = I + \frac{\sigma_1}{z} + \sum_{k=0}^{N-1} \frac{1}{z-z_k}
	\twovec{ \alpha_k(x,t) & 0 }{\beta_k(x,t) & 0}
	+ \sum_{k=0}^{N-1} \frac{1}{z- \overline{ z}_k  }
	\twovec{ 0 &     \widehat{\beta} _k(x,t)  }{ 0 &    \widehat{\alpha}_k(x,t) }.
\end{equation}	
Assuming for a moment that $m (z; x,t)$ exists we will consider the $N$-\textit{soliton}
 \begin{equation} \label{msol}
	q^{(sol),N} (x,t)  :=  \lim _{z\to \infty }z  \, m _{21}(z; x,t ) = 1 + \sum_{k=0}^{N-1} \beta_k(x,t).
\end{equation}
 Before discussing the boundary values of $q^{(sol),N} (x,t)$ and proving Lemma \ref{lem:inval}  we study  the existence of $m (z; x,t)$. By \eqref{eq:barm}  we have
\begin{equation}\label{eq:solsymm1}
	\widehat \alpha_k (x,t)= \widebar \alpha_k(x,t),
	\qquad
	\widehat \beta_k (x,t)= \widebar \beta_k (x,t)
\end{equation}
and by \eqref{eq:mzinv}    the additional symmetry
\begin{equation}\label{eq:solsymm2}
	\alpha_k (x,t) = -z_k   \overline{\beta}_k(x,t) .
\end{equation}
Inserting  \eqref{eq:mpartial1} into \eqref{eq:resm} and using \eqref{eq:solsymm1}-\eqref{eq:solsymm2} we arrive at the reduced linear system:
\begin{equation}\label{eq:sol linsys}
	(I - \mathbf{C}_{tx} \mathbf{Z}) \cdot  \vect{\beta}_{tx} = \mathbf{C}_{tx} \cdot \vect{1} \\
\end{equation}
where $\vect{\beta}_{tx}, \vect{1} \in \C^N$ and $\mathbf{C}_{tx}, \mathbf{Z} \in M(\C,N)$ are given by
\begin{equation}\label{eq:Mjk}
	\begin{gathered}
	\vect{\beta}_{tx} = \{ \beta_0  (x,t),\dots, \beta_{N-1}  (x,t) \}^\intercal,
	\qquad \vect{1} = \{1, \dots, 1 \}^\intercal \\
	\mathbf{C}_{tx} = \Diag (c_0 (x,t) \dots, c_{N-1} (x,t)) \qquad
	\{ \mathbf{Z}_{jk} \}_{j,k=0}^{N-1}
	= \dfrac{ \widebar z_j}{\widebar z_j -  z_k}.
	\end{gathered}
\end{equation}
  For general $\mathbf{C}_{tx}$ the matrix $I-\mathbf{C}_{tx} \mathbf{Z}$ need not be invertible. However, under the reality condition $c_j (x,t) = \im z_j |c_j (x,t)|$, the system can be expressed in the more symmetric form
\begin{equation}\label{sol linsys 2}
	(I+\mathbf{Y}_{tx} ) \cdot  \widehat{ \vect{\beta} }_{tx} = \vect{b}_{tx}
\end{equation}
where
\begin{equation*}
	\begin{aligned}
		& \widehat{\vect{\beta}}_{tx} :=
		\{ |c_0 (x,t)|^{-1/2} \beta_1, \dots, |c_{N-1} (x,t)|^{-1/2} \beta_{N-1} \}^\intercal \\
		& \vect{b} _{tx} :=
		\{ \im |c_0 (x,t)|^{1/2} z_1,\im |c_2 (x,t)|^{1/2} z_2, \dots,
		\im |c_{N-1} (x,t)|^{1/2} z_{N-1} \}^\intercal .
	\end{aligned}
\end{equation*}
Letting $y_j = - \im z_j$ ( $ \Im z_j >0 \Rightarrow \Re y_j > 0$)  we have
\begin{equation*}
	\begin{gathered}
		(\mathbf{Y}_{tx}) _{jk} = \frac{ |c_j(x,t)|^{1/2} |c_k(x,t)|^{1/2} }{\widebar y_j +  y_k}
		= |c_j(x,t)|^{1/2} |c_k(x,t)|^{1/2} \int_0^\infty e^{-( \widebar y_j +  y_k)s} ds.
	\end{gathered}
\end{equation*}
Invertibility of the system then follows from the observation that $\mathbf{Y}_{tx}$ is positive definite:
\begin{equation*}  \begin{aligned}   w^\dag \mathbf{Y}_{tx} w
	&= \int_0^\infty \lp \sum_{j,k=0}^{N-1} |c_j (x,t) c_k(x,t)|^{1/2} e^{-(  \overline{ y}_j +  y_k) s}   \overline{w}_j w_k \rp ds
	 \\& = \int_0^\infty \left| \sum_{k=0}^{N-1} |c_k(x,t)|^{1/2} e^{- y_k s} w_k \right|^2 ds \geq 0.
\end{aligned}
\end{equation*}
Using \eqref{msol} and Cramer's rule, the solution of the NLS corresponding  to the given discrete scattering data is given by
\begin{equation}\label{eq:Nsoliton}
	q^{(sol),N} (x,t) = 1 - \frac{ \det (I - ( \mathbf{C}_{tx}  \mathbf{Z})_1) }{ \det( I - \mathbf{C} _{tx} \mathbf{Z}) }
\end{equation}
where $( \mathbf{C} _{tx} \mathbf{Z})_1$ is the $(N+1) \times (N+1)$ matrix
\begin{equation}\label{eq:Nsoliton1}
	( \mathbf{C} _{tx}\mathbf{Z})_1:= \( \begin{array}{ccc|c}
		& & & c_0(x,t) \\
		& \mathbf{C} _{tx} \mathbf{Z} & & \vdots \\
		& & & c_{N-1}  (x,t) \\ \hline
		1 &\cdots &1 & 1
	\end{array} \).
\end{equation}

	
\section{Global existence of solution of the NLS equation}\label{sec:existence}
	
Here we establish the global existence of solutions for \eqref{eq:nls} with initial data $q_0 \in \tanh(x)+\Sigma_4$ and show that the $N$-soliton solutions $q^{(sol),N}(x,t)$ constructed in Appendix \ref{sec:msol} lie in this class of data.

\begin{theorem}\label{thm:existence}
Consider the initial value problem \eqref{eq:nls} with
$q_0-\tanh \left ( x \right ) \in \Sigma _4$.
Then  \eqref{eq:nls} admits a unique global solution $q$
such that $q(x,t)-\tanh \left ( x \right ) \in C^0([0,\infty ) ,H^4 (\R)) \cap C^1([0,\infty ) ,H^2(\R)) $.
Furthermore we have
$q(x,t)  -\tanh \left ( x \right ) \in C^0([0,\infty ) ,\Sigma _4) \cap C^1([0,\infty ) ,\Sigma _2) $.
\end{theorem}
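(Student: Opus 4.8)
The plan is to first establish global well-posedness in $H^4$ (with $C^1$ regularity in time valued in $H^2$) and then upgrade to the weighted spaces $\Sigma_4$, $\Sigma_2$ by propagating moment bounds. For the $H^k$ statement, I would work with the deviation $u(x,t) := q(x,t) - \tanh(x)$. Substituting $q = \tanh(x) + u$ into \eqref{eq:nls} produces a forced NLS-type equation for $u$ with smooth, bounded, exponentially-localized coefficients (coming from $\tanh$, $\tanh''=-2\tanh\,\mathrm{sech}^2$, and $|\tanh|^2-1 = -\mathrm{sech}^2$) and a fixed smooth source term $\tanh'' - 2(\tanh^2-1)\tanh \in \mathcal{S}(\R)$. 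Local existence in $H^4$ follows from a standard contraction-mapping / energy argument using the fact that $H^4(\R)$ is a Banach algebra, so the cubic and quadratic nonlinearities in $u$ map $H^4 \to H^4$ locally Lipschitz. The time derivative regularity $u \in C^1([0,T),H^2)$ is then read off directly from the equation: $u_t = \im(u_{xx} + \text{lower order}) \in C^0([0,T),H^2)$ whenever $u \in C^0([0,T),H^4)$.

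The main obstacle — and the reason one wants the full $\Sigma_4$ hypothesis rather than just $H^4$ — is obtaining a priori bounds that prevent blowup, since the nonzero boundary condition precludes naively using conservation of the $L^2$ norm of $q$. Here I would use the defocusing structure. The conserved energy of \eqref{eq:nls} on finite-density data is the renormalized Hamiltonian $E[q] = \int_\R \big( |q_x|^2 + (|q|^2-1)^2 \big)\,dx$, which is finite and conserved for data in $\tanh(x) + H^1$ with $|q|^2 - 1 \in L^2$; together with the (renormalized) mass and momentum this controls $\|u\|_{H^1}$ uniformly in time. This bootstraps to higher-order control: differentiating the equation and running energy estimates on $\partial_x^j u$ for $j \le 4$, using the Gagliardo–Nirenberg inequalities and the $H^1$ bound to absorb the nonlinear terms, yields $\sup_{t} \|u(t)\|_{H^4} < \infty$ on any finite interval (in fact uniformly), hence global existence. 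Uniqueness is immediate from the local Lipschitz estimate and Gronwall.

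For the weighted part, $q_0 - \tanh(x) \in \Sigma_4 = L^{2,4} \cap H^4$, I would propagate the moment bound by estimating $\langle x \rangle^4 u$ and $\langle x \rangle^2 u$ in $L^2$: commuting $\langle x\rangle^4$ through the equation generates commutator terms $[\langle x\rangle^4, \partial_x^2]u$ which are lower order in the weight but involve $\langle x\rangle^3 u_x$; these close via an energy estimate coupled to the already-established $H^4$ bound and Gronwall, giving $u \in C^0([0,\infty), L^{2,4})$ and, using the equation once more, $u \in C^1([0,\infty),L^{2,2})$. Combined with the $H^4$/$H^2$ statement this gives $q - \tanh(x) \in C^0([0,\infty),\Sigma_4)\cap C^1([0,\infty),\Sigma_2)$. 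Finally, to see that the $N$-solitons $q^{(sol),N}$ of Appendix~\ref{sec:msol} lie in this class, I would note from the explicit formula \eqref{eq:Nsoliton}–\eqref{eq:Nsoliton1} that $q^{(sol),N}(x,t) - \tanh(x)$ (after matching the $x\to+\infty$ normalization) is a rational-exponential expression in $e^{\pm 2\Im(z_k)x}$ that decays exponentially as $x\to+\infty$ and, using $\prod z_k^2 = $ the correct phase, is asymptotic to $z_0^2 \cdots$ a constant on the unit circle as $x\to-\infty$; subtracting the matching constant boundary values shows the difference is Schwartz in $x$ for each fixed $t$, hence in $\Sigma_4$, and smooth in $t$. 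I expect the a priori higher-order energy estimates (controlling $\|u\|_{H^4}$ globally from the Hamiltonian) to be the technically heaviest step, though entirely standard for defocusing NLS.
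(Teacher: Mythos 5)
Your route differs from the paper's mainly in the Sobolev part. The paper does not re-prove global well-posedness in $\tanh(x)+H^4$: it quotes Gallo \cite{gallo} for the unique global solution with $v:=q-\tanh(x)\in C^0([0,\infty),H^1)$, then \cite{gallo1,gallo} and \cite{BGSS} for persistence in the Zhidkov spaces $X^1$ and $X^4$, and only combines these to get $v\in C^0([0,\infty),H^4)\cap C^1([0,\infty),H^2)$. You instead propose a self-contained contraction-plus-energy argument for the forced equation satisfied by $u=q-\tanh(x)$ (the paper's \eqref{eqv}); that is a legitimate alternative, at the cost of redoing what the cited literature already provides. For the weighted part your plan coincides in spirit with what the paper actually proves: a weighted energy estimate plus Gronwall. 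Two technical points the paper handles that you should make explicit: the computation is done with the regularized weights $x^j e^{-\varepsilon x^2}$ (one cannot pair the equation against $x^{2j}\bar v$ before knowing $x^jv(t)\in L^2$ for $t>0$; uniform-in-$\varepsilon$ bounds plus Fatou and dominated convergence give $C^0([0,T],\Sigma_4)$), and the commutator term $\|x^{j-1}e^{-\varepsilon x^2}v_x\|_{L^2}$ is absorbed via the McKean--Shatah identity \cite{MS}, which bounds it by $\|x^j e^{-\varepsilon x^2}v\|_{L^2}+\|\partial_x^2 v\|_{L^2}$; your ``close via an energy estimate coupled to the $H^4$ bound'' is exactly this step and should be carried out at this level of care.

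One claim in your sketch is wrong as stated and should be repaired, though it is not fatal: the renormalized energy, mass and momentum do \emph{not} control $\|u\|_{H^1}$ uniformly in time. The energy controls $\|u_x\|_{L^2}$ and $\||q|^2-1\|_{L^2}$, but the renormalized mass $\int_\R(|q|^2-1)\,dx$ is not sign-definite in $u$ and gives no bound on $\|u\|_{L^2}$; indeed the failure of such coercive control is precisely why orbital/asymptotic stability of the black soliton is a nontrivial theorem. What you can get, and what suffices for global existence, is a Gronwall bound: since $\Im\int\bar u\,u_{xx}=0$, one has $\frac{d}{dt}\|u\|_{L^2}^2\le C\,\|u\|_{L^2}$ with $C$ controlled by the conserved energy, hence at most linear growth of $\|u(t)\|_{L^2}$ and finiteness of $\|u(t)\|_{H^4}$ on compact time intervals. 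With ``uniformly in time'' replaced by ``on each finite interval'' your argument goes through; the same correction applies to your claimed uniform $H^4$ bound. The closing paragraph about $q^{(sol),N}-\tanh(x)\in\Sigma_k$ belongs to Lemma~\ref{lem:inval}, which the paper proves separately from the explicit determinant formulas, so it is not needed for this theorem.
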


\begin{proof}
By Gallo \cite{gallo} there is a unique global solution $q(x,t)$ of \eqref{eq:nls} s.t.
the function $v(x,t):= q(x,t)-\tanh \left ( x \right )$  is in   $C^0([0,\infty ) ,H^1(\R)) $.
Furthermore  since $v(x,0)\in X^4(\R )\subset X^1(\R )$, by \cite{gallo1,gallo} we also have $v(x,t) \in  C^0([0,\infty ) ,X^1(\R)) $, where $X^k(\R ) := L^\infty (\R ) \cap  ( \cap _{l=1}^k\dot H ^l (\R ))$.
In   \cite{BGSS} it is proven that $v(x,t)\in  C^0([0,\infty ) ,X^4(\R)) $.
All these facts together imply $v(x,t)\in  C^0([0,\infty ) ,H^4(\R))  \cap C^1([0,\infty ) ,H^2(\R))$.

The fact that $v(x,t)\in  C^0([0,\infty ) ,\Sigma _4) $  can now be proved by standard arguments; multiplying the equation for $v $ by $x^4 e^{-\varepsilon x^2}$  and,  taking the limit $\varepsilon \to 0^+$,  one shows that $ x^4 v(x,t) \in L^{\infty}([0,T], L^2(\R))$ for any $T$.
 Indeed, $v(x,t)$ solves (for $v_R=\Re v)$
 \begin{equation}\label{eqv}
 	\im \dot v + v_{xx}-2(|v|^2 +2v_R \tanh \left ( x \right ) )
		(v+\tanh \left ( x \right ) ) - \text{sech}^2 (x) v=0.
 \end{equation}
Multiplying the equation by $x ^{2j} e^{-2\varepsilon x^2}\overline{v} $ for $1\le j\le 4$, taking the imaginary  part and integrating in $ x$  on $\R$ we obtain, for 
$[  \partial ^2_x, x^j  e^{- \varepsilon x^2}  ] v 
= (x^j  e^{- \varepsilon x^2})^{ \prime\prime} v 
+ 2  (x^j  e^{- \varepsilon x^2})^{ \prime }   v_x$,
\begin{equation}\label{ms1}
	\frac{d}{dt} \| x^j  e^{- \varepsilon x^2} {v}\| _{L^2}
	\le C  ( \| [  \partial ^2_x, x^j  e^{- \varepsilon x^2}  ] v \|_{L^2} 
	+ \| \langle x \rangle ^j  e^{- \varepsilon x^2} {v}\| _{L^2} ).
\end{equation}
We have $\|    (x^j  e^{- \varepsilon x^2})^{ \prime\prime} v \|  _{L^2} \le
\| v \| _{\Sigma _{j-1}}$ , where we assume the r.h.s. is bounded by induction.

So, for fixed constants we have
\begin{equation} \label{ms2}
	\|   (x^j  e^{- \varepsilon x^2})^{ \prime }   v_x \| _{L^2} \le c' \| x ^{j-1}
	e^{- \varepsilon x^2} v_x  \|  _{L^2}   \le c  (\|  x ^{j } e^{- \varepsilon x^2} v \|  _{L^2}
	+ \|  \partial ^2 _x v   \|  _{L^2}).
\end{equation}
The 2nd inequality follows by  the identity for   $f$ real, see \cite{MS} p.1069,
\begin{equation*}
	\begin{aligned}
  	&       \int  x ^{2j-2} e^{- 2\varepsilon x^2} (f_x) ^2  dx=  2^{-1}
	\int f (  x ^{2j-2} e^{- 2\varepsilon x^2}) ^{\prime\prime} f ^2 dx
	+ \int x ^{2j-2} e^{- 2\varepsilon x^2} f f _{xx} dx  .
	\end{aligned}
\end{equation*}
Then, by Gronwall's inequality, \eqref{ms1}--\eqref{ms2} imply that
$\| x^j  e^{- \varepsilon x^2} v (\cdot , t)\| _{L^2(\R )}\le C_T$   for $t\in [0,T]$ and all $j=1,...,4$.
By Fatou's lemma we conclude $v(x,t) \in  L^\infty ( [0,T], \Sigma _4)$ for all $T\ge 0$.
But then by dominated convergence $x^j  e^{-  \varepsilon x^2} v\to x^j v$ in $ L^\infty ( [0,T], L^2(\R ))$
and since $x^j  e^{-  \varepsilon x^2} v \in  C^ 0 ( [0,T], L^2(\R ))$, we have also  $x^j    v \in  C^ 0 ( [0,T], L^2(\R ))$ for all $j\le 4$.
So we conclude   that $v(x,t) \in  C^ 0 ( [0,T], \Sigma _4)$.
From \eqref{eqv}  we have also   $v(x,t) \in  C^ 1 ( [0,T], \Sigma _2)$.

\end{proof}

The  global existence for \eqref{eq:nls}--\eqref{eq:nls1} for the initial data in  Theorem    \ref{thm:main2} is   guaranteed by  {Theorem} \ref{thm:existence} and the  following lemma.
\begin{lemma}\label{lem:inval}
     $ q^{(sol),N}(x,t)  - \tanh (x) \in \Sigma _k$ for all $k\in \N$
     for any $N$--soliton satisfying the boundary conditions \eqref{eq:nls1}.
\end{lemma}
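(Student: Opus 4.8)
\textit{Proof plan.} Fix $t$ and write $v(x,t):=q^{(sol),N}(x,t)-\tanh(x)$. It suffices to show that $v(\cdot,t)\in C^\infty(\R)$ and that $v$ together with all of its $x$-derivatives decays exponentially as $|x|\to\infty$; such a function lies in $L^{2,k}(\R)\cap H^k(\R)=\Sigma_k$ for every $k\in\N$. We use the explicit construction of Appendix~\ref{sec:msol}: by \eqref{eq:mpartial1} and \eqref{msol}, $q^{(sol),N}(x,t)=1+\sum_{k=0}^{N-1}\beta_k(x,t)$, where $\vect{\beta}_{tx}=(\beta_0(x,t),\dots,\beta_{N-1}(x,t))^\intercal$ is the unique solution of the linear system \eqref{eq:sol linsys}, equivalently of \eqref{sol linsys 2}. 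On the unit circle $\Phi(z_k;x,t)=-2\Im(z_k)\bigl(x-2\Re(z_k)t\bigr)\in\R$, so the entries of $\mathbf{C}_{tx}$ and of $\mathbf{Y}_{tx}$ are fixed constants times the real exponentials $c_k(x,t)=c_k e^{\Phi(z_k;x,t)}$; in particular they are real-analytic in $x$. Since $I+\mathbf{Y}_{tx}$ was shown in Appendix~\ref{sec:msol} to be positive definite for every $(x,t)$, Cramer's rule represents each $\beta_k(x,t)$ as a ratio of finite sums of such exponentials whose denominator never vanishes; hence $x\mapsto\beta_k(x,t)$ is $C^\infty$ on $\R$, and so is $v(\cdot,t)$.

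It remains to analyse the two spatial limits. Put $\delta_0:=\min_{0\le k\le N-1}\Im(z_k)>0$. As $x\to+\infty$ each $|c_k(x,t)|=|c_k|\,e^{\Phi(z_k;x,t)}=\bigo{e^{-2\delta_0 x}}$, so $\mathbf{C}_{tx},\mathbf{Y}_{tx}$ and the right-hand side of \eqref{sol linsys 2} all tend to $0$ at that rate, whence $\vect{\beta}_{tx}=\bigo{e^{-2\delta_0 x}}$; differentiating the solved system in $x$ shows that every $\partial_x^j\beta_k(x,t)$ obeys the same bound. As $\tanh(x)-1=\bigo{e^{-2x}}$ together with all its derivatives, $v(x,t)$ and all its $x$-derivatives are exponentially small as $x\to+\infty$. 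For $x\to-\infty$ the norming constants blow up and we renormalise. With $\mathbf{D}=\Diag(|c_0(x,t)|,\dots,|c_{N-1}(x,t)|)$, $\mathbf{G}=\bigl((\widebar y_j+y_k)^{-1}\bigr)_{j,k}$ the Cauchy matrix of Appendix~\ref{sec:msol} (positive definite, hence invertible, with $y_j=-\im z_j$, $\Re y_j>0$), and $\vect{z}=(z_0,\dots,z_{N-1})^\intercal$, the identities $\mathbf{Y}_{tx}=\mathbf{D}^{1/2}\mathbf{G}\,\mathbf{D}^{1/2}$ and $\vect{b}_{tx}=\im\,\mathbf{D}^{1/2}\vect{z}$ reduce \eqref{sol linsys 2} to the equivalent form $(\mathbf{G}+\mathbf{D}^{-1})\,\vect{\beta}_{tx}=\im\,\vect{z}$. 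Since the entries of $\mathbf{D}^{-1}$ are $\bigo{e^{-2\delta_0|x|}}$ as $x\to-\infty$, we get $\vect{\beta}_{tx}=\im\,\mathbf{G}^{-1}\vect{z}+\bigo{e^{-2\delta_0|x|}}$, and, as before, the error bound survives $x$-differentiation. (Alternatively, one may observe that $q\mapsto\overline{a(0)}\,q(-x,t)$ sends an $N$-soliton solution of \eqref{eq:nls} to another $N$-soliton and interchanges the two spatial infinities, reducing the $x\to-\infty$ estimate to the $x\to+\infty$ one already proved.) In either case $q^{(sol),N}(x,t)$ converges exponentially, with all $x$-derivatives, to a constant limit as $x\to-\infty$ as well.

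Finally, the hypothesis that $q^{(sol),N}$ satisfies \emph{both} boundary conditions in \eqref{eq:nls1} means precisely that $\lim_{x\to+\infty}q^{(sol),N}(x,t)=1$ and $\lim_{x\to-\infty}q^{(sol),N}(x,t)=-1$, which are exactly the limits of $\tanh(x)$ (equivalently $\prod_{k}z_k^2=a(0)=-1$ by \eqref{eq:thetacond2}). Combining this with the previous paragraph, $v(x,t)=q^{(sol),N}(x,t)-\tanh(x)$ and each $\partial_x^j v(x,t)$ decay like $\bigo{e^{-2\delta_0|x|}}$ as $|x|\to\infty$ (with constants depending continuously on $t$ through the $c_k(x,t)$). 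A $C^\infty$ function with exponentially decaying derivatives belongs to $L^{2,k}(\R)\cap H^k(\R)=\Sigma_k$ for every $k$, which is the claim. The only genuinely delicate point is the $x\to-\infty$ regime, where the norming constants grow and one must renormalise the linear system (or invoke the reflection symmetry) to extract the correct limiting value of the soliton; the rest is routine.
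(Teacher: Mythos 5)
Your proof is correct, but it handles the delicate $x\to-\infty$ step by a genuinely different route than the paper. The paper establishes smoothness and the $x\to+\infty$ decay much as you do, but for $x\to-\infty$ it expands the determinants in \eqref{eq:Nsoliton}--\eqref{eq:Nsoliton1} after factoring out $\prod_j c_j(x,t)$, and then identifies the limiting constant as $a(0)=\prod_k z_k^2$ by going back to the Riemann--Hilbert machinery (the transformation \eqref{m1}, Lemma~\ref{lem: ex3rad}, the factor $T(\infty,\xi)^{-2}$ as in Sect.~\ref{sec:pfmain1}, together with \eqref{eq:thetacond2}). You instead renormalize the linear system \eqref{sol linsys 2} to $(\mathbf{G}+\mathbf{D}^{-1})\vect{\beta}_{tx}=\im\,\vect{z}$ with $\mathbf{G}$ the positive-definite Gram/Cauchy matrix, read off exponential convergence of $\vect{\beta}_{tx}$ (and, by differentiating, of all $x$-derivatives) to the $t$-independent constant $\im\,\mathbf{G}^{-1}\vect{z}$, and then identify the limit as $-1$ simply from the lemma's hypothesis that the boundary conditions \eqref{eq:nls1} hold; since your limiting value is manifestly independent of $t$, the condition imposed on $q_0$ propagates to all times, so this use of the hypothesis is legitimate. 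Your argument is self-contained within the linear algebra of Appendix~\ref{sec:msol} and avoids the RHP computation entirely; what the paper's route buys is the stronger statement \eqref{q-ifnty} that the limit equals $a(0)$ for an \emph{arbitrary} $N$-soliton (no boundary-condition hypothesis needed), which is the natural companion to \eqref{eq:thetacond2} — your parenthetical ``equivalently $\prod_k z_k^2=-1$'' is exactly this fact and is not something your argument itself establishes, though you do not need it. Two cosmetic points: the right-hand side $\vect{b}_{tx}$ of \eqref{sol linsys 2} decays only like $e^{-\delta_0 x}$ as $x\to+\infty$, giving $\widehat{\vect{\beta}}_{tx}=\bigo{e^{-\delta_0 x}}$, and the claimed rate $\bigo{e^{-2\delta_0 x}}$ for $\vect{\beta}_{tx}$ is then recovered from $\beta_k=|c_k(x,t)|^{1/2}\widehat{\beta}_k$ (or directly from \eqref{eq:sol linsys} by a Neumann series); and when differentiating the solved systems one should note that $(I+\mathbf{Y}_{tx})^{-1}$, respectively $(\mathbf{G}+\mathbf{D}^{-1})^{-1}$, are uniformly bounded because $\mathbf{Y}_{tx}\ge 0$, respectively $\mathbf{D}^{-1}\ge 0$ and $\mathbf{G}>0$ — both of which you essentially have.
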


\begin{proof}
Formulas \eqref{eq:Nsoliton}--\eqref{eq:Nsoliton1} imply immediately that
$q^{(sol),N}  \in \C ^\infty (\R ^2, \C )$.
Since for $|x| \to \infty$ we have $\Phi(z_j ; x,t) = -2x\Im [z_j] (1+o(1))$ it is elementary that $q^{(sol),N} (x,t)-1$ with all its derivatives  approaches 0 exponentially fast  as $x\to \infty$.
We assume now
\begin{equation}\label{q-ifnty}
	\lim _{x\to -\infty}q^{(sol),N}  (x,t)=a(0) \text{ for any fixed $t\ge 0  $}
\end{equation}
(where in the set up of Lemma \ref{lem:inval}  we have $a(0)=-1$).
Then  for any fixed $t$ it is elementary to conclude from  \eqref{eq:Nsoliton}--\eqref{eq:Nsoliton1} that for a fixed    $c>0$ and for $x\ll -1$
\begin{equation*}
	\begin{aligned}
	&   \det( I - \mathbf{C}_{tx}  \mathbf{Z}) =
	(-1)^N \prod _{j=0}^{N-1}c_j(x,t) \det ( \mathbf{Z})  (1+\bigo{e ^{cx}}),  \\
	&  \det (I - ( \mathbf{C}  \mathbf{Z})_1) =
	(-1) ^{N+1}  \prod _{j=0}^{N-1}c_j(x,t)
	\det \( \begin{array}{ccc|c}
		& & & 1 \\
		& \mathbf{Z} & & \vdots \\
		& & & 1 \\ \hline
		1 &\cdots &1 & 0
	\end{array} \)   (1+\bigo{e ^{cx}}).
	\end{aligned}
\end{equation*}
This implies that  $q^{(sol),N} (x,t)-a(0)$ and all of its derivatives  approaches 0 exponentially fast as $x\to - \infty$.
 To see \eqref{q-ifnty} we associate to  our  $m(z; x,t)$ the function $m ^{(1)}(z; x,t) $
 in \eqref{m1}.
 Notice that  $m ^{(1)}(z ) $ solves a Riemann--Hilbert problem in
 $\Sigma ^{(2)}$ since the jump matrix in $\R$ is the identity.
 In other words, here $m ^{( 1)}(z )=m ^{(sol)}(z ) $.
 Now, since as $x \to -\infty$ we have $\xi \to -\infty$, in this case $\Delta =\{  0,...,N-1\}$ and $j_0(\xi )=-1$, see \eqref{delta}--\eqref{j0}.
It is also easy to see, following the proof of Lemma \ref{lem: ex3rad}, that  for fixed $t$  for a fixed $c>0$   and all $x\ll -1 $ we have
 \[
 	m ^{(sol)}(z ) =I+z^{-1}\sigma _1 + z^{-1}\bigo{e ^{-c|x|}} + o(z^{-1}).
\]
Finally, proceeding as in Sect. \ref{sec:pfmain1}  as in \eqref{pfmain11} and using \eqref{eq:thetacond2} we have
\begin{equation*}
	\begin{aligned}
		  q^{(sol),N} (x,t) &= \lim_{z \to \infty} zm_{21}(z)
		=   T(\infty ,\xi)^{-2}\lim_{z \to \infty} zm_{21}^{(sol)}(z) \\
		&= a(0) (1+ \bigo{e ^{-c|x|}}) \to a(0) \text{ as $x\to -\infty$}.
	\end{aligned}
\end{equation*}
\end{proof}

\section{Singularity of $a(z)$ in $z=\pm 1$ for generic $q_0$} \label{app:sing}

We check here that for initial data $q_{0 \epsilon} =\tanh \left (   x
\right )  +  \varepsilon f$ with $f = f  _R+\im f _I$, $ f _A \in C_c^{\infty}( \R , \R )$ for $A=R,I$  generic, then the function $ a(z) $ blows up at $z=\pm 1$.   
Let $\psi _{j\varepsilon}^{\pm } (z;x )$ denote the Jost functions corresponding to initial data $q_{0\epsilon}$ (\cf\, \eqref{eq:jf1}).  
In particular,  by $\psi _{j0}^{\pm } (z;x )$ we denote the Jost functions associated to the black soliton  $q_{00}(x) =  \tanh \left (   x \right ).$

These functions  extend  to $(z,x)\in (\C \backslash \{ 0,-\im \})  \times \R$ and they are smooth.  Recalling \eqref{eq:wron2} we   denote
\begin{equation}\label{eq:wron2sin}
\begin{aligned}
&   a(\varepsilon ,z )  = \frac{W(\varepsilon ,z)}{1-z^{-2} }  \text{ where }  W(\varepsilon ,z):=\det [ \psi ^{-} _{1\varepsilon }(z;x),\psi _{2\varepsilon }^{+} (z;x) ].
\end{aligned}
\end{equation}
 Recall $ a(0,z ) = \frac{z-\im }{z+\im }$. This yields $ W(0 ,z) =\frac{(z-\im )(z^2-1) }{(z+\im ) z}  $.
 We have the following fact.
\begin{lemma}   \label{lem:sin1}
  We have
   \begin{equation} \label{eq:sin2} \begin{aligned} &W(\varepsilon ,z) =  \mp 2\im (z\mp 1)   - 2\varepsilon C(\pm  , f )
    +F_{\pm }(z, \varepsilon )
  \\& C(\pm  , f ):= \int
  _\R  \frac{      ( e^{-4y}-1)f _R(y)\mp 2e^{-2y} f _I(y) }{(1+e^{-2y})^2}    { dy}
  \end{aligned} \end{equation}
  where, for $|z\mp 1|<c_{f}$ and $ |\varepsilon |<c_{f}$  for a sufficiently  small constant $c_{f}>0$, the function
   $F_{\pm }(z, \varepsilon ) $ is analytic in $z$
  and   for a fixed constant $C_{f}$
   \begin{equation} \label{eq:sin3} \begin{aligned} &
   |F_{\pm }(z, \varepsilon )|\le C_{f} (|z\mp 1|^2+ \varepsilon ^2).
  \end{aligned} \end{equation}

\end{lemma}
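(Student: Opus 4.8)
The goal is to expand the Wronskian $W(\varepsilon,z) = \det[\psi_{1\varepsilon}^-(z;x),\psi_{2\varepsilon}^+(z;x)]$ around the point $z=\pm1$ and around $\varepsilon = 0$, and to read off that the constant term (order $(z\mp1)^0\varepsilon^0$) vanishes, that the order-$(z\mp1)^1\varepsilon^0$ coefficient is $\mp 2\im$, and that the order-$\varepsilon^1$ coefficient is a concrete integral $-2C(\pm,f)$ which is nonzero for generic $f$. Since $W$ is independent of $x$ (because $\Tr\La = 0$), we are free to evaluate each determinant at whichever $x$ is most convenient, and this is what makes the computation tractable.

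\textbf{Step 1: the unperturbed Wronskian.} First I would record the exact formula for $W(0,z)$. Since $a(0,z) = \frac{z-\im}{z+\im}$ and $a(\varepsilon,z) = W(\varepsilon,z)/(1-z^{-2})$, we get $W(0,z) = \frac{(z-\im)(z^2-1)}{(z+\im)z}$. Expanding this rational function at $z = \pm1$ gives $W(0,z) = \mp 2\im (z\mp1) + \bigo{(z\mp1)^2}$: indeed $W(0,\pm1)=0$ and a direct differentiation gives $\partial_z W(0,z)|_{z=\pm1} = \mp 2\im$. This produces the first two explicit terms in \eqref{eq:sin2} with $\varepsilon = 0$, and the $\bigo{(z\mp1)^2}$ remainder is absorbed into $F_\pm$.

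\textbf{Step 2: the first-order perturbation in $\varepsilon$.} Next I would differentiate $W(\varepsilon,z)$ in $\varepsilon$ at $\varepsilon=0$ and evaluate at $z = \pm1$. The key input is the differentiability of the Jost functions with respect to the potential (Lemma~\ref{lem:jf1b}, Lemma~\ref{lem:jf3}), which gives that $\partial_\varepsilon \psi_{j\varepsilon}^\pm(z;x)|_{\varepsilon=0}$ solves an inhomogeneous version of \eqref{eq:zs} with forcing term proportional to $f$. Using the variation-of-parameters formula against the solution basis furnished by the unperturbed Jost functions $\psi_{j0}^\pm(z;x)$ of the black soliton — which are explicitly $(1, e^{-x}\sech x + \tanh x$-type expressions, i.e.\ elementary in terms of $e^{-2x}$ at $z = \pm 1$ — one writes $\partial_\varepsilon W|_{\varepsilon=0,z=\pm1}$ as a single convergent integral over $\R$ of a combination of $f_R$ and $f_I$ against a kernel built from products of the explicit black-soliton Jost functions. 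A bookkeeping computation then identifies this integral with $-2C(\pm,f)$, where the weight $\frac{e^{-4y}-1}{(1+e^{-2y})^2}$ for $f_R$ and $\mp\frac{2e^{-2y}}{(1+e^{-2y})^2}$ for $f_I$ come precisely from squaring/multiplying the components of $\psi_{j0}^\pm(\pm1; y)$. Because $f \in C_c^\infty$ the integral converges absolutely and the kernel is bounded, so no issue at the endpoints of integration (and, importantly, since $f_A$ has compact support there is no problem with the $e^{-4y}$ growth as $y\to-\infty$).

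\textbf{Step 3: analyticity and the remainder estimate.} Finally I would define $F_\pm(z,\varepsilon) := W(\varepsilon,z) - (\mp 2\im(z\mp1) - 2\varepsilon C(\pm,f))$ and verify \eqref{eq:sin3}. Analyticity of $F_\pm$ in $z$ on a fixed small disc $|z\mp1| < c_f$ follows because each $\psi_{j\varepsilon}^\pm(z;x)$ is analytic in $z$ there (the singularities at $z=\pm1$ of the Jost functions are removable once $q_{0\varepsilon}-\tanh(x) \in L^{1,n+1}$ for suitable $n$, cf.\ Lemma~\ref{lem:jf3}, and $q_{0\varepsilon}-\tanh(x) = \varepsilon f$ is compactly supported hence in every weighted space), and determinants of analytic functions are analytic. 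The quadratic bound \eqref{eq:sin3} is then just Taylor's theorem with remainder in the two variables $(z\mp1,\varepsilon)$: $F_\pm$ vanishes to second order at $(z\mp1,\varepsilon)=(0,0)$ by construction, and the second derivatives are bounded uniformly on the compact polydisc by the local Lipschitz/boundedness estimates for the Jost functions (e.g.\ \eqref{eq:eqm21n_1} and \eqref{eq:eqm21nn_1}) together with Cauchy estimates in $z$. I would then observe that $W(\varepsilon,z)/(1-z^{-2}) = a(\varepsilon,z)$ has a genuine simple pole at $z = \pm1$ exactly when the constant term $\mp 2\varepsilon C(\pm,f) + \bigo{\varepsilon^2}$ in the numerator is nonzero, i.e.\ for all small $\varepsilon \ne 0$ provided $C(\pm,f)\ne 0$; and the latter fails only on a codimension-one (hence non-generic) set of $f$, since $C(\pm,\cdot)$ is a nonzero bounded linear functional on $C_c^\infty(\R,\R)^2$.

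\textbf{Main obstacle.} The technical heart — and the step most likely to require care — is Step 2: correctly setting up the first variation of the Jost solutions and carrying out the variation-of-parameters integral so that the messy combination of products of black-soliton Jost components collapses to the clean kernels $\frac{e^{-4y}-1}{(1+e^{-2y})^2}$ and $\frac{2e^{-2y}}{(1+e^{-2y})^2}$ appearing in $C(\pm,f)$. One must be careful about which Jost function ($\psi_1^-$ vs.\ $\psi_2^+$) is differentiated, about the $e^{\pm\im\zeta(z)x}$ phase factors (which become $e^{0}=1$ at $z=\pm1$ where $\zeta(\pm1)=0$, a fortunate simplification), and about the relative signs for the $z=+1$ versus $z=-1$ cases, which is where the $\mp$ in front of the $f_I$ term originates.
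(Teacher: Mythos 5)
Your outline follows essentially the same route as the paper's proof: expand $W(0,z)=\frac{(z-\im)(z^2-1)}{(z+\im)z}$ at $z=\pm1$ to produce $\mp2\im(z\mp1)$, obtain the order-$\varepsilon$ term by first-order perturbation against the black-soliton Jost basis evaluated at $z=\pm1$ (where $\zeta(\pm1)=0$ and the explicit formulas \eqref{eq:sin11} collapse the products of Jost components to the kernels in $C(\pm,f)$), and collect everything else into $F_\pm$.

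Two of your justifications, however, do not hold as stated and need the paper's actual mechanism. First, the quadratic bound \eqref{eq:sin3}: the Lipschitz estimates you cite (\eqref{eq:eqm21n_1}, \eqref{eq:eqm21nn_1}) control first differences of the Jost functions in $q$, so they do not by themselves bound second $\varepsilon$-derivatives of $W$, and ``Taylor with bounded second derivatives'' is therefore not yet justified. The paper instead writes an exact Duhamel (Volterra) identity for $\psi_{1\varepsilon}^-$ in terms of the unperturbed fundamental solution $U(x,y,z)$ and evaluates the Wronskian at $x>M$, where $\psi_{2\varepsilon}^+\equiv\psi_{20}^+$; the remainder is then $\im\varepsilon\int_\R[I(y,z)-I(y,\pm1)]\,dy+\bigo{\varepsilon^2}$, where the $\bigo{\varepsilon^2}$ comes from inserting the Lipschitz bound $\|\psi_{1\varepsilon}^--\psi_{10}^-\|_{L^\infty(-M,M)}\le C\varepsilon$ into an integral that already carries a factor $\varepsilon$, and the first piece is $\bigo{\varepsilon|z\mp1|}$ by $z$-regularity of $I(y,\cdot)$; both are absorbed into $C_f(|z\mp1|^2+\varepsilon^2)$. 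Second, analyticity of $F_\pm$ on a full disc $|z\mp1|<c_f$ does not follow from the removable-singularity statement of Lemma~\ref{lem:jf3}: finite moments only give continuity (and $C^n$ regularity) of the Jost functions at $\pm1$ from within the closed half-plane, not analytic continuation across $\R$. What actually yields analyticity near $\pm1$ is that $q_{0\varepsilon}$ approaches its limiting values exponentially fast ($f$ compactly supported and $\tanh(x)\mp1$ exponentially small), which is precisely what the paper invokes at the start of the appendix when asserting that the Jost functions extend to $(\C\setminus\{0,-\im\})\times\R$. With these two repairs, and after carrying out the bookkeeping in your Step 2 (which is the computational heart and is done in the paper via \eqref{eq:sin11}), your argument coincides with the paper's.
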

For generic $f \in C_c^{\infty}( \R , \C )$   we have $ C(\pm  ,f )\neq 0$. Then replacing  $F_{\pm }(z, \varepsilon ) $ with 0 we obtain a function
with a zero in
 \begin{equation*}  \begin{aligned} &  \widetilde{z}_{\pm }(\varepsilon ) = \pm (1+\im \varepsilon  C(\pm  , f ))
  \end{aligned} \end{equation*}
and by Rouch\'e Theorem we have that $W(\varepsilon ,z)$ has for $\varepsilon $ small a zero
 \begin{equation*} \label{eq:sin4} \begin{aligned} &  z_{\pm }(\varepsilon ) = \pm (1+\im \varepsilon  C(\pm  , f )) +\bigo{\varepsilon ^2}.
  \end{aligned} \end{equation*}
If $\pm  \varepsilon  C(\pm  , f ) >0$ this yields a new zero in $\C _+$ of $ a(\varepsilon ,z )$ near
$\pm 1$ and a corresponding almost white soliton.  If   $\pm  \varepsilon  C(\pm  , f ) <0$
this is a new zero of the analytic continuation of $ a(\varepsilon ,z )$ below $\R$, does not yield a new soliton but nonetheless  makes $ a(\varepsilon ,z )$ singular at $\pm 1$. All four cases can occur.

{\it Proof of Lemma \ref{lem:sin1}}. Recall the definitions of $\psi^\pm_{j\eps}$ and $\psi^\pm_{j 0}$ for $j=1,2$, from the first paragraph of this appendix. For $(\psi ^{\pm } _{10}(z ; x))_j$ the $j$--th component of $\psi ^{\pm } _{10}(z; x)$ for $j=1,2$, 
we set
\begin{equation*}  \begin{aligned} &  \Delta Q(x):= \begin{pmatrix} 0 & \overline{f}(x)\\ f(x) & 0 \end{pmatrix} \, , \, U(x,y,z) =[\psi ^{- } _{10}(z;x) , \psi ^{+ } _{20}(z;x)][\psi ^{- } _{10}(z;y) , \psi ^{+ } _{20}(z;y)] ^{-1},
  \end{aligned} \end{equation*}
  with $[\psi ^{- } _{10}  , \psi ^{+ } _{20} ]  $ the matrix with first column $\psi ^{- } _{10} $ and second column   $\psi ^{+ } _{20} $ and with the last  the inverse of one such
  matrix.
$ U(x,y,z)$ is well defined for any  $ z\neq 0,\pm \im $ in $\C$.  We have $(\partial _x -\mathcal{L}(z;x)) U(x,y,z) =0 $   and $U(y,y,z) =1$, i.e. $U(x,y,z)$
is the fundamental solution of  equation
  \eqref{eq:laxL} with $Q$ defined using $\tanh (x)$.
Let $f \in C^{\infty}_c((-M,M), \C)$.
Notice then that for  $\psi ^{- } _{1\varepsilon} $ and  $\psi ^{+ } _{2\varepsilon} $
Jost functions associated to $q_{0\eps}$, we have  $ \psi ^{+ } _{2\varepsilon}(z; x)= \psi ^{+ } _{20}(z; x)$ for $x>M$ and $\psi ^{- } _{1\varepsilon}(z; x)= \psi ^{- } _{10}(z; x)$ for $x<-M$.
Then  for $x>M$
   we have  for any preassigned $x_0<-M$
\begin{equation} \label{eq:sin5} \begin{aligned}
 \psi ^{+ } _{2\varepsilon}(z;x)&= \psi ^{+ } _{20}(z;x)\\
 \psi ^{- } _{1\varepsilon}(z;x)&=  U(x,x_0,z) \psi ^{- } _{1\varepsilon}(z; x_0)
  +\im \varepsilon \int _{x_0} ^x U(x,y,z)\sigma _3 \Delta Q (y) \psi ^{- } _{1\varepsilon }(z;y) dy \\& =
  \psi ^{- } _{10}(z;x)+\im \varepsilon \int _{\R}  U(x,y,z)\sigma _3 \Delta Q (y) \psi ^{- } _{1\varepsilon }(z;y) dy.
  \end{aligned} \end{equation}
 Picking $x>M$ and substituting  \eqref{eq:sin5}
   we can write
\begin{equation*} \label{eq:sin6} \begin{aligned} &  W(\varepsilon ,z)= \det [ \psi ^{-} _{1\varepsilon }(z;x),\psi _{2\varepsilon }^{+} (z;x ) ] =  \det [ \psi ^{-} _{1\varepsilon }(z;x),\psi _{20 }^{+} (z;x)]  \\&
=W(0 ,z) + \im \varepsilon \int _\R I'(y,z) dy  \text{ where }   I'(y,z):=   \det [  U(x,y,z)\sigma _3 \Delta Q (y) \psi ^{- } _{1\varepsilon }(z;y)  ,\psi ^{+} _{20 }(z;y)  ] .
  \end{aligned} \end{equation*}
  Notice that we have
  \begin{equation*}  \begin{aligned} &    I'(y,z) =       \det \left [ \  [\psi ^{- } _{10}(z;x) , \psi ^{+ } _{20}(z;x)] F(y,z) ,\psi ^{+} _{20 }(z;x) \right ] \\& = \det [ F_1(y,z) \psi ^{- } _{10}(z;x) + F_2(y,z) \psi ^{+ } _{20}(z;x)  ,\psi ^{+} _{20 }(z;x) ]   =F_1(y,z)W(0 ,z),
  \end{aligned} \end{equation*}
    for  $F(y,z)  $ the 2 components column vector
\begin{equation*} \label{eq:sin7} \begin{aligned} &
 \begin{pmatrix}F_1(y,z)\\ F_2(y,z) \end{pmatrix} = [\psi ^{- } _{10}(z;y) , \psi ^{+ } _{20}(z;y)] ^{-1}\sigma _3 \Delta Q (y) \psi ^{- } _{1\varepsilon }(z;y)
 \\& = \frac{1} {W(0 ,z)} \begin{pmatrix}(\psi ^{+ } _{20}(z;y))_2  &- (\psi ^{+ } _{20}(z;y))_1  \\-(\psi ^{-} _{10}(z;y))_2 & (\psi ^{- } _{10}(z;y))_1  \end{pmatrix}   \begin{pmatrix}\overline{f}(y)(\psi ^{- } _{1\varepsilon }(z;y))_2 \\ -{f}(y) (\psi ^{- } _{1\varepsilon }(z;y))_1 \end{pmatrix} ,
  \end{aligned} \end{equation*}
  $(\psi ^{+ } _{20}(z;y))_j$  the $j$--th component of $\psi ^{+ } _{20}(z;y)$ and similar notation for the other Jost functions.
So
\begin{equation*} \label{eq:sin8} \begin{aligned} &     I'(y,z) = \overline{f}(y)(\psi ^{+ } _{20}(z;y))_2 (\psi ^{- } _{1\varepsilon }(z;y))_2+ {f}(y)(\psi ^{+ } _{20}(z;y))_1(\psi ^{- } _{1\varepsilon }(z;y))_1.
  \end{aligned} \end{equation*}
Furthermore by the Lipschitz continuity  in  $q $ in   Lemmas \ref{lem:jf1b} and \ref{lem:jf3}, in particular the analogues for
 $z\to  m ^{- } _{1  }(z;x)$ of the maps \eqref{eq:eqm13}
and
\eqref{eq:eqm13jf3}, for a fixed $C$
  and when $z$ is in a preassigned compact subset of $\C \backslash \{ 0, -\im \}$
  we have
 \begin{equation*} \label{eq:sin9} \begin{aligned} &
 \| \psi ^{- } _{1\varepsilon }(z;\cdot )-\psi ^{- } _{10 }(z;\cdot )\| _{L^\infty (-\infty , M)}=\| \psi ^{- } _{1\varepsilon }(z;\cdot )-\psi ^{- } _{10 }(z;\cdot )\| _{L^\infty (-M , M)} <C \varepsilon   .
  \end{aligned} \end{equation*}
This yields
\begin{equation*} \label{eq:sin10} \begin{aligned} &  W(\varepsilon ,z) =W(0,z) + \im \varepsilon \int _\R   I (y,\pm 1)  dy+
\widetilde{F}_{\pm}(z,\varepsilon)   \\&  I (y,z) =   \overline{f}(y)(\psi ^{+ } _{20}(z;y))_2 (\psi ^{- } _{10 }(z;y))_2+ {f}(y)(\psi ^{+ } _{20}(z;y))_1(\psi ^{- } _{10 }(z;y))_1\\& \widetilde{F}_{\pm}(z,\varepsilon)=
 \im \varepsilon \int _\R [I (y,z)-I (y,\pm 1)] dy + \bigo{\varepsilon ^2}
  \end{aligned} \end{equation*}
where $\widetilde{F}_{\pm}(z,\varepsilon)  $ has the  properties claimed in the statement for $ {F}_{\pm}(z,\varepsilon)  $.

\noindent We have
\begin{equation}  \label{eq:sin11} \begin{aligned} &
\psi ^{+ } _{20}( \pm 1  ;y) =\im \psi ^{- } _{10 }( \pm 1 ;y) , \\&
\psi ^{- } _{10 }(  - 1 ;y)=\frac{1}{1+e^{-2y}}
 \begin{pmatrix}\im +e^{-2y}\\ -\im +e^{-2y} \end{pmatrix}
  \text{ and }  \psi ^{- } _{10 }(   1 ;y) =\frac{1}{1+e^{-2y}}
 \begin{pmatrix}-\im + e^{-2y}\\ -\im -  e^{-2y}  \end{pmatrix}
  .\end{aligned} 
  \end{equation}
  \eqref{eq:sin11} can be derived in an elementary fashion  by first substituting $z _{j_0} =\im$ and $\varphi  _{j_0} = x$    in  formula \eqref{eq:eq1a}   for $x>0$ and
  formula \eqref{eq:eq1b} for $x<0$. 
  This yields the formula for the matrix  $m(z; x)$ in \eqref{eq:defm}. 
  To obtain the Jost functions one then  multiplies by $a(z)= \frac{z-\im}{z+\im} $ the 1st (resp. 2nd) column of $m(z; x)$ if $\Im (z) >0$ (resp. $\Im (z) <0$), uses formulas \eqref{eq:jf1}  and exploits $\zeta (\pm 1)=0$ for the
  function in \eqref{eq:zeta} getting    \eqref{eq:sin11} with simple computations.
  After other elementary computations we get the formulas for $C(\pm  , f )$
in \eqref{eq:sin2}.
\qed

\section*{acknowledgements}   
S.C. was partially funded    by the grant FIRB 2012 (Dinamiche Dispersive)  from the Italian Government   and by a grant FRA 2013 from the University of Trieste.  We wish to thank Prof. Tamara Grava for useful discussions at the initial stages of this work.

%
%
%
%


\begin{thebibliography}{CP03}

\bibitem{BKMM} Baik, J.; Kriecherbauer, T.; McLaughlin, K. T.-R.; Miller, P. D.; Discrete orthogonal polynomials. {\em Annals of Mathematical Studies}, {\bf 164} Princeton University Press, 2007.

\bibitem{BGS}   B\'ethuel, F.; Gravejat, P.;   Smets, D.; Asymptotic stability in the energy space for dark solitons of the Gross-Pitaevskii equation,  
{\em  Ann. Sci. \'{E}c. Norm. Sup\'{e}r.}, {\bf 4} 48 (2015), no. 6,   1327--1381.

\bibitem{BGS1}   B\'ethuel, F.; Gravejat, P.;   Smets, D.; Stability in the energy space for chains of solitons of the one-dimensional Gross-Pitaevskii equation, {\em  Ann. Inst. Fourier (Grenoble).}, {\bf 64} (2014), no. 1,   19--70.


 \bibitem{BGSS}   B\'ethuel, F.; Gravejat, P.; Saut, J.-C.; Smets, D.; On the Korteweg-de Vries long-wave approximation of the Gross-Pitaevskii equation. I, {\em  Int. Math. Res. Not. IMRN}, {\bf 14}  (2009)   2700--2748.

\bibitem{BGSS1}   B\'ethuel, F.; Gravejat, P.; Saut, J.-C.; Smets, D.; Orbital stability of the black soliton to the Gross-Pitaevskii equation,  {\em  Indiana Univ. Math. Jour.}, {\bf 57}  (2008)   2611--2642.


\bibitem{BC}
Beals, R.; Coifman, R.R.; Scattering and inverse scattering for first order systems,
{\em Comm. Pure Appl. Math.}, {\bf 37}  (1984), 39-90.

\bibitem{BKS} Boutet de Monvel, A.; Kotlyarov, V. P.; Shepelsky, D.; Focusing NLS equation: long time dynamics of step-like initial data, {\em Int. Math. Res. Not. IMRN} (2011), no. 7, 1613--1653.

\bibitem{BM} Buckingham, R; Miller, P.D.; The sine-Gordon equation in the semiclassical limit: critical behavior near a separatrix, {\em J. Anal. Math.}, {\bf 118} (2012), no. 2, 397-492.


\bibitem{BN} Bilman, D.; Nenciu, I.;
On the evolution of scattering data under perturbations of the Toda lattice,
  arXiv:1405.2310.

\bibitem{CG} Claeys, T.; Grava, T.; Universality of the break-up profile for the KdV equation in the small dispersion limit using the Riemann-Hilbert approach, {\em Comm. Math. Phys.} {\bf 286} (2009), 979-1009.


\bibitem{CM} Cuccagna, S.; Maeda, M.; On weak interaction between  a ground state and a non--trapping  potential,  {\em J. Differential Equations} \textbf{256} (2014), 1395--1466.

\bibitem{CP} Cuccagna, S.; Pelinovsky, P.; The asymptotic stability of solitons in the cubic NLS equation on the line,  {\em Applicable Analysis} \textbf{93} (2014), 791--822.




\bibitem{DPMV}
Demontis, F.; Prinari, C.; van der Mee, C.; Vitale, F.; The Inverse Scattering Transform for the Defocusing Nonlinear Schr\"odinger Equations with Nonzero Boundary Conditions, {\em Stud. Appl. Math.}, {\bf 131} (2012), 1--40.


\bibitem{DKKZ}
Deift, P.; Kamvissis, S.; Kriecherbauer, T.; Zhou, X.; The Toda rarefaction problem,
{\em Comm. Pure and Appl. Math.}, {\bf 49} (1996), no. 1, 35--83.

\bibitem{DKMVZ}
Deift, P.;  Kriecherbauer, T.; McLaughlin, K. T.-R.; Venakides, S.; Zhou, X.;
Strong asymptotics of orthogonal polynomials with respect to exponential weights,
{\em Comm. Pure and Appl. Math.}, {\bf 52} (1999), no. 12, 1491--1552.

\bibitem{DP} Deift, P.;  Park, J.;   Long-time asymptotics for solutions
of the NLS equation with a delta potential and even initial data.
{ \em Int. Math. Res. Not. IMRN} {\bf  24} (2011), 5505--5624.


\bibitem{DZ3}
Deift, P.; Zhou, X.; A steepest descent method for oscillatory Riemann-Hilbert problems.
Asymptotics for the MKdV equation, { \em Ann. of Math.},  {\bf 137}   (1993), no. 2, 295--368.


\bibitem{DZ2} Deift, P.;    Zhou, X.; Long-time behavior of the non-focusing
nonlinear Schr\"odinger equation, a case study. New Series: Lectures in Mathematical Sciences, 5. University of Tokyo, 1994.


\bibitem{DZ1} Deift, P.;    Zhou, X;  Perturbation theory for infinite-dimensional integrable systems on the line. A case study,  { \em Acta Math.}  {\bf 188} (2002), 163--262.




\bibitem{DT} Deift, P.;  Trubowitz, E. ; Inverse scattering on the line, {\em Comm. Pure Appl. Math.}, {\bf 32}  (1979) 121-251.


\bibitem{DM}  Dieng,M.; McLaughlin,K.D.T.-R.;
Long-time asymptotics for the NLS equation via $\overline{\partial} $ methods,       arXiv:0805.2807.


\bibitem{DG} Di Menza, L.; Gallo, C.;   The black solitons of one-dimensional NLS equations, {\em Nonlinearity}, {\bf 20} (2007), 461-496.


\bibitem{EGKT} Egorova, I.; Gladka, Z.; Kotlyarov, V.; Teschl, G.; Long time asymptotics for the Korteweg-de Vries equation with step-like initial data. {\em Nonlinearity} {\bf 26} (2013), no 7, 1839-1864.


\bibitem{FT}
   Faddeev, L.D.;   Takhtajan,L.A.; {\em Hamiltonian Methods in the Theory of Solitons},
Springer, Berlin, 1987.


\bibitem {gallo}
 Gallo, C.;    The Cauchy problem for defocusing nonlinear Schr\"odinger equations with non-vanishing initial data at infinity,  {\em Comm. Partial Diff. Eq}.,     {\bf  33}
 (2008),
  729--771.

\bibitem {gallo1}
 Gallo, C.;   Schr\"odinger group on Zhidkov spaces,  {\em Adv. Differential Eq}.,     {\bf  9}
 (2004),
  509--538.

\bibitem {GZ}
 G\'erard,P.;  Zhang, Z.;    Orbital stability of traveling waves for the one-dimensional Gross-Pitaevskii equation,  {\em J. Math. Pures Appl}.,     {\bf  91}
 (2009),
  178--210.
  
\bibitem {GS} Gravejat, P.;  Smets, D.;
Asymptotic stability of the black soliton for the Gross--Pitaevskii equation, 
{\em  Proc. Lond. Math. Soc.}., {\bf  3} 111 (2015), 305--353.

\bibitem {GT}
 Grunert, K.;  Teschl, G.;    Long time asymptotics for the Korteweg--de Vries Equation via Nonlinear Steepest Descent,  {\em   Math. Phys. Anal. Geom}.,     {\bf  12}
 (2009),287--324.


\bibitem{J} Jenkins, R.; Regularization of a sharp shock by the defocusing nonlinear Schr\"odinger equation, {\em Nonlinearity}, {\bf 28} (2015).

\bibitem{JM} Jenkins, R.; McLaughlin, K.T.-R.; The semiclassical limit of focusing NLS for a family of square barrier initial data, {\em Comm. Pure Appl. Math.} {\bf 67} (2014), no. 2, 246-320.

\bibitem{KMM}
 Kamvissis, S.; McLaughlin, K.T.-R.; Miller, P.D.; {\em Semiclassical soliton ensembles for the focusing nonlinear Schr\"odinger equation}, Annals of Math. Studies  154, Princeton Un. Press,   2003.

\bibitem {LCT}
 Le Coz, S.;  Tsai, T.-P.; Infinite soliton and kink-soliton trains for nonlinear Schr\"odinger equations, {\em  Nonlinearity}  \textbf{27} (2014), 2689--2709.

\bibitem{MaM0}  Martel, Y;   Linear problems related to asymptotic stability of solitons of the generalized KdV equations, {\em SIAM J. Math. Anal.} \textbf{38}  (2006), 759--781.


\bibitem{MaM1}    Martel, Y;   Merle, F.;       Asymptotic stability of solitons of the
gKdV equations with general nonlinearity, {\em Math. Ann. } \textbf{341}
(2008),    391--427.




\bibitem{MS}
McKean, H.;  Shatah, J.; The nonlinear Schr\"odinger equation and
the nonlinear heat equation reduction to linear form,
{ \em  Comm. Pure Appl. Math.}   {\bf 44}   (1991), 1067--1080.



\bibitem{MM1}
 McLaughlin, K.T.-R.; Miller, P.D.; {  The $\overline{\partial}$ steepest descent method and the asymptotic behavior of polynomials orthogonal on the unit circle with fixed and exponentially varying nonanalytic weights},  {\em IMRP Int. Math. Res. Pap.} (2006), Art. ID 48673, 1--77.

\bibitem{MM2}
McLaughlin, K.T.-R.; Miller, P.D.; {  The $\overline{\partial}$ steepest descent method for orthogonal polynomials on the real line with varying weights}, {\em Int. Math. Res. Not. IMRN} 2008, Art. ID rnn 075, 66 pp.


\bibitem {taylor}
M.E. Taylor, {\em Partial Differential Equations I}, Texts in Appl.Math. 23, Springer, New York (1996).


\bibitem {tao}
 Tao, T.;    Why are solitons stable?, {\em   Bull. Amer. Math. Soc.}, {\bf  46}
 (2009),    1--33.

\bibitem {TVZ}
Tovbis, A.; Venakides, S.; Zhou, X.; On semiclassical (zero dispersion) solutions of the focusing nonlinear Schr\"odinger equation. {\em Comm. Pure Appl. Math.}, {\bf 57} (2004), 877--985.

\bibitem {V1}
 Vartanian, A.H.;    Long-time asymptotics of solutions to the Cauchy problem for the defocusing non-linear Schr\"odinger equation with finite-density initial data. I. Solitonless sector, in: {\em   Recent developments in integrable systems and Riemann-Hilbert problems} (Birmingham, AL, 2000), 91--185,
Contemp. Math., {\bf  326}, Amer. Math. Soc., Providence, RI, 2003.

 \bibitem {V2}
 Vartanian, A.H.;    Long-time asymptotics of solutions to the Cauchy problem for the defocusing non-linear Schr\"odinger equation with finite-density initial data. II. Dark solitons on continua,   {\em   Math. Phys. Anal. Geom.}, {\bf  5}
 (2002),    319--413.




\bibitem {V4}
 Vartanian, A.H.;
Exponentially small asymptotics of solutions to the defocusing nonlinear Schr\"odinger equation,   {\em
Appl. Math. Lett.} \textbf{16} (2003), 425--434.

\bibitem {ZS1}
 Zakharov,V.E.;  Shabat,A.B.; Interaction between solitons in a stable medium, {\em   Sov. Phys. JETP}  {\bf  37}  (1973) 823--828.




\end{thebibliography}
\end{document}